\documentclass{amsart}

\usepackage[english]{babel}
\usepackage[utf8]{inputenc}
\usepackage[T1]{fontenc}
\usepackage{textcomp}

\usepackage[a4paper]{geometry}

\usepackage{amsmath}
\usepackage{amssymb}
\usepackage{amsthm}
\usepackage{amsfonts}
\usepackage{enumitem}
\usepackage{graphicx}
\usepackage{subcaption}
\usepackage{mathtools}
\usepackage{tikz-cd}
\usepackage{bm}
\usepackage{float} 
\usepackage{wasysym} 
\usepackage{rotating} 
\usepackage{tcolorbox} 
\usepackage{listings} 
\usepackage{caption}
\usepackage{nicefrac}
\usepackage{xcolor}
\usepackage[colorlinks=true, linkbordercolor = {gray}, allcolors=blue]{hyperref}
\usepackage{booktabs} 
\usepackage{multirow}
\usepackage{tabularx} 
\usepackage{colortbl} 
\usepackage[export]{adjustbox}
\usepackage{wrapfig}
\usepackage{relsize}

\makeatletter
\def\@seccntformat#1{\csname the#1\endcsname\quad}

\renewcommand{\section}{\@startsection{section}{1}{\z@}%
  {-3.5ex \@plus -1ex \@minus -.2ex}%
  {2.3ex \@plus .2ex}%
  {\normalfont\Large\bfseries}}

\renewcommand{\subsection}{\@startsection{subsection}{2}{\z@}%
  {-3.25ex \@plus -1ex \@minus -.2ex}%
  {1.5ex \@plus .2ex}%
  {\normalfont\large\bfseries}}

\renewcommand{\subsubsection}{\@startsection{subsubsection}{3}{\z@}%
  {-3.25ex \@plus -1ex \@minus -.2ex}%
  {1.5ex \@plus .2ex}%
  {\normalfont\normalsize\bfseries}}
\makeatother

\makeatletter
\renewcommand{\paragraph}{\@startsection{paragraph}{4}{\z@}%
  {3.25ex \@plus1ex \@minus.2ex}%
  {-1em}%
  {\normalfont\normalsize\bfseries}}
\makeatother

\hypersetup{
    pdftitle={Computations of the slice genus and the unknotting number of links via machine learning},
    pdfauthor={Yutong Dai, Oliver Hayman, Andr\'as Juh\'asz, Ludovico Morellato},
    pdfkeywords={knot theory, machine learning, reinforcement learning, Bayesian optimisation}
}

\graphicspath{{./}}

\usepackage[style = alphabetic]{biblatex}
\usepackage{csquotes}
\numberwithin{equation}{section}
\theoremstyle{plain}
\newtheorem{theorem}{Theorem}[section]
\newtheorem*{theorem*}{Theorem}
\newtheorem{proposition}[theorem]{Proposition}
\newtheorem{corollary}[theorem]{Corollary}
\newtheorem*{corollary*}{Corollary}
\newtheorem{lemma}[theorem]{Lemma}
\newtheorem{conjecture}[theorem]{Conjecture}

\theoremstyle{definition}
\newtheorem{definition}[theorem]{Definition}
\newtheorem{remark}[theorem]{Remark}

\newtheorem{example}[theorem]{Example}

\renewcommand{\epsilon}{\varepsilon}
\renewcommand{\S}{S}

\def\R{{\mathbb R}}
\def\C{{\mathbb C}}

\def\N{{\mathbb N}}

\newcommand\abs[1]{\left\lvert #1 \right\rvert}
\newcommand\norm[1]{\left\lVert #1 \right\rVert}
\newcommand{\ktparen}[1]{\IfNoValueF{#1}{\if\relax\detokenize{#1}\relax\else(#1)\fi}}
\newcommand{\ktsub}[1]{\IfNoValueF{#1}{\if\relax\detokenize{#1}\relax\else_{#1}\fi}}
\NewDocumentCommand{\unk}{!g}{\ensuremath{{u}\ktparen{#1}}}
\NewDocumentCommand{\genSeif}{!g}{\ensuremath{{g}_3\ktparen{#1}}}
\NewDocumentCommand{\gentop}{!g}{\ensuremath{{g}_4^{top}\ktparen{#1}}}
\NewDocumentCommand{\gen}{!g}{\ensuremath{{g}_4\ktparen{#1}}}
\NewDocumentCommand{\sgen}{!g}{\ensuremath{{g}_4^{\ast}\ktparen{#1}}}
\NewDocumentCommand{\rgen}{!g}{\ensuremath{{g}_r\ktparen{#1}}}
\NewDocumentCommand{\rsgen}{!g}{\ensuremath{{g}_r^{\ast}\ktparen{#1}}}
\NewDocumentCommand{\ualg}{!g}{\ensuremath{\mathnormal{u}_a\ktparen{#1}}}
\NewDocumentCommand{\lk}{!g}{\ensuremath{\mathrm{lk}\ktparen{#1}}}
\NewDocumentCommand{\abslk}{!g}{\ensuremath{\abs{\mathrm{lk}}\IfNoValueF{#1}{\if\relax\detokenize{#1}\relax\else\!(#1)\fi}}}
\NewDocumentCommand{\mir}{!g}{\ensuremath{\operatorname{m}\ktparen{#1}}}
\NewDocumentCommand{\torus}{!g}{\ensuremath{\operatorname{T}\ktsub{#1}}}

\DeclareMathOperator{\FM}{FM}

\definecolor{light-gray}{gray}{0.95}

\newcommand{\greyrule}{\arrayrulecolor{black!25}\hline\arrayrulecolor{black}}

\definecolor{codegreen}{rgb}{0,0.6,0}
\definecolor{codegray}{rgb}{0.5,0.5,0.5}
\definecolor{codepurple}{rgb}{0.58,0,0.82}
\definecolor{backcolour}{rgb}{0.95,0.95,0.92}

\lstdefinestyle{mystyle}{
    backgroundcolor=\color{light-gray},
    commentstyle=\color{codegreen},
    keywordstyle=\color{magenta},
    numberstyle=\tiny\color{codegray},
    stringstyle=\color{codepurple},
    basicstyle=\ttfamily\footnotesize,
    breakatwhitespace=false,
    breaklines=true,
    captionpos=b,
    keepspaces=true,
    numbers=left,
    numbersep=5pt,
    showspaces=false,
    showstringspaces=false,
    showtabs=false,
    tabsize=2,
    frame=lines,
    rulecolor=\color{gray}
}

\usetikzlibrary{shapes.geometric}
\tikzset{
    mynode/.style={rounded corners, inner sep=4pt, fill=#1, draw=none}
}
\title[Computations of the slice genus and the unknotting number of links]{Computations of the slice genus and the unknotting number of links via machine learning%
}

\author{Yutong Dai}
\address{Department of Mathematics, Humboldt-Universit\"at zu Berlin, Unter den Linden 6, 10099 Berlin, Germany}
\email{yutong.dai@student.hu-berlin.de}

\author{Oliver Hayman}
\address{DRW Holdings, 540 West Madison Street, Suite 2500, Chicago, IL 60661, US}
\email{ohayman@drwholdings.com}

\author{Andr\'{a}s Juh\'{a}sz}
\address{Mathematical Institute, University of Oxford, Andrew Wiles Building,
	Radcliffe Observatory Quarter, Woodstock Road, Oxford, OX2 6GG, UK}
\email{juhasza@maths.ox.ac.uk}

\author{Ludovico Morellato}
\address{Department of Mathematics, Università di Padova, via Trieste, 63, 35121 Padova, Italy}
\email{ludovico.morellato@phd.unipd.it}

\begin{document}
\renewcommand{\thefootnote}{\fnsymbol{footnote}}
\renewcommand{\thefootnote}{\arabic{footnote}}
\setcounter{footnote}{0}

\begin{abstract}
    Links are disjoint unions of circles smoothly embedded in $\S^3$. We use reinforcement learning and Bayesian optimisation to obtain new upper bounds on several link invariants that are not known to be algorithmically computable: the slice genus and the unknotting number for links, and the strong slice genus for algebraically split links. We also compute lower bounds using known invariants. Combining the upper and lower bounds, we obtain new exact values in many cases. Our unknotting agents can reproduce the non-additivity of the unknotting number for several counterexamples due to Brittenham and Hermiller, in some cases finding new unknotting trajectories.
\end{abstract}

\maketitle


\section{Introduction}
\label{sec:intro}

The unknotting number and the smooth slice genus are topological invariants of links that are not known to be algorithmically computable, as they are defined as minima over infinite sets. 
The \emph{unknotting number} of a link is the minimal number of crossing changes needed to turn a diagram of the link into a diagram of the unlink. 
The difficulty is that every link admits infinitely many diagrams.
For example, out of the 249 prime knots with up to ten crossings, we do not know the unknotting number of ten.

The (weak) \emph{slice genus} of a link $L$ is the minimal genus of a compact, connected, oriented surface in the 4-ball that bounds $L$.
If the components of $L$ are $K_1, \dots, K_\ell$, the \emph{strong slice genus} of $L$ is the minimum of $\sum_{i=1}^\ell g(F_i)$, where $F_1,\dots,F_\ell$ are pairwise disjoint compact, connected, oriented surfaces in the 4-ball with $\partial F_i = K_i$. 
Such a collection of surfaces exists if and only if the link is \emph{algebraically split}; i.e., $\lk(K_i,K_j) = 0$ for $i \neq j$. 
Surfaces bounded by $L$ in the 4-ball can be represented by movies starting with $L$ and ending with the empty link that consist of isotopies, births and deaths of unlinked unknot components, and band attachments. 
The set of possible movies is infinite.
If we disallow births of unlinked unknot components, then we obtain what is known as a \emph{ribbon surface}, and minimising the genus over such surfaces gives rise to the notions of the \emph{weak and strong ribbon genera}.
The ribbon genus is at least the slice genus.

The application of machine learning to knot-theoretic problems has developed rapidly over the last decade. 
On the supervised side, Hughes~\cite{hughes2020neuralnetwork} trained neural networks to predict knot invariants directly from a diagram, and Jejjala, Kar, and Parrikar~\cite{jejjala2019volume} to recover the hyperbolic volume of a knot from its Jones polynomial. 
More broadly, Davies et al.~\cite{davies2021advancing} used machine learning feature attribution techniques to uncover previously unknown relationships between invariants, leading to new conjectures in knot theory.
On the reinforcement learning side, several works have framed the simplification of a knot as a sequential decision problem. Gukov, Halverson, Ruehle, and Su\l{}kowski~\cite{gukov2021unknot} trained an agent to recognise and untie the unknot via Markov moves and braid relations, while Applebaum et al.~\cite{applebaum2025unknottingrl} used a reinforcement learning agent to bound the unknotting number, producing along the way a large dataset of hard unknot diagrams.

Closer to our work, Gukov, Halverson, Manolescu, and Ruehle~\cite{gukov2025ribbon} looked for band moves exhibiting knots as ribbon. 
They framed the search for a ribbon disc as a game aimed at finding a band path very close to the one we describe in Section~\ref{sec:env}. 
The present work follows this line, unifying the computation of the unknotting number and the weak and strong slice genera (including the ribbon versions) into a single band path framework.
To obtain upper bounds on the unknotting number and the various slice genera, we implemented three agents: a random walker tuned via Bayesian optimisation, a neural network trained with reinforcement learning, and a mixed agent that combines the two.
While in the work of Gukov et al.\ the random walker outperformed the RL agent, the opposite holds in our case.
The novelty of our work is that we study links instead of just knots and all the aforementioned variants of the slice genus, not only ribbonness (i.e., whether a knot has ribbon genus zero). 
Furthermore, our unified framework allows us to bound the unknotting number.

While finding an upper bound on the unknotting number or slice genus involves a search over an infinite space of moves, verifying that a proposed sequence achieves a claimed upper bound is fast.
We phrase the computation of both invariants as a single combinatorial game on a diagram, and we let an agent play it. 
A won episode returns a certificate: an explicit sequence of moves that provides a verifiable upper bound for the invariant.
The environment never \emph{predicts} an invariant; it produces a sequence of moves that one can verify by hand. 
We also compute existing lower bounds obtained via invariants such as various signatures, the invariants $\tau$ and $\nu$ due to Ozsv\'ath and Szab\'o, and the Rasmussen $s$-invariant.
When the upper bound obtained via an agent matches the resulting lower bound, the invariant is determined exactly.
The lower bounds we match against are computed independently of the agents, using the results collected in Appendix~\ref{sec:bounds}. 

The numerical results we obtained with the trained reinforcement learning agents and the tuned random walkers can be found in Section~\ref{sec:results}.
Prior to our work, no unknotting number data were available for the 1{,}007 unoriented 11-crossing, at least two-component prime links.
We determine the exact value for 690 and obtain lower and upper bounds for the remaining ones.

When studying the slice genus, allowing the birth of unlinked unknot components did not improve our upper bounds, which is consistent with the higher-genus slice--ribbon conjecture stating that the slice and ribbon genera agree.
As the slice genus is at most the ribbon genus and all the invariants that we compute are lower bounds on the slice genus, our results apply to both the slice and ribbon genera.
Hence, below, we report only on the slice genus.
Of all oriented, at least two-component prime links with up to 11 crossings, the weak slice genus was previously undetermined for 407 links, and the strong slice genus for 732 algebraically split links.
We determine the weak slice genus for 144 of them.
The remaining 263 turn out to all have weak slice genus zero or one.
We determine the strong slice genus for 676 algebraically split links and strictly improve the bounds for the remaining 56.
Some of our results have already been incorporated into the \cite{linkinfo} dataset.

We also studied the slice genus and unknotting number of 14-crossing prime knots, of which there are $46{,}972$.
Dunfield and Gong \cite{dunfield2025ribbonconslice} showed $1{,}194$ have slice genus zero, and $408$ have slice genus at least one.
We determine a further $44{,}838$ values exactly and strictly narrow the interval for $940$ more, bringing the census to $46{,}032$ exact values.
For the unknotting number, the upper bounds of all 14-crossing prime knots come from \cite{brittenham2017unknottingsearch}, and the new contribution is purely on the lower bound. The obstructions give a non-trivial lower bound for $1{,}288$ knots, of which $1{,}097$ become exactly determined, with the new bound meeting Brittenham's upper bound.

When starting this project in 2023, our main motivation for developing the unknotting agents was to find counterexamples to the additivity of the unknotting number.
The knots $K$ and $K^{\prime}$ are called \emph{symbionts} if $\unk{K \sharp K^{\prime}} < \unk{K} + \unk{K^{\prime}}$.
The first symbionts were produced by Brittenham and Hermiller~\cite{brittenham2025additivity, brittenham2026unknottingnumberconnectedsums4_1_5_1}.
We tested the unknotting agents on some of their connected sums; see Section~\ref{subs:unknotting-additivity}.
The agents were able to independently reproduce that $\unk{K \sharp \mathtt{9_{10}}} \leq 4$ for ten symbionts $K$ of $\mathtt{9_{10}}$, and that $\unk{\mathtt{10_{41}} \sharp \mir{\mathtt{9_{10}}}} \leq 4$; in each case, the two summands have unknotting numbers $2$ and $3$, adding up to $5$.
None of these connected sums appear in the datasets we used to train, tune, or test the agents.
Four of the sequences found by the agents reproduce almost exactly the unknotting sequence in the proof of~\cite[Theorem~1.1]{brittenham2026unknottingnumberconnectedsums4_1_5_1}, and a fifth reproduces its mirror image; the others are original.
Two of them pass through $\mathtt{4_1} \sharp \mathtt{9_{10}}$ and then through $\mathtt{K15n4005}$, which does not appear in the sequence of Brittenham and Hermiller.
This gives a second unknotting sequence of three crossing changes for $\mathtt{4_1} \sharp \mathtt{9_{10}}$. 
The new unknotting sequences are depicted in Appendix~\ref{app:sequences}.

\subsubsection*{Data availability}
The code, the datasets, the trained agents, and the evaluations are available at 
\[
\texttt{\small https://github.com/andras-juhasz/slice-genus-unknotting-rl/releases/tag/v1.0-preprint}
\]

\subsubsection*{AI declaration}
The first and second authors wrote large parts of the codebase and did not use AI to assist with coding.
The third author used ChatGPT 5.6 and 6 Pro to proofread the manuscript, audit the code, and fix minor bugs.
The fourth author used Claude Opus 5 to proofread the manuscript and assist with code development.

\label{subsubs:acknowledgements}

\section{Basics: links, diagrams, the band game and RL}
\label{sec:background}

We work in the $3$-sphere $\S^3$.
A smooth $\ell$\emph{-component link} in $\S^3$ is a smooth embedding 
\[
L \colon \bigcup_{i=1}^{\ell} \S^1 \times \{i\} \hookrightarrow \S^3, 
\]
and a \emph{knot} is a $1$-component link.
We usually identify $L$ with its image, which is a smooth, oriented one-dimensional submanifold of $\S^3$, and we denote its connected components by $K_1, \dots, K_{\ell}$.
Links are regarded up to \emph{ambient isotopy}, that is, up to a smooth family of diffeomorphisms of $\S^3$ starting at the identity and carrying one link onto the other.

Although links live in $\S^3$, they can be faithfully encoded by $2$-dimensional pictures, called \emph{link diagrams}. A diagram $D$ of $L$ is a projection of $L$ onto a plane, which is an immersion with only transverse double points, called \emph{crossings}, at each of which we record which of the two strands lies above the other.
The over/under decoration is exactly what is needed to recover $L$ from $D$ up to ambient isotopy, so a diagram encodes the link up to equivalence.
By Reidemeister's theorem \cite[Theorem 3.8.1]{cromwell2004knots}, two diagrams represent equivalent links if and only if one can be transformed into the other via a sequence of planar isotopies and \emph{Reidemeister moves}, of which there are three.

Diagrams are also convenient from a computational perspective, since they admit a compact encoding known as a \emph{PD code}.
Let $D$ be an oriented link diagram with $n$ crossings.
Label each crossing with an integer from $1$ to $n$. Then start from a crossing and walk along the link following the orientation, labelling each segment between two crossings with integers starting from $1$ in increasing order, and repeating for every component, always increasing the segment labels.
The resulting \emph{Planar Diagram code (PD code)} is the ordered list of $n$ 4-tuples, where the $i$-th tuple corresponds to the $i$-th crossing and encodes the labels of the four segments around the crossing, starting from the incoming under-strand and going around counter-clockwise.
The isotopy type of a connected link diagram in $S^2$ is uniquely determined by its PD code.
In $\R^2$, the only ambiguity when reconstructing a connected diagram from a PD code is the choice of the unbounded region.
A given diagram admits several PD codes, depending on the choice of labels.
For multi-component links with few crossings on some components, it might happen that the orientation is not uniquely determined, and this problem is avoided by adding the data of crossing signs ($\pm 1$) to the PD code.
A PD code for the trefoil can be seen in Figure~\ref{fig:PD_trefoil}.

\begin{figure}[H]
    \centering
    \includegraphics[height=3.4cm]{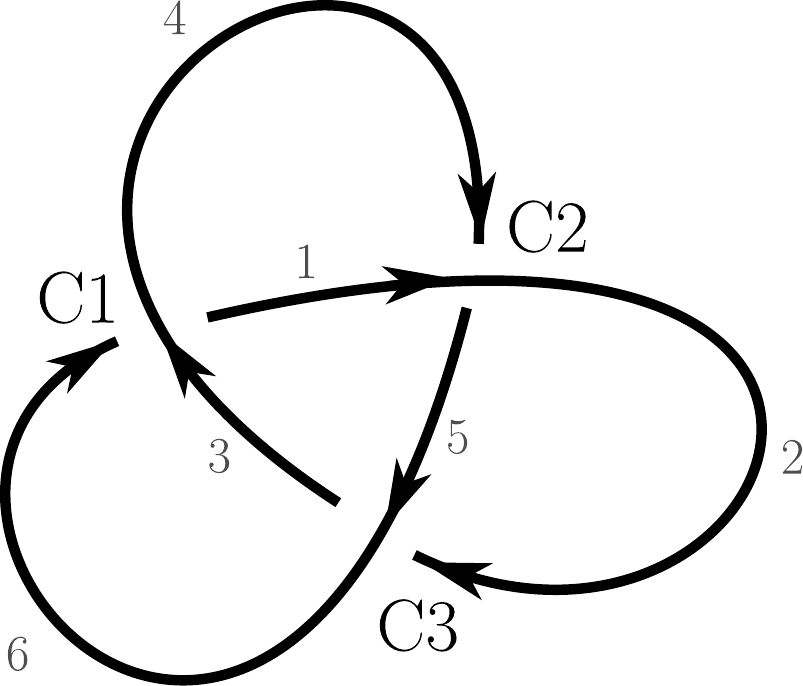}
    \caption{A PD code of the trefoil is $[(6,3,1,4), (4,1,5,2), (2,5,3,6)]$, where the crossings are labelled as $Ci$ for $i \in \{1,2,3\}$.}
    \label{fig:PD_trefoil}
\end{figure}

\subsection{The unknotting number and the band path}
\label{subs:unknotting-num}

The unknotting number, denoted by $\unk$, is a numerical invariant measuring how knotted a link is.
To define it, we need the notion of \emph{crossing change}, an elementary move that lets the link pass through itself.
It is a regular homotopy of the link which is an embedding except for a single time when there is exactly one transverse double point, where two strands of the link pass through each other.
By iterating this move finitely many times, any link can be turned into the unknot; see~\cite[Section~3.1]{adams2004knotbook}.
We are interested in the minimum number of times we need to perform this move.

\begin{definition}[Unknotting number]
\label{def:unknotting-number-link}
    Let $L$ be a link in $\S^3$ with $\ell \geq 1$ components.
    The \emph{unknotting number} $\unk{L}$ is the minimum number of crossing changes required to turn $L$ into the $\ell$-component unlink.
\end{definition}

For $\ell \geq 2$, the unknotting number is also called ``unlinking number''. We will call it the unknotting number for both knots and links.
Computing this value is very difficult, and no general algorithm is currently known.
Even though the problem is $3$-dimensional in nature, we can recast it in the completely two-dimensional framework of link diagrams.
Given a crossing $c$ of a diagram $D$, by a crossing change at $c$, we mean the local modification that swaps which of the two strands at $c$ lies above the other, leaving the rest of $D$ unchanged.
Every diagram $D$ can be turned into a diagram of an unlink by changing at most $c(D)$ crossings, where $c(D)$ is the number of crossings of $D$.
Write $\unk{D}$ for the minimal number of crossing changes needed to obtain the unlink from $D$.
This is only an upper bound on $\unk{L}$, as a different diagram may admit a shorter sequence.
Indeed, for any link $L$, one has
\[
\unk{L} = \min_{\substack{D \text{ diagram} \\ \text{for } L }} \unk{D}.
\]
While $u(D)$ can be computed by changing all subsets of crossings of size at most $c(D)/2$ (which is still computationally expensive), every link has infinitely many diagrams, and there are links for which the minimum is not achieved by a minimal crossing number diagram.

To remove this dependence on a specific diagram while still working with combinatorial moves in dimension 2, we package the data of ``deforming first, then performing a crossing change'' into a single elementary object called a \emph{band path}.

\begin{definition}[Band path]
\label{def:band-path}
    A \emph{band path} on a diagram of $L$ consists of
    \begin{itemize}
        \item an edge of the diagram between two neighbouring crossings of $L$;
        \item a sequence of crossing-increasing Reidemeister~II moves we want to perform on that edge, possibly interleaved with Reidemeister~I moves. This is the ``path'' along which we slide the chosen edge to reach a target edge of the diagram.
    \end{itemize}
\end{definition}

At the end of the band path, we are going to perform the crossing change at one of the two new crossings created by the last Reidemeister~II move.
An example can be seen in Figure~\ref{fig:unknotting_number_trefoil_band}.

\begin{figure}[ht]
    \centering
    \includegraphics[width=0.20\linewidth, valign=t]{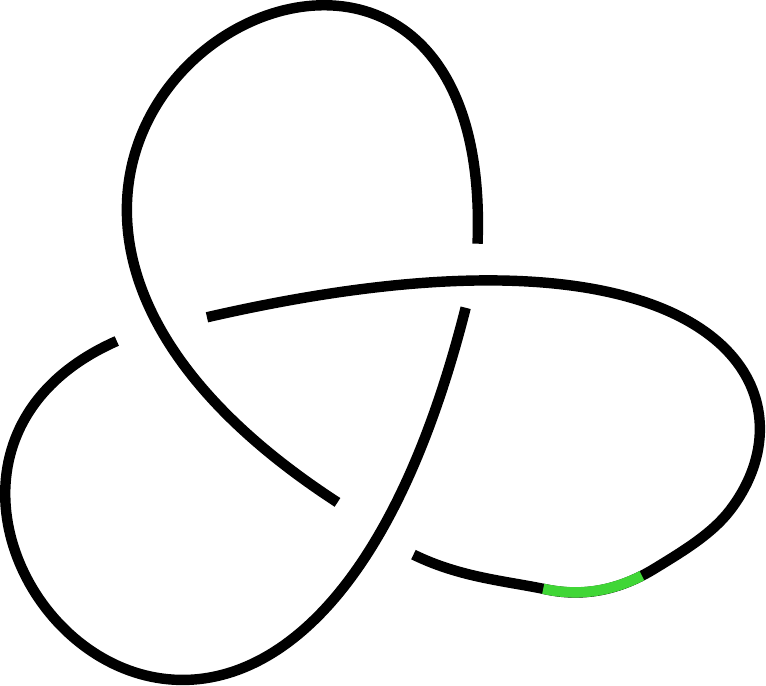}
    \hspace{15pt}
    \includegraphics[width=0.20\linewidth, valign=t]{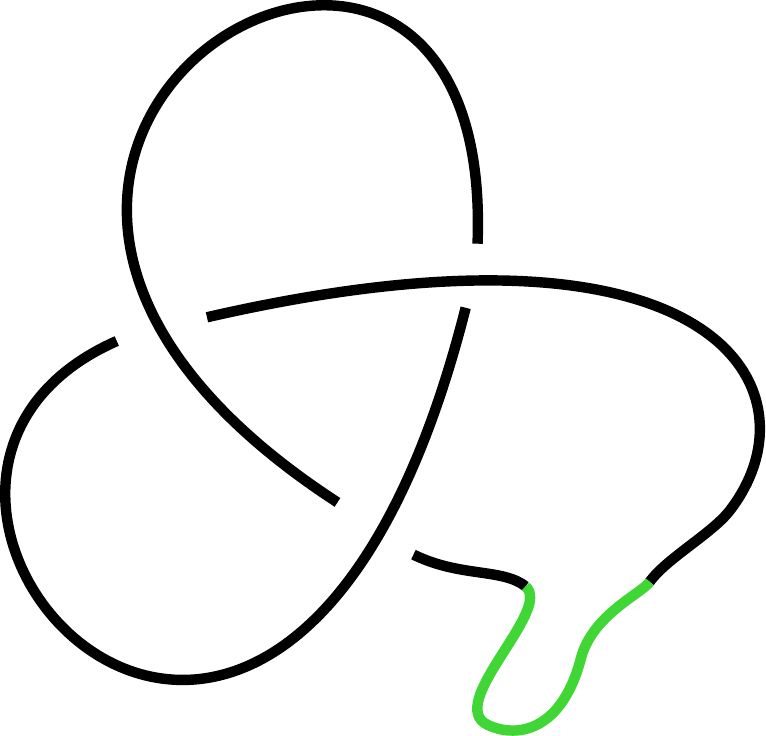}
    \hspace{15pt}
    \includegraphics[width=0.20\linewidth, valign=t]{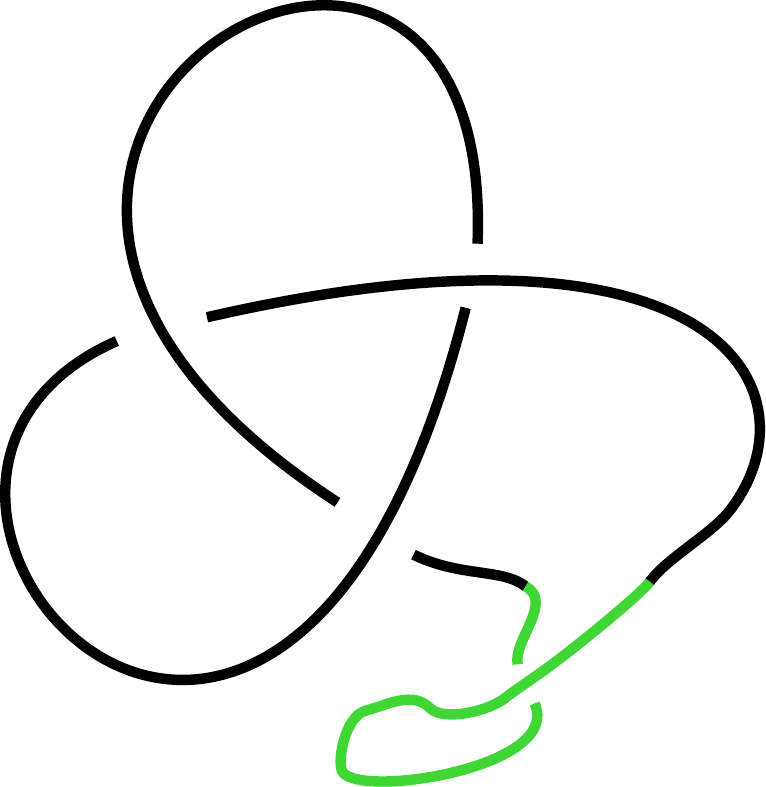}
    \\[10pt]
    \includegraphics[width=0.20\linewidth, valign=b]{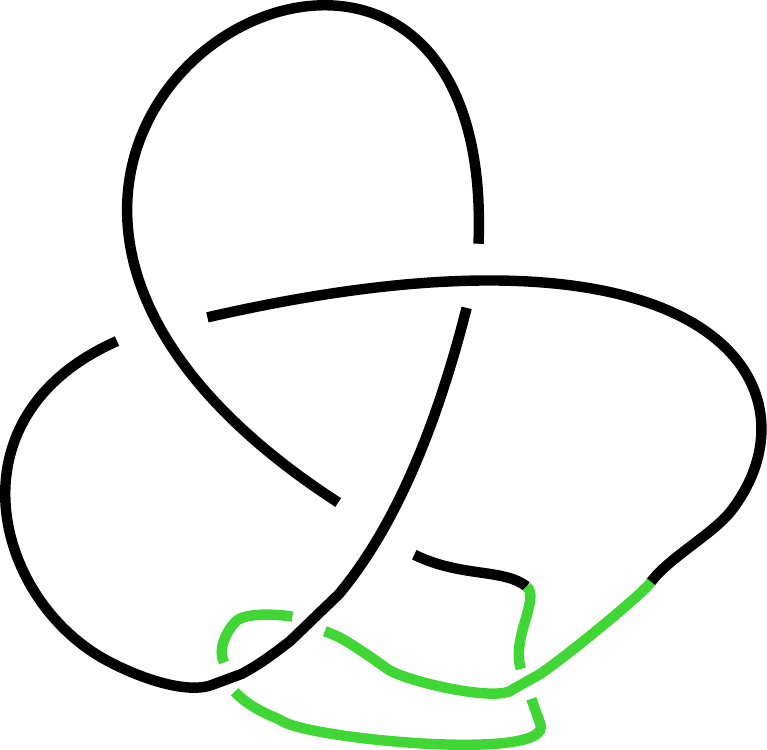}
    \hspace{15pt}
    \includegraphics[width=0.20\linewidth, valign=b]{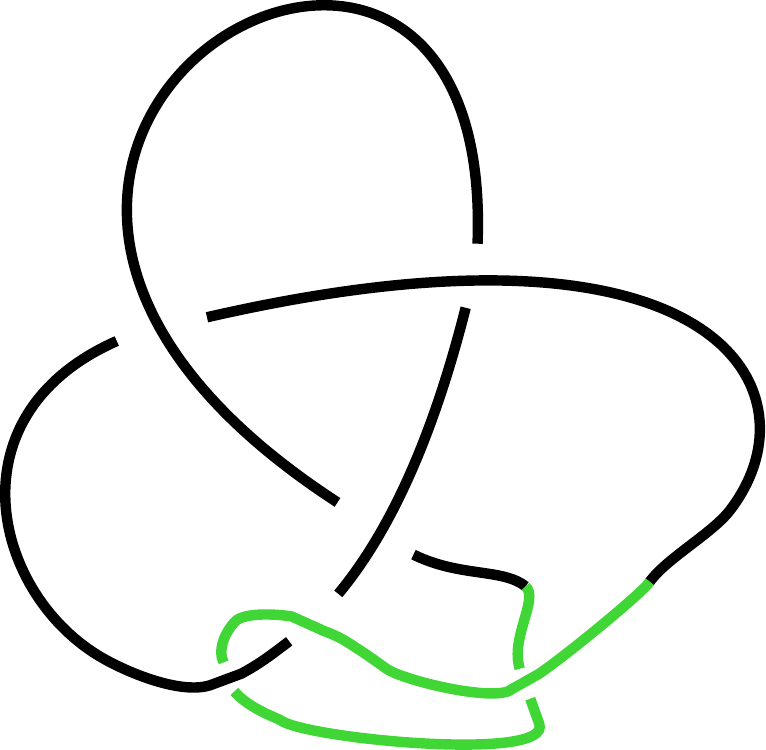}
    \hspace{15pt}
    \includegraphics[width=0.19\linewidth, valign=b]{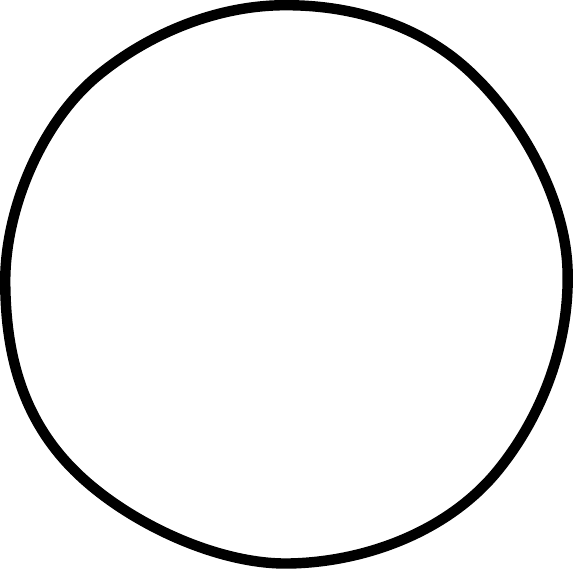}
    \caption{The trefoil has unknotting number at most 1: movie with a band.}
    \label{fig:unknotting_number_trefoil_band}
\end{figure}

A crossing change on a diagram can be represented by a band path. Indeed, we can perform a single Reidemeister~II move (and possibly one Reidemeister~I move), where the initial and target edges meet at the crossing we want to change, followed by changing one of the two new crossings and then locally simplifying using Reidemeister moves. Again, see Figure~\ref{fig:unknotting_number_trefoil_band}.
Therefore, from now on, when we say ``we perform a crossing change'', we actually mean that we choose a band path and change one of the two crossings created by the final Reidemeister~II move.
The unknotting number $\unk{L}$ of a link $L$ is the minimal number of band paths needed to turn a diagram of $L$ into a diagram of the unlink.
This allows us to work with a fixed diagram instead of having to explore all diagrams of $L$.
However, there are infinitely many band paths. Our implementation only allows paths that do not intersect themselves and visit each edge, including the initial and target edges, at most once (visiting the same component of the complement of the diagram multiple times is allowed).

\begin{remark}[Orientation independence of $\unk{L}$]
\label{rmk:u-orientation-independent}
    The unknotting number is defined purely in terms of crossing changes, and a crossing change simply swaps the overstrand and the understrand at a crossing, independently of any orientation.
    Therefore, $\unk{L}$ is well defined for \emph{unoriented} links.
    This is in sharp contrast with many of the invariants we will use to \emph{bound} $\unk{L}$ in Appendix~\ref{sec:bounds}, which do require a choice of orientation.
\end{remark}

\subsection{The slice genus}
\label{subs:slice-genus}

A central theme in low-dimensional topology is the study of surfaces bounded by a given knot.
Seifert~\cite{seifert1935geschlecht} gave an algorithm producing, for any knot $K$, a compact, connected, oriented surface $S \subset \S^3$ with $\partial S = K$. The minimal genus of such a surface is an invariant of $K$, called the \emph{Seifert genus} of $K$ and denoted by $\genSeif{K}$.
Both the algorithm and the definition apply verbatim to a link $L$.

Let $B^4 = \{ x \in \R^4 \mid \norm{x} \leq 1 \}$, so that $\S^3 = \partial B^4$, and fix a knot $K$ in $\S^3$.

\begin{definition}[Slice surface and slice genus]
\label{def:slice-genus}
    A \emph{slice surface} for a knot $K$ is a compact, connected, oriented surface $F$ smoothly embedded in $B^4$ such that $\partial F = K$.
    The \emph{slice genus} $\gen{K}$ of $K$ is the minimal genus of a slice surface for $K$.
\end{definition}

Note that $\gen{K} \leq \genSeif{K}$, as a Seifert surface can be embedded in $B^4$ to become a slice surface.
A useful way to visualise a slice surface is as a movie of links.
Let $r \colon B^4 \to \R$ be the radial function.
We regard $t := 1 - r$ as time and intersect the surface with successive $3$-dimensional level sets.
At a regular time, we obtain a link in a copy of $\S^3$, which we represent by a diagram in the plane.
As $t$ increases, these diagrams change by planar isotopies, Reidemeister moves, and a non-trivial local change called a \emph{band move}, which records a saddle point of the surface with respect to $t$.

The movie starts at $t=0$ with a diagram of our knot $K$ and ends with a diagram of the unlink. Then we glue discs to the components of the final unlink, corresponding to local maxima of $t$ on the surface.
The whole trace swept out by the moving diagram, together with these final caps, is a compact, oriented surface $F \subset B^4$ with $\partial F = K$; i.e., a slice surface for $K$.
Its genus is determined entirely by the moves we performed during the movie, so every such movie gives an explicit upper bound on $\gen{K}$.

The slice genus and the unknotting number are related by the inequality $g_4(K) \le u(K)$. This can be seen by smoothing the double points of the immersed surface in $B^4$ that the regular homotopy in $S^3$ from $K$ to the unknot traces out, and noting that each smoothing increases the genus by one.
It should now be clear that the band framework that we developed for $\unk{K}$ in Section~\ref{subs:unknotting-num} can be reused almost verbatim to produce diagrammatic upper bounds on $\gen{K}$ as well.
The key is to perform a band move instead of crossing changes at the end of the band: instead of swapping the over/under information at one of the new crossings produced by the final Reidemeister~II move of a band, we identify a target edge, cut the target and the band, and reconnect them in a way that agrees with the orientation of the two edges; see Figure~\ref{fig:band_move_oriented}.

\begin{figure}[ht]
    \hspace{30pt}
    \hfill
    {\begin{minipage}[c]{0.30\textwidth}
        \centering
        \includegraphics[width=0.8\linewidth]{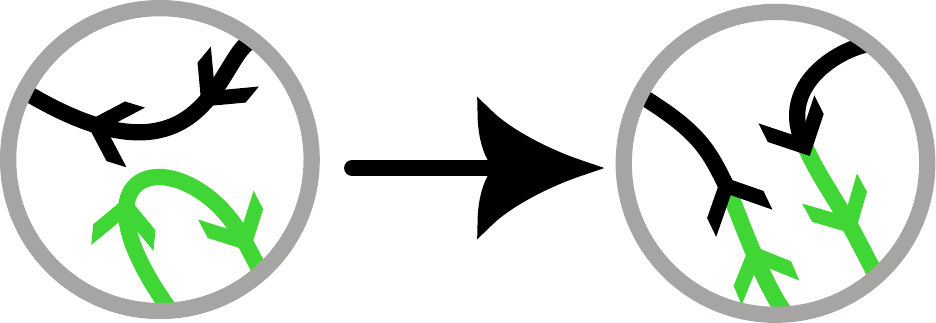}
    \end{minipage} \hspace{15pt}
    \begin{minipage}{0.30\textwidth}
        \centering
        \includegraphics[width=0.8\linewidth]{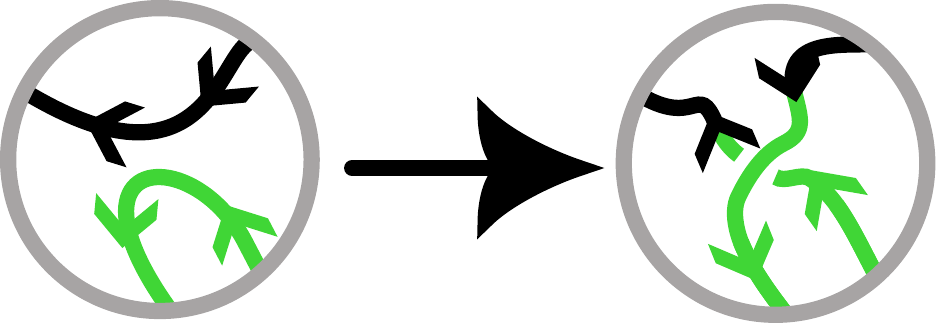} \\

        \vspace{5pt}

        \includegraphics[width=0.8\linewidth]{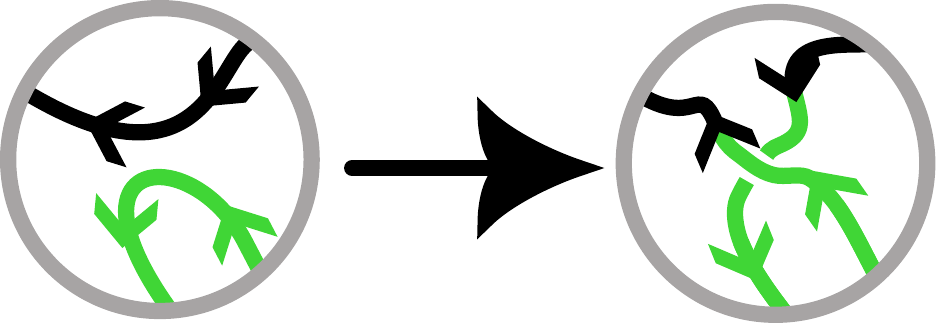}
    \end{minipage}}
    \hfill
    \hspace{30pt}
    \caption{Band move.
    The top strand (in black) is the target edge and the bottom one (in green) is the final part of the band before the band move takes place.
    Orientation is crucial. On the left, no twist is needed. On the right, we have two possible cases, which we can think of as performing a final Reidemeister~I move before the band move takes place. This Reidemeister~I move is right-handed in the top picture and left-handed in the bottom picture.}
    \label{fig:band_move_oriented}
\end{figure}

We can therefore play exactly the same game as before on a fixed diagram of $K$ (choose a strand to deform, slide it next to a target strand via Reidemeister~II moves, possibly interleaved with Reidemeister~I moves) but now we conclude each band with a band move rather than a crossing change.
A sequence of such band paths taking a diagram of $K$ to the unknot produces a slice surface for $K$ whose genus is an explicit upper bound on $\gen{K}$.

This unifies the two computations into a single combinatorial game on a fixed diagram of $K$. 
We choose a sequence of band paths, each ending either in a crossing change or in a band move, that simplifies the diagram to the unknot or an unlink, and try to minimise the upper bound we obtain.
This is exactly the setup for our agents in Section~\ref{sec:env}.

\begin{example}[Slice surface of genus one for the trefoil]
\label{ex:trefoil-slice}

As a concrete example, we study the case of the left-handed trefoil $K$.
Figures~\ref{fig:slice_example1} and~\ref{fig:slice_example2} show a \emph{slice movie} building a slice surface of genus one for the trefoil.
At time $t \in [0,1]$, we draw the portion $F^t$ of the surface $F$ that we have constructed so far, and above it the link $L_t = \partial F^t \setminus K$ (i.e., the ``bottom boundary'' with respect to the radial function on $B^4$).
While we represent $L_{t}$ with its actual link diagram, we represent the ``horizontal sections'' of $F^t$ as circles.

\begin{figure}[ht]
    \centering
    \begin{subfigure}[b]{0.17\textwidth}
        \centering
        \includegraphics[scale=0.14, valign=t]{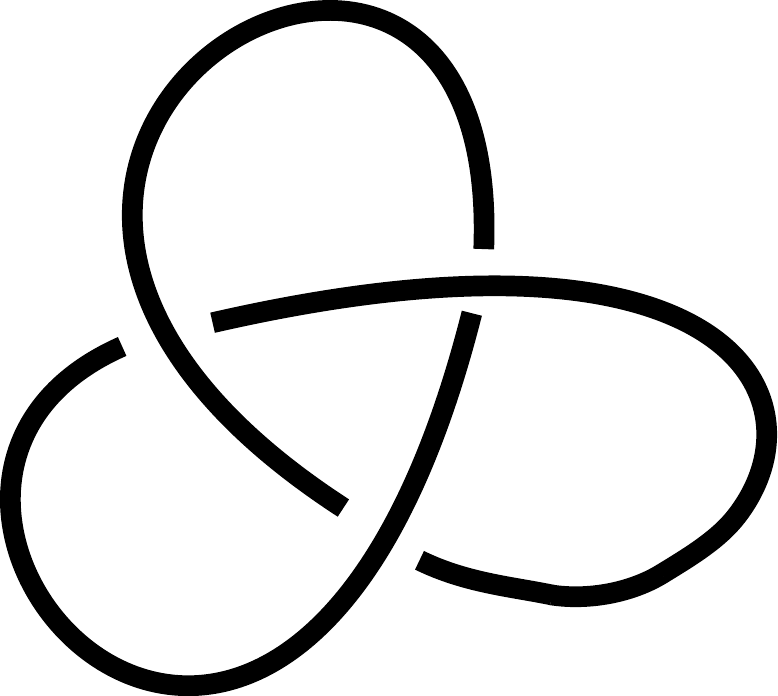}\rule[-4.7cm]{0pt}{0pt}
        \caption{Time $t_a = 0$}
        \label{subfig:slice1-a}
    \end{subfigure}
    \hspace{3pt}
    \begin{subfigure}[b]{0.17\textwidth}
        \centering
        \includegraphics[scale=0.14, valign=t]{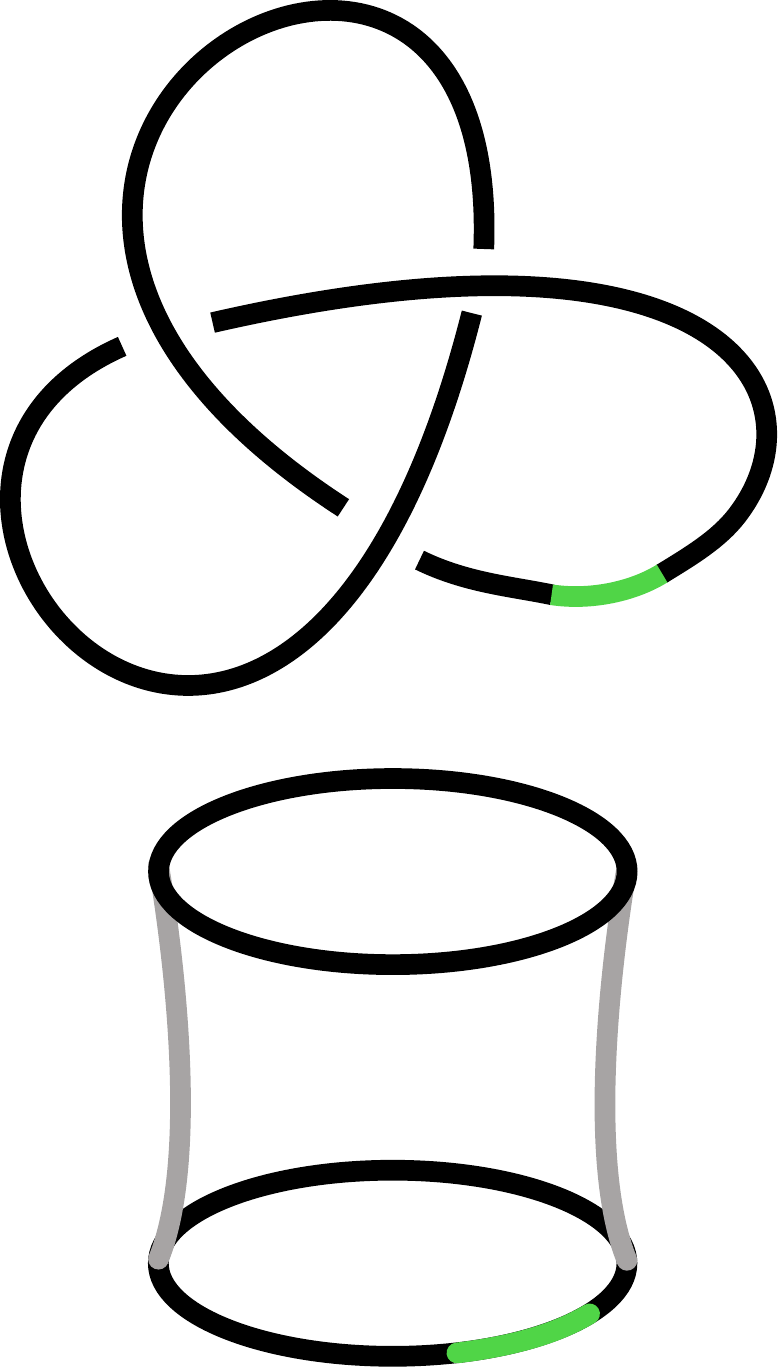}\rule[-4.7cm]{0pt}{0pt}
        \caption{Time $t_b > t_a$}
        \label{subfig:slice1-b}
    \end{subfigure}
    \hspace{3pt}
    \begin{subfigure}[b]{0.17\textwidth}
        \centering
        \includegraphics[scale=0.14, valign=t]{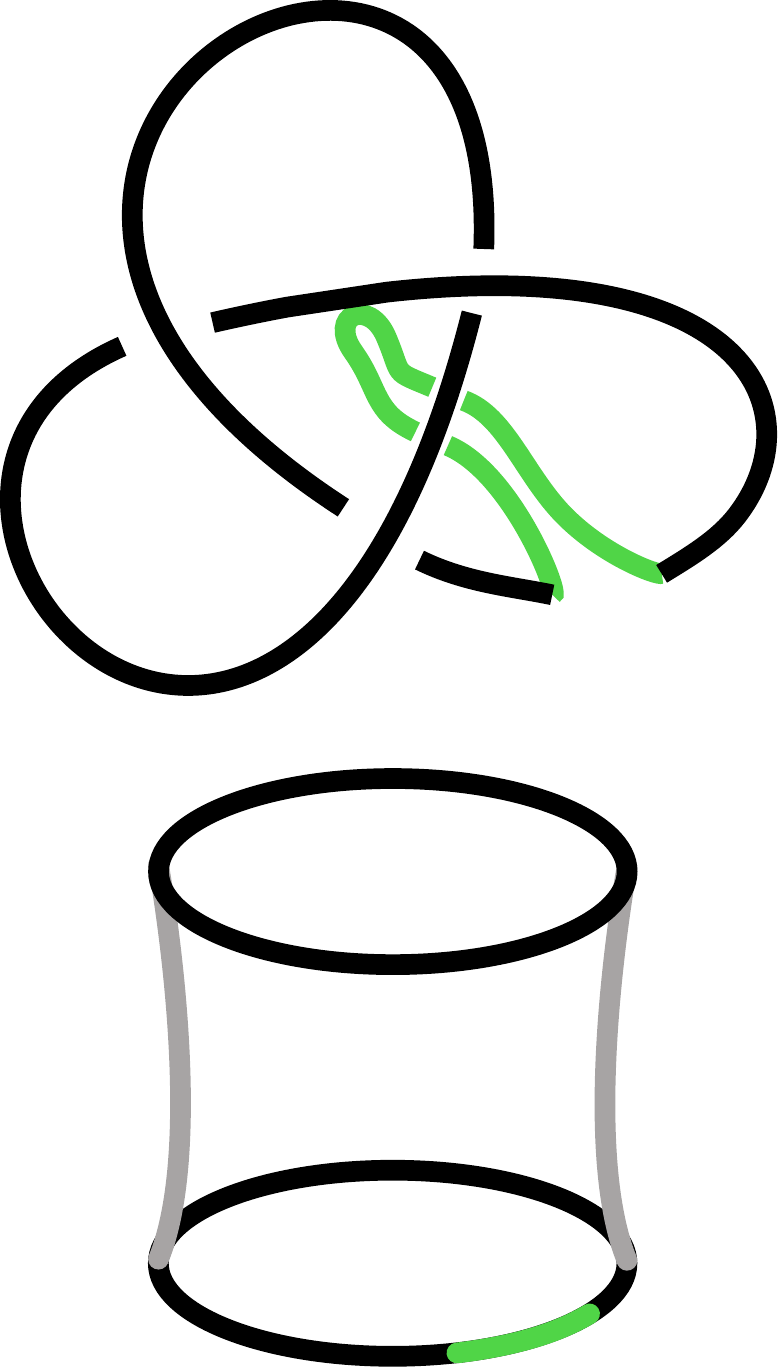}\rule[-4.7cm]{0pt}{0pt}
        \caption{Time $t_c > t_b$}
        \label{subfig:slice1-c}
    \end{subfigure}
    \hspace{0pt}
    \begin{subfigure}[b]{0.17\textwidth}
        \centering
        \includegraphics[scale=0.14, valign=t]{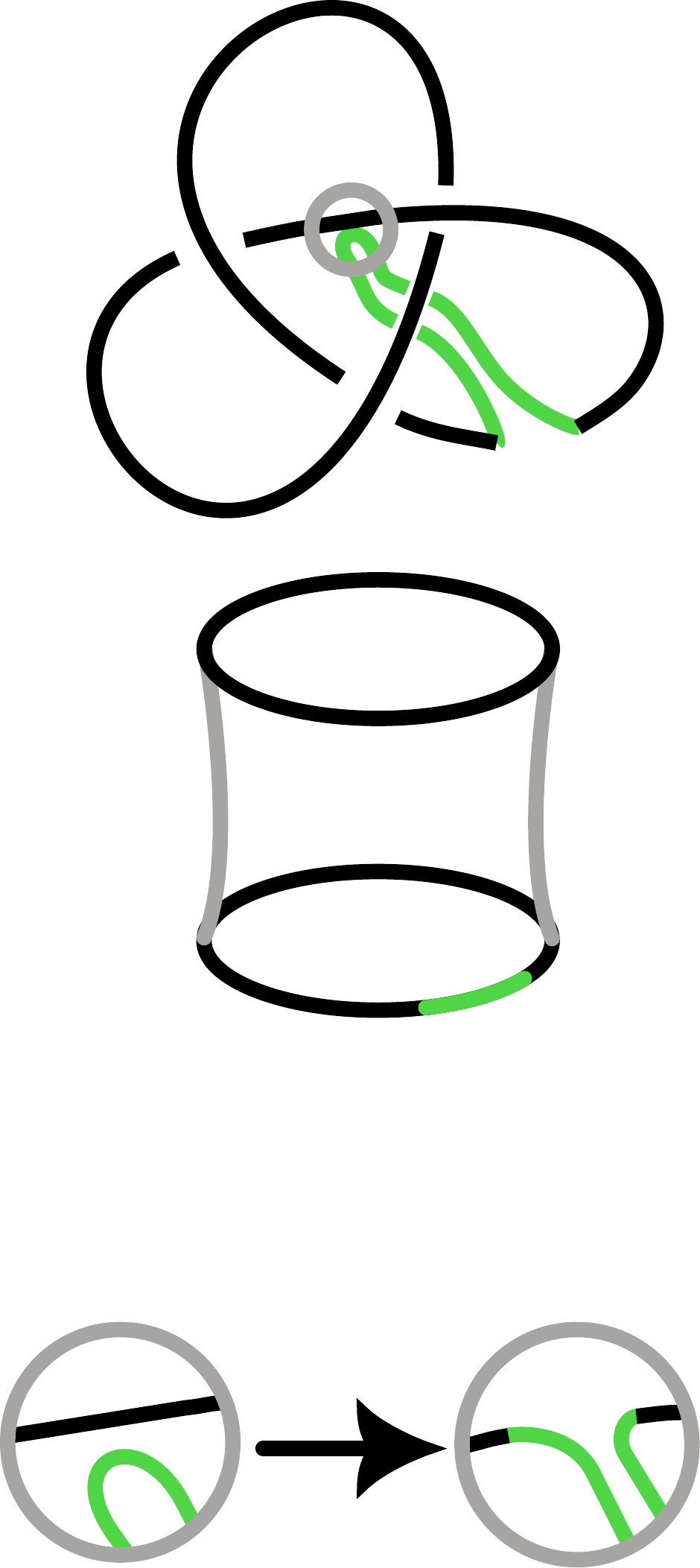}\rule[-4.7cm]{0pt}{0pt}
        \caption{Time $t_d > t_c$}
        \label{subfig:slice1-d}
    \end{subfigure}
    \hspace{6pt}
    \begin{subfigure}[b]{0.17\textwidth}
        \centering
        \includegraphics[scale=0.14, valign=t]{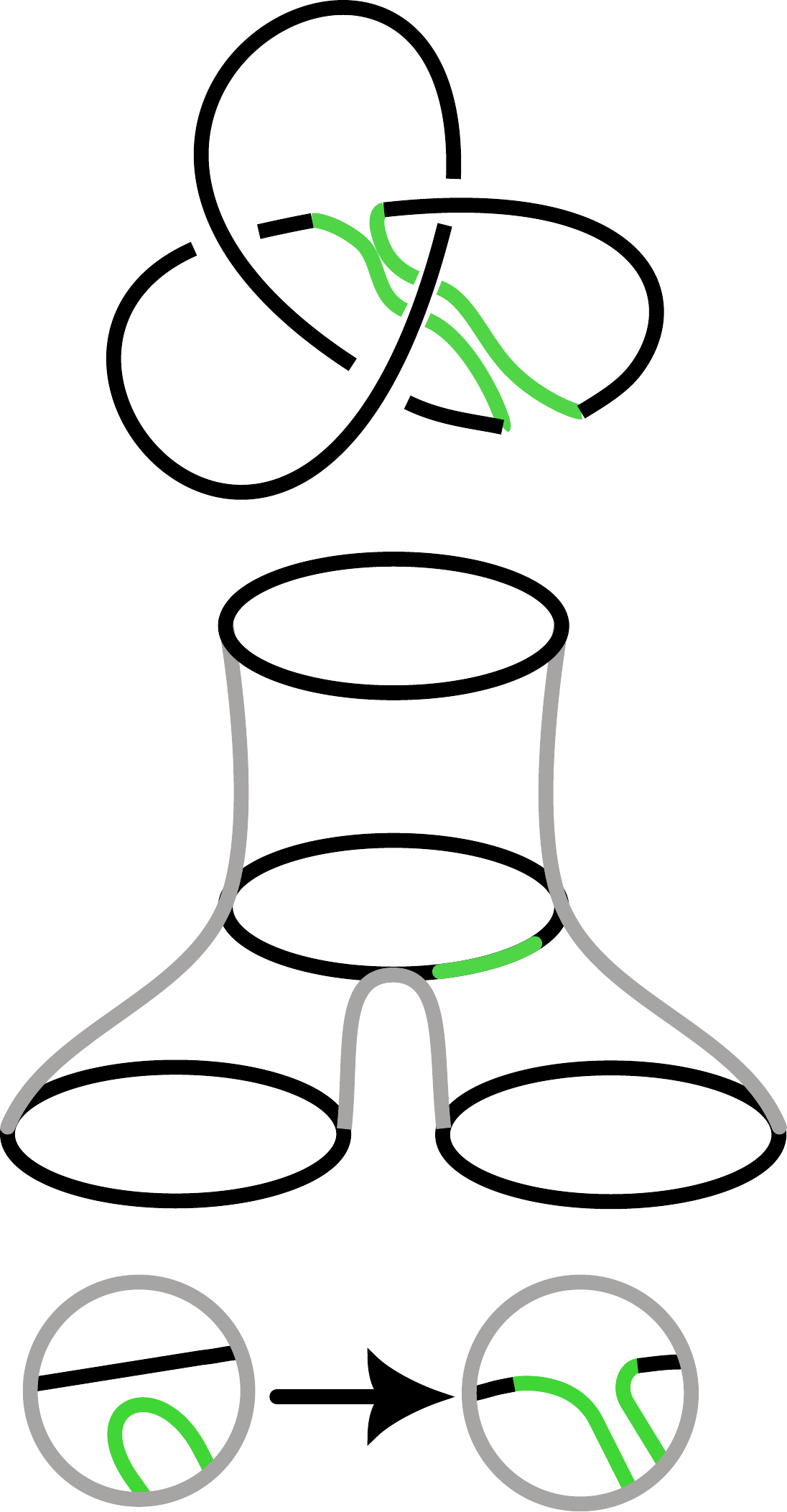}\rule[-4.7cm]{0pt}{0pt}
        \caption{Time $t_e > t_d$}
        \label{subfig:slice1-e}
    \end{subfigure}
    \caption{Building a slice surface for the trefoil: first part.}
    \label{fig:slice_example1}
\end{figure}

Let us start with the trefoil $K = L_{0}$ depicted in Figure~\ref{subfig:slice1-a}. Recall that $K$ lives in $\S^3$; i.e., the boundary of $B^4$.
We push the trefoil from $\S^3$ inside the ball $B^4$, obtaining at time $t_b$ a cylinder with the trefoil as its base; see Figure~\ref{subfig:slice1-b}.
We select a portion of $L_{t_b}$ highlighted in green in the figure.
We continue pushing down the link while deforming it with a Reidemeister~II move until we reach the state depicted in Figure~\ref{subfig:slice1-c}.
At this point, we can perform the \emph{band move} shown in Figure~\ref{subfig:slice1-d}, which corresponds to adding a saddle to $F^{t_d}$ and results in the link $L_{t_e}$ shown in Figure~\ref{subfig:slice1-e}.

\begin{figure}[ht]
    \centering
    \begin{subfigure}[b]{0.17\textwidth}
        \centering
        \includegraphics[scale=0.12, valign=t]{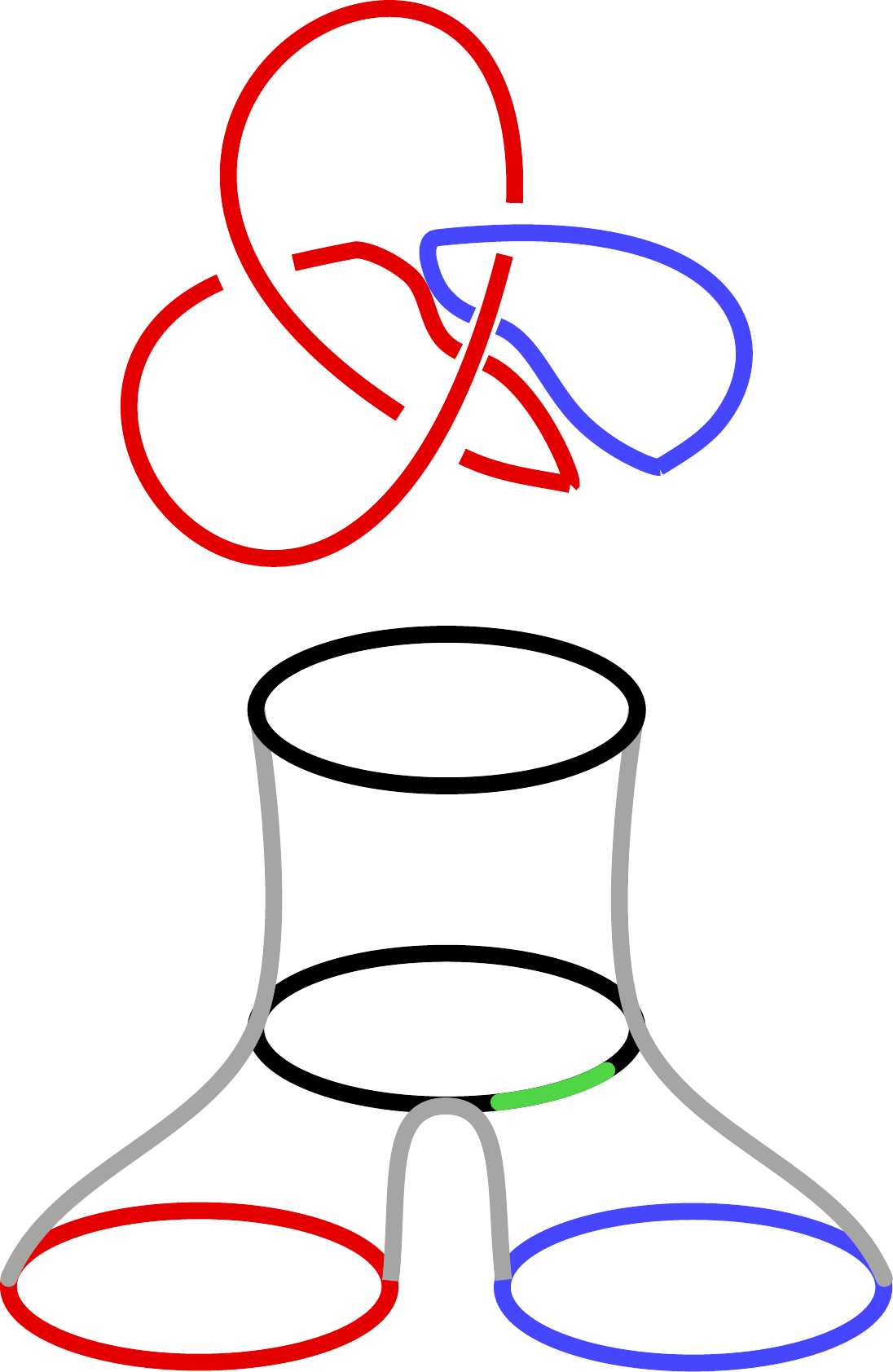}\rule[-5.45cm]{0pt}{0pt}
        \setcounter{subfigure}{5}
        \caption{Time $t_f > t_e$}
        \label{subfig:slice2-f}
    \end{subfigure}
    \hspace{5pt}
    \begin{subfigure}[b]{0.17\textwidth}
        \centering
        \includegraphics[scale=0.12, valign=t]{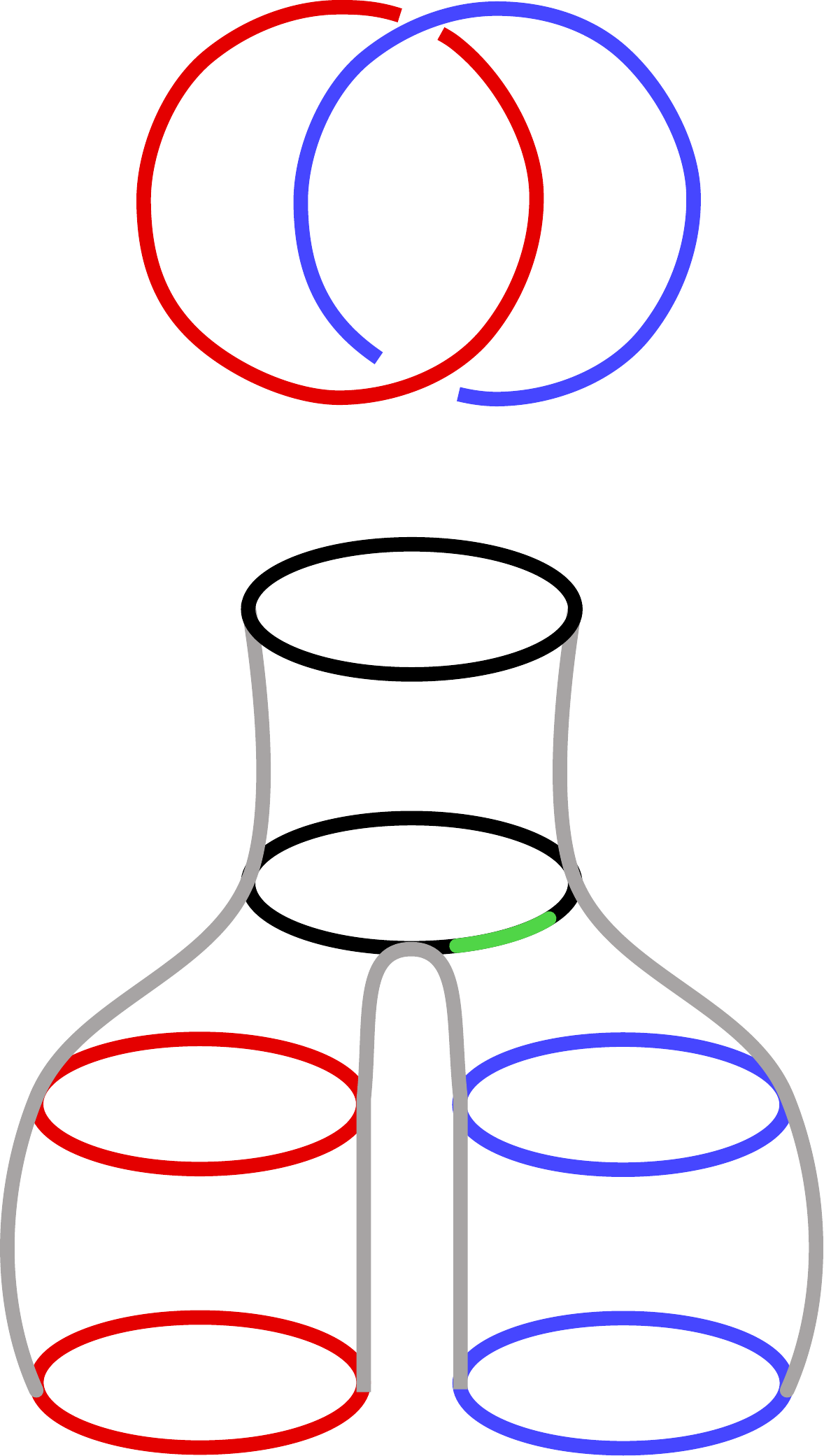}\rule[-5.45cm]{0pt}{0pt}
        \caption{Time $t_g > t_f$}
        \label{subfig:slice2-g}
    \end{subfigure}
    \hspace{5pt}
    \begin{subfigure}[b]{0.17\textwidth}
        \centering
        \includegraphics[scale=0.12, valign=t]{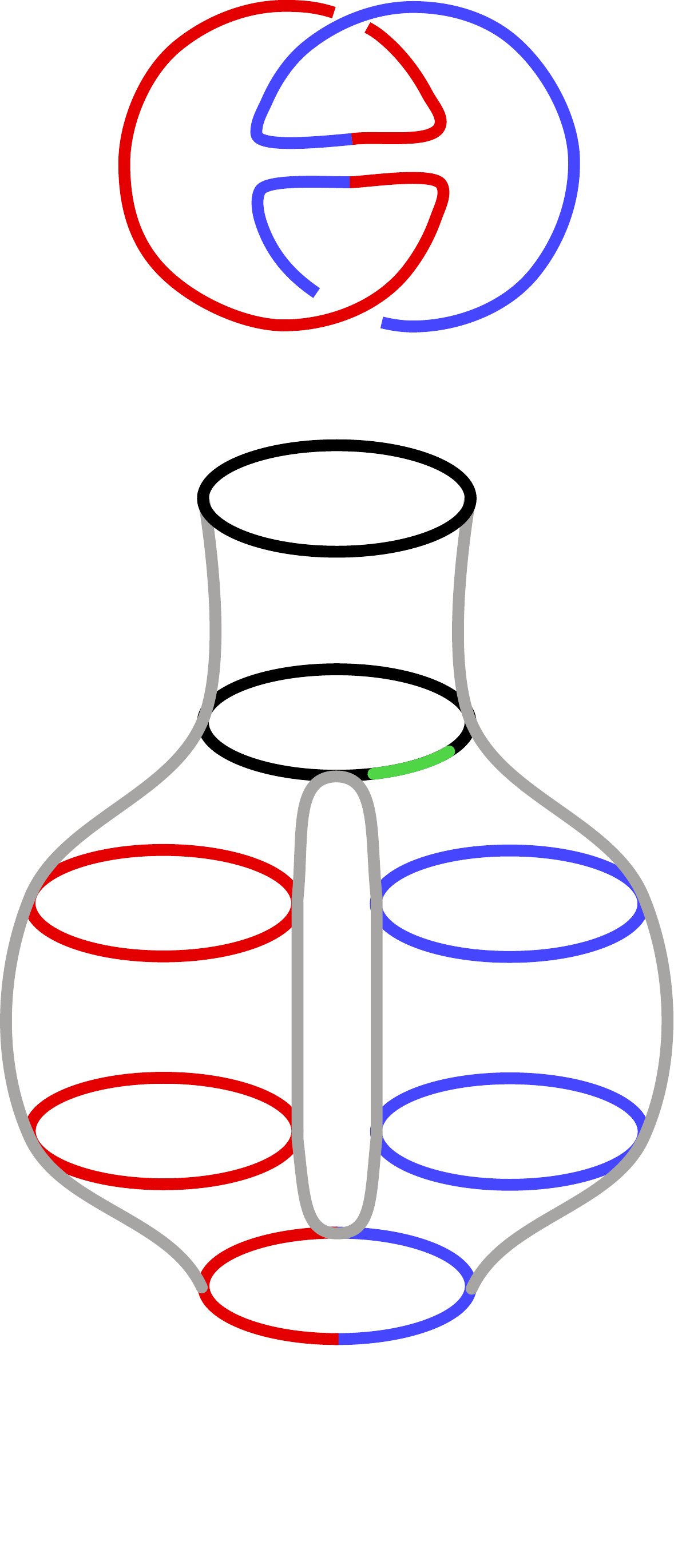}\rule[-5.45cm]{0pt}{0pt}
        \caption{Time $t_h > t_g$}
        \label{subfig:slice2-h}
    \end{subfigure}
    \hspace{5pt}
    \begin{subfigure}[b]{0.17\textwidth}
        \centering
        \includegraphics[scale=0.12, valign=t]{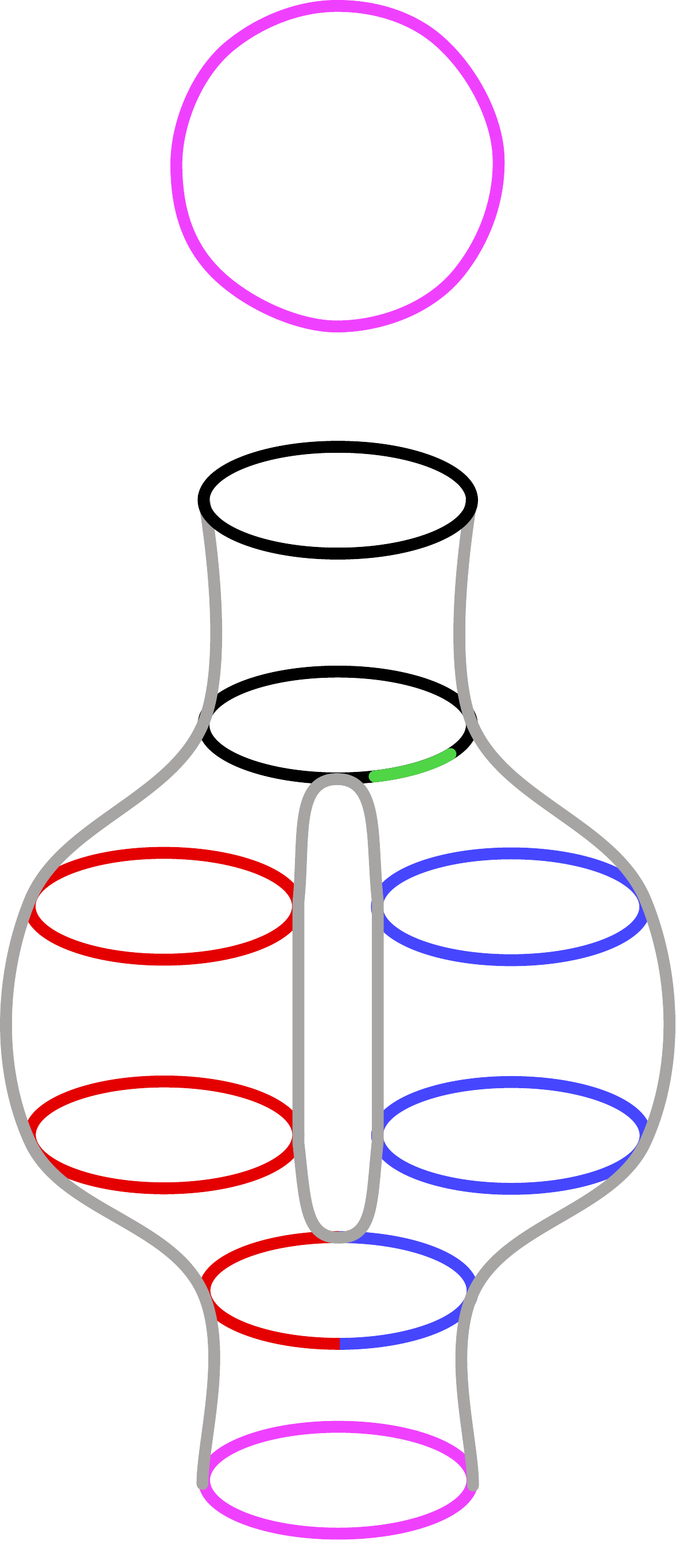}\rule[-5.45cm]{0pt}{0pt}
        \caption{Time $t_i > t_h$}
        \label{subfig:slice2-i}
    \end{subfigure}
    \hspace{5pt}
    \begin{subfigure}[b]{0.17\textwidth}
        \centering
        \includegraphics[scale=0.12, valign=t]{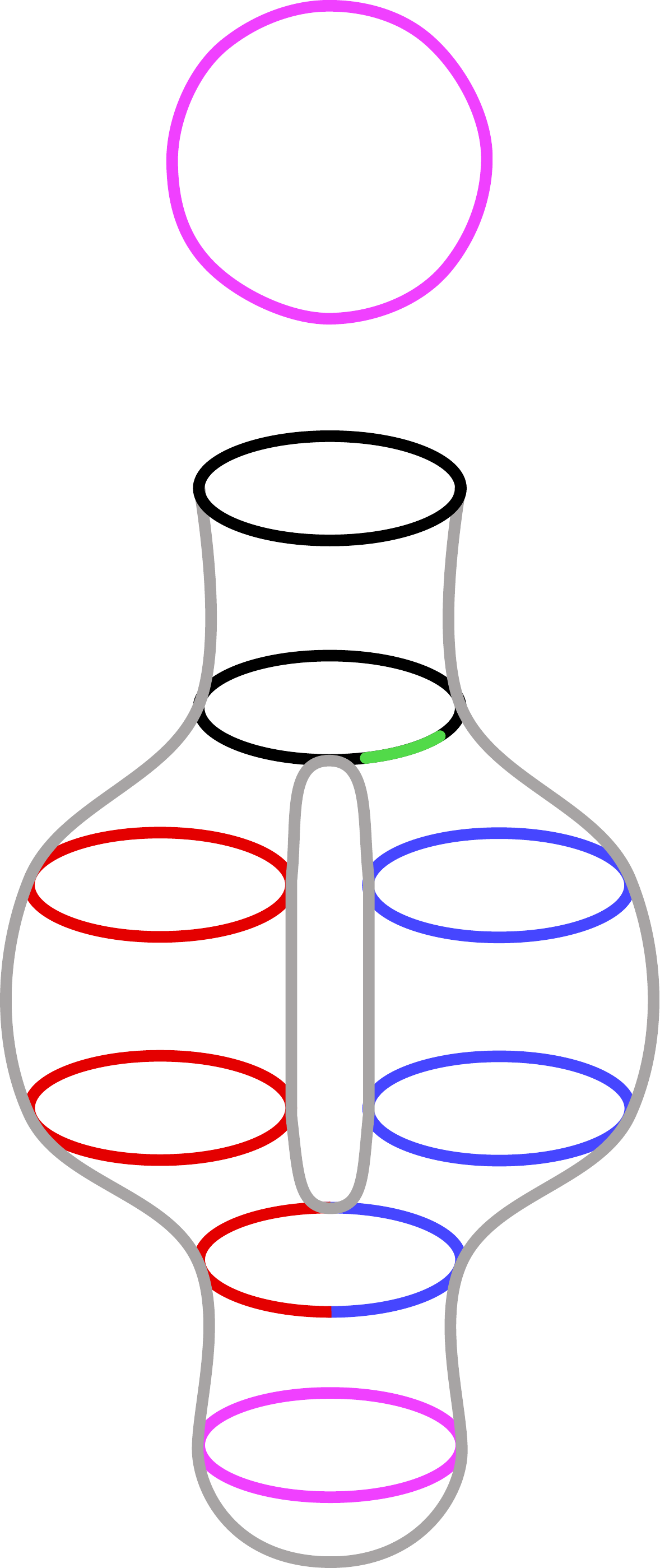}\rule[-5.45cm]{0pt}{0pt}
        \caption{Time $t_j > t_i$}
        \label{subfig:slice2-j}
    \end{subfigure}
    \caption{Building a slice surface for the trefoil: second part.}
    \label{fig:slice_example2}
\end{figure}

The link $L_{t_e}$ has two components that we colour in red and blue, as shown in Figure~\ref{subfig:slice2-f}.
As time increases, we simplify $L_{t_e}$ using a Reidemeister~II move and a Reidemeister~I move to obtain $L_{t_g}$, which is the standard diagram of the Hopf link in Figure~\ref{subfig:slice2-g}.
We now perform another band move on the link, which results in another saddle on the surface; see Figure~\ref{subfig:slice2-h}.
Note that this saddle fuses the two components into a knot $L_{t_h}$. We can simplify $L_{t_h}$ via two Reidemeister~I moves, obtaining $L_{t_i}$, the unknot shown in Figure~\ref{subfig:slice2-i}.
At this point, we can cap off the final boundary component by gluing a disc to it, obtaining a surface $F$ embedded in $B^4$ for which $\partial F = K$; this can be seen in Figure~\ref{subfig:slice2-j}. This shows that the slice genus of the trefoil is at most one, concluding our example.

\end{example}

\subsubsection{Weak and strong slice genera for links}
\label{subsubs:slice-genus-links}

There are two extensions of the notion of slice genus to links.

\begin{definition}[Weak and strong slice genera]
\label{def:weak-strong-genus}
    Let $L = \cup_{i=1}^{\ell} K_i$ be a link.
    A \emph{weak slice surface} for $L$ is a compact, connected, oriented surface $F \subset B^4$ such that $\partial F = L$.
    The \emph{weak slice genus} $\gen{L}$ is the minimal genus of a weak slice surface for $L$.

    A \emph{strong slice surface} for $L$ is a compact, oriented surface $F \subset B^4$ with $\ell$ disjoint components $F_1, \dots, F_\ell$ such that $\partial F_i = K_i$ for $i \in \{1,\dots,\ell\}$.
    The \emph{strong slice genus} $\sgen{L}$ is the minimal \emph{total} genus of a strong slice surface for $L$; i.e.,
    \[
    \sgen{L} = \min \left\{ \sum_{i=1}^{\ell} g(F_i) : F = \bigsqcup_{i=1}^{\ell} F_i \subset B^4 \text{ is a strong slice surface for } L \right\}.
    \]
\end{definition}

We could allow disconnected surfaces in the definition of the weak slice genus without changing $\gen{L}$, since one can take the connected sum of the components of a disconnected slice surface without changing the total genus.
In the literature, the term ``slice genus'' is used for either the weak or the strong slice genus. 
In this paper, we refer to the weak slice genus as the ``slice genus''.

Note that $\gen{L} \leq \sgen{L}$ for every link $L$.
Indeed, if we take a strong slice surface for $L$ and remove the discs we attached to the unlink ``on the bottom'', we can attach $\ell-1$ bands between unknots from different connected components and make it connected. This process does not change the boundary or the total genus.
Not every link bounds a strong slice surface. Indeed, if $\bigsqcup_{i=1}^\ell F_i$ is a strong slice surface for $L$, then $\lk{K_i, K_j} = F_i \cdot F_j = 0$ for any $i$, $j \in \{1,\dots,\ell\}$ with $i \neq j$; see Cavallo~\cite[Lemma~5.1]{cavallo2015links}.
We call a link $L$ \emph{algebraically split} if $\lk{K_i, K_j} = 0$ for all $i \neq j$, and we set $\sgen{L} = + \infty$ for a link $L$ that is not algebraically split (using $\min(\emptyset) = +\infty$).
With this convention, $\sgen{L}$ is defined for every link, and it is finite exactly for the algebraically split ones. If $L'$ is obtained from $L$ by reversing the orientation of a component, then $\sgen{L} = \sgen{L'}$; see \cite[Proposition~5.2]{cavallo2015links}.

\subsubsection{Ribbon surfaces and the slice--ribbon conjecture}
\label{subsubs:ribbon}

As we watch  a movie as in Section~\ref{subs:slice-genus}, only two kinds of moves change the topology of the surface we are building: a band move, which adds a saddle, and the birth of a new unknotted component, which adds a local maximum of the radial function on $B^4$.
Forbidding the second kind singles out a smaller class of surfaces and therefore yields a possibly larger invariant.

\begin{definition}[Ribbon surface and ribbon genus]
\label{def:ribbon-genus}
    A \emph{ribbon surface} for $K$ is a slice surface $F \subset B^4$ for $K$ with no local maxima with respect to the radial function of $B^4$.
    The \emph{ribbon genus} $\rgen{K}$ of $K$ is the minimal genus of a ribbon surface for $K$.
    \smallskip
    A knot with $\rgen{K} = 0$ is called a \emph{ribbon knot}; equivalently, it bounds a \emph{ribbon disc} in $B^4$.
\end{definition}

Every ribbon surface is, in particular, a slice surface, so the minimum defining $\gen{K}$ is taken over a larger set, and
\[
\gen{K} \leq \rgen{K}.
\]
Whether this inequality can be strict when $\gen{K}=0$ is the content of a long-standing open problem, raised by Fox in 1961.

\begin{conjecture}[Slice--ribbon conjecture, {\cite[Problem 25]{fox1961problems}}]
\label{conj:slice-ribbon}
    Every slice knot is ribbon; that is, $\gen{K} = 0$ implies $\rgen{K} = 0$.
\end{conjecture}

The conjecture is still open. Families of potential counterexamples have been proposed, but none of them have been confirmed.
See~\cite[Section~8]{gompf2010sliceribbon} for details. The higher-genus version of the conjecture is also open, which states that $\gen{K} = \rgen{K}$ for every knot $K$.

Definition~\ref{def:ribbon-genus} of ribbon surfaces for knots extends to links exactly as Definition~\ref{def:weak-strong-genus} does, by requiring the ribbon surface to be connected in the weak case and to have one component per link component in the strong case. We write $\rgen{L}$ and $\rsgen{L}$ for the two resulting invariants.
We say that the link $L$ is \emph{ribbon} if $\rsgen{L} = 0$; i.e., when its components bound disjoint ribbon discs.

Conjecture~\ref{conj:slice-ribbon} has a weak and a strong version for links, and both are open.
The strong genus-zero statement, that every slice link is ribbon, is the one usually meant in the literature (e.g., in~\cite[Section~8]{gompf2010sliceribbon}). It has been verified by Aceto, Kim, Park, and Ray~\cite{aceto2021pretzel} for $4$-stranded $2$-component pretzel links.

Upper bounds on both $\rgen$ and $\rsgen$ are computed by the agents of Section~\ref{sec:env}, where the ribbon frameworks are obtained from the slice ones by removing the birth of an unknotted component from the set of allowed moves.

\subsubsection{Computation of the slice genus from a movie}
\label{subsubs:computation-slice-genus-from-movie}

We explain how to compute the genus of a slice surface from a movie.
For a compact, connected, orientable surface $F$ with $\abs{\partial F}$ boundary components,
\begin{equation}
    \label{eq:genus-bound-surface}
    g(F) = \frac{2 - \chi(F) - \abs{\partial F}}{2};
\end{equation}
see~\cite[Section 1.1.1]{farb2011mappingclass}.
Start with an $\ell$-component link $L$ for which we want to build the slice surface, and let $F^t$ be the surface we have built until time $t$.
Before any births or band attachments, $\chi(F^t) = 0$, as $F^t$ is a disjoint union of annuli.
Assume we have reached the end of the slice movie, obtaining a compact, connected, orientable surface $F$, which is a cobordism between $L$ and the $m$-component unlink. 
Therefore, the surface has $\ell+m$ boundary components.
Each birth increases the Euler characteristic by one, while each band attachment decreases it by one. Denoting by $b$ the number of births and by $\beta$ the number of band attachments, the Euler characteristic of the surface is $b - \beta$, and equation~\eqref{eq:genus-bound-surface} gives
\begin{equation}\label{eq:genus-link-slice-surf}
    g(F) = \frac{2 - (b - \beta) - \left( \ell + m \right)}{2}.
\end{equation}
By capping the $m$ components of the unlink, we increase the Euler characteristic by $m$ and remove $m$ boundary components; hence, the genus remains the same.

We can actually use the above formula during an episode to compute the ``partial genus'' of the surface we are building.
This is useful for understanding ``how much genus has been created so far'', since this quantity can only increase as the episode continues.
When the intermediate surface $F^t$ is disconnected, with components $F_1^t, \dots, F_{c_t}^t$, we define its total genus as $g(F^t) := \sum_{i=1}^{c_t} g(F_i^t)$. 
By equation~\eqref{eq:genus-link-slice-surf}, if $L_t$ has $m_t$ components and we have encountered $b_t$ births and $\beta_t$ band attachments by time $t$, then
\begin{equation}\label{eq:genus}
g(F^t) = \frac{2c_t + \beta_t - b_t - \ell - m_t}{2}.
\end{equation}

Let $K_1,\dots,K_\ell$ be the components of the link $L$. A strong slice surface $F$ for $L$ is disconnected by definition. Write $F = \sqcup_{j=1}^\ell F_j$, where the $F_j$ are the connected components of the surface and $\partial F_j = K_j$. We compute $g(F_j)$ for each $j \in \{1,\dots,\ell\}$ using equation~\eqref{eq:genus-link-slice-surf}, and the total genus of $F$ as 
\begin{equation}
    g(F) := \sum_{j=1}^\ell g(F_j) = \frac{2\ell + \beta - b - \ell - m}{2}.
\end{equation}
Note that a movie of $F$ fuses every unknot component that is born into a surface component that intersects $L$. In our implementation, we track every unknot component born separately and discard it if it does not fuse into a component that intersects $L$ by the end of the episode.

\subsection{RL basics}
\label{subs:rl-basics}

Reinforcement learning (RL) is a machine learning framework for sequential decision problems.
The first ideas behind RL, the principle of learning from rewards and punishments, go back to the 1950s, but the most striking demonstrations are much more recent.
In the 1990s, \emph{TD-Gammon}~\cite{TD-gammon} reached near world-class play in backgammon, the first sign that this kind of training could lead to genuine expertise.
In 2016, \emph{AlphaGo}~\cite{Silver2016} defeated world champion Lee Sedol at Go, a game long considered too complex for computers to play at the highest level.
In 2017, \emph{AlphaGo Zero}~\cite{Silver2017} learned Go from scratch, without ever looking at a human game, suggesting that the method can discover its own playing style entirely from experience.

The basic picture is that of an \emph{agent} that interacts with an environment by repeatedly choosing actions. Good moves are rewarded with a positive score, while bad moves receive a negative one, and the agent's goal is to maximise its total score over time.
Through trial and error, an initially random agent gradually learns strategies that lead to higher cumulative rewards.

The interaction is organised as a loop, sketched in Figure~\ref{fig:rl_loop}.
From a given state, the agent observes the environment and selects one of the available actions according to its current strategy.
The environment then carries out the action and returns both a reward, quantifying how good that move was, and a new state, which plays the role of ``present state'' in the following iteration. The process repeats until the episode ends.
The rewards collected along the way are what the agent uses to revise its strategy between episodes, rather than a rule for choosing the next move.

\begin{figure}[ht]
    \centering
    \includegraphics[width=\linewidth]{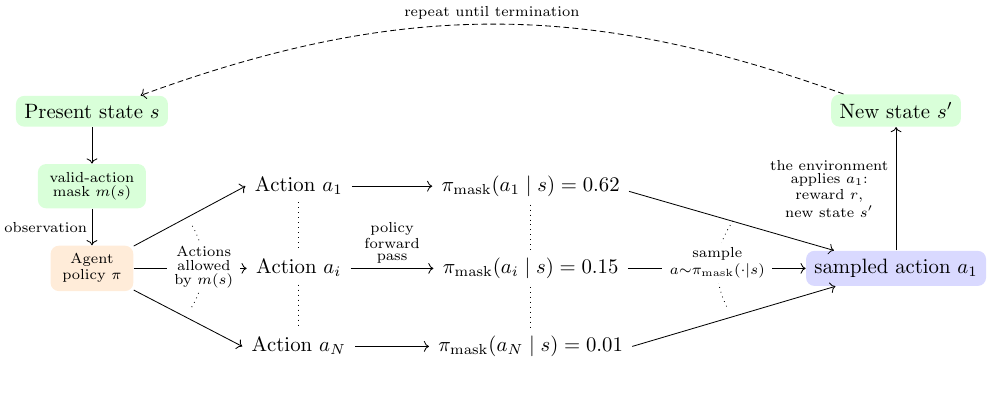}
    \caption{Schematic of the RL loop.}
    \label{fig:rl_loop}
\end{figure}

To make this picture more precise, the agent's strategy is encoded by a \emph{policy} $\pi(a \mid s)$, which is a typically probabilistic prescription assigning to each state $s$ a distribution over the available actions $a$.
The agent does \emph{not} simply pick the action with the highest immediate reward. Rather, its goal is to maximise the \emph{scalar discounted return}
\[
\mathbb{E}_{\pi}\!\left[\, \sum_{t \geq 0} \gamma^{t} \, r_t \,\right]
\]
collected over an entire episode, where $r_t$ is the reward received at time $t$, and $\gamma \in (0,1]$ is a discount factor.
When $\gamma < 1$, it weights immediate rewards more than future ones.
To reason about these long-term consequences, the agent typically estimates a \emph{value function} $V^{\pi}(s)$ (or, alternatively, an \emph{action-value function} $Q^{\pi}(s,a)$), defined as the expected cumulative reward obtained by starting from $s$ (and taking action $a$) and following the policy $\pi$ from then on.
We instead set $\gamma = 1$.
Since the setting is episodic with a maximum number of actions, an episode is guaranteed to terminate, and this value for $\gamma$ is allowed.
Defining $\gamma = 1$ permits creating a reward function that always gives a higher return for a smaller invariant value found.
More details can be found in Section~\ref{subs:env-reward}.
During evaluation, the smallest invariant found is retained separately.

A core challenge in reinforcement learning is the \emph{exploration versus exploitation trade-off}: the agent must \emph{exploit} the actions that currently look best while still \emph{exploring} less familiar ones to avoid getting stuck in suboptimal habits.
The policy is not given in advance, but is \emph{learned} over many episodes.
In the \emph{deep reinforcement learning} setting used in this work, $\pi$ and $V^{\pi}$ are represented by \emph{neural networks} and updated via gradient-based optimisation.
This is the approach that powered AlphaGo and underlies the work described in the rest of this article.

\paragraph{Using RL to compute the unknotting number and the slice genus}
\label{par:rl-and-knot-theory}
We can phrase the computation of the unknotting number and the slice genus as a game.
The RL framework is set up as follows.

\begin{itemize}
    \item \emph{Episode}: starting from a link and trying to reach the unlink.
    \item \emph{State}: the link diagram, the band path being built, and additional data about the episode in progress.
    \item \emph{Action}: extending a band path, performing a crossing change (for the unknotting number), or a band move (for the slice genus).
    \item \emph{Reward}: the smaller the resulting unknotting number or slice genus, the higher the score.
\end{itemize}

\section{The environment}
\label{sec:env}

In this section, we present the environment we have built.
The environment encodes the game the agent will play via states, actions, rewards, and terminal conditions. 
We therefore deliberately describe it without any reference to the agent, which is nothing but a choice of a policy $\pi$ inside the environment.
Indeed, the same environment is shared by the three agents of Section~\ref{sec:agents}: the RL policy, the random walker (RW), and the mixed RL$+$RW agent.

Before any episode takes place, we need to fix two fundamental parameters:
\begin{itemize}
    \item The \textbf{framework} selects the set of moves, the terminal condition, and the invariant that the episode computes (Section~\ref{subs:env-frameworks}).
    \item The \textbf{shape} $(c_{\max}, \ell_{\max})$ is a pair of upper bounds on the number of crossings and link components a diagram is allowed to have during an episode. 
    It is what makes all observations live in a common, fixed-dimensional space.
    The default we use is $(c_{\max}, \ell_{\max}) = (100,16)$.
\end{itemize}
Throughout this section, we denote by $c$ the number of crossings of the current diagram and by $\ell$ the number of link components.
An outline of an episode is given in Appendix~\ref{app:episode} alongside a worked example.

\paragraph{Implementation.}
The environment is implemented as a hybrid Nim/Python system. 
The diagram modifications, the construction of the band, the surface bookkeeping, the invariant features, and the random walker are written in Nim and compiled into shared libraries, while the RL layer (the Gymnasium environments, the datasets, and the experiment drivers) is written in Python.
Link manipulation and simplification rely on \texttt{spherogram-nim}~\cite{dai2025spherogramnim}, the Nim port of \texttt{spherogram} (the SnapPy module~\cite{culler2024snappy} that manages links).

\subsection{Frameworks}
\label{subs:env-frameworks}

With this environment, we can play five different games by slightly changing the rules and the move set.
These are: unknotting, ribbon, slice, strong ribbon, and strong slice.

\paragraph{Unknotting.}
This is the framework in which we can apply crossing changes to a link diagram.
A crossing change is applied via a band, which starts at an edge of the diagram and may travel over or under other edges.
The crossing change itself takes place when the band ``passes through'' a final edge.
A won episode returns an upper bound on the unknotting number of the input link.

\paragraph{Ribbon and slice.}
These are the frameworks for finding upper bounds on the ribbon genus and the slice genus of a link.
They operate almost identically to the unknotting framework, but we apply a band move instead of a crossing change when we end the band.
We equip every link component with a \emph{connected component label} (CCL for short).
Two current link components have the same CCL precisely when they belong to the same connected component of the evolving surface $F^t$.
This is used to keep track of the component count in the genus computation of equation~\eqref{eq:genus}.
In the slice frameworks, it is also possible, before starting a band, to add a crossingless unknotted component to the diagram; i.e., a local maximum of the radial function on the surface. 
The ribbon frameworks forbid this. 
Internally, we apply a reverse Reidemeister~II move to the crossingless unknot born and a neighbouring edge in order to provide crossing strands along which band sums can later be performed.
The unknot also receives a new provisional CCL marked as containing no original link component. 
When a fusion joins different CCL classes, these classes and their associated surface bookkeeping records (see Section~\ref{subs:env-mdp}) are merged. 
The implementation retains the larger CCL for the merged class. 
If a crossingless unknot born reaches the terminal state of an episode, it is discarded. 

\paragraph{The strong ribbon and strong slice frameworks.}
The strong ribbon and strong slice frameworks find upper bounds on the strong ribbon genus and the strong slice genus, respectively.
In the strong ribbon framework, a band may only join strands with the same CCL. 
In the strong slice framework, it may also join different CCL classes provided at least one contains no original link component. 
Thus, no connected surface component can contain two different original link components.
In the strong slice case, where \texttt{unknot\_birth} is allowed, the crossingless unknotted components introduced are managed as in the slice framework. 
These frameworks apply only to algebraically split links, and links outside this family are rejected when the episode is created.

\subsection{States and actions}
\label{subs:env-mdp}

\paragraph{State.}
\label{par:env-mdp-state}
A state of the game can be thought of as a tuple $(D, B, \FM)$, where:
\begin{itemize}
    \item $D$ is the working diagram; i.e., its PD code, together with the vector of crossing signs, the number of split unknotted components, and the CCL data for each link component (relevant only in the genus frameworks).
    If the diagram is split, then it is made into a connected diagram via inverse Reidemeister II moves.
    \item $B$ is the partially built band, represented by a list of quadruples 
    \[
    [\mathrm{crossing},\mathrm{strand},\mathrm{over/under},\mathrm{twists}].
    \] 
    The pair $[\mathrm{crossing},\mathrm{strand}]$ encodes an edge of the diagram and the side into which the band extends.
    The entry $\mathrm{over/under}$ specifies whether the band goes over or under this edge.
    The fourth entry records the signed number of half-twists inserted in the band segment following that crossing strand. 
    Thus, the internal state records both the total number of twists and their placement along the band.
    \item $\FM$ is the framework metadata.
    For all frameworks, it stores the number of crossings of the initial diagram of an episode. 
    In the unknotting framework, it stores the number of crossing changes performed.
    In the genus frameworks, it is a table indexed by CCL used for surface bookkeeping. 
    For each surface component, it records the number of bands attached to it, the number of initial link components (including unknots introduced by birth) and the number of current link components belonging to it,  and the number of original link components it contains (newborn unknots excluded).
\end{itemize}

\paragraph{Actions.}
An action is encoded as a string that can be of three different forms:
\begin{itemize}
    \item \texttt{ccw\_twist}/\texttt{cw\_twist}: insert a counter-clockwise (resp.\ clockwise) half-twist into the band;
    \item \texttt{unknot\_birth}: add a new unlinked trivial component (slice frameworks only);
    \item \texttt{C\{c\}S\{i\}\_\{type\}}: here $(c,i)$ identifies a crossing strand and \texttt{\{type\}} is one of the following:
        \begin{itemize}
            \item \texttt{start}: begin a band at the crossing strand $(c,i)$;
            \item \texttt{over}/\texttt{under}: route the band into the next face, passing over (resp.\ under) the strand $(c,i)$;
            \item \texttt{end1}/\texttt{end2}: in the unknotting framework, these describe whether the band goes over or under the final strand; in the genus frameworks, they specify whether we attach the band at $(c,i)$ with a positive or negative twist inserted just before closing it;
            \item \texttt{end0}: attach the band at $(c,i)$ with no extra twist (genus frameworks only).
        \end{itemize}
\end{itemize}
Note that only the \texttt{end} actions and \texttt{unknot\_birth} modify the link. An \texttt{end} builds the new PD code and the new crossing signs, and then calls a simplification routine.
All the other actions only extend the band path.

\paragraph{Size of the action space.}
For each of the $c_{\max}$ crossings and each of their $4$ strands, the environment emits \texttt{start}, \texttt{over}, \texttt{under}, \texttt{end1}, \texttt{end2}, plus the two global twist actions: $20\,c_{\max} + 2$ actions.
Genus frameworks add one \texttt{end0} per crossing strand, giving $24\,c_{\max}+2$, and the slice frameworks append \texttt{unknot\_birth} as the last index.
At $c_{\max} = 100$, this is $2002$, $2402$, and $2403$ discrete actions, respectively.

\paragraph{Legality and masking.}
Most of these actions are illegal in any given state, and the illegal ones are \emph{masked out} before the agent selects an action so that a policy never assigns a positive probability to an illegal move.
Illegal actions consist of:
\begin{itemize}
    \item starting a band while one is already in progress, or on a crossing strand that does not exist;
    \item routing the band over or under a strand that does not lie on the band's current face;
    \item re-crossing an already visited edge;
    \item reaching a position admitting no legal continuation;
    \item mixing twists of opposite signs within the same band or exceeding the maximum allowed number of twists \texttt{max\_twists} (we use $\texttt{max\_twists} = 5$ for the RL agent);
    \item ending a band before it has started;
    \item ending a band on a strand not reachable from the current face, or, in the strong frameworks, joining different CCL classes that both contain an original link component;
    \item performing a band move that is incompatible with the orientations;
    \item giving birth to an unknot outside the slice frameworks or while a band is in progress.
\end{itemize}
In the strong slice framework, a surface component containing no original link component may merge with another CCL class, subject to the other legality conditions. 
This remains possible after earlier fusions with other birth components.

\subsection{The observation}
\label{subs:env-obs}

The observation is a dictionary of named NumPy arrays of type float64, whose shapes depend only on $(c_{\max},\ell_{\max})$. 
Observation values are not clipped.
Each array is represented by a Gymnasium Box with unbounded numerical limits and a matching dtype. 
The value $-1000$ is used both as raw padding and as a sentinel for quantities that cannot be computed on a given diagram.
The environment replaces raw padding by $0$, except in \texttt{band\_matrix}, where it uses $-3$.
Unavailable invariant values are also replaced by $0$.
Each invariant that may be unavailable has a flag equal to $0$ when its value is available and $1$ otherwise.
The observation common to all the frameworks consists of the thirteen entries of Table~\ref{tab:observation}, which at $(c_{\max},\ell_{\max}) = (100,16)$ amount to $1923$ floats.
The base observation for the unknotting framework additionally carries the number of crossing changes performed, for a total of $1924$ floats.
In the genus frameworks, the base observation includes the last three entries of Table~\ref{tab:observation-specific}, for a total of $2020$ floats.
Note that some of these tensors are intentionally made redundant in order to make the base observation as explicit as possible for the agent.

\begin{table}[ht]
    \centering
    \small
    \setlength{\tabcolsep}{4pt}
    \renewcommand{\arraystretch}{1.2}
    \renewcommand{\tabularxcolumn}[1]{m{#1}}
    \begin{tabularx}{\textwidth}{|l|c|>{\raggedright\arraybackslash}X|}
        \hline
        Tensor Name & Tensor Shape & Tensor Content \\
        \hline
        \texttt{PD\_code}                     & $c_{\max} \times 4$       & PD code of the current diagram \\ \greyrule
        \texttt{crossing\_signs}              & $c_{\max}$                & sign of each crossing \\ \greyrule
        \texttt{writhe}                       & $1$                       & writhe of the diagram \\ \greyrule
        \texttt{gauss\_code}                  & $2c_{\max} + \ell_{\max}$ & Gauss code \\ \greyrule
        \texttt{num\_link\_components}        & $1$                       & number of link components that have crossings\\ \greyrule
        \texttt{unlinked\_unknot\_components} & $1$                       & number of crossingless unknot components \\ \greyrule
        \texttt{num\_connected\_components}   & $1$                       & number of connected components of the diagram containing crossings, counted before the reverse Reidemeister~II moves that connect them \\ \greyrule
        \texttt{band\_path}                   & $2c_{\max} \times 3$      & the band built so far: each row is a triple (crossing, strand, over/under), where the first two entries determine a crossing strand and the last one is the sign of the action (0 for start, $+1$ if the band goes over the strand, $-1$ if it goes under). \\ \greyrule
        \texttt{total\_twists}                & $1$                       & signed number of twists in the current band \\ \greyrule
        \texttt{band\_matrix}                 & $c_{\max} \times 4$       & the band path re-indexed per crossing strand: each position corresponds to a crossing strand and each entry is $0$/$1$/$-1$ if that strand lies on the band, according to its start/over/under sign, $-2$ if the crossing exists but the strand was never visited, and $-3$ if the crossing strand does not exist (padding) \\ \greyrule
        \texttt{band\_twists}                 & $2c_{\max}$               & signed number of twists in each segment of the band, row-by-row with \texttt{band\_path} \\ \greyrule
        \texttt{simplified\_num\_crossings}   & $1$                       & number of crossings of the current diagram after simplification \\ \greyrule
        \texttt{init\_num\_crossings}         & $1$                       & number of crossings of the initial diagram of the episode \\
        \hline
    \end{tabularx}
    \caption{The elements of the base observation common to all frameworks (they describe the diagram and the band being built).}
    \label{tab:observation}
\end{table}

\begin{table}[ht]
    \centering
    \small
    \setlength{\tabcolsep}{4pt}
    \renewcommand{\arraystretch}{1.2}
    \renewcommand{\tabularxcolumn}[1]{m{#1}}
    \begin{tabularx}{\textwidth}{|l|c|>{\raggedright\arraybackslash}X|}
        \hline
        Tensor Name & Tensor Shape & Tensor Content \\
        \hline
        \texttt{crossing\_changes}            & $1$                       & number of crossing changes performed so far (unknotting only) \\ \hline
        \texttt{component\_ccl}               & $\ell_{\max}$             & CCL of each link component (genus frameworks only) \\ \greyrule
        \texttt{genera\_info}                 & $\ell_{\max} \times 5$    & one row per CCL, recording the CCL, the number of bands attached to that surface component, the number of initial link components (including unknots introduced by birth) and the number of current link components belonging to it, and the number of original link components it contains (genus frameworks only) \\ \greyrule
        \texttt{current\_genus}               & $1$                       & genus of the surface built so far (genus frameworks only) \\
        \hline
    \end{tabularx}
    \caption{The elements of the base observation specific to each framework: \texttt{crossing\_changes} is used in the unknotting framework and the last three in the genus frameworks.}
    \label{tab:observation-specific}
\end{table}

To this base observation, we may concatenate a number of topological invariants that the agent might not be able to extract directly from the diagram.
We encoded some of them (the linking matrix, the determinant, the signature, the Alexander and Jones polynomials, and a few others) in order to determine which ones help the agent improve its accuracy.
Which of them actually help is an experimental question, and is the subject of Section~\ref{subs:RL-policy-features}.

\begin{remark}
\label{rmk:env-limitations}
    The shape is a hard upper bound on diagram size. 
    A diagram exceeding $c_{\max}$ crossings or $\ell_{\max}$ components during an episode cannot be represented (illegal state), and the episode is lost rather than truncated.
    Since band moves may increase the number of crossings, some bands are inadmissible.
\end{remark}

\subsection{Terminal states: won, lost, and truncated episodes}
\label{subs:env-terminal}

An episode finishes when it reaches a terminal state.
This happens on three different occasions: either the episode is \emph{won} (i.e., the diagram represents an unlink), or it is \emph{lost} (i.e., an illegal state has been reached), or it is \emph{truncated} (i.e., the step budget \texttt{max\_actions}, which we set to $80$ in training, is exhausted).

A state can be illegal for different reasons:
\begin{itemize}
    \item the agent picks an illegal action (the action mask makes this unreachable, but the check is kept as a guard);
    \item the diagram no longer fits the shape $(c_{\max}, \ell_{\max})$;
    \item a band in progress terminates prematurely with no legal continuation (which can only happen in the strong frameworks because of the CCL constraint).
\end{itemize}

If an episode is won, the agent has produced a certificate for an upper bound.
In the \texttt{unknotting} case, the agent's answer coincides with the number of crossing changes performed during the episode; i.e., the number of \texttt{end} actions.
In the other frameworks, we need to compute the genus of the surface built as indicated in Section~\ref{subsubs:computation-slice-genus-from-movie}.
After the episode terminates in any way, we move to the next link in the dataset and start a new one.

\subsection{Rewards}
\label{subs:env-reward}

All frameworks use \emph{potential-based} reward shaping.
This means that the reward $R$ splits into an immediate reward term and a potential reward $\Phi$.
The potential reward is a function of the state, while the immediate term also depends on the action.
We denote by $s$ the present state, $a$ the action applied, and $s^{\prime}$ the next state.

\paragraph{The immediate term.}
The immediate term rewards winning, punishes errors, and charges a price for every other action.
This splits again into two terms: the action reward $r(a, s^{\prime})$, a function of the action and the next state, and the answer charge $C(s, s^{\prime})$, a function of the present and next states.
In Table~\ref{tab:rewards}, we report all the possible action reward values.

\begin{table}[H]
    \centering
    \small
    \begin{tabular}{|l|c|}
        \hline
        Event & $r(a, s^{\prime})$ \\
        \hline
        the diagram becomes terminal (episode won)                   & $+R_{\text{win}}$ \\
        an error state is entered (episode lost)                     & $-R_{\text{win}}$ \\
        a non-terminal \texttt{end} action (a band move or crossing change)     & $-5$ \\
        an \texttt{unknot\_birth}                                   & $-1$ \\
        any other action, with pre-action band length $L \geq 9$ & $-\min(L - 7,\, 5)$ \\
        otherwise & $-1$ \\
        \hline
    \end{tabular}
    \caption{The action rewards.}
    \label{tab:rewards}
\end{table}
Conditions are tested in the following order: error, win, birth, end, and other.
The band length test $L \ge 9$ uses the pre-action band path length $L$, and twist actions do not increase that length.
All non-winning actions are charged.
An \texttt{end} action that reaches a terminal state receives a base reward $+R_{\text{win}}$ without an additional $-5$ charge.

The answer charge is designed to give negative rewards for any increase in the returned invariant value, so that lower answers are preferred.
We first need to define an invariant cap $i_{\text{cap}}$. 
We guarantee that the reward of a won episode is higher than that of any episode that is not won and that lower answers are better than higher ones, both up to $i_{\text{cap}}$.
In Remark~\ref{rem:answer-charge-cap}, we discuss the reward behaviour past this cap.
We set $i_{\text{cap}} = 8$ as it is the highest invariant value in the datasets we are interested in.
Denote by $i(s)$ the current invariant value at the state $s$.
In the unknotting framework, this is the number of crossing changes made so far. 
In the genus frameworks, it is the current genus of the surface being built; see Section~\ref{subsubs:computation-slice-genus-from-movie}.
Note that for the initial state $s_0$, we have $i(s_0) = 0$.
The answer charge depends on the framework.
In the case of ribbon and slice, the cost of increasing the genus is charged immediately by defining $C$ as
\[
   C\left(s,s^{\prime}\right) = W \bigl( i(s^{\prime}) - i(s) \bigr),
\]
where $W \in \N$ is an integer factor called ``answer weight''.
For unknotting, strong ribbon, and strong slice, the cost of increasing the invariant value is charged only at the end of an episode. 
$C$ is defined as
\[
    C\left(s,s^{\prime}\right) =
    \begin{cases}
    W \min\bigl(i(s^{\prime}),\, i_{\text{cap}}\bigr), & \text{if $s^{\prime}$ is won,} \\
    0, & \text{otherwise.} \\
    \end{cases}
\]
This difference is justified in Remark~\ref{rem:answer-charge-cap}.

\paragraph{The potential term.}
The potential $\Phi$ brings the number of crossings of the diagram into the reward.
The potential contributes to the final reward as $\gamma\, \Phi\left(s^{\prime}\right) - \Phi\left(s\right)$, generally discounting the potential of the new state by the factor $\gamma \in (0, 1]$.
As already stated in Section~\ref{subs:rl-basics}, we use $\gamma = 1$.
This shaping term telescopes along an episode and it guides the agent towards promising states with fewer crossings~\cite{ng1999shaping}.
More specifically,
\[
\Phi(s) =
\begin{cases}
    0 \qquad & \text{if $s$ is terminal or an error state}, \\
    -\kappa \, c(s) \qquad & \text{otherwise},
\end{cases}
\]
where $c(s)$ is the number of crossings of the simplified current diagram, and $\kappa = 65/(1+\sqrt{c_0})$ scales the crossing term by the number of crossings $c_0$ of the initial diagram.
In particular, an error transition receives no potential shaping.
The numerator of $\kappa$ is an empirical constant left unchanged in all our experiments.
The total shaping accumulated over a successful episode is
\[
\sum_{t=0}^{T-1} \left( \Phi(s_{t+1}) - \Phi(s_t)\right) = -\Phi(s_0) + \Phi(s_T).
\]
For a successful episode, this is $\kappa c_0$.
Note that
\[
\kappa c_0 = \frac{65 c_0}{1 + \sqrt{c_0}} = 65\left( \sqrt{c_0} - 1 + \frac{1}{1 + \sqrt{c_0}} \right),
\]
which is asymptotic to $65\sqrt{c_0}$.
Links whose crossing numbers differ by an order of magnitude still receive shaping of comparable size.

\paragraph{The complete reward.}
The complete reward is defined as
\[
    R\left(s,a,s^{\prime}\right) =
    \begin{cases}
    -\sigma R_{\text{win}}, &s^{\prime} \text{ is an error state,} \\
    \sigma \bigl( r\left(a,s^{\prime}\right) - C\left(s,s^{\prime}\right) + \Phi\left(s^{\prime}\right) - \Phi\left(s\right) \bigr), &\text{otherwise,}\\
    \end{cases}
\]
where $\sigma \in (0,1)$ is a rescaling factor.

It remains to choose the values of $R_{\text{win}}$, $W$, and $\sigma$.
The first two depend on three factors: the maximum number of actions allowed in an episode, the invariant cap defined above, and the rewards of non-terminal actions of Table~\ref{tab:rewards}, which are between $-1$ and $-5$.
We choose $W = 5 \cdot 80 = 400$ to guarantee that an episode returning a lower invariant is better rewarded than one returning a higher value.
Indeed, a won episode has at most $79$ non-terminal actions (plus one winning end action), which cost at most $5 \cdot 79 = 395 < W$ in total, so a single unit of invariant outweighs any difference in the length of the path.
Then, to guarantee that a won episode is rewarded more than a truncated one, we choose $R_{\text{win}} = 5 \cdot 80 (i_{\text{cap}} + 1) = 3600$.
Lastly, the choice of $\sigma = 0.01$ rescales the rewards to the order of $10$ so that they do not saturate the value network's tanh units.

\begin{remark}[On the answer weight $W$ and the invariant cap $i_{\text{cap}}$]
    \label{rem:answer-charge-cap}
    Recall that $i_{\text{cap}} = 8$ is the highest invariant value in our datasets.
    We chose $W$ so that one unit of answer costs more than all the actions of a won episode together, and $R_{\text{win}} = W (i_{\text{cap}} + 1)$ so that it exceeds the largest charge, $W i_{\text{cap}}$, by $W$ as well.

    In the unknotting, strong ribbon, and strong slice frameworks, the answer is charged once, when the episode is won.
    In the strong frameworks, a band can reach a state with no legal move, and the episode then ends on an error.
    When we charged the genus band by band in these frameworks, ending an episode early on an error was cheaper than playing on and paying for the genus built, and the agents learned to end their episodes on an error.
    In the unknotting framework, charging every crossing change taught the agent to avoid crossing changes, although they are its only \texttt{end} actions.
    Without the cap, the first wins found in training used many crossing changes and were worth less than a truncated episode, and the episodes of the agent went back to the time limit.
    Since the charge is at most $W i_{\text{cap}}$, every won episode is better rewarded than every episode that is not won, whatever its answer.
    The better reward of a lower answer is guaranteed below the cap. 
    Answers from $i_{\text{cap}}$ and above receive the same charge and are compared only through the length of the path.
    If the initial diagram has at least two crossings, a truncated episode loses less than $R_{\text{win}}$ through its actions and the crossings it adds, so an error is worse rewarded than any other outcome.

    In the ribbon and slice frameworks, the genus is charged as the surface is built, with no cap.
    Here a lower genus is always better rewarded. 
    However, the reward of a won episode is guaranteed to be higher than that of every episode that is not won only up to genus $i_{\text{cap}}$.
    A truncated episode has already paid for the genus it built, so an error can be cheaper than a long truncated episode.
    Since ribbon and slice always have a legal move, they reach an error only when the diagram outgrows the shape.
\end{remark}

\section{The agents}
\label{sec:agents}

Three agents play the game of Section~\ref{sec:env}: a trained RL policy, a Bayesian-optimised random walker, and a mixture of the two.
This section describes each of them and how we selected the best one for each framework.

\paragraph{Accuracy.}
To compare the performance of different agents, we measure their accuracy.
This is defined as the percentage of knots and links for which the agent finds a sequence of moves that matches or improves the best known upper bound.

\paragraph{Working datasets.}
For each framework, we created the following four disjoint datasets that have been used at various stages:
\begin{itemize}
    \item The \textbf{training dataset} is used to train the RL policy.
    \item The \textbf{validation dataset} is used to select the best RL checkpoint and the best parameters for the random walker and the mixed agent. A fixed subset of $500$ of its links, the \emph{coarse validation set}, is used for the cheaper passes.
    \item The \textbf{development dataset} is used to choose the input features of the RL policy.
    \item The \textbf{test dataset} is used only to evaluate the final agents after every selection and tuning step has been completed and is never used to select or tune anything.
\end{itemize}
The datasets were constructed to provide the models with a diverse collection of samples.
In them, we can find prime knots and prime multi-component links, connected sums of knots and links, and, in the strong frameworks, Whitehead doubles of links, with varying invariant values and link sizes.
We split the samples by underlying primes. 
A prime knot or link, together with all its orientations and mirror images, lies in exactly one dataset, and the connected sums and Whitehead doubles of a dataset are built from the primes of that dataset only.
Hence, no prime link appears in two datasets in any role.
The assignment is the same for every framework, so a link is in the test dataset of every framework or of none.
Roughly $70\%$ of the links go to training and $10\%$ to each of the other three datasets.
Moreover, we split the samples so that the proportions of types, sizes, and invariant values are similar across the four datasets.
Given the similarity between the \texttt{ribbon} and \texttt{slice} frameworks, and the \texttt{strong ribbon} and \texttt{strong slice} frameworks, these pairs of frameworks share the same datasets.
The resulting dataset sizes are reported in Table~\ref{tab:datasets-train-validation-test}.

\begin{table}[H]
    \centering
    \begin{tabular}{|lccccc|}
        \hline
        Framework & Training & Validation & Development & Test & Total \\
        \hline
        \texttt{unknotting}                       & 6310 & 838 & 838 & 841 & 8827 \\
        \texttt{ribbon}, \texttt{slice}          & 5701 & 800 & 801 & 807 & 8109 \\
        \texttt{strong ribbon}, \texttt{strong slice} & 5153 & 731 & 868 & 690 & 7442 \\
        \hline
    \end{tabular}
    \caption{Dataset sizes. The coarse validation set consists of $500$ of the validation links.}
    \label{tab:datasets-train-validation-test}
\end{table}

\paragraph{Allocated agent budget.}
We quote evaluation budgets as $B/N$ budget units per link, where $B$ is the nominal total allocation and $N$ is the size of the dataset. 
An attempted episode containing $a$ actions is charged $a+1$ units: one unit per action and one unit accounting for reset per attempted episode, including unsuccessful and zero-action attempts. 
The evaluator cycles through the links, starting another episode only while its remaining budget is positive.
Once started, an episode runs until an ordinary stopping condition, including pruning or the episode action limit.
It is not interrupted by exhaustion of the evaluation budget. 
Consequently, the final episode can exceed the nominal allocation. 
The recorded answer for each link is the best found across its attempts, with only strict improvements replacing the stored answer.

For parallel evaluation, the dataset is partitioned into shards $S_j$. Writing $d_j = \sum_{i \in S_j} c_i^{3/2}$, where $c_i$ is the number of crossings of the diagram of link $i$, shard $j$ receives a budget of
\[
B_j = \left\lceil B\,\frac{d_j}{\sum_k d_k}\right\rceil.
\]
Each shard applies the episode-accounting rule independently. 
Upward rounding and the final episode in each shard can therefore make the consumed budget exceed $B$. 
The quoted per-link figures are nominal average allocations, not exact environment-action counts or hard per-link limits.

\subsection{The RL policy and the choice of input features}
\label{subs:RL-policy-features}

For our first agent, we trained an RL policy to interact with the environment described in Section~\ref{sec:env}.
Indeed, its observation contains every part of the state on which the legal actions, the next state, or the reward depend; these are the values reported in Tables~\ref{tab:observation} and~\ref{tab:observation-specific}.
The observation is fed into a pair of neural networks: a policy network that outputs a distribution over actions, and a value network that estimates the expected cumulative reward of the current state.
Both networks are built on top of Stable-Baselines3's MultiInputPolicy~\cite{stable-baselines3}.
For training, we use the \texttt{SB3-Contrib} implementation of MaskablePPO, an implementation of Proximal Policy Optimization (PPO) with invalid action masking.
PPO is an on-policy actor-critic policy gradient algorithm introduced by Schulman, Wolski, Dhariwal, Radford and Klimov~\cite{schulman2017ppo}, and is currently among the most widely used RL algorithms in practice.

The fixed-size tensors forming the dictionary observation are flattened and concatenated, and every entry is brought to order one by a fixed map with no trainable parameters: a division by a fixed scale or, for larger entries, a signed logarithm.
Without this scaling, large entries saturate the first $\tanh$ layer, and the policy stops depending on the state.
The resulting vector is passed to separate policy and value networks, each consisting of two fully connected layers with 256 units each with $\tanh$ activations.
We use a discount factor $\gamma=1$ (since the reward of Section~\ref{subs:env-reward} is designed for undiscounted returns), a learning rate $3\times10^{-4}$, a batch size $64$, ten optimisation epochs per update, a GAE parameter $\lambda=0.95$, and a PPO clipping range $0.2$.
The remaining PPO hyperparameters are left at their library defaults.

At inference time, the policy defines a categorical distribution over the fixed action space. 
Before each prediction, actions that are illegal in the current state are masked out, and the distribution is renormalised over the remaining actions. 
We sample from this masked distribution and repeat the procedure until the episode terminates or reaches its step limit.

\subsubsection{The feature-selection protocol}
The first question we needed to address was which features would provide the RL agent with the most informative representation of the current state.
The base observation for each framework is shown in Tables~\ref{tab:observation} and~\ref{tab:observation-specific}.
Several additional \emph{candidate features} can be added to this observation, but it is not clear a priori which of them would provide the agent with the most useful information.
We tested the following features:
\begin{itemize}
    \item \texttt{Baseline} (base observation only);
    \item \texttt{MT}: the Murasugi--Tristram bound;
    \item \texttt{Seif}: the upper bound on the Seifert genus obtained using Seifert's algorithm;
    \item \texttt{LM}: the Linking Matrix;
    \item \texttt{CCLM}: the CCL Linking Matrix, where entry $(i,j)$ is the total linking number of the link components with Connected Component Labels $i$ and $j$; 
    \item \texttt{CompM}: the Components Matrix, where entry $(i,j)$ records on which link component (labelled by integers) the $j$-th strand of the $i$-th crossing lies;
    \item \texttt{DetSig}: the link determinant and signature;
    \item \texttt{Alex}: the Alexander polynomial;
    \item \texttt{Jones}: the Jones polynomial (only for links up to 16 crossings, since the cost of computing it grows exponentially with the number of crossings);
    \item \texttt{Theta}: evaluations of the $\Theta$-invariant of Bar-Natan and van der Veen~\cite{barnatanvanderveen2025theta}, at a single evaluation point (the cost grows linearly with the number of points, and \texttt{Theta} is already the slowest feature to train with).
\end{itemize}
All these quantities can be recomputed from the current diagram every time it changes. 
We did not include invariants such as the Rasmussen invariant $s$, whose computation is far more expensive.
In addition, we trained using pairs of features for each framework, pairing the two features whose models solved the most complementary sets of development links: \texttt{LM}$+$\texttt{MT} (\texttt{unknotting}), \texttt{DetSig}$+$\texttt{LM} (\texttt{ribbon}), \texttt{DetSig}$+$\texttt{Seif} (\texttt{slice}), \texttt{MT}$+$\texttt{Jones} (\texttt{strong ribbon}), and \texttt{CompM}$+$\texttt{DetSig} (\texttt{strong slice}).
This provided no improvement overall. 
Each feature pair scored below the best of the two features on the development dataset, between $3.6\%$ and $12.0\%$ depending on the framework.
The overall score of the feature pair was also below the score of the best model of each framework, so we do not consider them further.

To select the best feature set for each framework, we trained one model per candidate feature and compared them under the following protocol.

\paragraph{Checkpoint selection (validation).}
For each configuration, we trained for roughly $1{,}000{,}000$ timesteps (between about $160$ and $195$ timesteps per training link), saving the policy parameters every $15{,}000$ timesteps. 
We call each saved snapshot a \emph{(training) checkpoint}, obtaining $66$ checkpoints in total.
The budget is deliberately generous, so that training can continue beyond the peak in validation performance.
This matters because the best-performing checkpoint often occurs early in training.

Let us denote by $C_1,\dots,C_{66}$ the checkpoints obtained during training.
A naive way to select one would be to validate all of them on the validation set and keep the one with the highest accuracy score.
However, this could result in favouring whichever checkpoint was most inflated by noise~\cite{smith2006optimizers}. 
Moreover, the validation curve itself is noisy, and the raw maximum might not be the best stopping point~\cite{prechelt2012early}.
Therefore, the checkpoint validation consists of two phases.
\begin{enumerate}
    \item First, a \textbf{coarse pass}.
    We score the first checkpoint $C_1$ and every third checkpoint ($C_{3n}$) on the coarse validation set, with a nominal evaluation budget of 200 steps per link.
    Writing $a(C_{3n})$ for the corresponding accuracy score, we compute a moving average over adjacent scored checkpoints rather than take the noisy arg-max:
    \[
        s(C_{3n}) = \tfrac{1}{3}\left( a(C_{3n-3}) + a(C_{3n}) + a(C_{3n+3}) \right),
    \]
    where $C_1$ plays the role of $C_0$, and the average is taken over the available neighbours at the two ends of the curve.
    This whole procedure is computationally cheap, and it allows us to locate the plateau of the validation curve by looking at the five checkpoints with the highest smoothed accuracies.
    \item Second, the \textbf{refinement}.
    We then evaluate these checkpoints and their immediate neighbours, $C_{3n-1}$ and $C_{3n+1}$, on the full validation set.
    The best of these is selected as the \emph{best model} for that configuration.
\end{enumerate}

\paragraph{Feature choice (development) and budget.}
The selected checkpoint of each configuration is evaluated exactly once on the development dataset, and the configuration with the highest accuracy there is selected for its framework.
Any configuration within one standard error of the best counts as a tie, and a tie is resolved in favour of the feature that is cheaper to compute.
The test dataset plays no role in this choice.

Training multiple seeds of the same feature showed that the accuracy of the selected checkpoint varies with the seeding by a few percentage points.
Therefore, the above selection process has to be considered a mix of model selection and feature selection.

Every evaluation presented above (coarse, full-validation, and development) gives the RL agent a nominal evaluation budget of about 200 units per link.
This allocation is a compromise between computational feasibility and model accuracy. 
On one hand, each link needs several search attempts because the policy is stochastic, and a valid sequence of moves may not be found on the first try. 
On the other hand, it would take too long to evaluate the models with a substantially larger budget.
The development evaluation was performed at both 200 and 800 units per link, the latter being the budget given for the test evaluation of Section~\ref{subsec:best-agent-rl-vs-rw}.
The winning configuration is the same in both.

\paragraph{Longer training.}
The validation curves of several configurations were still rising at the end of training, so we trained ten configurations, chosen among the best of their frameworks, for $2{,}000{,}000$ timesteps: \texttt{DetSig} and \texttt{LM} on \texttt{ribbon}; \texttt{Seif}, \texttt{Jones}, and \texttt{Alex} on \texttt{slice}; \texttt{MT} on \texttt{strong ribbon}; and \texttt{CompM}, \texttt{LM}, \texttt{Seif}, and \texttt{DetSig} on \texttt{strong slice}.
No \texttt{unknotting} configuration was included, since the best ones reach their peak within the first $75{,}000$ timesteps.
Only one of these runs improved on the model selected for its framework: the \texttt{MT} configuration of the \texttt{strong ribbon} framework, whose development accuracy went from $12.7\%$ to $18.4\%$, with the best checkpoint at $1{,}755{,}000$ timesteps.
We therefore trained it for $1{,}000{,}000$ timesteps more, with the new best checkpoint being the one at $2{,}775{,}000$ timesteps, where the new development accuracy is $23.3\%$.
We use this model for the \texttt{strong ribbon} framework.

\paragraph{Best feature set per framework.}
The selected feature configuration for each framework is given in Table~\ref{tab:feature-selection}.

\begin{table}[H]
    \centering
    \small
    \setlength{\tabcolsep}{5pt}
    \begin{tabular}{|l|l|c|c|}
        \hline
        Framework & Selected feature & Accuracy & \texttt{Baseline} \\
        \hline
        \texttt{unknotting}     & \texttt{Baseline}                                            & $66.8\%$ & $66.8\%$ \\
        \texttt{ribbon}         & \texttt{DetSig} (determinant and signature)                  & $63.3\%$ & $56.4\%$ \\
        \texttt{slice}          & \texttt{Seif} (Seifert genus upper bound)                    & $60.2\%$ & $57.9\%$ \\
        \texttt{strong ribbon}  & \texttt{MT} (Murasugi--Tristram bound)                       & $23.3\%$ & $7.7\%$ \\
        \texttt{strong slice}   & \texttt{CompM} (components matrix)                           & $14.1\%$ & $6.2\%$ \\
        \hline
    \end{tabular}
    \caption{Selected input feature per framework, with its accuracy on the development set at $200$ units per link and, for comparison, the accuracy of the bare state observation.
    Since each selected accuracy is the best of ten, it is biased upwards. 
    The unbiased figures are those measured on the test set in Section~\ref{subsec:best-agent-rl-vs-rw}.}
    \label{tab:feature-selection}
\end{table}

\subsection{The random walker and its Bayesian optimisation}
\label{subs:rw}

Alongside the RL agent, we also built a random walker (RW) agent that is able to interact with the environment.
The random walker is governed by two relative action weights and three additional parameters.
The weights \texttt{p\_twist} and \texttt{p\_end} control the relative frequencies of twist and end actions, respectively.
The parameter \texttt{p\_birth} is the conditional probability of adding an unknot when no band is in progress and is used only in the slice frameworks.
Finally, \texttt{max\_twists} and \texttt{max\_actions} bound the number of twists per band and the number of actions per episode, respectively.

At each step, the random walker first removes action types for which no legal action exists. 
The remaining types \texttt{start}, \texttt{over}, \texttt{under}, \texttt{twist}, and \texttt{end} receive relative weights $1$, $1$, $1$, \texttt{p\_twist}, and \texttt{p\_end}, respectively. 
An action type is sampled proportionally to these weights, after which a concrete legal action of that type is selected uniformly. 
In particular, the twist direction is chosen uniformly among the legal directions, while an end action is obtained by sampling a legal final strand and then a legal end sign uniformly. 
In the slice frameworks, when no band is in progress, an \texttt{unknot\_birth} action is first taken with probability \texttt{p\_birth}; otherwise, the ordinary weighted sampling rule is used.

\paragraph{Bayesian optimisation.}
For each framework, these parameters were tuned via Bayesian optimisation (BO) using the accuracy on the same coarse validation sets as the metric employed for the coarse validation of the RL models.
Bayesian optimisation is well suited to this problem, as the objective is a black box with no accessible gradient, evaluations are expensive, and the space of weights is low-dimensional. 
The optimiser builds a probabilistic model of the accuracy over that space and uses it to choose which weights to try next, spending its budget where the model is promising or still uncertain rather than on a blind grid.
We used the Bayesian optimisation implementation developed by Nogueira~\cite{nogueira2014bayesianBO}, with its default configuration.\footnote{Version $2.0.0$: a Gaussian process surrogate with a Mat\'ern $\nu = 5/2$ covariance, and an upper confidence bound acquisition with $\kappa = 2.576$.}

The BO was performed with a budget of 300 iterations (50 initial exploratory iterations plus 250 tuning iterations), each using a nominal evaluation budget of $500{,}000$ units ($1{,}000$ per link).
We first searched the space $\texttt{p\_twist} \in [0.1, 5.0]$, $\texttt{p\_end} \in [0.5, 10.0]$, $\texttt{max\_twists} \in [1, 5]$, $\texttt{max\_actions} \in [5, 50]$, and $\texttt{p\_birth} \in [0.0, 1.0]$ for the two slice frameworks.
On four frameworks, the optimum of \texttt{p\_end} lay at the upper edge of this box, and that of \texttt{p\_twist} close to its lower edge.
We therefore repeated the search with $\texttt{p\_twist} \in [0.01, 5.0]$ and $\texttt{p\_end} \in [0.5, 100]$, both sampled on a logarithmic scale, running two independent optimisations with different random seeds for each framework.
Instead of adopting the best configuration found by the optimiser, we re-evaluated the ten best configurations of each run five times on the full validation set, with the nominal budget of $800$ units per link used in the final benchmark, and adopted the one with the highest mean accuracy.
The tuned defaults are given in Table~\ref{tab:rw-bo}, where \texttt{max\_twists} and \texttt{max\_actions} are converted to integer values.
Since \texttt{p\_twist} and \texttt{p\_end} are relative weights, the table also reports the probabilities they induce at a step inside a band where every action type is legal:
\[
    P(\mathrm{end}) = \frac{\texttt{p\_end}}{2 + \texttt{p\_twist} + \texttt{p\_end}}, \qquad P(\mathrm{twist}) = \frac{\texttt{p\_twist}}{2 + \texttt{p\_twist} + \texttt{p\_end}}.
\]

\begin{table}[H]
    \centering
    \small
    \setlength{\tabcolsep}{4pt}
    \begin{tabular}{|l|c|c|c|c|c|c|c|c|}
        \hline
        \multirow{2}{*}{Framework} & \multirow{2}{*}{\texttt{p\_twist}} & \multirow{2}{*}{\texttt{p\_end}} & \multirow{2}{*}{$P(\mathrm{end})$} & \multirow{2}{*}{$P(\mathrm{twist})$} & \texttt{max\_} & \texttt{max\_} & \multirow{2}{*}{\texttt{p\_birth}} & \multirow{2}{*}{Accuracy} \\
            & & & & & \texttt{twists} & \texttt{actions} & & \\
        \hline
        \texttt{unknotting}     & $0.061$ & $90.44$ & $97.8\%$ & $0.07\%$ & $1$ & $10$ & $-$     & $58.2 \pm 0.3\%$ \\
        \texttt{ribbon}        & $0.351$ & $98.31$ & $97.7\%$ & $0.35\%$ & $1$ & $20$ & $-$     & $56.9 \pm 0.6\%$ \\
        \texttt{slice}         & $0.012$ & $86.63$ & $97.7\%$ & $0.01\%$ & $1$ & $21$ & $0.005$ & $56.4 \pm 0.2\%$ \\
        \texttt{strong ribbon} & $0.023$ & $90.02$ & $97.8\%$ & $0.03\%$ & $1$ & $17$ & $-$     & $10.6 \pm 0.2\%$ \\
        \texttt{strong slice}  & $0.022$ & $89.80$ & $97.8\%$ & $0.02\%$ & $4$ & $17$ & $0.001$ & $10.7 \pm 0.3\%$ \\
        \hline
    \end{tabular}
    \caption{Tuned random walk parameters per framework, with the probabilities of ending a band and of inserting a twist that they induce. The last column is the validation accuracy, as mean and standard error over five evaluations at $800$ units per link.}
    \label{tab:rw-bo}
\end{table}

In all ten optimisation runs, the best configuration ends a band with probability between $95.3\%$ and $97.9\%$, and the adopted walkers almost never twist. 
So, the walker's best strategy is to close a band as soon as it legally can.
The optimum of \texttt{p\_end} is again close to the upper edge of the box, but this is saturation rather than truncation: setting $\texttt{p\_end} = 10^4$ so that $P(\mathrm{end}) \approx 99.98\%$ changes the validation accuracy of the tuned walkers by between $-0.1\%$ and $+1.6\%$, never by more than $2.2$ standard errors (five paired evaluations at $800$ units per link).
The parameter that separates the frameworks is \texttt{max\_actions}: $10$ for \texttt{unknotting}, $17$ for the strong frameworks, $20$ for \texttt{ribbon}, and $21$ for \texttt{slice}.
On \texttt{unknotting}, a cap of $10$ allows at most five crossing changes, so the walker gives up the links whose best known upper bound is at least six, about $7\%$ of each dataset, in exchange for more attempts on the others. 
The two optimisations chose $10$ and $11$ out of a range reaching $50$.

We now remark on the last column of Table~\ref{tab:rw-bo}.
Each evaluation of an objective is a single stochastic run over the coarse validation set, so the accuracy it returns is noisy, and what the optimiser reports is the largest of a few hundred such draws. 
As with the raw validation maximum over checkpoints in Section~\ref{subs:RL-policy-features}, this is biased upwards by an amount that grows with the budget \cite{smith2006optimizers}.
This is why we re-evaluated the candidates, and on three frameworks, the optimiser's own best configurations scored up to $3.5\%$ below the adopted ones.
The figures in the table are still tuning figures, since the coarse validation set is part of the validation set, and are not themselves performance claims.
Every accuracy we compare across agents is measured on the held-out test sets.

\subsection{The mixed RL+RW agent}
\label{sec:mixed}

As a final agent, we built a \emph{mixed agent} that follows the trained RL policy most of the time and hands control to the random walker (RW) for the remaining steps.
At the start of each episode, we draw the initial probability of selecting the random walker and then decay this probability as the episode proceeds.
The mixed agent is governed by the following parameters:
\begin{itemize}
    \item \texttt{p\_twist}: the relative weight assigned to twist actions when the random walker is in control;
    \item \texttt{p\_end}: the relative weight assigned to end actions when the random walker is in control;
    \item \texttt{p\_birth}: the conditional probability of adding an unknot when the random walker is in control and no band is in progress (this parameter is meaningful only for the slice frameworks);
    \item \texttt{p\_rw\_range}: an interval $[a,b] \subseteq [0,1]$, with $a \leq b$, from which the initial probability of following the random walker is drawn at the start of each episode;
    \item \texttt{min\_p\_rw}: a floor on that probability, leaving the random walker a residual chance of intervening late in the episode, once the decay has taken effect;
    \item \texttt{step\_decay} and \texttt{end\_decay}: the multiplicative factors at which the random walker's influence fades within an episode; the former is applied after an ordinary action, and the latter after a band is ended.
\end{itemize}

Compared with the pure random walker, \texttt{max\_twists} and \texttt{max\_actions} are absent, and those limits are inherited from the RL environment rather than tuned for each agent.

\paragraph{How the mixed agent works.}
The two strategies are combined as follows.
At the start of each episode, an initial probability $p^{(0)}_{\mathrm{rw}}$ is sampled uniformly from the interval \texttt{p\_rw\_range}.
This probability is multiplied by \texttt{end\_decay} after every end action and by \texttt{step\_decay} after every other action. 
We write $p^{(t)}_{\mathrm{rw}}$ for its value at step $t$. For instance, after $t$ non-end actions, we will get $p^{(t)}_{\mathrm{rw}} = p^{(0)}_{\mathrm{rw}} \cdot \left( \texttt{step\_decay} \right)^{t}$.
At each step, the action is taken by the random walker with probability $\max(\texttt{min\_p\_rw},\, p^{(t)}_{\mathrm{rw}})$, and by the trained RL policy with the complementary probability.

When the random walker is selected, it performs an unknot birth with probability \texttt{p\_birth}, provided the framework permits it and no band has started.
Otherwise, it samples an action type among those currently legal, with relative weights $1$ for \texttt{start}, \texttt{over}, and \texttt{under}, \texttt{p\_twist} for a twist, and \texttt{p\_end} for an end action, and then chooses the crossing strand (or the twist direction) uniformly among the legal ones.
The random walker, therefore, injects stochastic exploration early in the episode and gradually hands control over to the RL policy as the episode develops.

\subsubsection{Bayesian optimisation of the mixed agent}
\label{subsec:mixed-bo}

We tuned these parameters via Bayesian optimisation, using each framework’s best RL model as the policy component.
We used the same coarse validation sets employed for the coarse validation of the RL models, with a budget of $200$ objective evaluations per optimisation ($40$ initial points plus $160$ optimisation iterations) and a nominal evaluation budget of 200 units per link for each objective evaluation.
As for the random walker, we searched \texttt{p\_twist} and \texttt{p\_end} on the wider logarithmic box of Section~\ref{subs:rw}, ran two independent optimisations per framework, and re-evaluated the five best configurations of each run five times on the full validation set at $800$ units per link, adopting the one with the highest mean accuracy.
We re-evaluated five configurations per run rather than the ten of the random walker because a step of the mixed agent costs several times a step of the walker.

As we show below, the parameter that matters most is the range \texttt{p\_rw\_range} from which the mixing rate is drawn.
From a practical standpoint, we encoded \texttt{p\_rw\_range} by two endpoints $[\texttt{p\_rw\_range}_{\mathrm{low}}, \texttt{p\_rw\_range}_{\mathrm{high}}]$, sampled independently from $[0,1]$ and then sorted. 
We parametrised \texttt{min\_p\_rw} as a fraction $\epsilon \in [0,1]$ of the lower endpoint, so that
\[
    \texttt{min\_p\_rw} \;=\; \epsilon \cdot \texttt{p\_rw\_range}_{\mathrm{low}}.
\]
This parametrisation guarantees that every configuration proposed by the optimiser is admissible, so that no evaluation is wasted on an invalid one. Sampling \texttt{min\_p\_rw} independently could place it above the sampled range and make it override both the range and the decay.

\paragraph{Which parameters actually matter.}
\label{subsec:mixed-sensitivity}

To identify which parameters carry a detectable signal, we use a simple descriptive signal-to-noise comparison.
We first estimate a noise floor from independent repeated evaluations of identical parameter configurations. 
For each pair of repeats, we compute the absolute difference between their accuracies and define the noise floor as the median of these differences.
This captures the run-to-run variation due to the stochasticity of the agent and the evaluation procedure.

We then estimate the marginal effect of each parameter. 
We split its sampled values into five equally populated groups, compute the mean accuracy within each group, and define its marginal accuracy range as the difference between the largest and the smallest of these five means.
For \texttt{p\_rw\_range}, we use the midpoint of the range and all the evaluations.
For the other parameters, we only use the evaluations in the three lowest groups of \texttt{p\_rw\_range}, whose means differ by less than the noise floor, so that the effect of the mixing rate does not mask theirs.

Table~\ref{tab:mixed-sensitivity} compares these accuracy ranges with the noise floor.
We call a parameter significant when its accuracy range is substantially larger than the noise floor.
Across all five frameworks, \texttt{p\_rw\_range} meets this criterion, and at this budget, its effect is negative. 
The accuracy is flat over the lower mixing rates and drops for the higher ones.
The ranges of all the other parameters are below the noise floor, with one exception, \texttt{p\_birth} on the \texttt{slice} framework, which we discuss below.
We therefore conclude that \texttt{p\_rw\_range} is the only parameter that exhibits a consistent and substantial marginal effect on accuracy.

\begin{table}[H]
    \centering
    \begin{tabular}{|l|c||c|c|c|c|c|}
        \hline
        \multirow{2}{*}{Framework}
            & \multirow{2}{*}{\shortstack{Noise floor}}
            & \multicolumn{5}{c|}{Accuracy range of} \\
            & & \texttt{p\_rw\_range} & \texttt{p\_twist} & \texttt{p\_end} & \texttt{min\_p\_rw} & \texttt{p\_birth} \\
        \hline
        \texttt{unknotting}     & $1.10$ & $18.3$ & $0.32$ & $0.69$ & $0.61$ & $-$ \\
        \texttt{ribbon}        & $1.10$ & $9.4$  & $0.91$ & $0.80$ & $0.89$ & $-$ \\
        \texttt{slice}         & $1.00$ & $10.4$ & $0.92$ & $0.80$ & $0.56$ & $2.01$ \\
        \texttt{strong ribbon} & $1.20$ & $12.6$ & $0.70$ & $0.30$ & $0.83$ & $-$ \\
        \texttt{strong slice}  & $0.80$ & $6.7$  & $0.34$ & $0.20$ & $0.39$ & $0.37$ \\
        \hline
    \end{tabular}
    \caption{Accuracy ranges associated with each parameter, computed on the evaluations of the Bayesian optimisation, compared with the noise floor (coarse validation set, $200$ units per link). All values are expressed in percentage points.}
    \label{tab:mixed-sensitivity}
\end{table}
 
A plausible explanation for this behaviour is the following.
The parameters \texttt{p\_twist}, \texttt{p\_end}, and \texttt{p\_birth} determine the behaviour of the random walker conditional on that component being selected, and under the tuned schedules of Table~\ref{tab:mixed-bo}, the random walker controls only a handful of the agent's steps per episode.
The RW-specific parameters, therefore, have few opportunities to affect the final accuracy.
The exception is \texttt{p\_birth} in the \texttt{slice} framework, where more unknot births steadily lower the accuracy, from $59.7\%$ in the lowest group to $57.7\%$ in the highest.
This agrees with the tuned random walker, whose \texttt{p\_birth} is pinned near zero.
In the \texttt{strong slice} framework, \texttt{p\_birth} has no measurable effect.
We nevertheless keep a small \texttt{p\_birth} in the \texttt{slice} framework ($0.05$ in the adopted configuration) so that the moves of the random walker also include the unknot birth, the move that distinguishes the \texttt{slice} framework from the \texttt{ribbon} one.
The effect of \texttt{min\_p\_rw} is also below the noise floor, so we kept the value of the adopted configuration, which is at most $0.023$.

\paragraph{Optimal mixing rate and decay.}
\label{subsec:mixed-dose}
\label{subsec:mixed-decay}

The mixing rate decides how much of the search goes into random exploration.
This pays off only when the budget leaves room for many attempts per link, so the best mixing rate depends on the budget, while the other parameters only shape the few moves that the random walker makes.
Indeed, among the ten configurations re-evaluated above (the five best of each optimisation run), the accuracy at $800$ units per link grows with the mixing rate, with correlations between $+0.42$ and $+0.90$, whereas at the $200$ units per link of the optimisation, high mixing rates lower it.

The parameters \texttt{p\_twist}, \texttt{p\_end}, \texttt{p\_birth}, and \texttt{min\_p\_rw} at $800$ units per link made no difference.
We therefore chose the mixing rate by a grid search at the budget of the final benchmark, together with \texttt{step\_decay}, which sets how quickly the random walker hands control over to the policy and may interact with it. 
The optimisation had held both decays fixed, at $\texttt{step\_decay} = 0.9$ and $\texttt{end\_decay} = 0.75$.
Every episode starts with the same mixing rate 
\[
k \in \{0.05, 0.15, 0.30, 0.60, 0.80, 1.00\}, 
\]
crossed with 
\[
\texttt{step\_decay} \in \{0.15, 0.30, 0.60, 0.90\},
\]
except $0.90$ for $k \geq 0.80$, since at $k = 0.60$ it was already the worst decay rate on every framework, and every other parameter is held at the value selected above. Each of the resulting $22$ grid points was evaluated at least $4$ times on the coarse validation set at $800$ units per link.

The two rates interact: a faster decay helps more at a high mixing rate than at a low one.
On \texttt{unknotting}, \texttt{ribbon}, and on the two strong frameworks, the gain from the fastest decay is between $2.2\%$ and $9.4\%$ larger at $k = 0.60$ than at $k = 0.05$, by at least $3$ standard errors. On \texttt{slice}, it goes in the same direction, by about $1.5\%$, but within $2$ standard errors.

We then re-evaluated the two best grid points (with the range $\texttt{p\_rw\_range} = [k,k]$ of zero width) of each framework, together with the configuration selected by the optimiser, five times on the full validation set with fresh random seeds, and adopted a grid point only if it beat that configuration by more than one standard error.
This happened on all five frameworks, with gains between $+2.2\%$ and $+6.1\%$.

As a last step to find the best mixed parameters, we tested ranges $[a,b]$ with $a<b$, letting \texttt{step\_decay} vary as well.
The accuracies of the ranges are close to the mean of the grid accuracies over $[a,b]$ at the same decay rate; there is however a small gain growing with the width of the range.
We evaluated twelve ranges per framework four times on the coarse validation set at $800$ units per link, and re-evaluated the three best ones, against the grid point selected above with $\texttt{p\_rw\_range} = [k,k]$, five times on the full validation set with fresh random seeds.
On four frameworks, the best of these configurations was more accurate than that grid point, by between $0.2\%$ and $0.9\%$, and on \texttt{strong ribbon} the two tied ($-0.03\%$), always within two standard errors.
The two choices are therefore equivalent, and we adopt the ranges.
In the final configuration, \texttt{step\_decay} is $0.15$, the lowest value of the grid, on \texttt{unknotting} and \texttt{ribbon}, and $0.30$ on \texttt{slice}, \texttt{strong ribbon}, and \texttt{strong slice}.
Testing \texttt{step\_decay} separately at $0$ and $0.05$ on the first two showed no effect, and we keep $0.15$ as the tuned value.
The \texttt{end\_decay} parameter was tested separately at $0.15$, $0.45$, $0.75$, and $1.0$, and showed no significant effect on the score of the mixed agent. 
It therefore stayed at $0.75$ throughout.

\paragraph{Tuned defaults.}
\label{subsec:mixed-defaults}

Table~\ref{tab:mixed-bo} collects the tuned parameters of the mixed agent for each framework.

\begin{table}[ht]
    \centering
    \begin{tabular}{|l|c|c|c|c|c|}
        \hline
        Framework & \texttt{p\_rw\_range} & \texttt{step\_decay} & \texttt{min\_p\_rw} & \texttt{p\_birth} & Accuracy \\
        \hline
        \texttt{unknotting}     & $(0.70,\,1.00)$ & $0.15$ & $0.022$  & $-$     & $86.8 \pm 0.3\%$ \\
        \texttt{ribbon}        & $(0.90,\,1.00)$ & $0.15$ & $0.006$  & $-$     & $81.9 \pm 0.3\%$ \\
        \texttt{slice}         & $(0.80,\,0.90)$ & $0.30$ & $0.0003$ & $0.050$ & $81.0 \pm 0.4\%$ \\
        \texttt{strong ribbon} & $(0.75,\,1.00)$ & $0.30$ & $0.0025$ & $-$     & $49.0 \pm 0.6\%$ \\
        \texttt{strong slice}  & $(0.70,\,1.00)$ & $0.30$ & $0.023$  & $0.007$ & $34.0 \pm 0.7\%$ \\
        \hline
    \end{tabular}
    \caption{Tuned mixed agent parameters per framework.
    The accuracy is measured on the validation set at $800$ units per link, as mean and standard error over five evaluations with fresh random seeds.}
    \label{tab:mixed-bo}
\end{table}

Three remarks on these values are in order. First, \texttt{p\_twist} and \texttt{p\_end} are omitted because they showed no detectable effect under this analysis (Table~\ref{tab:mixed-sensitivity}). Any value in the search range performs equally well, and the optimiser's choice carries no information.
As remarked above, \texttt{end\_decay} stays at $0.75$ throughout.

Second, under the tuned schedules, the random walker acts almost only at the start of an episode.
The random walker takes the first action of an episode with probability $k$, the second with probability $k \cdot \texttt{step\_decay}$, and so on, until the floor \texttt{min\_p\_rw} is reached, so the expected number of random actions per episode, $k / (1 - \texttt{step\_decay})$, ranges from $1.0$ for \texttt{unknotting} to $1.25$ for \texttt{strong ribbon}.
The tuned mixed agent therefore randomises the first one or two actions of an episode and then follows the RL policy.
Since the first action of an episode can only start a band or, in the slice frameworks, add an unknot, this also explains why \texttt{p\_twist} and \texttt{p\_end} carry no signal.

Lastly, the accuracies in the last column are not comparable across frameworks, since they are measured on different validation sets and refer to different invariants.
The strong frameworks score much lower than the others here, as they do in the pure RL and pure RW cases.

\paragraph{A possible explanation.}
\label{subsubsec:mixed-explanation}
The above discussion suggests using the mixed RL+RW agent on every framework.
Indeed, this was also confirmed by the analysis we report in Section~\ref{subsec:best-agent-rl-vs-rw}.
We can offer a possible explanation for this.
It is instructive to compare our findings with the results reported by Gukov, Halverson, Manolescu, and Ruehle~\cite{gukov2025ribbon}. There, the best-performing agent at finding ribbon discs was a Bayesian-optimised random walker, with the trained RL policy slightly behind it.
In our setting, the opposite holds when the agents are given the same number of steps. As we discuss in Section~\ref{subsec:best-agent-rl-vs-rw}, our RL agents then outperform the optimised random walkers on every framework.
Given the same computing time instead, the walker overtakes the pure policy on the \texttt{ribbon} and \texttt{slice} frameworks and draws level with it on the others, and only the mixed agent stays ahead of the random walker on \texttt{unknotting} and on the strong frameworks (Section~\ref{subsec:equal-time}).
Note that the two benchmarks measure different quantities (theirs is an accuracy computed on knots known to be ribbon), so the comparison should be read qualitatively only.
We believe the difference is due to the wider action set available to our agents, which lets the RL policy choose the right sequence of moves more reliably.

The benefit of mixing the two agents can be explained in an analogous way by claiming that the random walker helps the RL agent where it struggles the most.
It is most visible where the policy is weakest.
On the links with the lowest invariant value of every framework except \texttt{strong ribbon},
the pure RL policy is less accurate than the tuned random walker (for instance, $44.0\%$ against $56.6\%$ on the \texttt{ribbon} test links of genus zero), and the mixed agent beats both ($64.6\%$).
These are the links on which many cheap random attempts might pay off.
On \texttt{strong ribbon}, the pure RL policy is already more accurate than the random walker on the links of genus one ($35.2\%$ against $30.2\%$), and mixing helps more on the links of higher genus.
After training, the policy tends to retry the same first band on a large share of the episodes of a link, and a single random action at the start is enough to diversify its attempts (Table~\ref{tab:diversity}).
Under the tuned schedule, indeed, the random walker takes only one or two actions per episode before handing control back to the RL policy.
Its benefit therefore appears to come from an early stochastic intervention, which can move the episode into a state from which the RL policy is more likely to find a successful trajectory.

\begin{table}[ht]
    \centering
    \begin{tabular}{|l|>{\centering\arraybackslash}p{5em}>{\centering\arraybackslash}p{5em}|>{\centering\arraybackslash}p{6.5em}>{\centering\arraybackslash}p{6.5em}|}
        \hline
        \multirow{2}{*}{Framework}
            & \multicolumn{2}{c|}{Most common first band}
            & \multicolumn{2}{c|}{Distinct first bands per episode} \\
            & \texttt{rl} & \texttt{rl\_rw} & \texttt{rl} & \texttt{rl\_rw} \\
        \hline
        \texttt{unknotting}    & $43\%$ & $9\%$  & $0.11$ & $0.35$ \\
        \texttt{ribbon}        & $63\%$ & $6\%$  & $0.12$ & $0.64$ \\
        \texttt{slice}         & $62\%$ & $9\%$  & $0.13$ & $0.68$ \\
        \texttt{strong ribbon} & $79\%$ & $9\%$ & $0.15$ & $0.88$ \\
        \texttt{strong slice}  & $73\%$ & $9\%$  & $0.18$ & $0.89$ \\
        \hline
    \end{tabular}
    \caption{Diversity of the attempts of the pure RL policy (\texttt{rl}) and of the mixed agent (\texttt{rl\_rw}) on the development set, at $800$ units per link.
    Most common first band: share of the episodes of a link that open with its most common first band, median over the links.
    Distinct first bands per episode: number of distinct first bands of a link divided by its number of episodes, averaged over the links.}
    \label{tab:diversity}
\end{table}

\subsection{Best agent among RW, RL, and RL+RW}
\label{subsec:best-agent-rl-vs-rw}

To determine the best agent for each framework, we evaluated all agents on the held-out test datasets, using a nominal evaluation budget of $800$ units per link.
The same analysis was performed at budgets of $200$, $400$, and $1{,}600$ units per link, without changing the ranking of the agents. 
We therefore report only the results of the analysis at $800$ units per link.

\paragraph{Benchmark setup.}
Alongside the three agents discussed above (which we call \texttt{bo\_rw}, \texttt{rl}, and \texttt{rl\_rw}), we also tested a fourth agent, \texttt{naive\_rw}: a non-optimised random walker with equal weights.\footnote{Its parameters are $\texttt{p\_twist} = \texttt{p\_end} = 1$, $\texttt{max\_twists} = 5$, $\texttt{max\_actions} = 30$, and, in the slice frameworks, $\texttt{p\_birth} = 0.1$.} 
This agent is useful as a baseline.
The metric is the accuracy on the test datasets, which were untouched by model selection and parameter tuning.
Each agent was run five times with different random seeds, and we report the mean accuracy over those runs, together with its standard deviation.

Because all four agents were evaluated on the same links, we computed their differences link-by-link. This paired comparison controls for variation in the intrinsic difficulty of individual links.
Links of the same dataset may share a prime summand, however, so their differences are not independent.
We therefore estimate the standard error of the mean difference allowing for correlation between any two links that share a prime. 
Writing $e_i$ for the deviation of the difference on link $i$ from its mean over the $N$ links of the dataset, we set
\[
    \mathrm{SE}^2 = \frac{1}{N^2} \sum_{i} \sum_{j \sim i} e_i e_j,
\]
where $j \sim i$ means that $j = i$ or that the links $i$ and $j$ share a prime.
This reduces to the usual standard error when no two links share a prime.
Clustering the links by the connected components of the relation $\sim$ is not possible here, as in the strong frameworks, connected sums join all the test links into a single component.
Accounting for the randomness of the agents, the finite size of the test sets, and the shared primes, the standard error of the differences discussed below lies between $0.8\%$ and $4.3\%$, and every one of these differences is at least $3.7$ standard errors away from zero.

\paragraph{Benchmark result.}
Table~\ref{tab:rl-vs-rw-results} summarises the benchmark results.
The ordering is the same in every framework: $\texttt{naive\_rw} \ll \texttt{bo\_rw} \ll \texttt{rl} < \texttt{rl\_rw}$.
Table~\ref{tab:rl-vs-rw-steps} reports the three successive accuracy differences.

\begin{table}[ht]
    \centering
    \begin{tabular}{|l|c|c|c|c|c|}
        \hline
        \multirow{2}{*}{Framework}
            & \multicolumn{4}{c|}{Accuracy (\%) of} & \multirow{2}{*}{Best agent} \\
        & \texttt{naive\_rw} & \texttt{bo\_rw} & \texttt{rl} & \texttt{rl\_rw} & \\
        \hline
        \texttt{unknotting}    & $8.4 \pm 0.8$  & $58.6 \pm 0.9$ & $82.4 \pm 0.8$ & $\mathbf{87.4 \pm 0.7}$ & \texttt{rl\_rw} \\
        \texttt{ribbon}        & $10.2 \pm 0.9$ & $54.6 \pm 1.1$ & $73.3 \pm 0.4$ & $\mathbf{82.1 \pm 0.5}$ & \texttt{rl\_rw} \\
        \texttt{slice}         & $9.8 \pm 0.5$  & $55.2 \pm 2.2$ & $68.9 \pm 0.5$ & $\mathbf{80.6 \pm 1.1}$ & \texttt{rl\_rw} \\
        \texttt{strong ribbon} & $1.2 \pm 0.3$  & $12.7 \pm 1.2$ & $37.9 \pm 1.7$ & $\mathbf{51.9 \pm 0.8}$ & \texttt{rl\_rw} \\
        \texttt{strong slice}  & $1.4 \pm 0.5$  & $12.1 \pm 0.7$ & $27.6 \pm 1.3$ & $\mathbf{36.7 \pm 1.2}$ & \texttt{rl\_rw} \\
        \hline
    \end{tabular}
    \caption{Accuracy of the four agents on the test set of each framework, expressed as percentages, with mean and standard deviation over five runs at $800$ units per link.
    The highest mean accuracy is in bold.}
    \label{tab:rl-vs-rw-results}
\end{table}

\begin{table}[ht]
    \centering
    \begin{tabular}{|l|c|c|c|}
        \hline
        \multirow{2}{*}{Framework} & Tuning the walker & RW $\rightarrow$ RL & RL $\rightarrow$ RL+RW \\
            & ($\texttt{bo\_rw} - \texttt{naive\_rw}$) & ($\texttt{rl} - \texttt{bo\_rw}$) & ($\texttt{rl\_rw} - \texttt{rl}$) \\
        \hline
        \texttt{unknotting}    & $+50.2$ & $+23.8$ & $+5.1$ \\
        \texttt{ribbon}        & $+44.4$ & $+18.7$ & $+8.8$ \\
        \texttt{slice}         & $+45.4$ & $+13.8$ & $+11.7$ \\
        \texttt{strong ribbon} & $+11.6$ & $+25.2$ & $+14.0$ \\
        \texttt{strong slice}  & $+10.6$ & $+15.5$ & $+9.2$ \\
        \hline
    \end{tabular}
    \caption{Accuracy differences between the various models, in percentage points.
    Recall that the margin of error on these differences lies between $0.8$ and $4.3$ points.}
    \label{tab:rl-vs-rw-steps}
\end{table}

Tuning the random walker improves accuracy by between $+10.6\%$ and $+50.2\%$ over the naive walker, while replacing the tuned walker with the RL policy adds a further $+13.8\%$ to $+25.2\%$. Both steps are significant in every framework.
On \texttt{unknotting}, \texttt{ribbon}, and \texttt{slice}, where the tuned random walker already solves more than half of the links, tuning it is the largest improvement.
In the two strong frameworks, the learned policy contributes more than the tuning.

Mixing the walker into the policy improves accuracy further on every framework, by between $+5.1\%$ on \texttt{unknotting} and $+14.0\%$ on \texttt{strong ribbon}.
As expected from the discussion in Section~\ref{subsubsec:mixed-explanation}, on every framework except \texttt{strong ribbon} the gain is large on the links with the lowest invariant value, where the pure policy is less accurate than the tuned random walker, and the mixed agent is more accurate than both.
The mixed agent also gains on the other classes of links and loses significantly on none.

A higher accuracy, however, does not mean that the mixed agent solves every link that the RL policy solves.
The two agents solve measurably different sets of links: in a single run on \texttt{ribbon}, $100$ test links are solved only by the mixed agent and $29$ only by the pure RL policy.

\subsubsection{Equal computing time}
\label{subsec:equal-time}

The benchmark above gives every agent the same number of steps, not the same computing time.
A step of the RL policy is more expensive than a step of the random walker, since it requires the computation of the input features and a forward pass of the policy network.
On the machines used for the benchmark, a policy step costs between $4.0$ and $6.8$ times a random walker step, depending on the framework.
The mixed agent costs no more than the pure RL policy.

To check whether the ranking survives when the computing time is matched, we repeated the comparison on the test datasets, giving the tuned random walker the number of steps per link that matches, on the same machine, the running time of the RL policy at $800$ steps per link.
The random walker thus received between about $3{,}500$ and $5{,}700$ steps per link on average, depending on the framework, while the RL policy and the mixed agent kept $800$.
The results are in Table~\ref{tab:equal-time}.

\begin{table}[H]
    \centering
    \small
    \setlength{\tabcolsep}{4pt}
    \begin{tabular}{|l|c|c|c|c|c|c|}
        \hline
        \multirow{2}{*}{Framework} & \texttt{bo\_rw} steps & \multicolumn{3}{c|}{Accuracy (\%) of} & \multirow{2}{*}{$\texttt{bo\_rw} - \texttt{rl}$} & \multirow{2}{*}{$\texttt{bo\_rw} - \texttt{rl\_rw}$} \\
            & per link & \texttt{bo\_rw} & \texttt{rl} & \texttt{rl\_rw} & & \\
        \hline
        \texttt{unknotting}    & $3452$ & $81.6 \pm 1.0$ & $82.3 \pm 0.8$ & $\mathbf{87.4 \pm 0.7}$ & $-0.7 \pm 1.6$  & $-5.9 \pm 1.4$ \\
        \texttt{ribbon}        & $5196$ & $\mathbf{84.3 \pm 0.9}$ & $73.3 \pm 0.4$ & $82.1 \pm 0.5$ & $+11.0 \pm 2.1$ & $+2.2 \pm 1.8$ \\
        \texttt{slice}         & $5725$ & $\mathbf{84.4 \pm 0.5}$ & $68.9 \pm 0.5$ & $80.6 \pm 1.1$ & $+15.5 \pm 1.9$ & $+3.8 \pm 1.5$ \\
        \texttt{strong ribbon} & $5088$ & $29.4 \pm 1.0$ & $37.9 \pm 1.7$ & $\mathbf{51.9 \pm 0.8}$ & $-8.5 \pm 4.8$  & $-22.5 \pm 4.7$ \\
        \texttt{strong slice}  & $3999$ & $25.7 \pm 1.5$ & $27.6 \pm 1.3$ & $\mathbf{36.7 \pm 1.2}$ & $-1.9 \pm 4.6$  & $-11.1 \pm 4.1$ \\
        \hline
    \end{tabular}
    \caption{The agents on the test set when the tuned random walker is given the computing time of the RL policy.
    Accuracies are mean and standard deviation over five runs. 
    Differences are in percentage points, with the standard error of the paired comparison link-by-link, allowing for links that share a prime.}
    \label{tab:equal-time}
\end{table}

On the two strong frameworks, the mixed agent remains the best, $22.5\%$ and $11.1\%$ ahead of the walker, and on \texttt{unknotting}, it leads by $5.9\%$.
On \texttt{ribbon} and \texttt{slice}, instead, the random walker with matched time is at least as accurate as the mixed agent, by $2.2\%$ and $3.8\%$, and much more accurate than the pure RL policy, by $11.0\%$ and $15.5\%$, respectively.
On \texttt{ribbon}, the random walker's lead over the mixed agent is only $1.2$ standard errors, so we consider it a tie.
The pure RL policy is not significantly more accurate than the tuned random walker on any framework.
With the matched computing time on \texttt{unknotting} and \texttt{strong slice}, the two agents differ by less than one standard error.
On \texttt{strong ribbon}, the pure policy is ahead of the tuned walker by $8.5\%$, which is only $1.8$ standard errors.

By looking at the link classes, we can explain this difference.
The random walker wins on links of genus zero and one, and on those of unknotting number at most three.
The policy wins where the answer requires a long and specific sequence of moves. 
On the strong frameworks, the walker solves none of the $20$ test links of genus five, even with more than four times as many steps, while the policy solves between $69\%$ and $89\%$ of them, depending on the framework and on the mixing.
On \texttt{ribbon} and \texttt{slice}, $44\%$ of the test links have genus zero or one.

In general, we run both the random walker and the mixed agent on the same dataset to maximise the chance of finding new upper bounds.

\section{New bounds}
\label{sec:results}

We report, per database tier, how much the known values were improved, splitting the contribution into \emph{theoretical} (the bounds of Appendix~\ref{sec:bounds}) and \emph{agentic} (an explicit band path found by the RW or RL$+$RW agent).
We denote a value to be \emph{exact} when its lower and upper bounds coincide, \emph{bounded} when it is confined to an interval, and \emph{unknown} when nothing is on record.
A value is agentic only when an agent's witness strictly improved the previously known upper bound.

Each tier below is therefore split into two parts, one per stage.
First, the \emph{theoretical bounds} part: beginning from the starting knowledge provided by the external databases, we apply a bounds-only pipeline to it.
This applies all the results reported in Appendix~\ref{sec:bounds} and verifies if better lower and upper bounds exist.
Second, the \emph{agent bounds} part: this starts where the theoretical pipeline ends and reports what the agent adds on top.

\subsection{Building the reference bounds dataset}
\label{sec:dataset}

Several public sources report values for slice genera and unknotting numbers, and they do not always agree. 
For some links, one source's bounds are sharper than another's.
We therefore assembled a single, up-to-date dataset with the best known bounds on the invariants we target by pulling together every public knot/link database we are aware of and merging them with explicit provenance.
This dataset plays two roles: it is the reference against which any improvement reported below is measured, and it is what the agents are trained and evaluated on.

\paragraph{Sources.}
We ingest the databases below, splitting them into three sections: knots up to $13$ crossings, $14$-crossing knots, and at least two-component links up to 11 crossings.

\begin{itemize}
    \item \cite{knotinfo} and \cite{linkinfo}, our main references: data on the unknotting number for prime knots up to $13$ crossings and at least two-component prime links up to $10$ crossings, and on the slice genus for prime knots up to $13$ crossings and at least two-component prime links up to $11$ crossings;
    \item \cite{knotatlas}: data on the unknotting number for prime knots up to 11 crossings and the slice genus for prime knots up to 10 crossings;
    \item \cite{brittenham2017unknottingsearch}: data on the unknotting number for prime knots up to 14 crossings;
    \item \cite{dunfield2025ribbonconslice}: data on the slice genus for prime knots up to 19 crossings, where they determine whether a knot is slice, not slice ($\gen{} \geq 1$), or undetermined ($\gen{} \geq 0$);
    \item \cite{jablonowski2026inequalities}: data on the unknotting number and the slice genus for prime knots up to 13 crossings;
    \item \cite{lee2026computationunknottingnumbersknot}: data on the unknotting number for prime knots up to 13 crossings.
\end{itemize}

KnotInfo for prime knots up to 13 crossings and LinkInfo for at least two-component prime links up to 11 crossings are taken as the main references. 
In particular, we use names, PD codes, and orientation conventions from there.
For $14$-crossing prime knots, the SnapPy census supplies the names and PD codes for all $46{,}972$ knots.
Crossing signs are part of the data: a \texttt{(PD code, crossing signs)} pair uniquely determines the oriented diagram, and signs are preserved across all computations.

\paragraph{Merging the datasets.}
After collecting all the data, we merge the various sources together.
This means that for an invariant whose exact value is not known and for which we have only an upper and a lower bound, we compare the different sources and keep the best bounds, recording for each invariant a \texttt{\_lower\_bound\_source} and \texttt{\_upper\_bound\_source} provenance column naming which database set the tightest bound on each side.

\paragraph{Discrepancies in the datasets.}
During the merge, two discrepancies emerged among the databases.
First, the unknotting number of $\mathtt{10_{6}}$ is recorded as having unknotting number 3 in various databases, while from \cite{brittenham2026unknottingnumberconnectedsums4_1_5_1} we know its value is either $2$ or $3$.

Second, the unknotting numbers of three further knots disagree between \cite{knotinfo} and \cite{brittenham2017unknottingsearch}: these are $\mathtt{10_{162}}, \mathtt{10_{164}}$, and $\mathtt{10_{165}}$.
We think this is due to the presence of the Perko pair in Brittenham's database, and we adopt the KnotInfo data in the merged release.

\paragraph{Narrowing by theoretical bounds.}
\label{subs:gt-bounds}
After merging the various references into a single dataset, we ran a bounds-only pipeline that narrows the recorded intervals where possible using the classical theoretical bounds discussed in Appendix~\ref{sec:bounds}.
Note that since $\gen \leq \unk$ and $\gen \leq \sgen$, all the lower bounds for the slice genus are also used for the strong slice genus and the unknotting number.
For a positive diagram, Theorem~\ref{thm:positive-links-genus} determines the slice genus exactly, which is a lower bound on the unknotting number. 
Whenever it matches the recorded unknotting upper bound, we can determine the unknotting number exactly.

\paragraph{Narrowing via agents.}
The narrowed dataset is the input that the agents start from, so any value the agent later pins down is genuinely beyond what these classical bounds already provide.
We present in the next sections the results we obtained from the theoretical bounds and from the agents' upper bounds, splitting the improvements by contribution.

\subsection{At least two-component prime links with up to 11 crossings}
\label{subs:updated-links}

In the dataset, there are $4{,}188$ oriented prime links with at least two components ($1{,}424$, disregarding orientation).
Note that we did not have any value for the strong slice genus of any link, nor the unknotting number for all the $11$-crossings links ($2{,}920$ oriented links).

\subsubsection{Theoretical bounds}

The bounds-only pipeline acts unevenly across the three invariants.
For the unknotting number, it works almost entirely on the $2{,}924$ links carrying no recorded value at all. 
It determines $1{,}132$ of them outright and leaves the other $1{,}792$ with a bounded interval, so that after this stage, no unknotting number is completely unknown.
For the slice genus, which starts out with only $407$ non-exact cases, it determines $19$ of them.
For the strong slice genus, which no database tabulates, it supplies the entire starting point. 
$3{,}456$ of the links have two components with non-zero pairwise linking number, hence $\sgen{} = \infty$ (counted as an exact value).
The remaining $732$ have vanishing pairwise linking numbers and therefore finite $\sgen{}$. 
These remain all non-exact after the bounds are computed.

\begin{table}[ht]
    \centering
    \small
    \setlength{\tabcolsep}{5pt}
    \renewcommand{\arraystretch}{1.15}
    \begin{tabular}{@{}lrrrrrrr@{}}
        \toprule
        & \multicolumn{3}{c}{Start}
        & \multicolumn{2}{c}{Theoretical contribution}
        & \multicolumn{2}{c}{After theory} \\
        \cmidrule(lr){2-4} \cmidrule(lr){5-6} \cmidrule(lr){7-8}
        Invariant & exact & bounded & unknown & made exact & better bounds & exact & bounded \\
        \midrule
        unknotting $\unk$
            & $1{,}264$ & $0$ & $2{,}924$
            & $1{,}132$ & $1{,}792$
            & $2{,}396$ & $1{,}792$ \\
        \addlinespace[2pt]
        slice genus $\gen$
            & $3{,}781$ & $407$ & $0$
            & $19$ & $0$
            & $3{,}800$ & $388$ \\
        \addlinespace[2pt]
        strong slice genus $\sgen$
            & $0$ & $0$ & $4{,}188$
            & $3{,}456$ & $732$
            & $3{,}456$ & $732$ \\
        \bottomrule
    \end{tabular}
    \caption{At least two-component oriented prime links with up to $11$ crossings: contribution of the theoretical bounds.}
    \label{tab:links-theory}
\end{table}

\subsubsection{Agent bounds}

Starting from the narrowed dataset above, the best-performing RW and RL$+$RW agents for each framework try to find explicit movies that realise a lower invariant value than the known upper bound.
Table~\ref{tab:links-rl} shows the improvements made by the agents. When studying the slice genera, allowing the birth of unlinked unknot components did not improve our upper bounds. Hence, we report only on the slice genus.

\begin{table}[ht]
    \centering
    \small
    \setlength{\tabcolsep}{6pt}
    \renewcommand{\arraystretch}{1.15}
    \begin{tabular}{@{}lrrrrrr@{}}
        \toprule
        & \multicolumn{2}{c}{After theory}
        & \multicolumn{2}{c}{Agent contribution}
        & \multicolumn{2}{c}{Final} \\
        \cmidrule(lr){2-3} \cmidrule(lr){4-5} \cmidrule(lr){6-7}
        Invariant & exact & bounded & made exact & better bounds & exact & bounded \\
        \midrule
        unknotting $\unk$
            & $2{,}396$ & $1{,}792$
            & $846$ & $934$
            & $3{,}242$ & $946$ \\
        \addlinespace[2pt]
        slice genus $\gen$
            & $3{,}800$ & $388$
            & $125$ & $0$
            & $3{,}925$ & $263$ \\
        \addlinespace[2pt]
        strong slice genus $\sgen$
            & $3{,}456$ & $732$
            & $676$ & $56$
            & $4{,}132$ & $56$ \\
        \bottomrule
    \end{tabular}
    \caption{At least two-component oriented prime links with up to $11$ crossings: contribution of the agent, on top of Table~\ref{tab:links-theory}.}
    \label{tab:links-rl}
\end{table}

\subsubsection{Comments}
We now briefly comment and provide some statistics on the results found for each invariant.

\paragraph{Unknotting number ($11$-crossing links).}
No table of unknotting numbers for $11$-crossing at least two-component prime links was previously available ($2{,}920$ oriented links).
Disregarding orientation, of the $1{,}007$ unique $11$-crossing at least two-component prime links, we determine the exact value for $690$ and confine each of the remaining $317$ to an interval.
The $2{,}920$ oriented $11$-crossing links split into $1{,}978$ exact values and $942$ intervals, of which $810$ have width one and $132$ width two.

\paragraph{Weak slice genus.}
The slice genus was undetermined for $407$ links.
The theoretical bounds settle $19$ of them, and the agent a further $125$, all of which turn out to be slice. 
Of the latter, $4$ links have $9$ crossings, $23$ have $10$, and $98$ have $11$.
The $263$ links still undetermined are all confined to $\gen{} \in \{0,1\}$; i.e., the only open question left is whether they are slice.

\paragraph{Strong slice genus.}
Of the $732$ links left with finite strong slice genus, the agent determines the exact value for $676$ and strictly narrows the interval of the other $56$.
Out of those $56$, $54$ links are confined to $\sgen \in \{1,2\}$ and $2$ links to $\sgen \in \{0,1\}$.
Table~\ref{tab:updated-strong-exact} collects the strong slice genus of the $676$ links for which we determined an exact value.

\begin{table}[H]
    \centering
    \small
    \setlength{\tabcolsep}{8pt}
    \renewcommand{\arraystretch}{1.15}
    \begin{tabular}{@{}crrrrr@{}}
        \toprule
        $\sgen{}$ & $0$ & $1$ & $2$ & $3$ & $4$ \\
        \midrule
        \# links & $4$ & $388$ & $238$ & $44$ & $2$ \\
        \bottomrule
    \end{tabular}
    \caption{The $676$ links of finite strong slice genus whose value was determined exactly.}
    \label{tab:updated-strong-exact}
\end{table}

\subsection{Prime knots with up to 13 crossings}
\label{subs:updated-knots13}

There are $12{,}965$ prime knots with up to 13 crossings.
Neither stage improved on the known data. 
The theoretical bounds reproduce the tabulated values without sharpening any of them, and no agent beat a known upper bound.
The agents did reproduce the tabulated upper bound on $12{,}865$ ($99.2 \%$ of the total) of the knots for the unknotting number and on $12{,}964$ ($99.9 \%$ of the total) for the slice genus.
On the remaining $100$ knots (unknotting number) and on $12a_{631}$ (ribbon genus), the best answer stayed one above the tabulated value.

\subsection{Prime knots with 14 crossings}
\label{subs:updated-knots14}

There are $46{,}972$ prime knots with 14 crossings.
Here, the two stages divide the work by invariant. 
The improvements on the slice genus side come almost entirely from the agent, while on the unknotting number side they come entirely from theory.

\subsubsection{Theoretical bounds}

While no slice genus dataset for $14$-crossing knots was available, Dunfield and Gong \cite{dunfield2025ribbonconslice} do report sliceness for $1{,}194$ of them, and slice obstruction for $408$, which we record as the bound $\gen{} \geq 1$.
That is the whole of the starting knowledge, and the only exact value it supplies is $\gen{} = 0$.
The theoretical bounds then determine a further $1{,}671$ values exactly and strictly narrow the interval of $44{,}107$ more, bringing the census to $2{,}865$ exact values.

For the unknotting number, the upper bounds of all $46{,}972$ knots come from \cite{brittenham2017unknottingsearch}, and the new contribution is purely on the \emph{lower} bound side. 
The obstructions give a non-trivial lower bound for $1{,}288$ knots, of which $1{,}097$ become exactly determined, the new bound meeting Brittenham's upper bound.
Table~\ref{tab:knots14-theory} shows the contribution of the theoretical pipeline.
Note that no value remains completely unbounded after the bounds are computed.

\begin{table}[H]
    \centering
    \small
    \setlength{\tabcolsep}{6pt}
    \renewcommand{\arraystretch}{1.15}
    \begin{tabular}{@{}lrrrrrrr@{}}
        \toprule
        & \multicolumn{3}{c}{Start}
        & \multicolumn{2}{c}{Theoretical contribution}
        & \multicolumn{2}{c}{After theory} \\
        \cmidrule(lr){2-4} \cmidrule(lr){5-6} \cmidrule(lr){7-8}
        Invariant & exact & bounded & unknown & made exact & better bounds & exact & bounded \\
        \midrule
        slice genus $\gen$
            & $1{,}194$ & $408$ & $45{,}370$
            & $1{,}671$ & $44{,}107$
            & $2{,}865$ & $44{,}107$ \\
        \addlinespace[2pt]
        unknotting $\unk$
            & $19{,}781$ & $27{,}191$ & $0$
            & $1{,}097$ & $191$
            & $20{,}878$ & $26{,}094$ \\
        \bottomrule
    \end{tabular}
    \caption{Prime knots with $14$ crossings: contribution from the theoretical bounds.}
    \label{tab:knots14-theory}
\end{table}

\subsubsection{Agent bounds}

The agent contributes only to the slice genus. 
It makes exact $43{,}167$ of the $44{,}107$ intervals left open above and strictly narrows $914$ more, bringing the census to an \emph{exact} slice genus for $46{,}032$ knots, with each of the remaining $940$ narrowed to an interval of at most two values.
Again, allowing the birth of unlinked unknot components did not improve our upper bounds.
For the unknotting number, it finds no unknotting sequence that improves on Brittenham's upper bounds, so that row remains unchanged.

\begin{table}[H]
    \centering
    \small
    \setlength{\tabcolsep}{6pt}
    \renewcommand{\arraystretch}{1.15}
    \begin{tabular}{@{}lrrrrrr@{}}
        \toprule
        & \multicolumn{2}{c}{After theory}
        & \multicolumn{2}{c}{Agent contribution}
        & \multicolumn{2}{c}{Final} \\
        \cmidrule(lr){2-3} \cmidrule(lr){4-5} \cmidrule(lr){6-7}
        Invariant & exact & bounded & made exact & better bounds & exact & bounded \\
        \midrule
        slice genus $\gen$
            & $2{,}865$ & $44{,}107$
            & $43{,}167$ & $914$
            & $46{,}032$ & $940$ \\
        \addlinespace[2pt]
        unknotting $\unk$
            & $20{,}878$ & $26{,}094$
            & --- & ---
            & $20{,}878$ & $26{,}094$ \\
        \bottomrule
    \end{tabular}
    \caption{Prime knots with $14$ crossings: what the agent contributes on top of Table~\ref{tab:knots14-theory}.}
    \label{tab:knots14-rl}
\end{table}

\subsubsection{Comments}
We now briefly comment on and provide some statistics regarding the results found for each invariant.

\paragraph{Unknotting number lower bounds source.}
The sources of the new theoretical bounds that improve the previously known values are summarised in Table~\ref{tab:updated-14-unknotting}.
\begin{table}[H]
    \centering
    \small
    \setlength{\tabcolsep}{8pt}
    \renewcommand{\arraystretch}{1.15}
    \begin{tabular}{@{}lr@{}}
        \toprule
        New lower bound source & \# knots \\
        \midrule
        $\tau$ and $\nu_s$ & $957$ \\
        positive and $g_4 =$ upper bound & $327$ \\
        Murasugi--Tristram & $4$ \\
        \midrule
        total & $1{,}288$ \\
        \bottomrule
    \end{tabular}
    \caption{New lower bounds on the unknotting number for $14$-crossing prime knots, with source.}
    \label{tab:updated-14-unknotting}
\end{table}

\paragraph{Slice genus distribution.}
In the following tables, we can observe the distribution of the values found for the slice genus.
Table~\ref{tab:updated-14-exact} summarises the $46{,}032$ exact values, where the $1{,}194$ knots with $\gen{} = 0$ are exactly the slice knots of Dunfield and Gong~\cite{dunfield2025ribbonconslice}; every value above $0$ is new.

\begin{table}[H]
    \centering
    \small
    \setlength{\tabcolsep}{6pt}
    \renewcommand{\arraystretch}{1.15}
    \begin{tabular}{@{}lrrrrrrr@{}}
        \toprule
        $\gen{}$ & $0$ & $1$ & $2$ & $3$ & $4$ & $5$ & $6$ \\
        \midrule
        \# knots & $1{,}194$ & $29{,}531$ & $10{,}178$ & $3{,}822$ & $1{,}088$ & $202$ & $17$ \\
        \bottomrule
    \end{tabular}
    \caption{The $46{,}032$ knots with $14$ crossings whose slice genus was determined exactly.}
    \label{tab:updated-14-exact}
\end{table}

Table~\ref{tab:updated-14-bounded} summarises the intervals of values left undetermined for the $940$ remaining knots by counting how many knots have each interval.

\begin{table}[H]
    \centering
    \small
    \setlength{\tabcolsep}{8pt}
    \renewcommand{\arraystretch}{1.15}
    \begin{tabular}{@{}lrrr@{}}
        \toprule
        interval & $[0,1]$ & $[1,2]$ & $[2,3]$ \\
        \midrule
        \# knots & $10$ & $919$ & $11$ \\
        \bottomrule
    \end{tabular}
    \caption{The $940$ knots with $14$ crossings whose slice genus was narrowed to a short interval, but not determined exactly.}
    \label{tab:updated-14-bounded}
\end{table}

\subsection{Non-additivity of the unknotting number}
\label{subs:unknotting-additivity}

Our original motivation for developing the unknotting framework was to find counterexamples to the additivity of the unknotting number.
Following the results of Brittenham and Hermiller~\cite{brittenham2026unknottingnumberconnectedsums4_1_5_1}, we ran the unknotting agents on some connected sums of \emph{symbiont knots}.
In their paper, they call two knots $K$ and $K^{\prime}$ symbionts if $\unk{K \sharp K^{\prime}} < \unk{K} + \unk{K^{\prime}}$.
They showed that $\mathtt{4_{1}}$ and $\mathtt{9_{10}}$ are symbionts; see \cite[Theorem~1.1]{brittenham2026unknottingnumberconnectedsums4_1_5_1}. 
Furthermore, $\mathtt{4_1}$ is Gordian adjacent to $\mathtt{8_{4}}$, $\mathtt{9_{8}}$, $\mathtt{9_{17}}$, $\mathtt{9_{40}}$, $\mathtt{10_{41}}$, $\mathtt{10_{89}}$, $\mathtt{10_{115}}$, $\mathtt{10_{121}}$, $\mathtt{10_{140}}$, and $\mathtt{10_{144}}$, as $\mathtt{4_{1}}$ is contained in a minimal unknotting sequence for each of them; see the set $S_2$ in \cite{brittenham2026unknottingnumberconnectedsums4_1_5_1}.
It now follows from \cite[Lemma~4.1]{brittenham2026unknottingnumberconnectedsums4_1_5_1} that $\mathtt{8_{4}}$, $\mathtt{9_{8}}$, $\mathtt{9_{17}}$, $\mathtt{9_{40}}$, $\mathtt{10_{41}}$, $\mathtt{10_{89}}$, $\mathtt{10_{115}}$, $\mathtt{10_{121}}$, $\mathtt{10_{140}}$, and $\mathtt{10_{144}}$ are symbionts of $\mathtt{9_{10}}$.
Since $\mathtt{4_1}$ is amphichiral, $\mir{\mathtt{9_{10}}}$ is a symbiont of $\mathtt{4_1}$ as well, and hence of the same knots.

Our unknotting agent found that
\begin{gather*}
\mathtt{8_{4}} \sharp \mathtt{9_{10}}, \mathtt{9_{8}} \sharp \mathtt{9_{10}}, \mathtt{9_{17}} \sharp \mathtt{9_{10}}, \mathtt{9_{40}} \sharp \mathtt{9_{10}}, \mathtt{10_{41}} \sharp \mathtt{9_{10}}, \mathtt{10_{89}} \sharp \mathtt{9_{10}}, \\
\mathtt{10_{115}} \sharp \mathtt{9_{10}}, \mathtt{10_{121}} \sharp \mathtt{9_{10}}, \mathtt{10_{140}} \sharp \mathtt{9_{10}}, \mathtt{10_{144}} \sharp \mathtt{9_{10}}, \mathtt{10_{41}} \sharp \mir{\mathtt{9_{10}}}
\end{gather*}
all have unknotting number at most four. As $\mathtt{8_{4}}$, $\mathtt{9_{8}}$, $\mathtt{9_{17}}$, $\mathtt{9_{40}}$, $\mathtt{10_{41}}$, $\mathtt{10_{89}}$, $\mathtt{10_{115}}$, $\mathtt{10_{121}}$, $\mathtt{10_{140}}$, and $\mathtt{10_{144}}$ have unknotting number two, and $\unk{\mathtt{9_{10}}} = \unk{\mir{\mathtt{9_{10}}}} = 3$, we recover the non-additivity of the unknotting number for the above connected sums.
Some of the prime summands were present in the datasets on which we trained or tested the models, while the exact connected sums were not.
The exact presence of the knots in the splits is reported in~Table~\ref{tab:symbionts-in-datasets}.

\begin{table}[ht]
\centering
\begin{tabular}{ll}
Split & Knot \\
\hline
training      & $\mathtt{8_{4}} \sharp \mir{\mathtt{9_{25}}}$   \\
           & $\mathtt{9_{8}} \sharp \mathtt{9_{38}}$       \\
           & $\mathtt{10_{115}} \sharp \mathtt{10_{124}}$  \\
\greyrule
validation        & $\mathtt{10_{41}}$                           \\
           & $\mathtt{10_{41}} \sharp \mathtt{10_{74}}$     \\
\greyrule
coarse validation & --- \\
\greyrule
development        & ---  \\
\greyrule
testing       & $\mathtt{10_{140}} \sharp \mathtt{10_{152}}$  \\
           & $\mathtt{9_{10}} \sharp \mathtt{10_{84}}$      \\
           & $\mathtt{9_{10}} \sharp \mir{\mathtt{10_{14}}}$   \\
           & $\mathtt{9_{10}} \sharp \mir{\mathtt{10_{152}}}$  \\
\greyrule
\end{tabular}
\caption{Rows of the unknotting splits containing $\mathtt{8_{4}}$, $\mathtt{9_{8}}$, $\mathtt{9_{10}}$, $\mathtt{9_{17}}$, $\mathtt{9_{40}}$, $\mathtt{10_{41}}$, $\mathtt{10_{89}}$, $\mathtt{10_{115}}$, $\mathtt{10_{121}}$, $\mathtt{10_{140}}$, or $\mathtt{10_{144}}$ (either chirality) as the knot itself or as a connected summand.}
\label{tab:symbionts-in-datasets}
\end{table}

\paragraph{Unknotting sequences.}
We comment briefly on the unknotting sequences we found.
Each band but one is of length one; i.e., the agent performs a crossing change by passing the band through an edge contained in the boundary of the same face where it started the band.
The exception is the last band of the sequence for $\mathtt{10_{41}} \sharp \mir{\mathtt{9_{10}}}$, which first passes under an edge and ends in the adjacent face.
After each band, the diagram is then simplified before starting a new one.
In half of the bands, the band corresponds to exactly one diagrammatic crossing change.

Recall the unknotting sequence proposed in the proof of~\cite[Theorem~1.1]{brittenham2026unknottingnumberconnectedsums4_1_5_1}, which is
\begin{equation}
\label{eq:unknotting-sequence-briher}
     \mathtt{4_1} \sharp \mathtt{9_{10}} \to \mir{\mathtt{K15n4866}} \to \mathtt{8_{14}} \to \text{unknot}.
\end{equation}
All the unknotting sequences we found reach $\mathtt{8_{14}}$, or its mirror for $\mathtt{10_{41}} \sharp \mir{\mathtt{9_{10}}}$, after three crossing changes and then the unknot.
The sequences for $\mathtt{8_{4}} \sharp \mathtt{9_{10}}$, $\mathtt{9_{17}} \sharp \mathtt{9_{10}}$, $\mathtt{10_{89}} \sharp \mathtt{9_{10}}$, and $\mathtt{10_{144}} \sharp \mathtt{9_{10}}$ find exactly $\mathtt{4_{1}} \sharp \mathtt{9_{10}}$ with the first crossing change and then follow the same sequence \eqref{eq:unknotting-sequence-briher}.
The sequence for $\mathtt{10_{41}} \sharp \mir{\mathtt{9_{10}}}$ is the mirror image of the one for $\mathtt{10_{89}} \sharp \mathtt{9_{10}}$: it passes through $\mathtt{4_{1}} \sharp \mir{\mathtt{9_{10}}}$, $\mathtt{K15n4866}$, and $\mir{\mathtt{8_{14}}}$.
The sequences for $\mathtt{9_{8}} \sharp \mathtt{9_{10}}$ and $\mathtt{10_{115}} \sharp \mathtt{9_{10}}$ also find $\mathtt{4_{1}} \sharp \mathtt{9_{10}}$ with the first crossing change, but then pass through a different knot, $\mathtt{K15n4005}$.
This gives a second sequence of three crossing changes from $\mathtt{4_{1}} \sharp \mathtt{9_{10}}$ to the unknot and shows that $\unk{\mathtt{K15n4005}} = 2$, since $\abs{\tau(\mathtt{K15n4005})} = 2$.
The sequences for $\mathtt{9_{40}} \sharp \mathtt{9_{10}}$ and $\mathtt{10_{41}} \sharp \mathtt{9_{10}}$ reach $\mathtt{K15n4005}$ after two crossing changes, passing through a knot that is not in the SnapPy tables of knots up to 15 crossings.
On $\mathtt{10_{140}} \sharp \mathtt{9_{10}}$, the mixed agent reaches $\mir{\mathtt{K15n4866}}$ after two crossing changes without passing through $\mathtt{4_{1}} \sharp \mathtt{9_{10}}$, while the random walker paths on $\mathtt{10_{140}} \sharp \mathtt{9_{10}}$ and $\mathtt{10_{121}} \sharp \mathtt{9_{10}}$ pass through two knots not in the tables and have only the last step ($\mathtt{8_{14}}$ to the unknot) in common with sequence~\eqref{eq:unknotting-sequence-briher}.
All the unknotting sequences are collected in Table~\ref{tab:symbionts-unknotting-values}.
The sequences that differ from~\eqref{eq:unknotting-sequence-briher} are drawn step by step in Appendix~\ref{app:sequences}.

\begin{table}[ht]
\centering
\newcommand{\summands}[1]{{\footnotesize\color{black!60}$(#1)$}}
\begin{tabular}{l@{\qquad}c@{\enspace}c@{\enspace}c@{\enspace}c@{\enspace}c@{\enspace}c@{\enspace}c@{\enspace}c@{\enspace}c}
\hline
Agent & \multicolumn{9}{c}{Unknotting sequence} \\
\hline
RW    & $\mathtt{8_{4}} \sharp \mathtt{9_{10}}$ & $\to$ & $\mathtt{4_{1}} \sharp \mathtt{9_{10}}$ & $\to$ & $\mir{\mathtt{K15n4866}}$ & $\to$ & $\mathtt{8_{14}}$ & $\to$ & unknot \\
      & $[1,4]$ \summands{2+3} & & $[2,3]$ \summands{1+3} & & $2$ & & $1$ & & $0$ \\
\greyrule
RW    & $\mathtt{9_{8}} \sharp \mathtt{9_{10}}$ & $\to$ & $\mathtt{4_{1}} \sharp \mathtt{9_{10}}$ & $\to$ & $\mathtt{K15n4005}$ & $\to$ & $\mathtt{8_{14}}$ & $\to$ & unknot \\
      & $[3,4]$ \summands{2+3} & & $[2,3]$ \summands{1+3} & & $2$ & & $1$ & & $0$ \\
\greyrule
RW    & $\mathtt{9_{17}} \sharp \mathtt{9_{10}}$ & $\to$ & $\mathtt{4_{1}} \sharp \mathtt{9_{10}}$ & $\to$ & $\mir{\mathtt{K15n4866}}$ & $\to$ & $\mathtt{8_{14}}$ & $\to$ & unknot \\
      & $[3,4]$ \summands{2+3} & & $[2,3]$ \summands{1+3} & & $2$ & & $1$ & & $0$ \\
\greyrule
RW    & $\mathtt{9_{40}} \sharp \mathtt{9_{10}}$ & $\to$ & $\mathtt{K20}$ & $\to$ & $\mathtt{K15n4005}$ & $\to$ & $\mathtt{8_{14}}$ & $\to$ & unknot \\
      & $[3,4]$ \summands{2+3} & & $[2,3]$ & & $2$ & & $1$ & & $0$ \\
\greyrule
RW    & $\mathtt{10_{41}} \sharp \mathtt{9_{10}}$ & $\to$ & $\mathtt{K21}$ & $\to$ & $\mathtt{K15n4005}$ & $\to$ & $\mathtt{8_{14}}$ & $\to$ & unknot \\
      & $[3,4]$ \summands{2+3} & & $[2,3]$ & & $2$ & & $1$ & & $0$ \\
\greyrule
RW    & $\mathtt{10_{89}} \sharp \mathtt{9_{10}}$ & $\to$ & $\mathtt{4_{1}} \sharp \mathtt{9_{10}}$ & $\to$ & $\mir{\mathtt{K15n4866}}$ & $\to$ & $\mathtt{8_{14}}$ & $\to$ & unknot \\
      & $[1,4]$ \summands{2+3} & & $[2,3]$ \summands{1+3} & & $2$ & & $1$ & & $0$ \\
\greyrule
RW    & $\mathtt{10_{115}} \sharp \mathtt{9_{10}}$ & $\to$ & $\mathtt{4_{1}} \sharp \mathtt{9_{10}}$ & $\to$ & $\mathtt{K15n4005}$ & $\to$ & $\mathtt{8_{14}}$ & $\to$ & unknot \\
      & $[2,4]$ \summands{2+3} & & $[2,3]$ \summands{1+3} & & $2$ & & $1$ & & $0$ \\
\greyrule
RW    & $\mathtt{10_{121}} \sharp \mathtt{9_{10}}$ & $\to$ & $\mathtt{K21}$ & $\to$ & $\mathtt{K23}$ & $\to$ & $\mathtt{8_{14}}$ & $\to$ & unknot \\
      & $[3,4]$ \summands{2+3} & & $3$ & & $2$ & & $1$ & & $0$ \\
\greyrule
RW    & $\mathtt{10_{140}} \sharp \mathtt{9_{10}}$ & $\to$ & $\mathtt{K21}$ & $\to$ & $\mathtt{K23}$ & $\to$ & $\mathtt{8_{14}}$ & $\to$ & unknot \\
      & $[2,4]$ \summands{2+3} & & $[2,3]$ & & $2$ & & $1$ & & $0$ \\
\greyrule
RL$+$RW & $\mathtt{10_{140}} \sharp \mathtt{9_{10}}$ & $\to$ & $\mathtt{K21}$ & $\to$ & $\mir{\mathtt{K15n4866}}$ & $\to$ & $\mathtt{8_{14}}$ & $\to$ & unknot \\
      & $[2,4]$ \summands{2+3} & & $[2,3]$ & & $2$ & & $1$ & & $0$ \\
\greyrule
RW    & $\mathtt{10_{144}} \sharp \mathtt{9_{10}}$ & $\to$ & $\mathtt{4_{1}} \sharp \mathtt{9_{10}}$ & $\to$ & $\mir{\mathtt{K15n4866}}$ & $\to$ & $\mathtt{8_{14}}$ & $\to$ & unknot \\
      & $[3,4]$ \summands{2+3} & & $[2,3]$ \summands{1+3} & & $2$ & & $1$ & & $0$ \\
\greyrule
RW    & $\mathtt{10_{41}} \sharp \mir{\mathtt{9_{10}}}$ & $\to$ & $\mathtt{4_{1}} \sharp \mir{\mathtt{9_{10}}}$ & $\to$ & $\mathtt{K15n4866}$ & $\to$ & $\mir{\mathtt{8_{14}}}$ & $\to$ & unknot \\
      & $[1,4]$ \summands{2+3} & & $[2,3]$ \summands{1+3} & & $2$ & & $1$ & & $0$ \\
\hline
\end{tabular}
\caption{The unknotting sequences found by the tuned random walker (RW) and by the mixed agent (RL$+$RW); each arrow is one crossing change.
The knot $\mathtt{Kn}$ is not in SnapPy's census up to 15 crossings, found as a diagram with $n$ crossings.
Knots with the same label $\mathtt{Kn}$ in different rows are different as they are hyperbolic and their volumes differ.
The knots $\mathtt{K15n4005}$ and $\mathtt{K15n4866}$ have the chirality of the diagrams SnapPy stores under these names. 
Below each knot is its unknotting number, or the interval known for it, whose the upper end is the number of crossing changes in the sequence, and whose lower end is the best lower bound of Appendix~\ref{sec:bounds}.
Next to a connected sum, in grey, are the unknotting numbers of its two summands, whose sum is the value that additivity would give.}
\label{tab:symbionts-unknotting-values}
\end{table}

\appendix

\section{One episode in the band environment}
\label{app:episode}

The pipeline of a single episode in the environment of Section~\ref{sec:env} is as follows.

\begin{lstlisting}[mathescape=true, caption={The pipeline of an episode.}, label={lst:episode}]
A new link is extracted from the dataset: the episode starts
The observation for the link is created
We then loop until one of the following holds:
  - the diagram is terminal                            --> episode won
  - an illegal state is reached                        --> episode lost
  - the step budget is reached                         --> episode truncated
Loop:
  - all legal actions are enumerated (the illegal ones are masked out)
  - the agent samples a legal action from the policy $\pi$
    -- if the action extends the band (start / over / under / twist):
      $\ast$ only the band path data inside the observation space is updated, 
        while the diagram is untouched
    -- if the action ends the band (end0 / end1 / end2):
      $\ast$ the diagram is rebuilt with the attached band
      $\ast$ the new diagram is simplified via spherogram-nim 
      $\ast$ (for genus computations) the surface bookkeeping is updated
    -- if the action is unknot_birth:
      $\ast$ a new unlinked trivial component is added to the diagram
  - the reward is computed and the new observation is created

A won episode returns a certificate: an explicit sequence of moves that 
provides a verifiable upper bound on the invariant
\end{lstlisting}

\subsection{A worked example}
\label{subs:worked-example-episode}
We close this appendix by displaying one full episode, with all the observations printed.
The episode we have chosen is exactly the one presented in Example~\ref{ex:trefoil-slice}. 
It builds a genus-1 surface for the trefoil (Figures~\ref{fig:slice_example1} and~\ref{fig:slice_example2}). 
In the following discussion, recall that we denote a strand of a link by \texttt{C\{x\}S\{y\}}, where \texttt{x} is the label of the crossing and \texttt{y} identifies which of the four strands at that crossing it is: the zeroth is the incoming under-strand, and the others are numbered in increasing order, going counter-clockwise around the crossing.

\medskip

The episode consists of two bands.
The first one is of length 2.
It starts, slides under a strand, and ends.
The second one is of length 1; i.e., it starts and ends without passing over or under anything. Such bands are the shortest the environment allows.
As in Example~\ref{ex:trefoil-slice}, attaching the first band to the trefoil yields the Hopf link, and attaching the second to the Hopf link yields the final unknot.

The framework is \texttt{slice}, and the environment is one of the \texttt{slice} models selected in Table~\ref{tab:feature-selection} (the model called \texttt{Slice\_Seif\_840k} in the header of the transcript). 
The observation is the one described in Section~\ref{subs:RL-policy-features}, with the Seifert genus bound (\texttt{Seif}) appended, and the rewards are those of Section~\ref{subs:env-reward}.
Note that, in the following transcript, we omit the padding. A line such as \texttt{[+97 pad]} records the $97$ suppressed rows of $-1{,}000$s, and a dash marks a tensor with no live entries.
The value $-500$ inside \texttt{gauss\_code} closes the Gauss code of a link component, and the value $-10$ inside \texttt{band\_matrix} marks a crossing strand that exists but has not been visited by the band.
The observation is printed as the environment builds it. 
Before it reaches the policy network, these sentinels are re-encoded and every entry is scaled, as described in Sections~\ref{subs:env-obs} and~\ref{subs:RL-policy-features}.
The last seven entries of each block are the Seifert genus bound (\texttt{seifert\_genus\_bound}) and the entries that complete the description of the state: the twist of each band segment (\texttt{band\_twists}), the number of crossings of the current diagram after simplification and of the initial diagram, the CCL of each link component (\texttt{component\_ccl}), one row of \texttt{genera\_info} per CCL, recording the CCL, the number of bands attached to that surface component, the initial number of link components (including the unknots born), the current number of link components, and the number of original link components it contains, and the genus of the surface built so far (\texttt{current\_genus}).
Finally, an \texttt{end} action is displayed as two blocks. The \texttt{STEP} block shows the diagram just after the band has been attached and the PD code rebuilt, and the \texttt{SIMPLIFY} block that follows shows the same diagram once \texttt{spherogram-nim} has simplified it; this is the diagram on which the next step acts.
Since the rebuilt diagram before the simplification is never a state of the environment, its \texttt{STEP} block shows only the entries relative to the diagram and the band path.
In Listing~\ref{lst:worked-example}, we present the episode.

\begin{lstlisting}[basicstyle=\ttfamily\scriptsize, escapechar=@, caption={A complete episode on the trefoil, in the \texttt{slice} framework.}, label={lst:worked-example}]
================================================================================
framework = slice      link = 3_1      shape = (100, 16)
policy = Slice_Seif_840k
max_twists = 5   max_actions = 80   observation = base + SeifertGenusBound
================================================================================

RESET      link 0 = 3_1: 3 crossings, 1 component        genus so far = 0
  PD_code                    : (6,3,1,4) (4,1,5,2) (2,5,3,6)  [+97 pad]
  crossing_signs             : -1 -1 -1
  writhe                     : -3
  gauss_code                 : -1 2 -3 1 -2 3 -500
  num_link_components        : 1
  unlinked_unknot_components : 0
  num_connected_components   : 1
  band_path                  : -  (no band in progress)
  total_twists               : +0
  band_matrix                : (-10,-10,-10,-10) (-10,-10,-10,-10) (-10,-10,-10,-10)
  seifert_genus_bound        : 1
  band_twists                : -
  simplified_num_crossings   : 3
  init_num_crossings         : 3
  component_ccl              : 0
  genera_info                : (0,0,1,1,1)
  current_genus              : 0

--------------------------------------------------------------------------------
STEP 1   action = C2S3_start     (13 of 2403 legal)   reward = -0.0100
  -> 3 crossings, 1 component        genus so far = 0
  PD_code                    : (6,3,1,4) (4,1,5,2) (2,5,3,6)  [+97 pad]
  crossing_signs             : -1 -1 -1
  writhe                     : -3
  gauss_code                 : -1 2 -3 1 -2 3 -500
  num_link_components        : 1
  unlinked_unknot_components : 0
  num_connected_components   : 1
  band_path                  : (2,3,0)
  total_twists               : +0
  band_matrix                : (-10,-10,-10,-10) (-10,-10,-10,0) (-10,-10,-10,-10)
  seifert_genus_bound        : 1
  band_twists                : 0
  simplified_num_crossings   : 3
  init_num_crossings         : 3
  component_ccl              : 0
  genera_info                : (0,0,1,1,1)
  current_genus              : 0

--------------------------------------------------------------------------------
STEP 2   action = C2S2_under     (6 of 2403 legal)   reward = -0.0100
  -> 3 crossings, 1 component        genus so far = 0
  PD_code                    : (6,3,1,4) (4,1,5,2) (2,5,3,6)  [+97 pad]
  crossing_signs             : -1 -1 -1
  writhe                     : -3
  gauss_code                 : -1 2 -3 1 -2 3 -500
  num_link_components        : 1
  unlinked_unknot_components : 0
  num_connected_components   : 1
  band_path                  : (2,3,0) (2,2,-1)
  total_twists               : +0
  band_matrix                : (-10,-10,-10,-10) (-10,-10,-1,0) (-10,-10,-10,-10)
  seifert_genus_bound        : 1
  band_twists                : 0 0
  simplified_num_crossings   : 3
  init_num_crossings         : 3
  component_ccl              : 0
  genera_info                : (0,0,1,1,1)
  current_genus              : 0

--------------------------------------------------------------------------------
STEP 3   action = C2S1_end0      (8 of 2403 legal)   reward = +0.1879
  -> band attached, diagram rebuilt   (not yet simplified)
  PD_code                    : (10,5,3,6) (6,2,7,1) (4,9,5,10) (1,7,2,8) (3,9,4,8)  [+95 pad]
  crossing_signs             : -1 -1 -1 -1 +1
  writhe                     : -3
  gauss_code                 : 2 -4 -500 -1 -5 -3 1 -2 4 5 3 -500
  num_link_components        : 2
  unlinked_unknot_components : 0
  num_connected_components   : 1
  band_path                  : (2,3,0) (2,2,-1) (2,1,0)
  total_twists               : +0
  band_matrix                : (-10,-10,-10,-10) (-10,0,-1,0) (-10,-10,-10,-10)

--------------------------------------------------------------------------------
SIMPLIFY   the diagram rebuilt by step 3   (spherogram-nim, global mode)
  -> 2 crossings, 2 components        genus so far = 0
  PD_code                    : (1,4,2,3) (3,2,4,1)  [+98 pad]
  crossing_signs             : -1 -1
  writhe                     : -2
  gauss_code                 : 2 -1 -500 1 -2 -500
  num_link_components        : 2
  unlinked_unknot_components : 0
  num_connected_components   : 1
  band_path                  : -  (no band in progress)
  total_twists               : +0
  band_matrix                : (-10,-10,-10,-10) (-10,-10,-10,-10)
  seifert_genus_bound        : 0
  band_twists                : -
  simplified_num_crossings   : 2
  init_num_crossings         : 3
  component_ccl              : 0 0
  genera_info                : (0,1,1,2,1)
  current_genus              : 0

--------------------------------------------------------------------------------
STEP 4   action = C2S0_start     (9 of 2403 legal)   reward = -0.0100
  -> 2 crossings, 2 components        genus so far = 0
  PD_code                    : (1,4,2,3) (3,2,4,1)  [+98 pad]
  crossing_signs             : -1 -1
  writhe                     : -2
  gauss_code                 : 2 -1 -500 1 -2 -500
  num_link_components        : 2
  unlinked_unknot_components : 0
  num_connected_components   : 1
  band_path                  : (2,0,0)
  total_twists               : +0
  band_matrix                : (-10,-10,-10,-10) (0,-10,-10,-10)
  seifert_genus_bound        : 0
  band_twists                : 0
  simplified_num_crossings   : 2
  init_num_crossings         : 3
  component_ccl              : 0 0
  genera_info                : (0,1,1,2,1)
  current_genus              : 0

--------------------------------------------------------------------------------
STEP 5   action = C2S3_end0      (5 of 2403 legal)   reward = +32.4758
  -> band attached, diagram rebuilt   (not yet simplified)
  PD_code                    : (1,4,2,1) (3,2,4,3)  [+98 pad]
  crossing_signs             : -1 -1
  writhe                     : -2
  gauss_code                 : 1 -1 2 -2 -500
  num_link_components        : 1
  unlinked_unknot_components : 0
  num_connected_components   : 1
  band_path                  : (2,0,0) (2,3,0)
  total_twists               : +0
  band_matrix                : (-10,-10,-10,-10) (0,-10,-10,0)

--------------------------------------------------------------------------------
SIMPLIFY   the diagram rebuilt by step 5   (spherogram-nim, global mode)
  -> 0 crossings, 1 unlinked unknot        genus so far = 1
  PD_code                    : (no crossings)  [+100 pad]
  crossing_signs             : -
  writhe                     : +0
  gauss_code                 : -
  num_link_components        : 0
  unlinked_unknot_components : 1
  num_connected_components   : 0
  band_path                  : -  (no band in progress)
  total_twists               : +0
  band_matrix                : -
  seifert_genus_bound        : 0
  band_twists                : -
  simplified_num_crossings   : 0
  init_num_crossings         : 3
  component_ccl              : -
  genera_info                : (0,2,1,1,1)
  current_genus              : 1

================================================================================
EPISODE WON  reached the unlink with 2 band moves,
             resulting in a slice surface of genus 1 with 1 unlinked unknot component
return = sum of the rewards = +32.6337   (gamma = 1)
================================================================================
\end{lstlisting}

\paragraph{Reading the transcript.}
The first band starts at the crossing strand \texttt{C2S3}, slides under the strand \texttt{C2S2} (a Reidemeister~II move, recorded as the entry $(2,2,-1)$ of \texttt{band\_path} and as the $-1$ in \texttt{band\_matrix}), and is attached at \texttt{C2S1} with no final twist.
Attaching the band rebuilds the diagram with $5$ crossings (step~3). 
Simplification then cuts this down to a $2$-crossing, $2$-component diagram of the Hopf link $L_{t_g}$ of Figure~\ref{subfig:slice2-g}.
The second band is the shortest one possible. It begins at \texttt{C2S0} and ends at \texttt{C2S3}, two strands of the same crossing that share a face.
The band rebuilds the $2$-crossing diagram \texttt{(1,4,2,1) (3,2,4,3)} of step~5, in which each crossing repeats one of its labels and is therefore a Reidemeister~I loop.
Undoing both loops is precisely what the last simplification does, and what is left is the unknot, which the environment reports as a diagram with $0$ crossings and \texttt{unlinked\_unknot\_components} equal to $1$.

The line \texttt{genus so far} shows the genus of the surface built up to that point. It is still $0$ after the first band (the surface is a pair of pants, with $\chi = -1$ and $3$ boundary components) and becomes $1$ after the second one since $2g = 2 + \#\{\text{bands}\} - \#\{\text{boundary components}\} = 2 + 2 - (1+1)$.
The \texttt{genera\_info} row records the same computation for the single surface component: $(0,1,1,2,1)$ after the first band, which is attached to it and raises its boundary components from $1$ to $2$, and $(0,2,1,1,1)$ at the end.
The episode therefore returns the certificate $\gen{\mathtt{3_1}} \leq 1$, which we know to be sharp.

Note that no \texttt{unknot\_birth} was played, even though the \texttt{slice} framework offers it (which is why the action space has size $2403 = 24 \cdot 100 + 2 + 1$ rather than $2402$).
This is expected since, for the trefoil, the ribbon genus already equals the slice genus, and adding a maximum to the surface would only make the certificate longer.

In Step~5, the reward is computed as
\begin{align*}
    R(s,a,s') & = \sigma\bigl(R_{\text{win}} - W (i(s') - i(s)) + \Phi(s') - \Phi(s)\bigr)\\
              & = 0.01\bigl(3600 - 400\cdot(1-0) + 2\kappa\bigr) = 32.4758,
\end{align*}
using $\kappa = \frac{65}{1 + \sqrt{3}}$: the reward for winning, minus the charge for the unit of genus that the band adds, plus the shaping term for the two crossings that it removes, all scaled by $0.01$; see Section~\ref{subs:env-reward}.
Since $\gamma = 1$, the return of the episode is the plain sum of its rewards, $32.6337$.

A further feature of the environment is visible in the transcript: the effect of the action masking of Section~\ref{subs:env-mdp}.
Given the small size of the trefoil, only $13$, $6$, $8$, $9$, and $5$ of the $2403$ actions are legal at the five steps, respectively, so the agent chooses from a set smaller by more than two orders of magnitude.

\section{Bounds}
\label{sec:bounds}

The environment we developed, presented in Section~\ref{sec:env}, produces \emph{upper} bounds on the invariants we are interested in, in the form of an explicit sequence of moves.
What turns an upper bound into an exact value is a matching lower bound, computed independently.
This appendix collects the bounds we evaluate before computing explicit band paths using some agent.
We assume the invariants themselves ($\sigma$, $\tau$, $s$, $\genSeif$) are known to the reader, and we provide references for each bound rather than restating the definitions.

\subsection{Relations between the three invariants}

The following proposition is well known and connects the unknotting number and the slice genus; see e.g.\ \cite[Section 1.1]{applebaum2025unknottingrl}.

\begin{proposition}
\label{prop:g4-lower-bound-unknotting}
For any link $L$, we have $\gen{L} \leq \unk{L}$.
\end{proposition}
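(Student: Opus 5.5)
We prove the inequality $\gen{L} \leq \unk{L}$ by turning a minimal unknotting sequence into an explicit connected surface in $B^4$ of genus at most $\unk{L}$. Write $n = \unk{L}$, let $L$ have $\ell$ components, and fix a sequence of $n$ crossing changes (on suitable diagrams, interleaved with isotopies) taking $L$ to the $\ell$-component unlink $U_\ell$. Each crossing change will be replaced by two band moves, following the band framework of Section~\ref{subs:slice-genus}: this yields a movie from $L$ to $U_\ell$ with $\beta = 2n$ band attachments and no births. Capping off $U_\ell$ with $\ell$ discs then gives a slice surface, and the genus will be read off from equation~\eqref{eq:genus-link-slice-surf}. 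Connectedness is the subtle point and is handled separately below.

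\textbf{Step 1: a crossing change as two bands.} Near a crossing $c$, add a small band joining the two strands so that it resolves $c$ in the oriented way. This is a saddle, and afterwards the crossing can be removed by a Reidemeister~I move. A second oriented band near the same spot then reinserts the crossing with the opposite sign, exactly as in the trefoil of Example~\ref{ex:trefoil-slice}, where two bands undo one crossing's worth of knotting. Equivalently, one can use the standard picture: a crossing change equals a pair of oriented saddles along a small twisted annulus around the crossing. Each crossing change therefore contributes $\Delta\chi = -2$ to the trace surface. In total, the trace $F'$ is a cobordism from $L$ to $U_\ell$ with $\chi(F') = -2n$, and after capping, $\chi(F) = \ell - 2n$.

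\textbf{Step 2: genus count and connectedness.} If $F$ is connected, equation~\eqref{eq:genus-bound-surface} with $\abs{\partial F} = \ell$ gives
\[
g(F) = \frac{2 - \ell + 2n - \ell}{2} = n + 1 - \ell \leq n,
\]
so $\gen{L} \leq \unk{L}$ follows immediately. If $F$ is disconnected, the definition of $\gen{L}$ still applies after tubing: as noted after Definition~\ref{def:weak-strong-genus}, connecting components by bands attached between caps does not change the total genus. To verify this, I will bound the genus componentwise with $g(F_i) = (2 - \chi(F_i) - \abs{\partial F_i})/2$ and sum, using that each component meets $L$ in at least one circle. This gives a total genus of at most $(2c - \chi(F) - \ell)/2$, where $c \leq \ell$ is the number of components, hence at most $n$. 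Joining the $c$ components by $c-1$ tubes in the 4-ball (boundary connected sums) keeps the total genus unchanged and makes the surface connected.

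\textbf{Main obstacle.} The delicate step is Step 1: checking that the two saddles can always be chosen compatibly with the orientations, since the band moves must be oriented as in Figure~\ref{fig:band_move_oriented}. I would first try the classical alternative route: view the crossing changes as a regular homotopy whose trace is an immersed annulus collection with $n$ transverse double points. Smoothing each double point (removing two discs and inserting an annulus) raises the genus by one, or merges components, which only helps. This is orientation-safe by construction, and it is the argument I would actually write out, with Step 1 as the combinatorial model.
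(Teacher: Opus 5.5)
Your proposal is correct and is essentially the paper's argument: you smooth the $n$ transverse double points of the immersed trace of the regular homotopy, and your count $n + c - \ell \le n$, together with the tubing step, handles the link case that the paper's knot-only sketch leaves implicit. One wording fix: the tubes joining the $c$ components must be interior connected sums (equivalently, bands between caps, which you cite elsewhere), not \emph{boundary} connected sums, since a boundary connected sum would change $\partial F$.
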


Therefore, every lower bound on the smooth slice genus automatically gives a lower bound on the unknotting number, and we will not recompute those bounds for both invariants.
Note that, on the other hand, the strong slice genus does \emph{not} satisfy $\sgen{L} \leq \unk{L}$.

\begin{remark}[Orientation dependence]
\label{rmk:orientation-dependence-g4}
The weak slice genus depends on the choice of orientation of $L$, since the bounding surface $F$ must be oriented compatibly with $L$, while the strong slice genus is independent of the orientation.
Since $\unk{L}$ is also orientation-independent (Remark~\ref{rmk:u-orientation-independent}), the strongest lower bounds from the slice genus are
\[
\sgen{L} \geq \max_{\vec{o}} \gen{L_{\vec{o}}}, \qquad \unk{L} \geq \max_{\vec{o}} \gen{L_{\vec{o}}},
\]
where $\vec{o} \in \{ +1, -1 \}^{\ell}$ is a choice of orientation.
\end{remark}

Let us present a lower bound for the unknotting number of links that does not factor through the slice genus.
The following inequality is not stated explicitly by Collari~\cite{collari2021slicetoruspositive}, but it appears in the proof of \cite[Lemma~2.8]{collari2021slicetoruspositive}.
The statement of \cite[Lemma~2.8]{collari2021slicetoruspositive} comes with the additional hypothesis that the strong splitting number equals the absolute linking number; however, that is needed to produce an equality. 

\begin{proposition}[From the proof of {\cite[Lemma 2.8]{collari2021slicetoruspositive}}]
    \label{prop:lower-bound-unknotting-link}
    Let $L = K_1 \cup \dots \cup K_{\ell}$ be a link.
    Then, we have
    \[
        \abslk{L} + \sum_{i=1}^{\ell} \unk{K_i} \leq \unk{L},
    \]
    where $\abslk{L}$ denotes the absolute linking number of $L$, which is the sum of the absolute values of the linking numbers between the components of $L$.
\end{proposition}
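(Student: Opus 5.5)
Fix a minimal sequence of $n = \unk{L}$ crossing changes taking $L$ to the $\ell$-component unlink, realised as crossing changes on a single (suitably isotoped) diagram $D$ of $L$, as in Section~\ref{subs:unknotting-num}. The plan is to sort the crossing changes into two kinds and bound the number of each kind separately. A \emph{self-crossing change} involves two strands of the same component $K_i$. A \emph{mixed crossing change} involves strands of two different components $K_i$ and $K_j$.

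First I bound the mixed changes. A mixed crossing change between $K_i$ and $K_j$ changes $\lk{K_i,K_j}$ by exactly $\pm 1$ and leaves every other pairwise linking number unchanged. It also does not change the knot type of any single component. Self-crossing changes leave all linking numbers unchanged, since linking numbers are defined component-wise from crossings between distinct components. At the end all linking numbers vanish. Hence, for each pair $i<j$, the number of mixed changes between $K_i$ and $K_j$ is at least $\abs{\lk{K_i,K_j}}$. Summing over pairs, the total number of mixed changes is at least $\abslk{L}$.

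Next I bound the self-crossing changes on $K_i$ by $\unk{K_i}$. Consider the sublink consisting of $K_i$ alone, and apply the same sequence of moves, ignoring all other components. Isotopies of $L$ restrict to isotopies of $K_i$. Mixed crossing changes involving $K_i$ are ambient moves of $L$ that, viewed on $K_i$ alone, are simply isotopies, because passing another component through $K_i$ does not alter $K_i$. Crossing changes between other components do not touch $K_i$ at all. Self-crossing changes of $K_i$ remain crossing changes of $K_i$. At the end $K_i$ is a component of the unlink, hence the unknot. Therefore the number of self-crossing changes on $K_i$ is at least $\unk{K_i}$.

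Since the two kinds of changes are disjoint, adding the bounds gives $n \ge \abslk{L} + \sum_i \unk{K_i}$. The only delicate point, and the main thing to check carefully, is the restriction step for $K_i$. One must make sure that the regular homotopy definition of a crossing change (Definition~\ref{def:unknotting-number-link}) restricts correctly. A crossing change between $K_i$ and $K_j$ is a regular homotopy of $L$ whose single double point involves both components, so restricted to $K_i$ it is an isotopy. I would therefore phrase the whole argument with regular homotopies rather than diagrams, which sidesteps diagram-dependence entirely.
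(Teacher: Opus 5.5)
Your proof is correct and is essentially the standard argument behind the inequality in Collari's proof (the paper only cites it). You split an optimal unlinking sequence into mixed and self crossing changes, bound the mixed changes between $K_i$ and $K_j$ below by $\lvert \mathrm{lk}(K_i,K_j)\rvert$, and restrict the regular homotopy to $K_i$ to get an unknotting sequence of $K_i$ whose length is the number of self changes on $K_i$. Phrasing this via regular homotopies, as you propose, makes the restriction step rigorous, so the single-diagram reduction at the start is not needed.
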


\begin{remark}
The inequality of Proposition \ref{prop:lower-bound-unknotting-link} is an equality whenever $L$ is \emph{simply-linked}.
See Theorem \ref{thm:unknotting-simply-linked-links} for details.
\end{remark}

Lastly, let us state the following trivial lower bound on the strong slice genus.

\begin{lemma}[Lower bound on the strong slice genus]
    \label{lem:lower-bound-strong-genus-link-components}
    Given a link $L = K_1 \cup \dots \cup K_{\ell}$, we have
    \[
    \sum_{i = 1}^{\ell} \gen{K_i} \leq \sgen{L}.
    \]
\end{lemma}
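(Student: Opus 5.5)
The plan is to argue directly from the definitions in Definition~\ref{def:weak-strong-genus} and Definition~\ref{def:slice-genus}, by restricting a strong slice surface to its individual components.

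First I dispose of the trivial case. If $L$ is not algebraically split, then by the convention adopted after Definition~\ref{def:weak-strong-genus} we have $\sgen{L} = +\infty$, and the inequality holds automatically. So assume $L$ is algebraically split. Then $\sgen{L}$ is a minimum over a nonempty set of nonnegative integers, so it is attained. Fix a strong slice surface $F = \bigsqcup_{i=1}^{\ell} F_i \subset B^4$ with $\partial F_i = K_i$ and $\sum_i g(F_i) = \sgen{L}$.

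The key observation is that each $F_i$ is, by itself, a slice surface for the knot $K_i$ in the sense of Definition~\ref{def:slice-genus}. It is compact, connected and oriented; it is smoothly embedded in $B^4$; and its boundary is $K_i$. The disjointness from the other $F_j$ plays no role here. Hence, by minimality in the definition of $\gen{K_i}$, we get $\gen{K_i} \leq g(F_i)$ for each $i$. Summing over $i \in \{1,\dots,\ell\}$ gives
\[
\sum_{i=1}^{\ell} \gen{K_i} \leq \sum_{i=1}^{\ell} g(F_i) = \sgen{L}.
\]
Equivalently, one can avoid choosing a minimiser and instead take the infimum over all strong slice surfaces of the inequality $\sum_i \gen{K_i} \leq \sum_i g(F_i)$.

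There is no real obstacle here. The only points requiring care are the following.
\begin{enumerate}
\item Each component $F_i$ must be connected, which is built into the definition of a strong slice surface.
\item The orientation on $F_i$ must induce the given orientation on $K_i$. Even so, $\gen{K_i}$ does not depend on the orientation of the knot, since reversing the orientation of $F_i$ reverses that of its boundary.
\end{enumerate}
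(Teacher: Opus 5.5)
Your proposal is correct and follows the paper's argument: take a minimal-genus strong slice surface $F=\bigsqcup_i F_i$, note that each $F_i$ is a slice surface for $K_i$, so $g_4(K_i)\le g(F_i)$, and sum over $i$. Your explicit handling of the non-algebraically split case through the convention $g_4^{\ast}(L)=+\infty$ is a small addition that the paper leaves implicit.
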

Indeed, given a strong slice surface $F$ of minimal genus for $L$, we get that $F = \sqcup_{i=1}^{\ell} F_i$ with $\partial F_i = K_i$ and $g_4(K_i) \le g(F_i)$.
Using existing databases and possibly new computations of the slice genera of $K_1,\dots,K_{\ell}$, we might derive a non-trivial lower bound from this inequality.

\subsection{Upper bounds}
\label{subs:upper-bounds}

We have the following elementary bounds on the smooth slice genus.

\begin{proposition}
\label{prop:genera-relations}
For any link $L$,
\[
\gentop{L} \leq \gen{L} \leq \genSeif{L}.
\]
\end{proposition}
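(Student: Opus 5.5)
The two inequalities are of a different nature, so I treat them separately. Both come from the observation that enlarging the class of admissible surfaces can only decrease the minimum genus. Throughout, $\gentop{L}$ is the minimal genus of a compact, connected, oriented, \emph{locally flat} surface in $B^4$ with boundary $L$, and $\genSeif{L}$ is the minimal genus of a compact, connected, oriented surface in $\S^3$ with boundary $L$, as in Section~\ref{subs:slice-genus}.

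\textbf{The first inequality.} A smoothly embedded compact surface $F\subset B^4$ is in particular locally flat. Around every point, the smooth chart in which $F$ is linear gives a topological chart straightening $(B^4,F)$ locally to $(\R^4,\R^2)$; at boundary points one uses the half-space version. So every weak slice surface realising $\gen{L}$ is admissible in the definition of $\gentop{L}$, and the minimum over the larger class is at most $\gen{L}$. The only point to check here is that connectedness and orientability are required identically in both definitions.

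\textbf{The second inequality.} This is the one needing a construction, and it extends the remark after Definition~\ref{def:slice-genus} from knots to links. Take a minimal-genus connected Seifert surface $S\subset \S^3$ for $L$, and push its interior into $B^4$. Concretely, choose a collar $\S^3\times[0,\epsilon]\subset B^4$ with $\S^3\times\{0\}=\partial B^4$, and a smooth function $f\colon S\to[0,\epsilon)$ with $f^{-1}(0)=\partial S$ that has no critical points near $\partial S$. The graph $F=\{(x,f(x)) : x\in S\}$ is then a smooth embedding of $S$ into $B^4$ with $\partial F=L$. It is embedded because $x\mapsto (x,f(x))$ is injective, and it meets $\partial B^4$ only along $L$, transversally. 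Its genus equals that of $S$, and it is connected and oriented compatibly with $L$. Hence $\gen{L}\le g(F)=\genSeif{L}$.

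\textbf{Main obstacle.} The step needing the most care is the smoothness and properness of the pushed-in surface at the boundary, i.e.\ arranging that $F$ meets $\S^3$ transversally exactly in $L$. I would handle this with the collar coordinate and the condition $\partial f/\partial n>0$ along $\partial S$, where $n$ is the inward normal of $S$ along $\partial S$. Everything else is immediate from the definitions.
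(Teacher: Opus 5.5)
Your proof is correct and takes the same approach as the paper: smooth surfaces are locally flat, which gives the first inequality, and pushing the interior of a minimal-genus connected Seifert surface into $B^4$ gives the second. Your collar-and-graph construction, with $\partial f/\partial n>0$ along $\partial S$, spells out the properness and transversality at the boundary that the paper's one-line argument leaves implicit.
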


Indeed, any smoothly embedded surface is also locally flat; hence, any smooth slice surface for $L$ is also a topological slice surface for $L$. Furthermore, we can push the interior of any Seifert surface for $L$ in $\S^3$ into $B^4$, resulting in a properly embedded surface bounding $L$ in the $4$-ball.

We can compute $\genSeif{K}$ via knot Floer homology, which, for knots, was implemented by Ozsv\'ath and Szab\'o~\cite{culler2024snappy}. This gives us a practical slice genus upper bound for knots.
While we do not have an implementation of knot Floer homology for links, we can compute an upper bound on the Seifert genus of links, which results in an upper bound on the weak slice genus.

\begin{theorem}[Upper bound from Seifert surfaces, {\cite[Proposition 4.1]{cavallo2015links}}]
\label{thm:diagramUpperBoundLinks}
Given a link $L$ with $\ell$ components and a diagram $D$, Seifert's algorithm produces a surface $\Sigma$ with $\chi(\Sigma) = O(D) - c(D)$, where $O(D)$ is the number of Seifert circles and $c(D)$ is the number of crossings.
If the Seifert surface has $r(D)$ components, then
\begin{equation}
\label{eq:seifert-upper}
\gen{L} \leq \genSeif{L} \leq g(\Sigma) = \frac{2r(D) - \ell - O(D) + c(D)}{2}.
\end{equation}
\end{theorem}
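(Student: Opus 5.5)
We prove the displayed formula by computing $\chi(\Sigma)$ and the number of boundary components of each component of $\Sigma$, and then applying equation~\eqref{eq:genus-bound-surface} componentwise. The first two inequalities are already known. The inequality $\gen{L} \leq \genSeif{L}$ is Proposition~\ref{prop:genera-relations}. The inequality $\genSeif{L} \leq g(\Sigma)$ holds because $\Sigma$ is a compact oriented surface in $\S^3$ with $\partial\Sigma = L$. If $\Sigma$ is disconnected, we first tube its components together inside $\S^3$ without changing the total genus. Tubing changes neither the boundary nor $\sum g$, since each tube lowers $\chi$ by $2$ and lowers the number of components by one. We therefore read $g(\Sigma)$ as the total genus $\sum_j g(\Sigma_j)$, exactly as in the convention of Section~\ref{subsubs:computation-slice-genus-from-movie}.

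Next we compute $\chi(\Sigma)$. Seifert's algorithm smooths every crossing of $D$ in the orientation-preserving way, which produces $O(D)$ disjoint oriented circles. Each circle bounds a disc, and the discs are stacked at different heights when circles are nested. At each of the $c(D)$ crossings we glue a half-twisted band joining two discs. A CW decomposition with the discs as $0$-handles and the bands as $1$-handles gives
\[
\chi(\Sigma) = O(D) - c(D).
\]
The boundary of $\Sigma$ is $L$, so $\Sigma$ has exactly $\ell$ boundary circles in total.

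Now let $\Sigma_1,\dots,\Sigma_{r(D)}$ be the components of $\Sigma$, and let $\Sigma_j$ have $b_j$ boundary components, so that $\sum_j b_j = \ell$. Applying equation~\eqref{eq:genus-bound-surface} to each $\Sigma_j$ and summing gives
\[
\sum_j g(\Sigma_j) = \frac{2r(D) - \chi(\Sigma) - \ell}{2} = \frac{2r(D) - \ell - O(D) + c(D)}{2}.
\]
This is equation~\eqref{eq:seifert-upper}.

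The only subtle point is the bookkeeping for a disconnected $\Sigma$, which arises from a split diagram. Here we must make sure that the tubing step really produces a connected Seifert surface of the same total genus, so that the bound applies to $\genSeif{L}$ as defined for connected surfaces. Everything else is routine Euler characteristic counting.
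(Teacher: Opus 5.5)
Your proof is correct. It is the standard argument behind \cite[Proposition~4.1]{cavallo2015links}, which the paper cites without proof: $\chi(\Sigma)=O(D)-c(D)$ from the Seifert graph, then equation~\eqref{eq:genus-bound-surface} summed over the $r(D)$ components with $\sum_j b_j=\ell$, then tubing to meet the connectivity required in the definition of $\genSeif{L}$. The one detail you leave implicit is how the tube is chosen. It should follow an arc in $\S^3\setminus\Sigma$ (connected, since every component of $\Sigma$ has nonempty boundary) from the positive side of one component to the positive side of the other, so that the tubed surface is still oriented with oriented boundary $L$.
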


We have the following simple upper bound on the unknotting number given by the crossing number.

\begin{proposition}[Trivial upper bound]
\label{prop:trivial-upper-bound}
For any link $L$,
\[
\unk{L} \leq \left\lfloor \frac{\mathrm{c}(L)}{2} \right\rfloor.
\]
\end{proposition}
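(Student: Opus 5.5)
Take a minimal crossing diagram $D$ of $L$, so $c(D)=\mathrm{c}(L)$, and show directly that at most $\lfloor c(D)/2\rfloor$ crossing changes turn $D$ into a diagram of the $\ell$-component unlink. Since $\unk{L}\le\unk{D}$ by the definition of $\unk{L}$ as a minimum over diagrams, this gives the bound.

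The construction is the standard descending (monotone) diagram. Order the components $K_1,\dots,K_\ell$ of $D$, and on each $K_i$ choose a base point and follow the orientation. Call a diagram \emph{descending} if:
\begin{itemize}
\item at every crossing between $K_i$ and $K_j$ with $i<j$, $K_i$ passes over;
\item at every self-crossing of $K_i$, the strand reached first from the base point passes over.
\end{itemize}
Such a diagram represents the unlink. Lift the $K_i$ to disjoint height levels, with $K_1$ highest. Within each level, lift $K_i$ so that its height decreases monotonically from the base point, then returns vertically to close up. Each $K_i$ is then an unknot, and the levels are split from one another.

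The count comes from complementarity. Fix an ordering of the components and a choice of base points and orientations. Every crossing of $D$ is either already in the pattern prescribed by this choice, or it becomes so after one crossing change. Now reverse the choice: invert the ordering of the components, and reverse the orientation of each component while keeping its base point. Every crossing's required over/under status flips.
\begin{itemize}
\item For a crossing between two components, reversing the order swaps which one must pass over.
\item For a self-crossing, reversing the traversal swaps which strand is reached first, provided no base point sits exactly at a crossing, which we arrange by placing base points in the interiors of edges.
\end{itemize}
So the two choices require disjoint sets of crossing changes, and together these sets partition all $c(D)$ crossings. One of them therefore has at most $\lfloor c(D)/2\rfloor$ elements, which yields $\unk{L}\le\lfloor \mathrm{c}(L)/2\rfloor$.

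The main point needing care is the reversed choice for self-crossings. A crossing met at parameter times $s<t$ from the base point is met at times $T-t<T-s$ when traversed backwards, where $T$ is the length of the component. The previously later strand is thus now reached first, and the flip is genuine. By Remark~\ref{rmk:u-orientation-independent}, changing orientations does not affect $\unk{L}$, so this reversal is legitimate.
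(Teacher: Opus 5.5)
Your proof is correct, and it shares the paper's skeleton: split the crossings of a diagram into two complementary sets, each of which unlinks the diagram when changed, and keep the smaller set. The difference is in why the complementary set also works.

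The paper takes an arbitrary set $A$ of crossings whose change yields an unlink diagram $U$, quoting the existence of such a set as a known fact. It then notes that changing $\mathcal{C}(D)\smallsetminus A$ instead yields the diagram in which every crossing of $U$ is flipped, i.e.\ the mirror of $U$, which is again an unlink.

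You construct $A$ explicitly from descending data and identify its complement as the unlinking set of the reversed data. This is why you need the parameter reversal $s<t \mapsto T-t<T-s$ and base points placed away from crossings. In effect you re-verify, for descending diagrams only, that the mirror of an unlink diagram is an unlink diagram: the mirror of a descending diagram is descending for the reversed order and traversal.

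The paper's step is shorter and works for any unlinking set. Your argument is self-contained, since it also proves the existence of an unlinking set that the paper takes for granted.

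The appeal to Remark~\ref{rmk:u-orientation-independent} is harmless but unnecessary. The traversal directions are auxiliary choices made for the construction, and whether the resulting diagram represents an unlink does not depend on them.
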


\begin{proof}[Proof from {\cite[Discussion after Theorem 1.1]{akimoto2020unknottinggraph}}]
It is a known fact that any diagram $D$ of $L$ with $c$ crossings can be turned into a diagram of the unlink by changing some of its crossings.
Let $\mathcal{C}(D)$ be the set of crossings of $D$ and $A \subset \mathcal{C}(D)$ be the subset of crossings that yields a diagram $U$ of the unlink if changed.
Let now $S$ be the diagram we obtain from $D$ if we change all the crossings in $B = \mathcal{C}(D) \smallsetminus A$. This is the mirror of $U$, which is again the unlink.
Therefore, we get the unlink with at most $\min \{ \lvert A \rvert, \lvert B \rvert \} \leq \frac{c(D)}{2}$ crossing changes.
\end{proof}

No upper bound on the strong slice genus is available to us. A Seifert surface is not a strong slice surface, so every finite upper bound on $\sgen{L}$ comes from an explicitly exhibited surface, which is precisely what the agents of Section~\ref{sec:agents} produce.

\subsection{Lower bounds, by technique}
\label{subs:lower-bounds}

\subsubsection{Signature-based}
\label{subsubs:signature}

The signature is a classical and computationally cheap link invariant. Given any Seifert surface $S$ for a link $L$, obtained for instance by Seifert's algorithm applied to a diagram of $L$, it is the signature of $A + A^{T}$, where $A$ is a Seifert matrix associated with $S$.
It was introduced by Trotter~\cite{trotter1962homology} for knots and extended to links by Murasugi~\cite[Theorem 3.1]{murasugi1965numerical}, who also showed that it bounds the (weak) slice genus of links.

\begin{theorem}[Signature bound {\cite[Theorem 9.1]{murasugi1965numerical}}]
\label{thm:murasugi-signature-bound}
For any link $L$ with $\ell$ components,
\[
\frac{ \lvert \sigma(L) \rvert - \ell + 1}{2} \leq \gen{L},
\]
where $\sigma(L)$ is the signature of $L$.
For a knot $K$, this reads $\tfrac{1}{2}\abs{\sigma(K)} \leq \gen{K}$.
\end{theorem}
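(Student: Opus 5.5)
The approach is the classical $4$-dimensional one: turn a connected slice surface into a closed-up $4$-manifold (a branched double cover) and compare signatures. Let $F \subset B^4$ be a connected weak slice surface for $L$ of genus $g = \gen{L}$. Its Euler characteristic is $\chi(F) = 2 - 2g - \ell$, so $b_1(F) = 2g + \ell - 1$. The aim is to show $\abs{\sigma(L)} \leq b_1(F)$, which rearranges exactly to $\tfrac{1}{2}(\abs{\sigma(L)} - \ell + 1) \leq g$.

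To realise $\sigma(L)$ as a $4$-dimensional signature, I would take a Seifert surface $S \subset \S^3$ for $L$ (connected, e.g.\ after tubing) with Seifert matrix $A$. I would push its interior into $B^4$ to get $S'$, and let $X_{S'}$ be the double cover of $B^4$ branched along $S'$. It is standard (Kauffman--Taylor, Viro) that $H_2(X_{S'})$ is free of rank $b_1(S)$ with intersection form $A + A^T$, so $\sigma(X_{S'}) = \sigma(L)$. The boundary of $X_{S'}$ is the double branched cover $\Sigma_2(L)$, which depends only on $L$.

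Next I form the closed manifold $Z = X_{S'} \cup_{\Sigma_2(L)} -X_F$, where $X_F$ is the double branched cover of $B^4$ along $F$. This double cover exists because $[F,\partial F]=0$ in $H_2(B^4,\S^3)$. Equivalently, $Z$ is the double branched cover of $S^4 = B^4 \cup -B^4$ along the closed surface $\Sigma = S' \cup F$. By Novikov additivity, $\sigma(Z) = \sigma(L) - \sigma(X_F)$. For a double branched cover of $S^4$ along a closed orientable surface, the $G$-signature theorem gives $\sigma(Z) = 2\sigma(S^4) - \tfrac{1}{2}[\Sigma]^2 = 0$. Hence $\sigma(X_F) = \sigma(L)$.

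Finally, I bound $\abs{\sigma(X_F)} \leq b_2(X_F)$. A transfer/Euler characteristic computation gives $\chi(X_F) = 2\chi(B^4) - \chi(F) = 2 - \chi(F)$. With $\mathbb{Q}$ coefficients, $b_1(X_F) = 0$ (the cover of $B^4$ has trivial rational $H_1$, via a Smith-theory or transfer argument) and $b_3(X_F) = 0$ by duality with $H_1(X_F,\partial)$. This gives $b_2(X_F) = 1 - \chi(F) = 2g + \ell - 1$, and the result follows. For knots, $\ell = 1$ gives $\tfrac{1}{2}\abs{\sigma(K)} \leq \gen{K}$.

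The main obstacle is the rational homology computation of $X_F$, especially showing $b_1 = b_3 = 0$, together with the identification $\sigma(X_{S'}) = \sigma(A + A^T)$. In the paper I would cite these (Kauffman--Taylor) rather than reprove them.
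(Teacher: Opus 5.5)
Your argument is correct. There is no proof in the paper to compare it with: the inequality is simply quoted from Murasugi, whose original argument predates the $G$-signature theorem and works algebraically with Seifert matrices. What you give is the later four-dimensional proof in the style of Viro and Kauffman--Taylor. You show $\sigma(X_F)=\pm\sigma(L)$ for every slice surface $F$, by gluing to $X_{S'}$, using Novikov additivity, and using $\sigma(Z)=-\tfrac12[\Sigma]^2=0$. You then bound $\abs{\sigma(X_F)}$ by $b_2(X_F)=1-\chi(F)=2g+\ell-1$, and the Euler characteristic and Betti number bookkeeping is right.

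The one step to state more carefully is $b_1(X_F;\mathbb{Q})=0$. A transfer argument only kills the part of $H_1(X_F;\mathbb{Q})$ that is invariant under the deck involution, since that part is $H_1(B^4;\mathbb{Q})=0$. The anti-invariant part vanishes because $F$ is connected. Then $H_1(B^4\smallsetminus F)\cong\mathbb{Z}$ is generated by a meridian, and the mod~$2$ Gysin sequence of the unbranched double cover of the complement gives $H_1(X_F;\mathbb{Z}/2)=0$. This is where connectedness of $F$ is really used: for two trivial discs bounding the $2$-component unlink, $X_F\cong S^1\times B^3$.

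Your route explains why $\sigma(L)$ is a four-dimensional quantity, independent of the choice of $F$. It generalises directly to prime-power cyclic branched covers, which is how the Levine--Tristram bound of Theorem~\ref{thm:murasugi-tristram} and its disconnected version in Remark~\ref{rmk:disconnected-mt} are obtained. It also yields the nullity refinement almost for free. Since $H_1(X_F;\mathbb{Q})=0$, the radical of the intersection form of $X_F$ is the injective image of $H_2(\Sigma_2(L);\mathbb{Q})$, which has dimension $b_1(\Sigma_2(L))=\eta_L(-1)$. Hence $\abs{\sigma(L)}+\eta_L(-1)\le 2g+\ell-1$. The price is the machinery: the $G$-signature theorem, Novikov additivity, and the identification of the intersection form of $X_{S'}$ with $A+A^T$.

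A Seifert-matrix proof is elementary and closer to the band-move movies used in this paper: an oriented band move changes exactly one of $\sigma$ and $\eta$ by one. However, it requires putting $F$ in normal form (births, then saddles, then deaths) and tracking the nullity through births and deaths in order to exploit connectedness. Counting saddles alone only bounds $\abs{\sigma(L)}$ by the number of bands, which is too weak in general.
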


Sharper bounds come from ``signature-like'' quantities, the main one being the Levine--Tristram signature $\sigma_L(\omega)$ and nullity $\eta_L(\omega)$ at $\omega \in \S^1$, introduced independently by Levine~\cite{levine1969cobordism} and Tristram~\cite{tristram1969cobordism}. We refer to \cite[Definition 1]{conway2019signature} for the definition and for a complete survey.
That they give slice genus bounds was originally proved by Tristram for $\omega = -1$ or $\omega = \exp(\nicefrac{2q \pi i}{2q+1})$ with $2q+1$ an odd prime, and extended by Kauffman~\cite{kauffman1978signature} to prime power order roots of unity at the cost of requiring the surface to be connected.
Note that Tristram and Kauffman's nullity $\eta_\omega(L)$ equals our $\eta_L(\omega) + 1$, as defined in \cite[Definition~2$\cdot$14]{tristram1969cobordism} and \cite[above Theorem~4.1]{kauffman1978signature}.
The case $\omega = -1$ recovers the signature.

\begin{theorem}[Murasugi--Tristram inequality {\cite[Theorem~2$\cdot$27]{tristram1969cobordism}\cite[Theorem~4.1]{kauffman1978signature}}]
\label{thm:murasugi-tristram}
Let $L$ be an $\ell$-component link and $\omega \in \S^1 \subset \C$ be a primitive $a$-th root of unity, where $a = p^s$ with $p$ prime.
Then, we have
\[
\frac{ \lvert \sigma_L(\omega) \rvert + \eta_L(\omega) - (\ell-1)}{2} \leq \gen{L}.
\footnote{%
  \label{foot:kauf-misprint}
  Kauffman's Theorem 4.1 is misprinted as $ \lvert \sigma_\omega(L) \rvert \le \mathbf{g(F)} + \mu(L) - \eta_\omega(L)$, but the final display of his own proof gives $ \lvert \sigma_\omega(L) \rvert \le \mathbf{2g(F)} + \mu(L) - \eta_\omega(L)$; hence, the factor $2$ is missing from the statement and we use the corrected form.}
\]
\end{theorem}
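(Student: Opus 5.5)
The plan is the classical $4$-dimensional argument. Take a connected, compact, oriented surface $F \subset B^4$ of genus $g$ with $\partial F = L$, and form the $p^s$-fold cyclic branched cover $W \to B^4$ along $F$. Relate the signature and nullity of $W$ (twisted by the $\omega$-eigenspace) to $\sigma_L(\omega)$ and $\eta_L(\omega)$ on the boundary. Then bound both using the size of $H_2$ of the cover, which is controlled by $g$ and $\ell$.

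\textbf{Step 1 (existence of the cover).} Since $F$ is connected and oriented, $H_1(B^4 \smallsetminus F;\mathbb{Z}) \cong \mathbb{Z}$, generated by a meridian. We take the kernel of the map $\pi_1 \to \mathbb{Z}/p^s$ sending each meridian to $1$; this is where connectedness of $F$ is used. We obtain a branched cover $W$ with boundary $\Sigma_{p^s}(L)$, the $p^s$-fold branched cover of $S^3$ over $L$ with all meridians sent to $1$. The deck transformation $t$ acts on $H_2(W;\C)$ and $H_1(\partial W;\C)$. We restrict to the $\omega$-eigenspaces $H_*^\omega$.

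\textbf{Step 2 (boundary identification).} Use Viro's identification: the signature of the intersection form on $H_2^\omega(W)$ equals $\sigma_L(\omega)$. Also, the nullity of this form equals $\dim H_1^\omega(\partial W)$, which in turn equals $\eta_L(\omega)$ (in the paper's normalisation). Concretely, push a Seifert surface $S$ into $B^4$ and compute with the standard handle description of the branched cover along $S$. The eigenspace form is then $(1-\omega)A + (1-\bar\omega)A^T$, which is the Levine--Tristram form. Novikov additivity and $G$-signature invariance under changing the surface rel boundary make this independent of the surface, so it also holds for $F$.

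\textbf{Step 3 (dimension count).} This is the main obstacle. We must show $\dim H_2^\omega(W) \le 2g + \ell - 1$, and also $H_1^\omega(W) = 0$ and $H_3^\omega(W) = 0$. Here the prime power hypothesis enters. For a $\mathbb{Z}/p^s$ cover branched along a surface in a $\mathbb{Z}/p$-homology ball, Smith theory or a transfer argument with $\mathbb{F}_p$ coefficients shows the cover is a $\mathbb{Q}$-homology ball away from the branch locus. The Euler characteristic computation then gives
\[
\chi(W) = p^s \chi(B^4) - (p^s - 1)\chi(F) = 1 + (p^s-1)(2g + \ell - 1).
\]
So the non-trivial eigenspaces have total $H_2$ dimension $(p^s-1)(2g+\ell-1)$. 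I would first try to establish equidistribution, namely $\dim H_2^\omega = 2g + \ell - 1$ for each primitive $\omega$. The cleanest route is the Mayer--Vietoris decomposition $W = (\text{cover of complement}) \cup \nu\tilde F$. There one checks that $H_*$ of the complement cover over $\C[t]/(t-\omega)$ is computed by the twisted chain complex of $B^4 \smallsetminus F$. Its Euler characteristic is $\chi(B^4 \smallsetminus F) = \chi(F)$ in absolute value $2g + \ell - 2$, up to boundary corrections. Vanishing of twisted $H_1$ and $H_3$ follows from the $\mathbb{F}_p$-acyclicity argument, since $1-\omega$ is a unit after inverting $p$ only for prime powers.

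\textbf{Step 4 (conclusion).} For a Hermitian form on a space of dimension $n = 2g+\ell-1$ with nullity $\eta$ and signature $\sigma$, we have $\lvert\sigma\rvert + \eta \le n$. Applying this to $H_2^\omega(W)$ and inserting Step 2 gives
\[
\lvert \sigma_L(\omega) \rvert + \eta_L(\omega) \le 2g + \ell - 1.
\]
Rearranging yields the stated bound. Minimising over $F$ gives the inequality with $\gen{L}$. This is the corrected form with the factor $2$, consistent with the footnote. Alternatively, one may simply cite Tristram's Theorem 2$\cdot$27 and Kauffman's proof, after translating the nullity convention $\eta_\omega = \eta_L(\omega)+1$.
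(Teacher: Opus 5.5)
The paper gives no proof of this statement. It cites Tristram's Theorem~2$\cdot$27 and Kauffman's Theorem~4.1, restoring the missing factor $2$ and translating the nullity convention $\eta_\omega=\eta_L(\omega)+1$, which is exactly the fallback in your last sentence. Your self-contained argument is the standard branched-cover proof behind those references. Its architecture is right: Viro's identification, surface-independence via Novikov additivity and the $G$-signature theorem, then $\lvert\sigma\rvert+\eta\le n$. It also makes visible where connectedness and the prime-power hypothesis enter. Note that Step~2 depends on Step~3: the nullity of the form on $H_2^\omega(W)$ is $\dim\ker\bigl(H_1^\omega(\partial W)\to H_1^\omega(W)\bigr)$, which equals $\eta_L(\omega)$ only once $H_1^\omega(W)=0$.

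Step~3 carries the real content, and as written it contains two errors.

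\textbf{First error.} Neither $W$ nor the unbranched cover $\tilde X$ is a $\mathbb{Q}$-homology ball; your own count gives $\dim H_2(W;\mathbb{Q})=(p^s-1)(2g+\ell-1)$. What is true, and what you need, is $H_1(W;\mathbb{F}_p)=0$. Here is the argument.
\begin{enumerate}
\item Let $X=B^4\smallsetminus\nu F$, with infinite cyclic cover $X_\infty$. Since $H_1(X;\mathbb{Z})\cong\mathbb{Z}$, the Wang sequence shows that $t-1$ is surjective on $H_1(X_\infty;\mathbb{F}_p)$. This, not the existence of the cover, is where connectedness of $F$ is used.
\item In $\mathbb{F}_p[t]$ we have $t^{p^s}-1=(t-1)^{p^s}$. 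So the Wang sequence for $X_\infty\to\tilde X$ gives $H_1(\tilde X;\mathbb{F}_p)\cong\mathbb{F}_p$, generated by the lift of $\mu^{p^s}$.
\item That lift bounds a fibre disc of $\nu\tilde F$, so $H_1(W;\mathbb{F}_p)=0$ and hence $H_1(W;\mathbb{Q})=0$.
\item Since $\partial W$ is connected, $H_3(W;\mathbb{Q})\cong H_1(W,\partial W;\mathbb{Q})^*=0$.
\end{enumerate}
The prime-power hypothesis enters only through the identity in step~2 (equivalently, $\Phi_{p^k}(1)=p$). Your remark that $1-\omega$ becomes a unit after inverting $p$ only for prime powers is not the mechanism, and is false as stated. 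For orders that are not prime powers, $1-\omega$ is already a unit in $\mathbb{Z}[\omega]$, and that is exactly why the argument fails there.

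\textbf{Second error.} The Euler characteristic is $\chi(B^4\smallsetminus\nu F)=\chi(B^4)-\chi(F)=2g+\ell-1$, not $\chi(F)$. The value $2g+\ell-2$ you wrote contradicts the $n=2g+\ell-1$ you use in Step~4.

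With the correct value, no separate equidistribution argument is needed. For $\omega\neq 1$, the deck group acts trivially on the homology of $\nu\tilde F$ and of the circle bundle along which it is glued. So Mayer--Vietoris gives $H_*^\omega(W)\cong H_*^\omega(\tilde X)\cong H_*(X;\mathbb{C}^\omega)$. The twisted groups $H_0$, $H_1$, $H_3$, $H_4$ all vanish, so the twisted Euler characteristic yields $\dim H_2^\omega(W)=\chi(X)=2g+\ell-1$ for every such $\omega$.
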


\begin{remark}[Disconnected surfaces and the strong slice genus]
\label{rmk:disconnected-mt}
Tristram's Theorem~2$\cdot$27 holds for $\omega = -1$ and for $\omega = \exp(\nicefrac{2q \pi i}{2q+1})$ with $2q+1$ an odd prime \cite[Definitions~2$\cdot$3, 2$\cdot$14]{tristram1969cobordism} and is stated for an arbitrary, possibly disconnected, spanning surface $F \subset B^{4}$. If $F$ has $m$ connected components, it reads
\[
  \lvert \sigma_L(\omega) \rvert + \lvert (\eta_L(\omega)+ 1) - m \rvert \leq 2g(F) + \ell - m.
\]
Theorem~\ref{thm:murasugi-tristram} is the case $m = 1$, which Kauffman extended to all roots of unity of prime power order.
In the strong slice case, the surface $F = F_1 \sqcup \dots \sqcup F_{\ell}$ is a disjoint union of $\ell$ surfaces, one per component of $L$, so $m = \ell$ and the inequality becomes a bound on the strong slice genus:
\[
    \frac{\lvert \sigma_L(\omega) \rvert + \lvert (\eta_L(\omega)+ 1) - \ell \rvert}{2} \leq \sgen{L}.
\]
In particular, if $L$ is strongly slice, then $\sigma_L(\omega) = 0$ and $\eta_L(\omega) = \ell - 1$.
See \cite[Theorem~3]{conway2019signature} for a version valid for locally flat surfaces and for all $\omega$ outside the set of \emph{Knotennullstellen}, and \cite[Theorem~2.1]{florensgilmer2003links} for the connected case.
\end{remark}

\paragraph{Lower bound specific to \texorpdfstring{$\unk{L}$}{u(L)}.}
The Levine--Tristram signature for links provides a lower bound directly on $\unk{L}$ without factoring through $\gen{L}$.
It rests on a lemma of Nagel and Owens \cite[Lemma 2.1]{nagelowens2015unlinking}, which describes how the signature and nullity behave when a single crossing is changed, extending the results of Cochran and Lickorish \cite[Proposition 2.1]{cochranlickorish1986unknotting} and Stoimenow \cite[Theorem 5.1]{stoimenow2004polynomial} for knots to the multi-component case.
Conway \cite[Proof of Proposition 3]{conway2019signature} noted that this lemma and its proof hold not only for $\omega = -1$, as Nagel and Owens stated, but for any $\omega \in \S^1_{\ast} = \S^1 \smallsetminus \{1\}$, and deduced the following bound.

\begin{theorem}[{\cite[Theorem 1]{conway2019signature}}]
\label{thm:levine-tristram-unknotting-links}
Let $L$ be an oriented $\ell$-component link and let $\omega \in \S^1_{\ast}$. Then
\[
  \frac{\lvert \sigma_L(\omega) \rvert + \lvert \eta_L(\omega) - \ell + 1 \rvert}{2} \leq \unk{L}.
  \footnote{%
  The statement as printed in \cite[Theorem 1]{conway2019signature} reads $\abs{\sigma_L(\omega)} + \abs{\eta_L(\omega) + \ell - 1} \le 2\unk{L}$; this is a misprint.
  Indeed, for the $\ell$-component unlink $U_\ell$ one has $\unk{U_\ell} = 0$, $\sigma_{U_\ell}(\omega) = 0$, and $\eta_{U_\ell}(\omega) = \ell - 1$, so the printed inequality would give $|2\ell - 2| \le 0$, which is false for $\ell \ge 2$.
  The term $-\ell+1$ is moreover the one consistent with the Murasugi--Tristram inequality \cite[Theorem~3]{conway2019signature}, whose nullity term is $\abs{\eta_L(\omega) - m + 1}$ with $m$ the number of components of the bounding surface.}
\]
\end{theorem}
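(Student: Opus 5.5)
We will argue by induction on $u = \unk{L}$, using the crossing-change lemma of Nagel and Owens \cite[Lemma 2.1]{nagelowens2015unlinking} in the form extended by Conway to every $\omega \in \S^1_{\ast}$. Write
\[
  f_\omega(L) = \abs{\sigma_L(\omega)} + \abs{\eta_L(\omega) - \ell + 1}.
\]
The plan is to show three things: that $f_\omega(U_\ell) = 0$ for the $\ell$-component unlink; that a single crossing change alters $f_\omega$ by at most $2$; and that an unknotting sequence of length $u$ therefore gives $f_\omega(L) \le 2u$. The number of components $\ell$ is unchanged by a crossing change (even one between different components), so the same $\ell$ appears throughout the sequence.

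\textbf{Base case.} We compute that $U_\ell$ has a disconnected Seifert surface made of $\ell$ discs, or equivalently a connected one obtained by tubing. Its Seifert matrix is the zero $(\ell-1)\times(\ell-1)$ matrix. Hence $\sigma_{U_\ell}(\omega) = 0$ and $\eta_{U_\ell}(\omega) = \ell - 1$, so $f_\omega(U_\ell) = 0$. This is exactly the normalisation checked in the footnote to the statement.

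\textbf{Inductive step.} Let $L_+$ and $L_-$ differ by one crossing change. The lemma of Nagel--Owens/Conway says that the pair $(\sigma, \eta)$ changes in one of a restricted list of ways. Either $\eta$ is unchanged and $\sigma$ changes by $0$ or $\pm 2$, or $\eta$ changes by $\pm 1$ and $\sigma$ changes by $\pm 1$ (with at most one of these moves at a time). I plan to derive this by comparing Hermitian matrices $(1-\omega)A + (1-\bar\omega)A^T$ for Seifert surfaces $F_\pm$ that differ by a single twist in one band. One surface's form is a one-dimensional extension (bordered matrix) of the other's, up to a rank-one change, so interlacing gives exactly these possibilities. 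In every case $\abs{\Delta\sigma} + \abs{\Delta\eta} \le 2$. The triangle inequality then yields $\abs{f_\omega(L_+) - f_\omega(L_-)} \le 2$.

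\textbf{Conclusion.} Chaining this estimate along a minimal sequence $L = L_0 \to L_1 \to \dots \to L_u = U_\ell$ of crossing changes gives $f_\omega(L) \le f_\omega(U_\ell) + 2u = 2u$, which is the claimed inequality.

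The main obstacle is making the crossing-change lemma hold for arbitrary $\omega \ne 1$, in particular when a crossing is between two different components. In that case the Seifert surfaces can be chosen connected with the same underlying surface, and a crossing change modifies the Seifert form by adding $\pm 1$ to a single diagonal entry. The Hermitian form then changes by a rank-one perturbation $\pm\abs{1-\omega}^2 e e^{*}$. The standard fact is that a rank-one positive semidefinite perturbation changes the inertia triple $(n_+, n_-, n_0)$ by moving at most one unit between adjacent classes. This gives $\abs{\Delta\sigma} + \abs{\Delta\eta} \le 2$ directly, independent of whether $\omega$ is a root of unity. I would present this rank-one argument in place of the bordered-matrix argument above, since it handles self-crossings and mixed crossings uniformly.
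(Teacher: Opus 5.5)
Your proposal is correct and follows the route the paper cites: the Nagel--Owens crossing-change lemma, extended to every $\omega \in S^1_{\ast}$ as Conway observed, realised as a rank-one change $\pm\lvert 1-\omega\rvert^2 vv^{*}$ of the Hermitian Seifert form on a fixed connected Seifert surface, combined with the normalisation $f_\omega(U_\ell)=0$ and chaining along an unknotting sequence. Two small corrections, neither of which affects the bound $\lvert\Delta\sigma\rvert+\lvert\Delta\eta\rvert\le 2$. First, a rank-one semidefinite perturbation can move a unit directly from $n_-$ to $n_+$; this is the case $\Delta\sigma=\pm 2$, $\Delta\eta=0$ that you listed earlier, so the inertia statement should say that $n_+$ and $n_-$ each change by at most one, not that units move only between adjacent classes. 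Second, the single-diagonal-entry description needs the crossing band to be non-separating; if it is separating, the Seifert form is simply unchanged.
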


\subsubsection{Knot Floer homology}
\label{subsubs:floer-homology}

Knot Floer homology was first introduced independently by Ozsv\'ath and Szab\'o~\cite{ozsvath2004knotfloer} and Rasmussen~\cite{rasmussen2003floerhomologyknotcomplements}.
They defined a concordance invariant $\tau$; see~\cite{ozsvath2003fourball}.

\begin{theorem}[{\cite[Corollary 1.3]{ozsvath2003fourball}}]
\label{thm:tau-bound}
The $\tau$-invariant satisfies
\[
\abs{\tau(K)} \leq \gen{K}.
\]
\end{theorem}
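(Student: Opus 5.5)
The statement to prove is Theorem~\ref{thm:tau-bound}: $\abs{\tau(K)} \leq \gen{K}$. This is the Ozsv\'ath--Szab\'o result, and I will follow their strategy. It rests on two facts.

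The first is a \emph{crossing-change} or \emph{genus-change} inequality for $\tau$. Suppose $\Sigma \subset B^4$ is a smooth slice surface of genus $g$ for $K$. I remove a small ball around an interior point of $\Sigma$. This produces a genus-$g$ cobordism in $S^3 \times [0,1]$ from the unknot to $K$. The goal is to show that $\tau$ changes by at most $g$ along such a cobordism. Since $\tau(\text{unknot}) = 0$, we then get $\abs{\tau(K)} \le g$, and minimising over $\Sigma$ gives the theorem.

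To control $\tau$ along the cobordism, I realise $\tau$ through the filtered chain homotopy type of $\widehat{CF}(S^3)$ with the knot filtration. Here $\tau(K)$ is the least filtration level whose inclusion induces a map that is nonzero onto $\widehat{HF}(S^3) \cong \mathbb{F}$. I plan to reduce the genus-$g$ cobordism to a statement about 4-manifolds, following the original route, as follows.

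First, I tube the surface so that it is capped off inside a 4-manifold. Concretely, I take $W = B^4$ blown up, or use the trick of attaching $g$ pairs of handles: a genus-$g$ surface with boundary $K$ is converted into a disc in $W = B^4 \# g(S^2 \times S^2)$, or into $\#^{2g}\overline{\mathbb{CP}}^2$ after blow-ups. The disc has self-intersection $0$, and it represents a class $[\Sigma]$ whose relevant homological data is recorded by $g$.

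Second, I invoke the adjunction-type relation for the filtered cobordism maps. The map induced by $W$, with a suitable $\mathrm{Spin}^c$ structure $\mathfrak{s}$, respects the knot filtration up to a shift of $\tfrac{1}{2}(\langle c_1(\mathfrak{s}), [\Sigma]\rangle + [\Sigma]^2)$. For a negative-definite $W$ with $b_1 = 0$, this map is nonzero on $\widehat{HF}$ for a characteristic $\mathfrak{s}$ achieving $c_1^2 + b_2 = 0$. Choosing $\mathfrak{s}$ to make the shift at most $g$ gives $\tau(K) \le g$.

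Third, applying the same argument to the mirror $\mir{K}$ and using $\tau(\mir{K}) = -\tau(K)$ gives $-\tau(K) \le g$.

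I expect the main obstacle to be the second step: establishing that the cobordism map is filtered with exactly the stated shift, and that it is nonvanishing on the generator. This requires the identification of holomorphic triangle counts with the knot filtration. I would take this as the core input from the paper cited in the theorem statement, \cite{ozsvath2003fourball}, rather than reprove it. All the other steps are formal.
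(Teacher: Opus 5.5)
Your opening reduction (puncturing $\Sigma$ to get a genus-$g$ cobordism in $S^3\times[0,1]$ from the unknot to $K$) and your mirroring step are fine. Your first step, however, is false as stated, and with it the genus drops out of the argument.

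Even granting that $\Sigma$ can be surgered into a disc in $B^4\# g(S^2\times S^2)$, that manifold has $b_2^+=g>0$. So the negative-definite non-vanishing you invoke in the second step is unavailable. In a punctured $\#^{2g}\overline{\mathbb{CP}}^2$ the intersection form is negative definite on a torsion-free $H_2$. Hence a disc of self-intersection $0$ is null-homologous and carries no information about $g$. Your proposed shift $\tfrac12(\langle c_1(\mathfrak s),[\Sigma]\rangle+[\Sigma]^2)$ also has no genus term. Your second step would then give $\tau(K)\le 0$ for every knot, and your third step would force $\tau\equiv 0$, contradicting $\tau(\torus{2,3})=1$.

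So the argument proves too much. In fact, by the Ozsv\'ath--Szab\'o inequality below, $\torus{2,3}$ bounds no self-intersection-$0$ disc in any punctured $\#^{k}\overline{\mathbb{CP}}^2$. The conversion you rely on therefore does not exist in general.

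The missing point is that genus cannot be exchanged for homology in a definite manifold at no cost; it has to appear in the filtration shift itself. That is how the cited result is formulated. By \cite[Theorem~1.1]{ozsvath2003fourball}, if $b_1(W)=b_2^+(W)=0$, $\partial W=S^3$, and $\Sigma\subset W$ is a surface with $\partial\Sigma=K$, then $2\tau(K)+\abs{[\Sigma]}+[\Sigma]\cdot[\Sigma]\le 2g(\Sigma)$, where $\abs{\cdot}$ is the $L^1$-norm in a diagonalising basis. The statement here, which the paper simply quotes, is the case $W=B^4$, where $[\Sigma]=0$, combined with $\tau(\mir{K})=-\tau(K)$. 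No capping or blow-up is needed.

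If you want to keep the cobordism formulation of your first paragraph, use the Juh\'asz--Zemke knot cobordism maps. Choose the decoration in which the $w$-subsurface $\Sigma_w$ of $\Sigma\setminus D$ is a bigon and the $z$-subsurface $\Sigma_z$ carries all the genus. Forgetting $z$, the map is the product cobordism map, an isomorphism on $\widehat{HF}(S^3)$. Zemke's Alexander grading shift is $\tfrac12(\langle c_1(\mathfrak s),[\Sigma]\rangle-[\Sigma]\cdot[\Sigma])+\tfrac12(\tilde\chi(\Sigma_w)-\tilde\chi(\Sigma_z))$, which here equals $0+\tfrac12\bigl(0-(-2g)\bigr)=g$. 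So a cycle in filtration level $\le\tau(U)=0$ representing the generator of $\widehat{HF}(S^3)$ is sent to a cycle in level $\le g$ representing the generator. This gives $\tau(K)\le g$, and your mirror step then gives $-\tau(K)\le g$.
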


Ozsv{\'a}th and Szab{\'o}~\cite[Definition~9.1]{ozsvath2011rationalsurgery} introduced a refinement $\nu$ of $\tau$, which satisfies $\nu(K) \in \{\tau(K), \tau(K)+1\}$ and also bounds the slice genus from below.

\begin{proposition}[{\cite[Proposition 2.3 and Proposition 2.4]{hom2016nuplus}}]
\label{prop:nu-bound}
For a knot $K$,
\[
  \nu(K) \leq \gen{K}.
\]
\end{proposition}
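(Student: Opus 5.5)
We will not work with $\nu$ directly. Instead we factor the inequality through the Hom--Wu invariant $\nu^+(K)$, proving the two inequalities
\[
\nu(K) \le \nu^+(K) \qquad\text{and}\qquad \nu^+(K) \le \gen{K}.
\]
These correspond to the two cited propositions of \cite{hom2016nuplus}.

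\textbf{Setup.} Recall the large-surgery description of knot Floer homology. For each $s \in \mathbb{Z}$ there are quotient complexes $A_s^+ = C\{\max(i, j-s) \ge 0\}$ and $B^+ = C\{i \ge 0\}$ of $CFK^\infty(K)$, together with the projection $v_s^+ \colon A_s^+ \to B^+$. On homology, in high gradings, $v_s^+$ is multiplication by $U^{V_s}$ on the tower $\mathcal{T}^+$ for some integer $V_s = V_s(K) \ge 0$. We set $\nu^+(K) = \min\{ s \ge 0 : V_s = 0\}$. By contrast, $\nu(K)$ is the minimal $s$ for which the hat version $\hat v_s \colon \hat A_s \to \hat B \cong \widehat{HF}(S^3)$ is nontrivial on homology.

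\textbf{Step 1: $\nu \le \nu^+$.} Take $s$ with $V_s = 0$. Then $v_s^+$ is an isomorphism on the bottom of the towers, so the grading-$d$ generator of $H_*(B^+)$ lies in the image of $H_*(A_s^+)$. The hat complexes are the kernels of $U$ on the plus complexes. Using the long exact sequences for $0 \to \hat A_s \to A_s^+ \xrightarrow{U} A_s^+ \to 0$, together with naturality of $v_s$, we lift this to a class in $H_*(\hat A_s)$ mapping onto the generator of $\widehat{HF}(S^3)$. Hence $\hat v_s$ is nontrivial and $\nu(K) \le s$. Applying this with $s = \nu^+(K)$ gives the claim.

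If $\nu^+(K) = 0$, the inequality $\nu \le \nu^+$ still holds. In that case it only remains to note $\gen{K} \ge 0$, so Step 2 is trivial. We do need $\nu(K) \le 0$ there, which follows from the same argument at $s = 0$.

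\textbf{Step 2: $V_s = 0$ for $s \ge \gen{K}$, hence $\nu^+ \le \gen{K}$.} This is Rasmussen's bound, and we expect it to be the main obstacle. Let $g = \gen{K}$. By the large surgery formula, $H_*(A_s^+)$ computes $HF^+(S^3_N(K), \mathfrak{s}_s)$ for $N \gg 0$. The integer $V_s$ is read off from the correction term:
\[
d(S^3_N(K), \mathfrak{s}_s) = d(L(N,1), s) - 2V_s .
\]
Cap a genus-$g$ slice surface $F$ in $B^4$ with the core of the $2$-handle. This gives a closed surface $\Sigma$ of genus $g$ and self-intersection $N$ in the trace cobordism $W$ (or its reversed version, a negative-definite cobordism). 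Applying the Ozsv\'ath--Szab\'o inequality for $d$-invariants under negative-definite cobordisms, with the spin$^c$ structures $\mathfrak t$ satisfying $\langle c_1(\mathfrak t), [\Sigma]\rangle + N = 2s$, bounds $d(S^3_N(K), \mathfrak s_s)$ from below. Comparing with the lens space term forces $V_s = 0$ once $s \ge g$. Equivalently, one can use the adjunction relation $V_{s} \le V_{s+1} + 1$ together with the genus bound on the surgery cobordism map.

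The delicate point is tracking the grading shifts $(c_1^2 - 2\chi - 3\sigma)/4$ and matching spin$^c$ labels. We would handle this by following Rasmussen's thesis, or \cite[Theorem 2.3]{hom2016nuplus}, verbatim rather than rederiving it. Combining Steps 1 and 2 gives $\nu(K) \le \nu^+(K) \le \gen{K}$.
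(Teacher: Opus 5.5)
Your decomposition $\nu(K)\le\nu^+(K)\le g_4(K)$ is exactly the content of the two cited propositions of Hom--Wu, and the paper itself offers nothing beyond that citation. Step~1 is correct once you make one point explicit. Exactness of $H_*(\hat A_s)\to H_*(A_s^+)\xrightarrow{U}H_*(A_s^+)$ only lifts classes lying in $\ker U$. So the preimage of the bottom generator $b_0\in H_*(B^+)$ must be taken to be the bottom element $t_0$ of the tower in $H_*(A_s^+)$. This element satisfies $Ut_0=0$, and by $U$-equivariance of $v_s^+$ on the towers, $v_s^+(t_0)=b_0$ when $V_s=0$.

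The argument you sketch for Step~2, however, would fail. The trace $X_N(K)$ is positive definite, and its reverse $-X_N(K)$ is a negative-definite filling of $-S^3_N(K)$. The Ozsv\'ath--Szab\'o inequality then gives $c_1(\mathfrak t)^2+1\le -4\,d(S^3_N(K),\mathfrak s_s)$. Maximising $c_1(\mathfrak t)^2=-\langle c_1(\mathfrak t),[\Sigma]\rangle^2/N$ over extensions of $\mathfrak s_s$ yields the \emph{upper} bound $d(S^3_N(K),\mathfrak s_s)\le d(L(N,1),s)$, which is only the trivial inequality $V_s\ge 0$.

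More fundamentally, this bound depends only on the intersection form $(N)$ and on $c_1(\mathfrak t)$, never on the genus of the capped surface $\Sigma$. It is therefore the same numerical statement for every knot, and it cannot give $V_s\le 0$ for any $s\ge 0$; that already fails for $T_{2,3}$, where $V_0=1$. Likewise, $V_s\le V_{s+1}+1$ is just the elementary monotonicity of the $V_s$ and involves no surface. The genus has to enter through an input that genuinely uses the surface, and that is precisely Rasmussen's theorem that $V_s(K)=0$ for $s\ge g_4(K)$, which is what Hom--Wu invoke for $\nu^+\le g_4$. So Step~2 stands only as a citation of that theorem. State it that way and drop the $d$-invariant sketch, which as written proves nothing beyond $V_s\ge 0$.
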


Both $\tau$ and $\nu$ can be computed for knots via the \texttt{knot\_floer\_homology} module of \cite{culler2024snappy}.

\paragraph{An extension of \texorpdfstring{$\tau$}{tau} to links.}
Cavallo~\cite{cavallo2018concordancetau} introduced a version of $\tau$ for links via link grid homology, and is a strong concordance invariant that yields a lower bound on the weak slice genus of a link.
However, as of now, there is no publicly available implementation that computes $\tau$ for links.

\begin{theorem}[{\cite[Proposition 1.4]{cavallo2018concordancetau}}]
\label{thm:tau-links}
For an $\ell$-component link $L$,
\[
\abs{\tau(L)}  +1 - \ell \leq \gen{L}.
\]
\end{theorem}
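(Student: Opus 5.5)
The bound follows from two properties of Cavallo's link invariant $\tau(L)$: it changes by a controlled amount under a saddle (band) cobordism, and it takes a fixed value on unlinks. I would argue directly with movies, as in Section~\ref{subsubs:computation-slice-genus-from-movie}. Let $F$ be a connected weak slice surface for $L$ of genus $g = \gen{L}$. Remove a small disc from $F$ in the interior of $B^4$, so that $F$ becomes a connected cobordism in $S^3 \times I$ from $L$ to the unknot $U$. After a generic perturbation, the radial function restricts to a Morse function on this cobordism. Isotopies do not change $\tau$, so the movie consists of $b$ births, $\beta$ saddles and $d$ deaths. The Euler characteristic gives $\chi = b + d - \beta = 2 - 2g - (\ell + 1)$.

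The inputs from link grid homology are the following.
\begin{itemize}
\item Adding or removing a split unknot shifts $\tau$ by a fixed normalised amount, in Cavallo's convention $\tau(L \sqcup U) = \tau(L) - \tfrac12$ or a similar constant.
\item A saddle that merges two components, or splits one into two, changes $\tau$ by at most $\tfrac12$ in absolute value, after taking into account the normalisation for the change in the number of components.
\end{itemize}
Equivalently, I would pass to the normalised quantity $\tau'(L) = \tau(L) + \tfrac{\ell - 1}{2}$ (or whatever normalisation makes unlinks have $\tau' = 0$). The two properties above then read $|\Delta \tau'| \le \tfrac12$ per saddle and $\Delta \tau' = 0$ per birth or death, and $\tau'(U) = 0$.

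Summing along the movie gives $|\tau'(L)| \le \beta/2$. Since births and deaths can be cancelled or traded for extra saddles, it is enough to treat a movie with $b = d = 0$. Then $\beta = 2g + \ell - 1$, and we obtain
\[
\Bigl|\tau(L) + \tfrac{\ell-1}{2}\Bigr| \le g + \tfrac{\ell-1}{2}.
\]
This bound is weaker than the statement when $\tau(L) > 0$. The sharper bound $|\tau(L)| + 1 - \ell \le g$ should instead come from Cavallo's cobordism inequality, which is stated for a connected cobordism $\Sigma$ between $L_1$ and $L_2$ with $\ell_1$ and $\ell_2$ components:
\[
|\tau(L_1) - \tau(L_2)| \le g(\Sigma) + \frac{\ell_1 + \ell_2}{2} - 1.
\]
Applying it with $L_1 = L$, $L_2 = U$, $\tau(U) = 0$ and $\ell_2 = 1$ gives $|\tau(L)| \le g + \tfrac{\ell - 1}{2}$, which implies the statement because $\tfrac{\ell-1}{2} \le \ell - 1$.

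The main obstacle is establishing this cobordism inequality with the right constants. The plan is to prove it move by move in grid homology. Saddle moves are realised by commutation and stabilisation together with maps that change the filtration level by at most one. Birth and death moves are handled by the Künneth formula for split unions with the unknot. Keeping track of the Maslov and Alexander gradings, which are shifted by the number of components, is where the constants come from, and this is the step that needs the most care.
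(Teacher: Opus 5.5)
The cobordism inequality you end with, $\lvert\tau(L_1)-\tau(L_2)\rvert\le g(\Sigma)+\frac{\ell_1+\ell_2}{2}-1$, is false. So is your intermediate bound $\bigl\lvert\tau(L)+\tfrac{\ell-1}{2}\bigr\rvert\le g+\tfrac{\ell-1}{2}$.

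Take the positive Hopf link $H$. Its standard diagram is positive with $c(D)=O(D)=\ell=2$, so Theorem~\ref{thm:positive-links-genus} applied to $\nu_\tau=\tau$ (Example~\ref{ex:slice-torus-invariants}) gives $\tau(H)=1$ and $\gen{H}=0$. Your inequality with $L_2=U$ would give $1\le\tfrac12$, and the intermediate one would give $\tfrac32\le\tfrac12$. More generally, every non-split positive link has $\tau(L)=\gen{L}+\ell-1$. So the statement is sharp, and no bound with $\tfrac{\ell-1}{2}$ in place of $\ell-1$ can hold.

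Your remark that the intermediate bound is weaker than the statement when $\tau(L)>0$ is also backwards. For $\tau(L)>0$ it would give $\tau(L)\le g$, and in general it would imply $\lvert\tau(L)\rvert\le g+\ell-1$.

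The error comes from treating $\tau$ as if it varied symmetrically. In Cavallo's normalisation the unlinks already have $\tau=0$, since they are strongly slice (Theorem~\ref{thm:slice-torus-bound}). So the shift $\tau+\frac{\ell-1}{2}$ is not the right one, and your guess $\tau(L\sqcup U)=\tau(L)-\tfrac12$ is inconsistent with this. A saddle also acts one-sidedly: taking $g=0$ in Theorem~\ref{thm:cobordism-bound}, a merging saddle changes $\tau$ by an amount in $[-1,0]$ and a splitting saddle by an amount in $[0,1]$.

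What is true is the asymmetric bound $-\gen{L}\le\tau(L)\le\gen{L}+\ell-1$, i.e.\ Theorem~\ref{thm:slice-torus-bound} for $\nu=\tau$, and the statement is just its weakening. The paper quotes the result from Cavallo without proof. Within its slice-torus framework, it follows at once from Theorem~\ref{thm:cobordism-bound} applied to your punctured surface. That surface is a connected cobordism from $L$ to $U$ with $n=1$, $\ell_0=\ell$, $\ell_1=1$, genus $\gen{L}$ and $\tau(U)=0$, which gives $-\gen{L}\le\tau(L)\le\gen{L}+\ell-1$.

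Your movie argument has two further problems.
\begin{itemize}
\item The reduction to $b=d=0$ is not available. For a nontrivial slice knot ($g=0$, $\ell=1$), your Euler characteristic count $b+d-\beta=1-2g-\ell$ would force $\beta=0$, so the knot would be the unknot. Births and deaths genuinely occur, and a per-move count has to absorb them (for instance via concordance invariance of $\tau$) rather than cancel them.
\item The step you defer to grid homology is the entire content of Cavallo's theorem. That step is establishing the saddle and concordance behaviour of $\tau$ with the correct, one-sided constants. So the proposal does not yet prove the key estimate, even in its corrected form.
\end{itemize}
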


\subsubsection{Khovanov homology}
\label{subsubs:khovanov-hom}

Khovanov~\cite{khovanov2000categorification} categorified the Jones polynomial and introduced Khovanov homology.
Building on the work of Lee~\cite{lee2005endomorphism}, Rasmussen~\cite[Definition 3.4]{rasmussen2010slice} introduced a new knot invariant $s$, later called the \emph{Rasmussen $s$-invariant}, that provides a lower bound on the slice genus.
This invariant was subsequently extended to multi-component links by Beliakova and Wehrli~\cite[Section 7.1]{beliakova2008categorification}.

\begin{theorem}[Rasmussen's $s$-invariant lower bound {\cite[Theorem 1]{rasmussen2010slice}\cite[Equation (4.1)]{cavallo2015links}}]
\label{thm:rasmussen-lower-bound}
For an $\ell$-component link $L$, we have
\[
\frac{\abs{s(L)} + 1 - \ell}{2} \leq \gen{L}.
\]
\end{theorem}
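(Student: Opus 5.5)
We reduce the link statement to two inputs: Rasmussen's knot inequality $\abs{s(K)} \leq 2\gen{K}$, and the behaviour of the Beliakova--Wehrli invariant $s(L)$ under the band moves of a slice movie. Let $F \subset B^4$ be a connected weak slice surface for $L$ realising $g(F) = \gen{L}$. The plan is to turn $F$ into a surface whose boundary is a single knot and whose genus is controlled, and then compare $s$ across the resulting cobordism.

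\textbf{Step 1: fuse the components.} Choose $\ell-1$ bands in $S^3$ joining the components of $L$ in a chain, and let $K$ be the knot obtained by these band moves. Gluing the bands onto $F$ near the boundary gives an oriented cobordism $C \subset S^3\times[0,1]$ from $L$ to $K$. Since the surface stays connected, this changes the Euler characteristic by $-(\ell-1)$ while passing from $\ell$ to $1$ boundary components. By equation~\eqref{eq:genus-bound-surface}, the resulting surface $F'$ bounding $K$ is connected of genus
\[
g(F') = \frac{2-\chi(F)+(\ell-1)-1}{2} = g(F).
\]
Hence $\gen{K} \leq \gen{L}$, and by Rasmussen $\abs{s(K)} \leq 2\gen{L}$.

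\textbf{Step 2: compare $s(L)$ with $s(K)$.} We use the standard cobordism inequality for the link $s$-invariant of Beliakova--Wehrli, which is the content behind \cite[Equation~(4.1)]{cavallo2015links}. For a connected oriented cobordism $\Sigma$ between links $L_0$ and $L_1$ in which every component of $\Sigma$ meets both ends, one has
\[
\abs{s(L_0)-s(L_1)} \leq -\chi(\Sigma).
\]
Applied to $C$, which is connected, meets both ends and has $\chi(C) = -(\ell-1)$, this gives $\abs{s(L)} \leq \abs{s(K)} + \ell - 1$.

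\textbf{Step 3: combine.} Putting the two steps together,
\[
\abs{s(L)} + 1 - \ell \leq \abs{s(K)} \leq 2\gen{L},
\]
which is the claim. For $\ell = 1$ this is exactly Rasmussen's theorem.

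The main obstacle is Step 2. One has to check that the Lee-homology generators $\mathfrak{s}_o$ map to nonzero multiples of each other under the maps induced by each saddle. This is where the hypothesis that every component of the cobordism meets both ends is used, via the grading shift $\chi$ and the filtered degree of the cobordism map. I would argue this saddle by saddle, exactly as in Rasmussen's proof, tracking that a fusion saddle sends the canonical generator to a canonical generator.

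A shortcut avoids part of this. Run the whole movie of $F$ through Steps 1 and 2 at once: $F$ minus a small disc is a connected cobordism from $L$ to the unknot $U$, meeting both ends, with Euler characteristic $\chi(F)-1 = 1-2g-\ell$. Since $s(U)=0$, the inequality of Step 2 gives $\abs{s(L)} \leq 2g + \ell - 1$ directly. I would present this version, and cite Beliakova--Wehrli for the cobordism inequality.
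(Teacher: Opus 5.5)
Your Steps 1--3 contain a genuine error, although the shortcut you close with is a correct proof.

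\textbf{The error in Step 1.} The genus count is wrong. Since $\chi(F) = 2-2g-\ell$ with $g=g(F)$, your own expression $\frac{2-\chi(F)+(\ell-1)-1}{2}$ simplifies to $g+\ell-1$, not $g$. This is forced by the geometry. A band attached to a connected surface along two distinct boundary circles lowers $\chi$ by one and lowers the number of boundary circles by one, so it raises the genus by one. Your $\ell-1$ fusion bands therefore add $\ell-1$ to the genus.

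Consequently the claim $g_4(K)\le g_4(L)$ is not just unproved but false in general. Reverse the first band of Example~\ref{ex:trefoil-slice}: it turns the Hopf link obtained there, which bounds an annulus and so has $g_4=0$, into the left-handed trefoil, which has $g_4=1$. With the correct count, Steps 1 and 2 only give $\lvert s(L)\rvert \le 2g_4(L)+3(\ell-1)$, which misses the statement by $2(\ell-1)$. Routing the argument through an intermediate knot makes you pay for the fusion bands both in $\chi(C)$ and in $2g(F')$, and nothing recovers that loss.

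\textbf{The shortcut.} This part is correct and complete. Removing a small disc from a connected genus-$g$ slice surface $F$ gives a connected cobordism from $L$ to the unknot $U$ with $-\chi = 2g+\ell-1$, and it meets both ends. Lemma~\ref{lem:cobordims_bound_s} together with $s(U)=0$ then gives $\lvert s(L)\rvert \le 2g+\ell-1$, which is the claimed inequality.

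The paper itself gives no proof beyond citing Rasmussen and Cavallo's equation~(4.1). Your shortcut is the standard argument behind that citation, and it is the same computation the paper carries out in the proof of Corollary~\ref{cor:strong-slice-rasmussen}, with the $\ell$-component unlink in place of $U$.

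\textbf{What to change.} Present only the shortcut. It is the whole proof, not a way of avoiding ``part'' of Steps 1--3, so delete those steps. Your paragraph about Lee generators is needed only if you want to reprove the Beliakova--Wehrli inequality. If you cite it instead, state the hypothesis that every component of the cobordism meets \emph{both} ends; see the paper's footnote on the hypotheses of Lemma~\ref{lem:cobordims_bound_s}. Your cobordism satisfies this hypothesis because it is connected.
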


One of the properties proved for the link version is of particular interest to us.\footnote{Beliakova and Wehrli~\cite[equation~(7.1)]{beliakova2008categorification} only assume that each component of $F$ intersects $L_1$, but the cobordism from a 2-component to a 1-component unlink consisting of a disc and a product cylinder provides a counterexample. However, their proof works if we assume that each component of $F$ also intersects $L_2$.}

\begin{lemma}[{\cite[equation~(7.1)]{beliakova2008categorification}}]
    \label{lem:cobordims_bound_s}
    Let $F$ be a smooth oriented cobordism from $L_1$ to $L_2$ such that every connected component of $F$ intersects both $L_1$ and $L_2$.
    Then
    \[
        \abs{s(L_1) - s(L_2)} \leq -\chi(F).
    \]
\end{lemma}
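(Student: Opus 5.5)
The plan is to follow Rasmussen's original argument for knots, carried out in Lee's deformation of Khovanov homology, as Beliakova and Wehrli do. I will first prove the one-sided inequality $s(L_2) \geq s(L_1) + \chi(F)$. Then I will apply the same inequality to the reversed cobordism $\bar F$ from $L_2$ to $L_1$, which has the same Euler characteristic, and combine the two.

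I recall the setup over $\mathbb{Q}$. The Lee complex of a diagram $D$ of an oriented link $L$ is filtered by the quantum grading. For every orientation $o$ of $L$ there is a canonical Lee generator $\mathfrak{s}_o$, and these classes form a basis of Lee homology. The invariant is defined by $s(L) = \frac{1}{2}\bigl(q_{\max}[\mathfrak{s}_o + \mathfrak{s}_{\bar o}] + q_{\max}[\mathfrak{s}_o - \mathfrak{s}_{\bar o}]\bigr)$, where $o$ is the given orientation, $\bar o$ its reverse, and $q_{\max}$ is the filtration level of a class. Equivalently, $s(L) = q_{\max}[\mathfrak{s}_o] + 1$.

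I choose a movie for $F$, that is, a Morse decomposition into Reidemeister moves, births, deaths and saddles. Each elementary piece induces a chain map $\Phi$ on Lee complexes. Reidemeister moves give filtered isomorphisms that send canonical generators to nonzero multiples of canonical generators. Births and deaths give maps of filtered degree $+1$, and saddles give maps of filtered degree $-1$. Composing, $\Phi_F$ is filtered of degree $\chi(F)$. It is therefore enough to show that $\Phi_F(\mathfrak{s}_o)$ is a nonzero multiple of $\mathfrak{s}_{o'}$, where $o'$ is the orientation induced on $L_2$ by $F$, possibly plus terms $\mathfrak{s}_{o''}$ for other orientations $o''$. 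Granting this, and using that the $\mathfrak{s}$-classes form a basis, $q_{\max}$ can only drop by at most the degree of the map, which gives the one-sided inequality.

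The main obstacle is this nonvanishing claim, and it is exactly where the hypotheses enter. I will track $\mathfrak{s}_o$ through the movie, keeping at each level an orientation of the intermediate link induced from the partial surface. A saddle that merges two circles with compatible orientations sends the generator to a nonzero multiple of the new generator. A split sends it to a nonzero multiple of the generator with the induced orientation. A birth inserts $\mathbf{a}+\mathbf{b}$ or $\mathbf{a}-\mathbf{b}$, that is, a sum of the two possible orientations of the new circle, and any cross terms this produces are carried along as other $\mathfrak{s}_{o''}$. A death pairs via the counit, which is nonzero on $\mathbf{a}$ and $\mathbf{b}$, so it contributes a nonzero scalar.

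The only way the coefficient of $\mathfrak{s}_{o'}$ can vanish is if a component of $F$ is closed, or if its boundary lies only in $L_2$. In either case the orientation cannot be propagated from $L_1$, and the Frobenius-algebra contributions of the two orientations of that piece cancel. The hypothesis that every component meets $L_1$ rules this out for $\Phi_F$. The hypothesis that every component meets $L_2$ rules it out for $\Phi_{\bar F}$. This also explains why the footnote's counterexample fails: the disc component meets only one end. I would write out the coefficient bookkeeping component by component, using that each component of $F$ is a connected cobordism between nonempty links, to which Rasmussen's computation applies verbatim.

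Finally I combine the two directions. Applying the result to $F$ gives $s(L_2) \geq s(L_1) + \chi(F)$. Applying it to $\bar F$ gives $s(L_1) \geq s(L_2) + \chi(F)$. Together these give $\abs{s(L_1) - s(L_2)} \leq -\chi(F)$.
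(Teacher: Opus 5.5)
Your outline (Lee homology, $\Phi_F$ filtered of degree $\chi(F)$, canonical generators, then the reversed cobordism $\bar F$) is the Rasmussen--Beliakova--Wehrli argument that the paper cites, and your final combination step is fine. The gap is the reduction in your third paragraph. It is not enough that $\mathfrak{s}_{o'}$ occurs in $\Phi_F(\mathfrak{s}_o)$ with nonzero coefficient ``possibly plus terms $\mathfrak{s}_{o''}$''. The canonical generators form a basis of Lee homology, but not one adapted to the filtration: a linear combination can have strictly higher filtration level than each summand. For the crossingless unknot, with $\mathbf{a}=v_-+v_+$ and $\mathbf{b}=v_--v_+$, one has $q[\mathbf{a}]=q[\mathbf{b}]=-1$ but $q[\mathbf{a}-\mathbf{b}]=q[2v_+]=1$. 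So $q[\Phi_F(\mathfrak{s}_o)]\ge q[\mathfrak{s}_o]+\chi(F)$ gives no lower bound on $q[\mathfrak{s}_{o'}]$.

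The footnote's example shows this failure concretely. Take the reversed cobordism from $U_1$ to $U_2$, a cylinder plus a born disc. Then $\Phi(\mathbf{a})=\mathbf{a}\otimes v_+=\tfrac12(\mathbf{a}\otimes\mathbf{a}-\mathbf{a}\otimes\mathbf{b})$, which has coefficient $\pm\tfrac12$ on the generator of the induced orientation. Your inference would then yield $q[\mathfrak{s}_{o'}]\ge -1+1=0$, whereas $q[\mathfrak{s}_{o'}]=-2$ for $U_2$. So the hypotheses are not there to keep a coefficient nonzero. Your diagnosis of the bad cases is also inaccurate: a born disc leaves an uncancelled mixture of both orientations, and a closed torus multiplies by $2$ rather than killing.

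What you must prove is that $\Phi_F(\mathfrak{s}_o)$ is \emph{exactly} a nonzero multiple of $\mathfrak{s}_{o'}$. This is Rasmussen's Proposition~4.1 for cobordisms each of whose components meets the source, and it is where ``every component meets $L_1$'' is used. Proceed as follows:
\begin{enumerate}
\item After each birth, expand the class into canonical generators of the intermediate oriented link.
\item Reidemeister maps and deaths send each term to a nonzero multiple of a canonical generator, since $\epsilon(\mathbf{a})=\epsilon(\mathbf{b})\neq 0$.
\item A saddle compatible with a term's orientation does the same. A saddle incompatible with it kills the term, because it merges circles labelled $\mathbf{a}$ and $\mathbf{b}$ and $\mathbf{a}\mathbf{b}=0$.
\item The surviving terms are therefore indexed by orientations of $F$ restricting to $o$ on $L_1$. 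Since every component of $F$ meets $L_1$, there is exactly one such orientation, the given one.
\end{enumerate}
Hence all cross terms vanish, and $q[\mathfrak{s}_{o'}]=q[\Phi_F(\mathfrak{s}_o)]\ge q[\mathfrak{s}_o]+\chi(F)$. The same argument applied to $\bar F$ is where ``every component meets $L_2$'' enters.
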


Applying this cobordism estimate to the case of a strong cobordism, we can produce a lower bound for the strong slice genus of a link.

\begin{corollary}[Strong slice genus bound]
\label{cor:strong-slice-rasmussen}
Let $L$ be an $\ell$-component, algebraically split link.
Then
\[
    \frac{\abs{s(L) + \ell - 1}}{2} \leq \sgen{L}.
\]
\end{corollary}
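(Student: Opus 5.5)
We turn a minimal strong slice surface into a cobordism from $L$ to the $\ell$-component unlink $U_\ell$ and apply Lemma~\ref{lem:cobordims_bound_s}, using $s(U_\ell) = 1-\ell$. This value follows from the Beliakova--Wehrli normalisation, in which $s$ of the unknot is $0$ and $s$ drops by one under disjoint union with an unknot. It is also consistent with Theorem~\ref{thm:rasmussen-lower-bound} being sharp on the unlink.

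\textbf{Step 1: the cobordism.} Let $F = F_1 \sqcup \dots \sqcup F_\ell$ be a strong slice surface with $\partial F_i = K_i$ and $\sum_i g(F_i) = \sgen{L}$. Such a surface exists because $L$ is algebraically split, and the minimum is attained since genera are non-negative integers. In each $F_i$ remove a small open disc around an interior point, choosing the points at distinct radial levels (or just pushing small discs near the origin). Let $B_\epsilon \subset B^4$ be a small round ball centred at the origin that meets $F$ exactly in these $\ell$ discs, arranged so that $F\cap \partial B_\epsilon$ is an $\ell$-component unlink. This can be achieved by isotoping $F$ so that the $\ell$ discs are standard, disjoint, flat discs in a small ball. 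Then $F' = F \cap (B^4 \setminus \mathrm{int}\, B_\epsilon)$ is a smooth oriented cobordism in $S^3 \times I$ from $L$ to $U_\ell$ (with orientations reversed appropriately).

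\textbf{Step 2: the hypothesis and the Euler characteristic.} Each component $F_i' = F_i \setminus D_i$ is connected and meets both ends: it meets $L$ in $K_i$ and $U_\ell$ in $\partial D_i$. This is exactly the hypothesis of Lemma~\ref{lem:cobordims_bound_s}, including the extra condition from the footnote. Moreover,
\[
\chi(F') = \sum_i \bigl(\chi(F_i) - 1\bigr) = \sum_i \bigl(1 - 2g(F_i) - 1\bigr) = -2\,\sgen{L}.
\]
Lemma~\ref{lem:cobordims_bound_s} then gives $\abs{s(L) - s(U_\ell)} \le 2\,\sgen{L}$, that is, $\abs{s(L) + \ell - 1} \le 2\,\sgen{L}$, which is the claim.

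\textbf{Main obstacle.} The delicate point is Step~1, namely producing the unlink cross-section and checking its $s$-value with the correct orientation and sign conventions. For the geometry, I would first isotope the surface (or use a Morse-theoretic movie) so that the $\ell$ local maxima closest to the centre are the first critical points and are separated; the level just past them is then an $\ell$-component unlink. For the conventions, I would check that $s(U_\ell)=1-\ell$ holds in the normalisation used by Theorem~\ref{thm:rasmussen-lower-bound} for the oriented unlink. Orientation of $U_\ell$ is irrelevant, since reversing components of an unlink gives an isotopic oriented link.
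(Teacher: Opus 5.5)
Your proposal is correct and matches the paper's proof: you puncture a minimal strong slice surface to get a cobordism $F$ from $L$ to $U_\ell$ whose $\ell$ components each meet both ends, compute $\chi(F) = -2g_4^{\ast}(L)$, and apply Lemma~\ref{lem:cobordims_bound_s} with $s(U_\ell) = 1-\ell$. The only difference is that you are more explicit than the paper, which simply takes a ``minimal genus strong cobordism'' to $U_\ell$ as given and asserts $s(U_\ell) = 1-\ell$ without justification.
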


\begin{proof}
Let $F = \bigsqcup_{i=1}^{\ell} F_i$ be a minimal genus strong cobordism between $L$ and $U$, the $\ell$-component unlink.
Then $\sum_i g(F_i) = \sgen{L}$.
Assume that $\partial F_i = -K_i \sqcup U_i$, where $U_i$ is the $i$-th component of $U$.
As $F$ has $\ell$ connected components and $2\ell$ boundary circles, by equation~\eqref{eq:genus-bound-surface}, we get that its Euler characteristic is
\[
\chi(F) = \sum_{i=1}^{\ell} \chi(F_i) = \sum_{i=1}^{\ell} (2 -2g(F_i) - \abs{\partial F_i}) =  2\ell - 2\sgen{L} - 2\ell = -2\sgen{L}.
\]
Since every component of $F$ intersects $L$ and $U$, Lemma~\ref{lem:cobordims_bound_s} applies.
As $s(U) = 1 - \ell$, we get
\[
    \abs{s(L) + \ell - 1} \leq - \chi(F) = 2 \sgen{L},
\]
which concludes the proof.
\end{proof}

\paragraph{On the computability of the Rasmussen $s$-invariant.}
Several pieces of software were created to compute the $s$-invariant for knots.
In our codebase, we integrated the implementation by Sch{\"u}tz~\cite{schuetz2025knotjob}, which, to our knowledge, is the only one able to compute the $s$-invariant for links.
Details on the algorithm used for knots can be found in Sch{\"u}tz~\cite{schuetz2021algorithm, schuetz2021errataalgorithm}, and for links in \cite{schuetz2025links}.

\subsubsection{Slice-torus link invariants}
\label{subsubs:slice-torus-invariants}

Livingston~\cite{livingston2004computations} defined a class of knot invariants that Lewark~\cite{lewark2014spectral} later called \emph{slice-torus invariants}, and he observed that they could be generalised from taking values in even integers to reals.
They were extended to links by Cavallo and Collari~\cite{cavallocollari2020links}, and they proved several results concerning weak and strong sliceness.
What follows is a short survey of these results and the bounds they yield on the weak and strong slice genera of a link.

\begin{example}[Known slice-torus link invariants {\cite[Examples 2.3--2.4]{cavallocollari2020links}}]
\label{ex:slice-torus-invariants}
The following are slice-torus link invariants for an $\ell$-component link $L$:
\begin{enumerate}
    \item the Ozsv\'ath--Szab\'o $\tau$-invariant for links (Section~\ref{subsubs:floer-homology}): $\nu_{\tau}(L) := \tau(L)$;
    \item the Rasmussen $s$-invariant for links (Section~\ref{subsubs:khovanov-hom}): $\nu_s(L) := \tfrac{s(L) + \ell - 1}{2}$.
\end{enumerate}
\end{example}

The following result generalises what we already stated for $s$ and $\tau$ in this framework.

\begin{theorem}[Slice-torus bound on the weak slice genus {\cite[Proposition~1.2]{cavallocollari2020links}}]
\label{thm:slice-torus-bound}
If $\nu$ is a slice-torus link invariant and $L$ is an $\ell$-component link, then:
\[
-\gen{L} \leq \nu(L) \leq \gen{L} + \ell - 1.
\]
Furthermore, if $L$ is \emph{strongly slice}, then $\nu(L) = 0$.
\end{theorem}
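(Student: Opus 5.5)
The plan is to derive both inequalities from the two defining axioms of a slice-torus link invariant in the sense of Cavallo and Collari: the \emph{cobordism axiom} and the \emph{normalisation}. By the cobordism axiom, if $\Sigma$ is a connected oriented cobordism from $L_1$ (with $\ell_1$ components) to $L_2$ (with $\ell_2$ components), then
\[
\nu(L_1) - \nu(L_2) \leq g(\Sigma) + \ell_1 - 1,
\]
and the inequality is symmetric under reversing the cobordism. The normalisation fixes $\nu$ on the unknot and on positive torus knots; in particular $\nu(\text{unknot}) = 0$.

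For the upper bound, take a weak slice surface $F$ of genus $\gen{L}$ and remove a small disc around a point in its interior. This produces a connected cobordism $\Sigma$ from $L$ to the unknot $U$, still of genus $\gen{L}$. The cobordism axiom then gives
\[
\nu(L) - 0 \leq \gen{L} + \ell - 1.
\]

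For the lower bound, read the same surface in the opposite direction, as a connected cobordism from $U$ to $L$. The axiom, with $\ell_1 = 1$, gives
\[
0 - \nu(L) \leq \gen{L},
\]
that is, $-\gen{L} \leq \nu(L)$. This asymmetry, with $\ell - 1$ appearing on one side only, is exactly what the axiom predicts, because the component count enters through the source link.

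For the strongly slice claim, suppose $L$ bounds disjoint discs $D_1,\dots,D_\ell$. First tube the discs together along arcs, which are disjoint from each other after a general-position argument in dimension four. This gives a connected genus-zero surface bounding $L$, so the bounds above yield $0 \leq \nu(L) \leq \ell - 1$, which is not yet enough. The sharper statement needs the second ingredient in Cavallo--Collari's setup: for an oriented cobordism all of whose components meet both ends, one gets $\nu(L_1)-\nu(L_2) \leq -\chi$-type bounds with the correct component corrections. Applying this to the disjoint discs with a small disc removed from each gives an $\ell$-component cobordism of annuli from $L$ to the $\ell$-component unlink $U_\ell$. This should give $\nu(L) = \nu(U_\ell)$, and the normalisation then gives $\nu(U_\ell) = 0$ (for instance, $\nu_s(U_\ell) = (1-\ell+\ell-1)/2 = 0$).

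The main obstacle is this last step. It requires the version of the cobordism axiom for disconnected cobordisms, essentially a concordance-invariance statement under strong concordance, together with the value of $\nu$ on unlinks. If the axioms in \cite{cavallocollari2020links} only treat connected cobordisms, I would instead deduce strong concordance invariance directly by comparing with the split union and using additivity under disjoint union, which states $\nu(L \sqcup U) = \nu(L)$.
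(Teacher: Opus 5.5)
Your proposal is correct and follows the paper's route: the paper deduces the statement from the cobordism bound of Theorem~\ref{thm:cobordism-bound}, applied with $n=1$ to a punctured slice surface viewed as a connected cobordism between $L$ and the unknot (giving your two inequalities), and with $n=\ell$ to a strong slice surface viewed as $\ell$ annuli from $L$ to $U_\ell$; together with $\nu(U_\ell)=0$, which the paper simply recalls, this gives $\nu(U_\ell)\le\nu(L)\le\nu(U_\ell)$ exactly as you anticipate. Since that disconnected bound is available, your fallback is unnecessary, and it would not suffice on its own anyway, because every upper bound on $\nu(L)$ that comes from a connected cobordism carries the $+(\ell-1)$ term, just as in your tubing estimate $0\le\nu(L)\le\ell-1$.
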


We state the bound on the slice genus that we obtain from the above theorem.
Since the slice genus is invariant under mirroring and orientation reversal, we can also consider the values we obtain from the slice-torus invariant applied to $-\mir{L}$.

\begin{corollary}[Lower bound on the weak slice genus]
\label{cor:lower-bound-nu}
For any slice-torus link invariant $\nu$ and $\ell$-component link $L$,
\[
\gen{L} \geq \max\left\{ -\nu(L), \,\, \nu(L) - \ell + 1, \,\, -\nu(-\mir{L}), \,\, \nu(-\mir{L}) - \ell + 1 \right\}.
\]
\end{corollary}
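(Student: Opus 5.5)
The plan is to apply Theorem~\ref{thm:slice-torus-bound} twice, once to $L$ and once to $-\mir{L}$, and then use the invariance of the weak slice genus under mirroring and orientation reversal.

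First, from Theorem~\ref{thm:slice-torus-bound} applied to $L$ we get $-\gen{L} \leq \nu(L) \leq \gen{L} + \ell - 1$. Rearranging the two sides gives the first two candidates $\gen{L} \geq -\nu(L)$ and $\gen{L} \geq \nu(L) - \ell + 1$.

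Second, we need $\gen{-\mir{L}} = \gen{L}$, and we justify it explicitly. Take a weak slice surface $F \subset B^4$ with $\partial F = L$. Its image under the orientation-reversing reflection of $B^4$ that restricts to the mirroring reflection of $S^3$ is a connected oriented surface of the same genus bounded by $\mir{L}$, with the induced orientation. Reversing the orientation of this surface gives a surface of the same genus bounded by $-\mir{L}$. Running the same construction backwards gives the reverse inequality, so the two genera agree. The only point needing care is that the boundary orientation conventions work out to $-\mir{L}$ rather than $\mir{L}$. This does not matter for our purposes, because both $\mir{L}$ and $-\mir{L}$ have the same weak slice genus by the same argument: reversing all orientations of a surface reverses its boundary.

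Since $-\mir{L}$ also has $\ell$ components, Theorem~\ref{thm:slice-torus-bound} applied to it gives $\gen{L} = \gen{-\mir{L}} \geq -\nu(-\mir{L})$ and $\gen{L} \geq \nu(-\mir{L}) - \ell + 1$.

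Taking the maximum of the four inequalities yields the claim. There is no real obstacle here; the only step requiring attention is the invariance of $g_4$ under $L \mapsto -\mir{L}$, which is immediate from the reflection argument above.
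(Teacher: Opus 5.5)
Your proof is correct and follows the paper's own route. The paper gets the corollary by applying Theorem~\ref{thm:slice-torus-bound} to $L$ and to $-\mir{L}$, using without proof that the weak slice genus is invariant under mirroring and orientation reversal; your reflection-and-reversal argument correctly justifies $\gen{-\mir{L}} = \gen{L}$, and it works because reversing the orientation of the whole connected surface reverses all components of the boundary at once.
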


Theorem~\ref{thm:slice-torus-bound} follows from the following cobordism bound.

\begin{theorem}[Cobordism bound {\cite[Proposition~2.11]{cavallocollari2020links}}]
\label{thm:cobordism-bound}
Let $\nu$ be a slice-torus link invariant. If $\Sigma \subset \S^3 \times [0,1]$ is a cobordism from $L_0$ to $L_1$ with $n$ connected components, each intersecting both $L_0$ and $L_1$, then
\[
\nu(L_1) - g(\Sigma) - \ell_1 + n \leq \nu(L_0) \leq \nu(L_1) + g(\Sigma) + \ell_0 - n,
\]
where $\ell_i$ denotes the number of components of $L_i$.
\end{theorem}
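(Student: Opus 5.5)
The plan is to reduce the statement to the connected case. I take the defining axiom of a slice-torus link invariant in \cite{cavallocollari2020links} to be the case $n=1$ of Theorem~\ref{thm:cobordism-bound}: for a \emph{connected} oriented cobordism $\Sigma$ from $L_0$ to $L_1$,
\[
\nu(L_1)-g(\Sigma)-\ell_1+1\le \nu(L_0)\le \nu(L_1)+g(\Sigma)+\ell_0-1 .
\]
The two inequalities of the theorem are symmetric under turning $\Sigma$ upside down, which swaps $L_0$ and $L_1$. So I only prove the left-hand one; the right-hand one then follows by the same argument applied to the reversed cobordism.

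\textbf{Step 1: make $\Sigma$ connected at the $L_1$ end.} Let $\Sigma=\Sigma_1\sqcup\dots\sqcup\Sigma_n$, each $\Sigma_i$ meeting both $L_0$ and $L_1$. Inside a collar $S^3\times[1,1+\varepsilon]$, attach $n-1$ oriented bands to $L_1$. Each band joins a circle of $L_1\cap\Sigma_i$ to a circle of $L_1\cap\Sigma_{i+1}$, and bands are possible because every $\Sigma_i$ reaches $L_1$. Call the resulting link $L_1''$; it has $\ell_1-n+1$ components. The extended surface $\Sigma''$ is connected and goes from $L_0$ to $L_1''$. Its Euler characteristic is $\chi(\Sigma)-(n-1)$ and its number of boundary circles is $|\partial\Sigma|-(n-1)$. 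By \eqref{eq:genus-bound-surface}, applied componentwise with $g(\Sigma)=\sum g(\Sigma_i)$, this gives $g(\Sigma'')=g(\Sigma)$. The connected axiom applied to $\Sigma''$ then yields
\[
\nu(L_1'')-g(\Sigma)-\ell_1+n\le\nu(L_0).
\]

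\textbf{Step 2: compare $\nu(L_1)$ with $\nu(L_1'')$.} It remains to show $\nu(L_1)\le\nu(L_1'')$ when $L_1''$ is obtained from $L_1$ by $n-1$ bands, each of which decreases the number of components by one. By induction it suffices to handle a single merging band $L\to L'$. This is the step I expect to be the main obstacle. The band cobordism consists of a pair of pants together with product annuli, so it is not connected, and the connected axiom does not apply to it directly. Using the reversed cobordism has the same problem.

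My first attempt is to make the single-band cobordism connected by tubing in the product annuli of the other components. Tubing two components together keeps the boundary unchanged and lowers $\chi$ by $2$, so by \eqref{eq:genus-bound-surface} it does not change the total genus; after tubing, the cobordism is connected of genus $0$. The connected axiom, read from $L'$ to $L$, then only gives $\nu(L)\le\nu(L')+\ell'-1$. The loss $\ell'-1$ has to be recovered. I would try to do this with the remaining axioms of \cite{cavallocollari2020links}: additivity under connected sum, and the behaviour under split union with an unknot. Concretely, I would reduce to the case where the band joins the only two components, replacing $L$ by a suitable connected sum that isolates them. If the axioms instead include a direct band-move inequality of the form $\nu(L)\le\nu(L')$ for merging bands, I will cite it and Step 2 is immediate.

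Combining Steps 1 and 2 gives $\nu(L_1)-g(\Sigma)-\ell_1+n\le\nu(L_0)$, and the upside-down argument gives the other inequality. Finally, I will check the unlink cases, for instance a product cobordism, to confirm that the constants $\ell_i-n$ are sharp and that no off-by-one error has crept into the genus bookkeeping.
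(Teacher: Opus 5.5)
Step~2 is not just the hard step. The inequality it asks for is false, so Steps~1 and~2 cannot be completed. Let $L=H^+$ be the positive Hopf link (the closure of $\sigma_1^2$), and let $L'$ be the unknot obtained by the oriented smoothing of one crossing; this is a single merging band. Theorem~\ref{thm:positive-links-genus} with $c=O=\ell=2$ gives $\nu(H^+)=1$, while $\nu(L')=0$. So $\nu(L)\le\nu(L')$ fails for every slice-torus link invariant, and there is no axiom of that form for you to cite. In fact, the statement you are proving, applied to a single-band cobordism (a pair of pants plus product annuli, so $g=0$ and $n=\ell'$), says
\[
\nu(L')\le\nu(L)\le\nu(L')+1 ,
\]
that is, merging bands can only lower $\nu$. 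Feeding the true relation $\nu(L_1'')\ge\nu(L_1)-(n-1)$ into Step~1 returns only $\nu(L_1)-g(\Sigma)-\ell_1+1\le\nu(L_0)$. This is the $n=1$ bound, with the same loss of $n-1$ as your tubing attempt. Putting the bands at the $L_0$ end instead loses $n-1$ in both inequalities as well. Each tube or band used to join two components of $\Sigma$ costs exactly one unit. So connecting the components and then invoking the connected case cannot, by itself, produce the constants $\ell_i-n$.

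What is missing is an ingredient that controls disconnected cobordisms directly. Since $g(\Sigma)=\tfrac12\bigl(2n-\chi(\Sigma)-\ell_0-\ell_1\bigr)$, the theorem is equivalent to the Euler-characteristic bounds $\nu(L_0)-\nu(L_1)\le\tfrac12\bigl(\ell_0-\ell_1-\chi(\Sigma)\bigr)$ and $\nu(L_1)-\nu(L_0)\le\tfrac12\bigl(\ell_1-\ell_0-\chi(\Sigma)\bigr)$, for cobordisms all of whose components meet both ends. For $\nu_s$ this follows immediately from Lemma~\ref{lem:cobordims_bound_s}; compare the computation in the proof of Corollary~\ref{cor:strong-slice-rasmussen}. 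For a general slice-torus invariant, you need to work from the actual defining properties in \cite{cavallocollari2020links}, which your proposal only guesses. You then need a mechanism that recovers the $n-1$ units, which the reduction in Steps~1 and~2 does not supply.
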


Indeed, by taking a slice surface for a link $L$ (i.e., a cobordism between $L$ and the unknot), one obtains the inequalities of Theorem~\ref{thm:slice-torus-bound}.
Moreover, we can get a better bound on the strong slice genus by taking a minimal genus strong slice surface for $L$; i.e., a cobordism between $L$ and the $\ell$-component unlink $U_{\ell}$, and recalling that $g(F) = \sgen{L}$. Various terms cancel, and we obtain the following corollary.

\begin{corollary}
\label{cor:strong-cobordism-bound}
Let $L$ be an $\ell$-component algebraically split link.
Then, for any slice-torus link invariant $\nu$,
\[
\abs{\nu(L)} \leq \sgen{L}.
\]
Since the strong slice genus is invariant under mirroring and orientation reversal, we obtain
\[
\max\{\abs{\nu(L)}, \abs{\nu(-\mir{L})}\} \leq \sgen{L}.
\]
\end{corollary}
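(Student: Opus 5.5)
We apply Theorem~\ref{thm:cobordism-bound} to a minimal genus strong slice surface, viewed as a cobordism in $\S^3 \times [0,1]$. Since $L$ is algebraically split, $\sgen{L}$ is finite. Take a strong slice surface $F = \bigsqcup_{i=1}^{\ell} F_i \subset B^4$ with $\partial F_i = K_i$ and $\sum_i g(F_i) = \sgen{L}$. Removing a small ball from $B^4$ at a point where $F$ is a standard collection of $\ell$ disjoint discs (after an isotopy, we may assume each $F_i$ has a local maximum, or we simply puncture each $F_i$ along a disc near the ball's centre), we obtain a cobordism $\Sigma \subset \S^3 \times [0,1]$ from $L_0 := L$ to $L_1 := U_\ell$, the $\ell$-component unlink. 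This $\Sigma$ has $n = \ell$ connected components, each meeting both ends, and $g(\Sigma) = \sgen{L}$.

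Next, we plug into Theorem~\ref{thm:cobordism-bound} with $\ell_0 = \ell_1 = n = \ell$. The terms $-\ell_1 + n$ and $\ell_0 - n$ vanish, giving
\[
\nu(U_\ell) - \sgen{L} \leq \nu(L) \leq \nu(U_\ell) + \sgen{L}.
\]
It remains to know that $\nu(U_\ell) = 0$. This follows from the last assertion of Theorem~\ref{thm:slice-torus-bound}, since the unlink is strongly slice. Alternatively, it can be checked directly for the examples of Example~\ref{ex:slice-torus-invariants}, using $s(U_\ell) = 1-\ell$ as in the proof of Corollary~\ref{cor:strong-slice-rasmussen}. Hence $\abs{\nu(L)} \leq \sgen{L}$.

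For the second inequality, we note that mirroring and orientation reversal send a strong slice surface for $L$ to one for $-\mir{L}$: reflect $B^4$ and reverse the orientation of $F$. Both operations preserve disjointness, the component-wise boundary condition, and the genus, so $\sgen{-\mir{L}} = \sgen{L}$. Moreover, $-\mir{L}$ is again algebraically split, since linking numbers only change sign. Applying the first part to $-\mir{L}$ then gives $\abs{\nu(-\mir{L})} \leq \sgen{L}$, and taking the maximum yields the claim.

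The main point requiring care is the conversion of $F \subset B^4$ into a cobordism in $\S^3\times[0,1]$ whose components each meet both boundary links, which is the hypothesis of Theorem~\ref{thm:cobordism-bound}. Here we isotope $F$ so that the radial function is Morse on it, and remove a neighbourhood of one local maximum on each $F_i$. This produces, near the centre, exactly $\ell$ small unlinked circles forming $U_\ell$, with each $F_i$ punctured once, so every component of $\Sigma$ touches both ends and the genus is unchanged.
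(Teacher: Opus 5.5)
Your proof is correct and follows the paper's own argument: regard a minimal-genus strong slice surface as a cobordism between $L$ and $U_\ell$ with $n=\ell_0=\ell_1=\ell$, apply Theorem~\ref{thm:cobordism-bound}, use $\nu(U_\ell)=0$, and pass to $-\mir{L}$ via the invariance of $\sgen{}$ under mirroring and orientation reversal. The one step to tighten is the puncturing: the $\ell$ removed discs must all lie in a single interior $4$-ball, so that the complement is $\S^3\times[0,1]$, rather than in neighbourhoods of separate local maxima of the radial function. You can arrange this by taking one minimum of the radial function on each $F_i$ (a maximum of $t=1-r$) and postponing these caps to the end of the movie, or by joining small balls around points of the $F_i$ by arcs in the complement of $F$.
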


By Lemma~\ref{lem:lower-bound-strong-genus-link-components}, if we denote by $K_1, \dots, K_{\ell}$ the components of the link $L$, then
\begin{equation}\label{eqn:strong-genus-bound}
\max\left\{\abs{\nu(L)}, \abs{\nu(-\mir{L})}, \sum_{i=1}^{\ell} \abs{\nu(K_i)}\right\} \leq \sgen{L}.
\end{equation}
Indeed, $\gen{K} = \sgen{K}$ for a knot $K$. By Corollary~\ref{cor:strong-cobordism-bound}, we have $\abs{\nu(K)} \leq \gen{K}$. Together with $\sum_{i = 1}^{\ell} \gen{K_i} \leq \sgen{L}$, we obtain inequality~\eqref{eqn:strong-genus-bound}.

\paragraph{Bounds on the unknotting number.}
By studying the behaviour of slice-torus invariants under crossing changes, we can also obtain bounds on the unknotting number that do not factor through the slice genus.

The first result gives a lower bound on the unknotting number of a link by studying a splitting sequence.
Recall that the splitting number of a link, denoted $\widetilde{sp}$ by Cavallo and Collari\footnote{
    Note that in subsequent work of Collari~\cite{collari2021slicetoruspositive}, the notation is swapped and $\widetilde{sp}$ denotes the \emph{strong splitting number} of a link; that is, only mixed-crossing changes are allowed.
    We use the notation from \cite{cavallocollari2020links}.
    },
is the minimum number of crossing changes needed to change an $\ell$-component link into a disjoint union of $\ell$ knots.
Clearly, $\widetilde{sp}(L) \leq \unk{L}$ since an unknotting sequence is also a splitting sequence.

\begin{theorem}[{\cite[Theorem 1.5]{cavallocollari2020links}}]
\label{thm:splitting-number-bound}
    Suppose that $\nu$ is a slice-torus link invariant and $L$ is a link with components $K_1, \dots , K_{\ell}$.
    Then
    \[
    \abs{\nu(L) - \sum_{i=1}^{\ell} \nu(K_i)} \leq \widetilde{sp}(L) \leq \unk{L}.
    \]
\end{theorem}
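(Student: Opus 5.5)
The plan is to track how the quantity
\[
d(L) := \nu(L) - \sum_{i=1}^{\ell} \nu(K_i)
\]
changes along a minimal splitting sequence. Let $L = L^0 \to L^1 \to \dots \to L^k$ be a sequence of $k = \widetilde{sp}(L)$ crossing changes ending in a split link $L^k = K_1^k \sqcup \dots \sqcup K_\ell^k$. Write $K_i^j$ for the $i$-th component of $L^j$. I will show that $\abs{d(L^{j+1}) - d(L^j)} \leq 1$ for every $j$ and that $d(L^k) = 0$. This gives $\abs{d(L)} \leq k = \widetilde{sp}(L)$. The inequality $\widetilde{sp}(L) \leq \unk{L}$ was already observed before the statement: an unknotting sequence ends at the unlink, which is in particular a split union of $\ell$ knots.

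The first ingredient is the crossing change inequality for slice-torus link invariants. If $L_-$ is obtained from $L_+$ by changing a positive crossing into a negative one, then
\[
\nu(L_-) \leq \nu(L_+) \leq \nu(L_-) + 1 .
\]
In Cavallo--Collari this is essentially one of the axioms, or an immediate consequence of them. If I had to derive it from Theorem~\ref{thm:cobordism-bound}, I would use the oriented resolution $L_0$. Both $L_+$ and $L_-$ are related to $L_0$ by a single saddle, and one can also pass from $L_+$ to $L_-$ through a genus-one cobordism; the component-count terms in Theorem~\ref{thm:cobordism-bound} would then need careful handling. I expect this bookkeeping to be the main obstacle, so I would first look for the inequality as a stated property in \cite{cavallocollari2020links} and only fall back on the cobordism argument if necessary.

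Next I split into two cases according to the type of the crossing changed at step $j$.
\begin{itemize}
\item If the crossing is mixed, the components are unchanged as knots, so $K_i^{j+1} = K_i^j$ for all $i$. Only $\nu(L^j)$ moves, and it moves by at most $1$, so $d$ changes by at most $1$.
\item If it is a self-crossing of $K_i$, then the same crossing change is applied both to $L^j$ and to the knot $K_i^j$, with the same sign. By the crossing change inequality, both $\nu(L^{j+1}) - \nu(L^j)$ and $\nu(K_i^{j+1}) - \nu(K_i^j)$ lie in $[-1,0]$, or both lie in $[0,1]$, depending on the direction of the change. Their difference therefore lies in $[-1,1]$.
\end{itemize}

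Finally, I need $d(L^k)=0$, that is, additivity of $\nu$ under split (disjoint) union. For the two examples in Example~\ref{ex:slice-torus-invariants}, this is consistent with the normalisation: $s(L_1 \sqcup L_2) = s(L_1)+s(L_2)-1$ makes $\nu_s$ additive. In general I would prove it from the cobordism bound. A single band joining $L_1$ and $L_2$ produces the connected sum $L_1 \sharp L_2$, and a reverse band undoes it. Applying Theorem~\ref{thm:cobordism-bound} to these genus-zero connected cobordisms, together with additivity under connected sum (an axiom or property in \cite{cavallocollari2020links}), should give additivity under split union. Adding the $k$ step estimates then yields $\abs{d(L)} \leq k$, as claimed.
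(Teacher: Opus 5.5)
Your main argument is correct and follows the route the paper indicates: the paper says the theorem is based on Proposition~\ref{prop:crossing-change-slice-torus}. Tracking $d(L)=\nu(L)-\sum_i\nu(K_i)$ along a splitting sequence, with the mixed and self-crossing cases handled as you do, is exactly right.

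Cite that proposition directly rather than rederiving it from Theorem~\ref{thm:cobordism-bound}. Its one-sided form is essential in your self-crossing case: if you only knew that $\nu(L)$ and $\nu(K_i)$ each change by at most $1$, then $d$ could change by $2$. The route through the oriented resolution $L_0$ yields only $\abs{\nu(L_+)-\nu(L_-)}\le 1$. In fact no argument using Theorem~\ref{thm:cobordism-bound} alone can give the sign. The map $L\mapsto \ell-1-\nu(L)$, with $\ell$ the number of components of $L$, also satisfies that theorem but reverses the inequality.

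The step that fails as written is your derivation of $d(L^k)=0$. Let $L'$ and $L''$ have $\ell'$ and $\ell''$ components. The band cobordism between $L'\sqcup L''$ and $L'\sharp L''$ has $\ell'+\ell''-1$ components, each meeting both ends, so it is connected only when both are knots. Applied to it in either direction, Theorem~\ref{thm:cobordism-bound} gives only
\[
\nu(L'\sharp L'')\le\nu(L'\sqcup L'')\le\nu(L'\sharp L'')+1 .
\]
Even with connected-sum additivity, this leaves $\nu(L')+\nu(L'')\le\nu(L'\sqcup L'')\le\nu(L')+\nu(L'')+1$, not equality.

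The case $\widetilde{sp}(L)=0$ of the statement is precisely $\nu(K_1\sqcup\dots\sqcup K_\ell)=\sum_i\nu(K_i)$. So this additivity is a genuine input, to be taken from Cavallo--Collari's framework; it cannot be squeezed out of the inequalities quoted in the paper. If you grant additivity under connected sum along arbitrary components, as you propose, skip the cobordism bound entirely. Write $L'\sqcup L''\cong L'\sharp U_2\sharp L''$, summing one component of $U_2$ into a component of $L'$ and the other into a component of $L''$. Then $\nu(U_2)=0$ because $U_2$ is strongly slice (Theorem~\ref{thm:slice-torus-bound}). With this step repaired, your proof is complete.
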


The above theorem is based on the following result, which describes the effect of a crossing change on a slice-torus link invariant.

\begin{proposition}[Crossing changes and slice-torus link invariants {\cite[Proposition 2.10]{cavallocollari2020links}}]
\label{prop:crossing-change-slice-torus}
Let $D_{-}$ be obtained from $D_{+}$ by replacing a positive crossing with a negative one, and let $L_{\pm}$ be the corresponding links.
Then, for every slice-torus link invariant $\nu$,
\[
     \quad 0 \leq \nu\left(L_{+}\right) - \nu\left(L_{-}\right) \leq 1.
\]
\end{proposition}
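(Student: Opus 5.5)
The plan is to realise the crossing change by explicit cobordisms. We then feed these into the cobordism bound of Theorem~\ref{thm:cobordism-bound}, together with the axioms in the definition of a slice-torus link invariant from \cite{cavallocollari2020links}. In particular we will use its behaviour under disjoint union and connected sum, and the normalisation on positive torus links. The two inequalities $\nu(L_+)-\nu(L_-)\le 1$ and $\nu(L_+)-\nu(L_-)\ge 0$ have different flavours, so we treat them separately.

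\emph{Upper bound.} Let $L_0$ be the oriented resolution of the distinguished crossing. A single oriented saddle at the crossing turns $L_+$ into $L_0$, and a second saddle turns $L_0$ into $L_-$. Composing gives a cobordism $\Sigma\subset \S^3\times[0,1]$ from $L_-$ to $L_+$ with $\chi(\Sigma)=-2$. We split into two cases according to whether the two strands at the crossing lie on the same component or on different components. In each case we count the components $n$ of $\Sigma$, check that each meets both ends, and compute $g(\Sigma)$ from equation~\eqref{eq:genus-bound-surface}. When the strands lie on the same component, $\Sigma$ consists of a pair of pants glued to a reversed pair of pants, plus product cylinders, so $g(\Sigma)=0$ and $n=\ell$. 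When the strands lie on different components, $\Sigma$ has $g(\Sigma)=1$ on one piece and $n=\ell-1$. Plugging into Theorem~\ref{thm:cobordism-bound} should give $|\nu(L_+)-\nu(L_-)|\le 1$ in the first case. In the second case it gives a bound of the same size once the $\ell_i-n$ terms are tracked carefully. This is routine bookkeeping.

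\emph{Lower bound (monotonicity).} This inequality uses the positivity built into the axioms. I would write $L_+$ as the result of adding a positive full twist to two parallel strands of $L_-$. Concretely, $L_+$ is obtained from the split union $L_-\sqcup H_+$, where $H_+$ is the positive Hopf link ($=T_{2,2}$), by two band moves. Each band connects one strand at the crossing to one component of $H_+$, with orientations chosen compatibly. This gives a genus-zero cobordism from $L_-\sqcup H_+$ to $L_+$. Every component of this cobordism meets both ends, so Theorem~\ref{thm:cobordism-bound} yields a lower bound of the form
\[
\nu(L_+) \ge \nu(L_-\sqcup H_+) - (\text{correction from } \ell_0-n).
\]
We then use $\nu(L_-\sqcup H_+)=\nu(L_-)+\nu(H_+)$ (up to the split-union correction in Cavallo--Collari's axioms) and $\nu(H_+)=1$ from the normalisation. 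If the constants are right, the correction terms should cancel to give $\nu(L_+)\ge\nu(L_-)$.

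The main obstacle is this second step. The constants coming from split unions and from the $\ell_0-n$ term in Theorem~\ref{thm:cobordism-bound} must cancel exactly. I would first verify the count on the trivial example where $L_-$ is the two-component unlink presented with one negative clasp, so that $L_+$ is a known link, and compare with the normalisation axiom. If the Hopf-link construction loses a unit, my fallback is Livingston's original argument. There, the negative crossing is undone by a genus-one cobordism that passes through a connected sum with $T_{2,3}$ or $T_{2,2}$. One then uses additivity under connected sum and the value of $\nu$ on positive torus links to isolate the sign.
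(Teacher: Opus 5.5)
Your upper-bound argument is the right one, up to bookkeeping (see the end). The monotonicity half, however, has a genuine gap: the Hopf-link construction provably loses exactly the unit you are worried about.

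In both the self-crossing and the mixed-crossing case, the two fusion bands give a genus-zero cobordism from $L_-\sqcup H_+$ to $L_+$ with $n=\ell$, $\ell_0=\ell+2$ and $\ell_1=\ell$. Theorem~\ref{thm:cobordism-bound} then yields only
\[
\nu(L_+)\le \nu(L_-\sqcup H_+)\le \nu(L_+)+2 .
\]
There is no split-union correction to rescue this: $\nu(L_-\sqcup H_+)=\nu(L_-)+1$ on the nose. For $\nu_s$, for instance, $s(L_1\sqcup L_2)=s(L_1)+s(L_2)-1$ makes $\nu_s$ exactly additive. So the display says only $|\nu(L_+)-\nu(L_-)|\le 1$.

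The loss sits in the term $\ell_0-n=2$. Each fusion band can lower $\nu$ by anything in $[0,1]$, so no choice of bands repairs it. Your fallback is not yet an argument either. The form of Livingston's proof that runs in the knot concordance group passes to $K_+\sharp\overline{K_-}$ and uses that $K_-\sharp\overline{K_-}$ is slice and that $\nu(\overline{K})=-\nu(K)$. None of this extends to links: for instance $\nu(H_+)=1$, while $\nu(\overline{H_+})=\nu(H_-)=0$.

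What does work is a \emph{local} cobordism that leaves no surplus components at the far end.
\begin{itemize}
\item Write the crossing as the positive generator $\sigma_1$ on its two coherently oriented strands, and isotope it to $\sigma_1^{-1}\sigma_1^{2}$.
\item One oriented saddle (the inverse of an oriented resolution) turns this into $\sigma_1^{-1}\sigma_1^{3}$.
\item A second, untwisted band runs up beside the $\sigma_1^{3}$ twist region. It joins the strand leaving the bottom endpoint on that side, just after it starts, to the strand reaching the top endpoint on that side, just before it ends. This turns $\sigma_1^{3}$ into two parallel strands with a positive trefoil tied into one of them.
\end{itemize}
The result is a two-saddle cobordism from $L_+$ to $L_-\sharp T_{2,3}$. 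It has $g=1$, $n=\ell$ for a self-crossing, and $g=0$, $n=\ell-1$ for a mixed crossing. In both cases Theorem~\ref{thm:cobordism-bound} gives
\[
\nu(L_+)\ge \nu(L_-\sharp T_{2,3})-1=\nu(L_-),
\]
using the connected-sum additivity you already planned to invoke, together with $\nu(T_{2,3})=1$.

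Separately, your genus counts in the upper bound are swapped.
\begin{itemize}
\item For a self-crossing, $L_+\to L_0\to L_-$ splits and then re-merges a component. Two pairs of pants glued along both legs form a genus-one surface with two boundary circles, so $g=1$, $n=\ell$.
\item For a mixed crossing, it merges and then re-splits, giving a four-holed sphere, so $g=0$, $n=\ell-1$.
\end{itemize}
With your values, the theorem would force $\nu(L_+)=\nu(L_-)$ in the first case and give only $|\nu(L_+)-\nu(L_-)|\le 2$ in the second. With the correct values, both cases give $|\nu(L_+)-\nu(L_-)|\le 1$.
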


Using Proposition \ref{prop:crossing-change-slice-torus}, we can obtain another bound on the unknotting number of a link.

\begin{corollary}[Bound on the unknotting number]
\label{cor:unlinking-signed-slice-torus}
Let $\nu$ be a slice-torus link invariant, and let $L$ be a link.
If $L$ admits an unknotting sequence consisting of $u_+$ changes of a positive crossing into a negative one and $u_-$ changes of a negative crossing into a positive one, then
\[
    \nu(L) \leq u_{+} \qquad\text{and}\qquad -\nu(L) \leq u_{-} .
\]
Combining the two inequalities, we get $\abs{\nu(L)} \leq \unk{L}$.
\end{corollary}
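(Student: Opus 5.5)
Fix an unknotting sequence $L = L_0 \to L_1 \to \dots \to L_n = U_\ell$, where $U_\ell$ is the $\ell$-component unlink. Each step $L_{k-1}\to L_k$ is a single crossing change on some diagram. We will track the value of $\nu$ along the sequence using Proposition~\ref{prop:crossing-change-slice-torus}. Between consecutive crossing changes we may pass through isotopies and Reidemeister moves; $\nu$ is a link invariant, so it is constant there. Moreover, every crossing change can be realised on a diagram in which the two strands meet at a single crossing. Orient all the $L_k$ compatibly with $L$, using the orientation of each component transported through the regular homotopy. Each step is then either a change of a positive crossing into a negative one or a change of a negative crossing into a positive one. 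By hypothesis there are $u_+$ steps of the first kind and $u_-$ of the second, so $u_+ + u_- = n$.

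\textbf{Step 1 (per-step estimate).} Suppose step $k$ changes a positive crossing into a negative one. Then $L_{k-1}=L_+$ and $L_k=L_-$, and Proposition~\ref{prop:crossing-change-slice-torus} gives $0\le \nu(L_{k-1})-\nu(L_k)\le 1$. Suppose instead step $k$ changes a negative crossing into a positive one. Then $L_{k-1}=L_-$ and $L_k=L_+$, so $-1\le \nu(L_{k-1})-\nu(L_k)\le 0$.

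\textbf{Step 2 (telescoping).} Summing over $k$ gives $\nu(L)-\nu(U_\ell)=\sum_k(\nu(L_{k-1})-\nu(L_k))$. In this sum each positive-to-negative step contributes a number in $[0,1]$ and each negative-to-positive step a number in $[-1,0]$. Hence
\[
-u_- \le \nu(L)-\nu(U_\ell)\le u_+ .
\]

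\textbf{Step 3 (value on the unlink).} It remains to show $\nu(U_\ell)=0$. The unlink is strongly slice, since its components bound disjoint discs in $B^4$. Theorem~\ref{thm:slice-torus-bound} therefore gives $\nu(U_\ell)=0$, and both inequalities $\nu(L)\le u_+$ and $-\nu(L)\le u_-$ follow.

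\textbf{Step 4 (conclusion).} Take a minimal unknotting sequence, so that $u_++u_-=\unk{L}$. Then $\abs{\nu(L)}\le\max(u_+,u_-)\le u_++u_-=\unk{L}$.

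The only delicate point is the orientation bookkeeping in Step 1. The sign of a crossing depends on orientations, so the sign in each $L_k$ must be read with the orientation inherited from $L$. This is consistent because crossing changes do not alter the component structure. The same bookkeeping ensures that $U_\ell$ carries a well-defined orientation, and an unlink with any orientation is still strongly slice.
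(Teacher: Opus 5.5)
Your proposal is correct and follows the paper's own proof: both telescope the per-step estimate of Proposition~\ref{prop:crossing-change-slice-torus} along the unknotting sequence, use $\nu(U_\ell)=0$ to obtain $-u_-\le\nu(L)\le u_+$, and then deduce $\abs{\nu(L)}\le u_++u_-=\unk{L}$ from a minimal sequence. The differences are only cosmetic: you sum $\nu(L_{k-1})-\nu(L_k)$ rather than its negative, and you justify $\nu(U_\ell)=0$ explicitly through the strongly slice clause of Theorem~\ref{thm:slice-torus-bound}, which the paper simply recalls.
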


\begin{proof}
    Consider an unknotting sequence from $L$ with $u_{+}$ positive crossing changes, $u_{-}$ negative ones, and $u_{+} + u_{-} = N = \unk{L}$.
    We can describe the sequence from $L$ to the unlink $U_{\ell}$ as
    \begin{center}
        \begin{tikzcd}
        L = L^{(0)} \arrow[r, "u_1"] & L^{(1)} \arrow[r, "u_2"] & \cdots \arrow[r, "u_N"] & L^{(N)} = U_{\ell}
        \end{tikzcd}    
    \end{center}
    Let $\delta_j \coloneqq \nu(L^{(j)}) - \nu(L^{(j-1)})$.
    \begin{itemize}
        \item If $u_j$ turns a positive crossing into a negative one (i.e., is one of the $u_{+}$), then in the notation of Proposition~\ref{prop:crossing-change-slice-torus}, we have $L^{(j-1)} = L_{+}$ and $L^{(j)} = L_{-}$, so $0 \leq \nu(L^{(j-1)}) - \nu(L^{(j)}) \leq 1$, which gives $-1 \leq \delta_j \leq 0$.
        \item If $u_j$ turns a negative crossing into a positive one (i.e., is one of the $u_{-}$), then with $L^{(j-1)} = L_{-}$ and $L^{(j)} = L_{+}$, we obtain $0 \leq \nu(L^{(j)}) - \nu(L^{(j-1)}) \leq 1$; that is, $0 \leq \delta_j \leq 1$.
    \end{itemize}
    In particular, by summing all $N$ inequalities,
    \[
        -u_{+} \leq \sum_{j=1}^{N} \delta_j \leq u_{-}.
    \]
    Recall that $\nu(U_{\ell}) = 0$, so 
    \begin{align*}
        -\nu(L) & = \nu(U_{\ell}) - \nu(L) = \nu(L^{(N)}) - \nu(L^{(0)}) \\
        & = \nu(L^{(N)}) - \nu(L^{(N-1)}) + \nu(L^{(N-1)}) - \, \ldots \,  -\nu(L^{(1)})  + \nu(L^{(1)}) - \nu(L^{(0)}) \\
        & = \sum_{j=1}^{N} \delta_j.
    \end{align*}
    Hence,
    \[
        -u_{+} \leq -\nu(L) \leq u_{-},
    \]
    which yields the two stated inequalities.
    Finally, since $u_{+} \geq \max\{\nu(L), 0\}$ and $u_{-} \geq \max\{-\nu(L), 0\}$, we get $\unk{L} = u_+ + u_- \geq \abs{\nu(L)}$.
\end{proof}

\subsubsection{The Fox--Milnor condition}
\label{subsubs:polynomial}

The Alexander polynomial $\Delta_K(t)$ of a knot $K$ is a classical invariant, introduced by Alexander~\cite{alexander1928polynomial}.
It is a Laurent polynomial that is well-defined up to a factor $\pm t^m$ for some integer $m$.
Fox and Milnor proved~\cite{foxmilnor1966cobordism} that this polynomial yields an obstruction to sliceness.

\begin{theorem}[{\cite[Theorem 2]{foxmilnor1966cobordism}}]
    \label{thm:fox-milnor-condition}
    If $K$ is a slice knot, then its Alexander polynomial $\Delta_K(t)$ is of the form
    \[
        \Delta_K(t) = t^m \cdot p \left( t \right) p \left( \nicefrac{1}{t} \right),
    \]
    where $p(t)$ is a polynomial with integral coefficients and $m$ is an integer.
\end{theorem}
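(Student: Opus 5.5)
The plan is the classical Fox--Milnor argument: compute the Alexander polynomial from the complement of a slice disc and use duality. Let $D \subset B^4$ be a smooth slice disc for $K$. Set $W = B^4 \smallsetminus \nu(D)$, the exterior of a tubular neighbourhood. By Alexander duality, $H_1(W;\mathbb{Z}) \cong \mathbb{Z}$, generated by a meridian of $D$. Let $\widetilde{W}$ be the infinite cyclic cover, and consider the rational Alexander modules $H_*(\widetilde{W};\mathbb{Q})$ as modules over $\Lambda = \mathbb{Q}[t^{\pm 1}]$. The boundary $\partial W = X_K \cup (D \times S^1)$ is the zero surgery $M_K$ on $K$, whose infinite cyclic cover has $H_1(\widetilde{M_K};\mathbb{Q}) \cong \Lambda/(\Delta_K(t))$.

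The key step is the long exact sequence of the pair $(\widetilde{W}, \widetilde{M_K})$ in the portion
\[
H_2(\widetilde{W},\widetilde{M_K}) \to H_1(\widetilde{M_K}) \to H_1(\widetilde{W}) \to H_1(\widetilde{W},\widetilde{M_K}),
\]
all of which are torsion $\Lambda$-modules. Blanchfield/Milnor duality for the infinite cyclic cover of a compact $4$-manifold identifies $H_2(\widetilde{W},\widetilde{M_K})$ with the conjugate dual $\overline{\mathrm{Ext}}(H_1(\widetilde{W}),\Lambda)$, whose order is $\overline{\Delta_W}$, i.e.\ $\Delta_W(t^{-1})$. Write $p(t)$ for the order of $H_1(\widetilde{W};\mathbb{Q})$.

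I would then show that the image of $H_1(\widetilde{M_K})$ in $H_1(\widetilde{W})$ has order $p(t)$ up to units. Equivalently, the map onto $H_1(\widetilde{W})$ is surjective rationally, because $H_1(\widetilde{W},\widetilde{M_K})$ is dual to $H^3(\widetilde{W})$, which vanishes rationally after a handle count. I would also show that the kernel is the image of $H_2(\widetilde{W},\widetilde{M_K})$, which has order $p(t^{-1})$. A half-lives-half-dies argument, using the self-duality of the sequence under the Blanchfield pairing, makes the kernel annihilated exactly by the conjugate. Multiplicativity of orders in exact sequences then gives $\Delta_K(t) \doteq p(t)\,p(t^{-1})$ over $\mathbb{Q}[t^{\pm1}]$.

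Finally, I would descend to integers. By Gauss's lemma, and because $\Delta_K(1) = \pm 1$, we can rescale $p$ to be primitive in $\mathbb{Z}[t]$ and absorb units into $\pm t^m$. The sign is fixed by evaluating at $t=1$, giving $\Delta_K(t) = t^m p(t)p(t^{-1})$.

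The main obstacle is the middle step: establishing the precise duality identifications for the torsion Alexander modules of $W$ relative to its boundary, and exhausting the exact sequence so that exactly one factor $p(t)$ and one factor $p(t^{-1})$ appear. I would first try Milnor's approach, using Reidemeister torsion of the infinite cyclic cover: $\tau(M_K) = \tau(W)\,\overline{\tau(W)}$ by duality and multiplicativity, together with $\tau(M_K) \doteq \Delta_K(t)/(t-1)^2$ and $\tau(W) \doteq p(t)/(t-1)$. This sidesteps the module bookkeeping.
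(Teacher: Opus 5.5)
The paper gives no proof of this theorem; it only cites Fox--Milnor, so your proposal has to stand on its own. Its framework is the right one: slice disc exterior $W$, torsion rational Alexander modules, Blanchfield/Milnor duality, orders, and Gauss's lemma. Your identification $H_2(\widetilde W,\widetilde{M_K};\mathbb{Q})\cong\overline{\mathrm{Ext}^1_\Lambda(H_1(\widetilde W;\mathbb{Q}),\Lambda)}$ is correct, and the final integrality step is fine.

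The gap is the claim that $H_1(\widetilde{M_K};\mathbb{Q})\to H_1(\widetilde W;\mathbb{Q})$ is onto. By the same duality, $H_1(\widetilde W,\widetilde{M_K};\mathbb{Q})\cong\overline{\mathrm{Ext}^1_\Lambda(H_2(\widetilde W;\mathbb{Q}),\Lambda)}$, so surjectivity is equivalent to $H_2(\widetilde W;\mathbb{Q})=0$. A handle count gives this when $W$ has no $3$-handles, e.g.\ for ribbon discs, but not for a general slice disc. In that case your conclusion with $p=\mathrm{ord}\,H_1(\widetilde W;\mathbb{Q})$ is false. Take $K$ the unknot and $D$ the trivial disc with a spun trefoil $2$-knot tied into it. Then $W$ is homotopy equivalent to the spun trefoil exterior, so $H_1(\widetilde W;\mathbb{Q})\cong\Lambda/(t^2-t+1)$ and correspondingly $H_2(\widetilde W;\mathbb{Q})\neq 0$. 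Meanwhile $H_1(\widetilde{M_K};\mathbb{Q})=H_1(\mathbb{R}\times S^2;\mathbb{Q})=0$. So the map is not onto, and $p\bar p\doteq(t^2-t+1)^2\neq 1=\Delta_K$. Your torsion fallback has the same defect. Since $H_3(\widetilde W;\mathbb{Q})=0$, one has $\tau(W)\doteq a_1/\bigl((t-1)a_2\bigr)$ with $a_i=\mathrm{ord}\,H_i(\widetilde W;\mathbb{Q})$. This is in general a rational function, not $p(t)/(t-1)$ with $p$ a polynomial.

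Both routes are repaired by keeping $H_2$. The group $H_2(\widetilde{M_K};\mathbb{Q})\cong\mathbb{Q}$ is killed by $t-1$, while $t-1$ acts invertibly on $H_2(\widetilde W;\mathbb{Q})$; also $H_0(\widetilde{M_K})\to H_0(\widetilde W)$ is an isomorphism. So the sequence of the pair reduces to the exact sequence $0\to H_2(\widetilde W)\to H_2(\widetilde W,\widetilde{M_K})\to H_1(\widetilde{M_K})\to H_1(\widetilde W)\to H_1(\widetilde W,\widetilde{M_K})\to 0$. Its terms have orders $a_2,\bar a_1,\Delta_K,a_1,\bar a_2$, hence $\Delta_K\,a_2\bar a_2\doteq a_1\bar a_1$. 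This is also what Milnor's $\tau(M_K)\doteq\tau(W)\overline{\tau(W)}$ gives with the correct $\tau(W)$.

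Unique factorisation in $\mathbb{Q}[t^{\pm1}]$ then finishes. For an irreducible $\pi$ with multiplicity function $v_\pi$, put $g(\pi)=v_\pi(a_1)-v_\pi(a_2)$, so that $v_\pi(\Delta_K)=g(\pi)+g(\bar\pi)$. Hence every self-conjugate irreducible occurs in $\Delta_K$ with even multiplicity, and every other $\pi$ occurs exactly as often as $\bar\pi$. This yields a genuine polynomial $q$ with $\Delta_K\doteq q\bar q$, and your Gauss's-lemma step applies to $q$.

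Alternatively, let $P$ be the kernel of $H_1(\widetilde{M_K})\to H_1(\widetilde W)$ and prove the precise form of your half-lives-half-dies claim, namely $P=P^\perp$ for the Blanchfield pairing. Nonsingularity then gives $H_1(\widetilde{M_K})/P\cong\overline{\mathrm{Hom}_\Lambda(P,\mathbb{Q}(t)/\Lambda)}$, hence $\Delta_K\doteq\mathrm{ord}(P)\,\overline{\mathrm{ord}(P)}$. The factor is therefore $\mathrm{ord}(P)$, not $\mathrm{ord}\,H_1(\widetilde W)$.

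A minor slip: $H_1(\widetilde{M_K};\mathbb{Q})$ need not be cyclic (e.g.\ for the connected sum of two trefoils). Only its order is $\Delta_K$, which is all you use.
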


The determinant of a knot agrees with $\abs{ \Delta_K(-1) }$. Hence, we obtain the following corollary, which provides a simpler obstruction to sliceness.

\begin{corollary}
\label{cor:determinant-square}
    If a knot $K$ is slice, then its determinant is a perfect square.
\end{corollary}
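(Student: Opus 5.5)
The plan is to derive the corollary directly from Theorem~\ref{thm:fox-milnor-condition} by evaluating the Alexander polynomial at $t=-1$. We use the standard identification $\det(K) = \abs{\Delta_K(-1)}$, recalled just before the statement. This identification is independent of the normalisation ambiguity $\pm t^m$, since $\abs{\pm(-1)^m} = 1$.

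Assume $K$ is slice. Theorem~\ref{thm:fox-milnor-condition} gives a Laurent polynomial factorisation
\[
\Delta_K(t) = t^m \, p(t)\, p(t^{-1})
\]
with $p(t) \in \mathbb{Z}[t]$ and $m \in \mathbb{Z}$. Substituting $t=-1$, and noting that $t^{-1} = -1$ as well, we obtain
\[
\Delta_K(-1) = (-1)^m \, p(-1)^2 .
\]
Since $p$ has integer coefficients, $p(-1)$ is an integer. Taking absolute values then gives $\det(K) = \abs{\Delta_K(-1)} = p(-1)^2$, which is a perfect square of an integer.

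The only point needing care is whether the Fox--Milnor factorisation is an identity of Laurent polynomials, so that evaluation at $-1$ is legitimate. It is, since $p(t^{-1})$ is a Laurent polynomial and evaluation at $-1$ is a ring homomorphism $\mathbb{Z}[t,t^{-1}] \to \mathbb{Z}$. A secondary check is that the determinant is nonzero for knots, because $\Delta_K(-1)$ is odd. This is not even needed, as $0$ would also be a square.

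So I expect no genuine obstacle: the main step is simply the evaluation, together with recalling $\det(K) = \abs{\Delta_K(-1)}$.
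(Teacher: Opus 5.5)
Your proposal is correct and follows the paper's route exactly. The paper deduces the corollary from Theorem~\ref{thm:fox-milnor-condition} using $\det(K) = \abs{\Delta_K(-1)}$, and your evaluation $\abs{(-1)^m\, p(-1)^2} = p(-1)^2$ simply writes out that one-line deduction explicitly.
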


\subsection{Special families}
\label{subs:special-families}

For certain special families of links, we can give stronger bounds.
What follows is a short survey on some of these, namely: positive and negative links, torus links, alternating links, and pseudo-thin links.

\subsubsection{Positive and negative links}
\label{sec:pos_links_slice}

In this section, we study positive and negative links, whose four-genera can be computed exactly and directly from a diagram.
Recall that an oriented link is \emph{positive} if it admits a diagram where all crossings are positive, and \emph{negative} if it admits a diagram where all crossings are negative.

The positive case was settled in stages.
Nakamura~\cite[Theorem~1.1]{nakamura2000positive} proved that for a non-split positive link, the slice genus, the Seifert genus, and the genus of the surface produced by Seifert's algorithm all agree, so $\gen{L}$ can be read off from any positive diagram.
For knots, the same result was later reproved by Rasmussen~\cite[Theorem~4]{rasmussen2010slice}, using a purely combinatorial argument in Khovanov homology, which ties both genera to the $s$-invariant. The resulting diagrammatic formula for $s$ was then extended to positive links by Cavallo~\cite[Proposition~3.3]{cavallo2015links}.
All of these are instances of a single statement about slice-torus link invariants, due to Cavallo and Collari, which is the form we use.

\begin{theorem}[Positive links and slice-torus invariants {\cite[Theorem~1.3]{cavallocollari2020links}}]
\label{thm:positive-links-genus}
Let $L$ be an $\ell$-component positive link with $\ell_s$ split components and positive diagram $D$ that has $c(D)$ crossings and $O(D)$ Seifert circles. Then, for any slice-torus link invariant $\nu$,
\begin{equation*}
    \label{eq:slice_torus_positive}
    \nu(L) = \genSeif{L} + \ell - \ell_s = \frac{c(D) - O(D) + \ell}{2}.
\end{equation*}
If $L$ is also non-split, then
\begin{equation}
    \label{eq:slice_torus_positive_genus}
    \nu(L) = \gen{L} + \ell - 1,
\end{equation}
and consequently $\gen{L} = \genSeif{L}$.
\end{theorem}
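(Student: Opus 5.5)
The statement is Theorem~\ref{thm:positive-links-genus}, so the plan has two parts: first compute $\nu(L)$ from the positive diagram by a sandwich argument, then, for non-split $L$, compare with Theorem~\ref{thm:slice-torus-bound}.

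\emph{Upper bound on $\nu(L)$.} Reduce to the non-split case first, since disjoint union should add the values of $\nu$ up to a shift by the number of split pieces. Apply Seifert's algorithm to $D$, which gives a surface $\Sigma$ with $\chi(\Sigma)=O(D)-c(D)$. For a non-split positive $D$ this surface is connected, so pushing it into $B^4$ and using Theorem~\ref{thm:slice-torus-bound} gives
\[
\nu(L)\le g(\Sigma)+\ell-1=\frac{c(D)-O(D)+\ell}{2}.
\]

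\emph{Matching lower bound.} I would build a cobordism from $L$ to a link whose $\nu$ is known. Oriented-resolve every crossing of $D$ except a spanning tree's worth: these are the saddles in the Seifert graph, and there are $c(D)-O(D)+1$ of them in the tree. The remaining positive crossings give a connected sum of positive Hopf-type or torus links $T(2,n)$. Then I would use Theorem~\ref{thm:cobordism-bound} together with the normalisation on torus links to push the value back. Alternatively, I would use Proposition~\ref{prop:crossing-change-slice-torus} inductively: change crossings positive$\to$negative so that the diagram becomes a diagram of an unlink in a controlled way. Each change lowers $\nu$ by at most $1$, while the Seifert-circle count stays fixed. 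This argument needs care and is the main obstacle. The cleanest route is probably to cite the normalisation axiom of slice-torus invariants on positive torus links and argue by induction on $c(D)$. Resolving a positive crossing is a saddle, so $\nu$ changes by at most the bound of Theorem~\ref{thm:cobordism-bound}, with equality forced by the upper bound just proved.

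\emph{Non-split case.} The genus formula $\genSeif{L}=g(\Sigma)$ for positive diagrams then follows from $\genSeif{L}\ge \gen{L}\ge \nu(L)-\ell+1=g(\Sigma)\ge\genSeif{L}$. Here the middle inequality is Theorem~\ref{thm:slice-torus-bound}, and the first is Proposition~\ref{prop:genera-relations}. All inequalities are therefore equalities, which gives equation~\eqref{eq:slice_torus_positive_genus} and $\gen{L}=\genSeif{L}$. For split links, apply this to each split piece and add: each piece contributes $\ell_j-1$, and summing over the $\ell_s$ pieces gives the term $\ell-\ell_s$.
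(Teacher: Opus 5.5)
\textbf{The lower bound is missing.} Your upper bound $\nu(L)\le g(\Sigma)+\ell-1$ and the closing sandwich for non-split $L$ are fine. The gap is the lower bound $\nu(L)\ge \frac{c(D)-O(D)+\ell}{2}$, which is the real content of the theorem. None of your routes can give it, because each compares $L$ with a \emph{simpler} link.

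Write $f(D)=\frac{c(D)-O(D)+\ell}{2}$, and let $D_0$ be the oriented resolution of $D$ at one crossing, so $O(D_0)=O(D)$. Theorem~\ref{thm:cobordism-bound} applied to this single saddle gives:
\begin{itemize}
\item for a self-crossing, $\nu(L_0)-1\le\nu(L)\le\nu(L_0)$, with $f(D_0)=f(D)$;
\item for a mixed crossing, $\nu(L_0)\le\nu(L)\le\nu(L_0)+1$, with $f(D_0)=f(D)-1$.
\end{itemize}
So from $\nu(L_0)=f(D_0)$ your induction only yields $f(D)-1\le\nu(L)\le f(D)$. Every resolution loses one unit, and the upper bound does not force equality: $\nu$ is real-valued, and even integrality would leave two values.

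Resolving crossings down to a simpler link therefore only gives weak bounds. A spanning tree of the Seifert graph has $O(D)-1$ edges, not $c(D)-O(D)+1$. Keeping exactly those edges leaves a diagram of the unknot, and the resulting bound is only $\nu(L)\ge -g(\Sigma)$. The crossing-change route also points the wrong way. Proposition~\ref{prop:crossing-change-slice-torus} gives $\nu(L)\ge\nu(L_-)$, so changing positive crossings until you reach an unlink yields only $\nu(L)\ge 0$, while $\nu(L)\le u_+$ is an upper bound.

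\textbf{What is needed is to go up.} Suppose $D$ is the resolution of a positive diagram $D'$ at one crossing. The same case analysis gives $\nu(L)\ge\nu(L')-\bigl(f(D')-f(D)\bigr)$, so this direction is lossless: $\nu(L')=f(D')$ implies $\nu(L)\ge f(D)$. You therefore need a link reachable from $L$ by such saddles on which the normalisation axiom pins down $\nu$.

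For a positive braid word this works at once. The word is a subword of $(\sigma_1\cdots\sigma_{n-1})^N$, whose closure is the torus knot $T(n,N)$ when $\gcd(n,N)=1$, and there $\frac{(n-1)(N-1)}{2}=f$. However, inserting crossings never changes the Seifert circles, and a positive diagram need not be a positive braid. For example, $5_2$ is positive but not fibred, so it is not a positive braid closure.

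Bridging this requires a genuinely non-trivial input; the paper gives no proof and cites Cavallo and Collari. One such input is Rudolph's theorem that positive links are strongly quasipositive. It makes the canonical Seifert surface of $D$ the Bennequin surface of a product of positive band generators $\sigma_{i,j}$. Adding bands then reaches the fibre surface of a positive torus knot. Each band is a single saddle, lossless by the same computation with $f=\frac{\ell-\chi}{2}$.

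\textbf{Split case.} Your reduction needs $\nu(L_1\sqcup L_2)=\nu(L_1)+\nu(L_2)$. There is no shift in $\nu$ itself; the shift $\ell-\ell_s$ only appears relative to $g_3$. This additivity must be justified from the axioms rather than assumed.
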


The negative case follows by mirroring.
If $D$ is a negative diagram of $L$, then the mirror image $\mir{D}$ is a positive diagram of $\mir{L}$ with the same numbers of crossings and Seifert circles.
Since the four-genus is invariant under mirroring, one obtains for a non-split negative diagram $D$ of an $\ell$-component link $L$ that
\[
\gen{L} = \genSeif{L} = \frac{c(D) - O(D) + 2 - \ell}{2}.
\]

\paragraph{Unknotting number of positive links.}
While no formula is known for arbitrary positive links, the following results help us to compute some unknotting numbers.
We say that a link is \emph{simply-linked} if it admits a diagram where the crossings between any two fixed components have the same sign.
Clearly, positive links are simply-linked.

\begin{theorem}[Unknotting number of simply-linked links {\cite[Proposition~4.1 and Theorem~1.9]{collari2021slicetoruspositive}}]
    \label{thm:unknotting-simply-linked-links}
    If $L = K_1 \cup \dots \cup K_{\ell}$ is a simply-linked link, then
    \[
    \abslk{L} = \unk{L} - \sum_{i=1}^{\ell} \unk{K_i},
    \]
where $\abslk{L}$ is the \emph{absolute linking number} of $L$; i.e., the sum of the absolute values of $\lk{K_i, K_j}$ for all $i < j$.
For positive links, we have $\abslk{L} = \lk{L}$.
\end{theorem}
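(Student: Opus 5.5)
The identity in Theorem~\ref{thm:unknotting-simply-linked-links} is really two inequalities, and I would prove them separately. The lower bound $\abslk{L} + \sum_i \unk{K_i} \le \unk{L}$ holds for every link by Proposition~\ref{prop:lower-bound-unknotting-link}, so only the reverse inequality needs the simply-linked hypothesis. Let me recall why the general bound holds, since the upper bound will mirror it. Take a minimal unknotting sequence for $L$ and split its crossing changes into \emph{mixed} changes, between two different components, and \emph{self} changes, within one component. A mixed change between $K_i$ and $K_j$ alters $\lk{K_i,K_j}$ by exactly $\pm1$. Since the unlink has all linking numbers zero, there are at least $\abs{\lk{K_i,K_j}}$ mixed changes between each pair. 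Mixed changes do not change the knot types of the individual components up to homotopy, but they can change their isotopy types. The bound on self changes therefore needs the argument of Collari: one reorders or projects the sequence so that the self changes restricted to $K_i$ unknot $K_i$, giving at least $\unk{K_i}$ of them. I would cite this as already established.

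For the upper bound, fix a diagram $D$ of $L$ in which all crossings between $K_i$ and $K_j$ have the same sign, for each pair $i<j$. In such a diagram the number of mixed crossings between $K_i$ and $K_j$ is exactly $2\abs{\lk{K_i,K_j}}$, because $\lk$ is half the signed count and there is no cancellation.

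The plan is to unlink the components using only $\abs{\lk{K_i,K_j}}$ changes per pair, keeping each $K_i$ intact as a knot. Then each split component is unknotted using $\unk{K_i}$ changes performed in a minimal diagram for $K_i$ alone, which is possible once the components are split. The total is then $\abslk{L}+\sum_i\unk{K_i}$.

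The key step, and the main obstacle, is the first stage: showing that $\abs{\lk{K_i,K_j}}$ mixed changes suffice to split the pair while preserving each component's knot type. Changing just one crossing out of each pair of mixed crossings does not obviously yield a split link in general. Here I would follow Collari's use of slice-torus invariants. By Theorem~\ref{thm:splitting-number-bound} and Proposition~\ref{prop:crossing-change-slice-torus}, each crossing change moves $\nu$ by at most one in a signed way. For simply-linked links, one shows combinatorially that changing half of the mixed crossings between each pair, chosen so that each component becomes entirely over the other in a layered order, produces the split union $K_1\sqcup\dots\sqcup K_\ell$.

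Concretely, I would order the components so that $K_1$ lies above all the others, then $K_2$, and so on. Making each component lie entirely over the later ones requires changing every mixed crossing where $K_i$ passes under $K_j$ with $j>i$. The count of such crossings is not automatically $\abs{\lk{}}$, so instead I would pick, for each pair, the layering direction (which component goes on top) that needs at most half of the $2\abs{\lk{K_i,K_j}}$ mixed crossings changed. Making these choices consistently across all pairs, i.e.\ finding an acyclic tournament, is the delicate part. If consistency fails, I would fall back on directly invoking \cite[Proposition~4.1]{collari2021slicetoruspositive}, which gives the strong splitting number of a simply-linked link as $\abslk{L}$ and is exactly what Collari's Theorem~1.9 combines with Proposition~\ref{prop:lower-bound-unknotting-link}.
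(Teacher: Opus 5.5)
Your decomposition is the right one, and it matches the argument behind the citation. The general inequality of Proposition~\ref{prop:lower-bound-unknotting-link} gives ``$\ge$''. For ``$\le$'' one splits $L$ by $\abslk{L}$ mixed crossing changes in a simply-linked diagram and then unknots the split components separately. However, you leave the key step unproved, and you misjudge it as delicate. The missing observation is that in any diagram the linking number is the signed count over one type of mixed crossing only:
\[
\lk{K_i,K_j} \;=\; \sum_{c\colon K_i \text{ over } K_j} \epsilon(c) \;=\; \sum_{c\colon K_j \text{ over } K_i} \epsilon(c).
\]
If all mixed crossings between $K_i$ and $K_j$ have the same sign, then exactly $\abs{\lk{K_i,K_j}}$ of them have $K_i$ on top and exactly $\abs{\lk{K_i,K_j}}$ have $K_j$ on top. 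So your sentence that ``the count of such crossings is not automatically $\abs{\lk{}}$'' is false. There is no direction to choose and no tournament to make acyclic.

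Fix any ordering of the components. For each $i<j$, change the $\abs{\lk{K_i,K_j}}$ crossings where $K_j$ passes over $K_i$. The resulting diagram is layered: it is realised by putting $K_i$ in the slab $\R^2\times(\ell-i,\ell-i+1)$, with its own self-crossings realised inside that slab. Hence the new link is the split union of its components. The self-crossings are untouched, so each component keeps its knot type. Redrawing this split union as a disjoint union of diagrams, each realising $\unk{K_i}$, gives $\unk{L}\le\abslk{L}+\sum_i\unk{K_i}$. Falling back on Collari's Proposition~4.1 instead simply cites the step you set out to prove.

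Three smaller corrections. First, the slice-torus results you invoke (Theorem~\ref{thm:splitting-number-bound} and Proposition~\ref{prop:crossing-change-slice-torus}) only give lower bounds. They cannot show that a particular set of crossing changes splits the link; the upper bound is purely diagrammatic. Second, your claim that mixed crossing changes can change the isotopy types of the components is wrong, and it contradicts what you rely on in the upper bound. If you forget every component except $K_i$, a mixed crossing change becomes an isotopy of $K_i$. That observation is the entire proof of the lower bound on self-changes: projecting a minimal unknotting sequence onto $K_i$ gives an unknotting sequence for $K_i$ made of its self-changes, with no reordering needed. Third, the final sentence of the statement, which you do not address, is immediate. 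In a positive diagram every mixed crossing is positive, so each $\lk{K_i,K_j}\ge 0$, and hence $\abslk{L}=\lk{L}$.
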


Therefore, we can quickly compute the unknotting numbers of several positive links with up to 11 crossings by knowing the unknotting numbers of positive knots with up to 9 crossings and identifying the link components using topological invariants.

\subsubsection{Torus links}
\label{subsubs:torus}

Recall that the torus link $\torus{a,b}$ with $\gcd(a,b) = \ell$ is an $\ell$-component link lying on a standard torus in $\S^3$, wrapping $a$ times around the meridian and $b$ times around the longitude. When $\gcd(a,b) = 1$, $\torus{a,b}$ is a knot.

\begin{theorem}[Milnor conjecture]
\label{thm:milnor}
For $a$, $b \in \mathbb{Z}_+$ and $\gcd(a,b) = 1$,
\[
\gen{\torus{a,b}} = \genSeif{\torus{a,b}} = \unk{\torus{a,b}} = \frac{(a-1)(b-1)}{2}.
\]
\end{theorem}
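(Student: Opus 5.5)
The plan is to squeeze every quantity between $\frac{(a-1)(b-1)}{2}$ from below and from above. The lower bound will come from slice-torus invariants; the upper bound will come from explicit unknotting sequences and Seifert surfaces. Throughout I set $K=\torus{a,b}$ with $\gcd(a,b)=1$, and write $g=\frac{(a-1)(b-1)}{2}$.

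\emph{Upper bounds.} The first step is to check that $\genSeif{K}\le g$. For this, present $K$ as the closure of the positive braid $(\sigma_1\cdots\sigma_{a-1})^b$. The associated diagram $D$ has $c(D)=(a-1)b$ crossings and $O(D)=a$ Seifert circles, and it is connected because $\gcd(a,b)=1$. Plugging this into \eqref{eq:seifert-upper} with $\ell=r(D)=1$ gives
\[
g(\Sigma)=\frac{2-1-a+(a-1)b}{2}=\frac{(a-1)(b-1)}{2}.
\]
Next I need $\unk{K}\le g$. I would exhibit a concrete unknotting sequence in the braid $(\sigma_1\cdots\sigma_{a-1})^b$. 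The standard route is induction on $a+b$ via the Euclidean algorithm. Write $b=qa+r$. Changing suitable crossings in the full twists turns $\torus{a,b}$ into $\torus{a,r}$, and one counts that exactly $g(\torus{a,b})-g(\torus{a,r})$ changes are used. A cleaner alternative is to change every other crossing, $\lfloor\cdot\rfloor$-style, to reduce the braid word to the trivial braid; this uses $\frac{(a-1)(b-1)}{2}$ changes. This counting step is the only combinatorial part. I expect to do it by induction on $a$: removing the last strand from $(\sigma_1\cdots\sigma_{a-1})^b$ by changing $\lfloor (b-1)/2\rfloor$-type crossings is delicate. If that proves messy, I would instead cite the classical construction from the literature.

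\emph{Lower bound.} Since $D$ is a positive, non-split diagram of the knot $K$, Theorem~\ref{thm:positive-links-genus} with $\ell=\ell_s=1$ gives, for any slice-torus invariant $\nu$ (for instance $\nu_\tau=\tau$ from Example~\ref{ex:slice-torus-invariants}),
\[
\nu(K)=\gen{K}=\genSeif{K}=\frac{c(D)-O(D)+1}{2}=g.
\]
This already settles $\gen{K}=\genSeif{K}=g$. For the unknotting number, Corollary~\ref{cor:unlinking-signed-slice-torus} yields $\unk{K}\ge\abs{\nu(K)}=g$; alternatively, Proposition~\ref{prop:g4-lower-bound-unknotting} gives $\unk{K}\ge\gen{K}=g$.

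Combining the two bounds gives all the equalities. The main obstacle is the explicit unknotting sequence realising $\unk{K}\le g$, since every other step is a direct substitution into earlier results.
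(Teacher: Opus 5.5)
Your Seifert-genus computation and your lower bound are correct, and they follow the route the paper relies on. The paper proves nothing here: it cites Kronheimer--Mrowka, Ozsv\'ath--Szab\'o's computation $\tau(T_{a,b})=\frac{(a-1)(b-1)}{2}$, and Rasmussen's $s$. Applying Theorem~\ref{thm:positive-links-genus} to the positive braid closure with $\nu=\nu_s$ is exactly Rasmussen's argument, since his result for positive knots is how he derives the Milnor conjecture, so nothing is circular. One small correction: the closed-braid diagram is connected because every $\sigma_i$ occurs. Coprimality is what makes it a knot, which gives $\ell=r(D)=1$.

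The genuine gap is the upper bound $u(T_{a,b})\le\frac{(a-1)(b-1)}{2}$, which you do not establish. Drop the ``every other crossing'' alternative, because both its count and its conclusion are wrong:
\begin{itemize}
\item The word has $(a-1)b$ crossings, so changing alternate ones costs roughly $\frac{(a-1)b}{2}$. That is $\frac{a-1}{2}$ more than the target, hence too many once $a\ge 3$.
\item It need not unknot. For $T_{3,4}$ it turns $(\sigma_1\sigma_2)^4$ into $(\sigma_1\sigma_2^{-1})^4$ or $(\sigma_1^{-1}\sigma_2)^4$, whose closure is $\mathtt{8_{18}}$, not the unknot.
\end{itemize}
Your Euclidean sketch is the right approach and its count is correct; what is missing is which crossings to change. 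Here is one way to fill it in.
\begin{itemize}
\item Since $(\sigma_1\cdots\sigma_{a-1})^a=\Delta^2$, for $b=qa+r$ the braid equals $\Delta^{2q}(\sigma_1\cdots\sigma_{a-1})^r$.
\item Isotope each $\Delta^2$ to a diagram $WW$, with $W$ a positive word for $\Delta$, and change the $\binom{a}{2}$ crossings of the first copy of $W$.
\item This produces $\Delta^{-1}\Delta=1$. Indeed, the map $\sigma_i\mapsto\sigma_i^{-1}$ sends any word $w$ to $(\mathrm{rev}\,w)^{-1}$, and word reversal, which is well defined on $B_a$, fixes $\Delta$.
\item The cost is $q\binom{a}{2}=\frac{(a-1)(b-1)}{2}-\frac{(a-1)(r-1)}{2}$, and what remains is $T_{a,r}=T_{r,a}$.
\item Iterate along the Euclidean algorithm until one parameter equals $1$, which gives the unknot. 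The costs telescope to $\frac{(a-1)(b-1)}{2}$.
\end{itemize}
With this step supplied, your proof is complete.
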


This was originally proved using gauge theory by Kronheimer and Mrowka~\cite{kronheimermrowka1993gauge,kronheimermrowka1995embedded}.
Subsequently, Ozsv\'ath and Szab\'o~\cite[Corollary 1.7]{ozsvath2003fourball} reproved it using Heegaard Floer homology by showing that $\tau(\torus{a,b}) = (a-1)(b-1)/2$. Rasmussen~\cite[Corollary 1]{rasmussen2010slice} gave a purely combinatorial proof using the $s$-invariant in Khovanov homology.
The unknotting number, the four-ball genus, and the Seifert genus of torus links are also known.

\begin{theorem}[Unknotting number of positive torus links {\cite[Example 4.1]{kawamura2002unknotting}}]
\label{thm:torus-link-unknotting}
Let $\torus{a,b}$ be a torus link with $a$, $b \in \mathbb{Z}_+$ and all components oriented coherently.
Let $\ell = \gcd(a,b)$.
Then
    \[
    \unk{\torus{a,b}} = \frac{(a-1)(b-1) + \ell - 1}{2}.
    \]
\end{theorem}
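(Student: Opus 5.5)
The plan is to prove the formula by matching a lower bound with an upper bound. The lower bound will come from slice-torus invariants, and the upper bound from an explicit unknotting sequence on the standard diagram.

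\emph{Lower bound.} Orient all components of $\torus{a,b}$ coherently, so that the standard closed-braid diagram (the closure of $(\sigma_1\cdots\sigma_{a-1})^b$) is a positive diagram $D$. It is also non-split. This diagram has $c(D)=(a-1)b$ crossings and $O(D)=a$ Seifert circles. Applying Theorem~\ref{thm:positive-links-genus} gives
\[
\nu(\torus{a,b}) = \frac{(a-1)b - a + \ell}{2} = \frac{(a-1)(b-1) + \ell - 1}{2}
\]
for any slice-torus invariant $\nu$, for example $\nu_s$. Corollary~\ref{cor:unlinking-signed-slice-torus} then yields
\[
\unk{\torus{a,b}} \geq \abs{\nu(\torus{a,b})} = \frac{(a-1)(b-1)+\ell-1}{2}.
\]
Equivalently, one can combine Theorem~\ref{thm:slice-torus-bound} with Proposition~\ref{prop:g4-lower-bound-unknotting}; both routes land on the same value.

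\emph{Upper bound.} We must exhibit $\tfrac{(a-1)(b-1)+\ell-1}{2}$ crossing changes that turn $\torus{a,b}$ into the $\ell$-component unlink. The natural route is Theorem~\ref{thm:unknotting-simply-linked-links}: positive links are simply-linked, so
\[
\unk{\torus{a,b}} = \lk{\torus{a,b}} + \sum_i \unk{K_i}.
\]
Write $a=\ell a'$ and $b=\ell b'$. Each component is the torus knot $\torus{a',b'}$, whose unknotting number is $\tfrac{(a'-1)(b'-1)}{2}$ by Theorem~\ref{thm:milnor}. Each pair of components has linking number $a'b'$, so $\lk{} = \binom{\ell}{2}a'b'$. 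It then remains to check the identity
\[
\binom{\ell}{2}a'b' + \ell\,\frac{(a'-1)(b'-1)}{2} = \frac{(\ell a'-1)(\ell b'-1)+\ell-1}{2}.
\]
Expanding, the left side is $\tfrac{\ell^2a'b' - \ell a' - \ell b' + 2\ell - \ell a'b' + \ell a'b'}{2}$, which simplifies to $\tfrac{\ell^2a'b' - \ell a' - \ell b' + 2\ell}{2}$. The right side expands to the same expression. So this route already gives equality outright, and the lower bound above serves only as a consistency check.

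\emph{Main obstacle.} The key step is justifying the application of Theorem~\ref{thm:unknotting-simply-linked-links}, in particular identifying the components as $\torus{a',b'}$ and computing their pairwise linking numbers on the standard diagram. Each pair of components meets in $2a'b'$ crossings, all positive, so each pair contributes $a'b'$ to the linking number. If one prefers a self-contained upper bound instead, I would use a direct construction: first change crossings between components to split them, using $a'b'$ changes per pair, and then unknot each $\torus{a',b'}$ component separately. The delicate part of that construction is showing that the splitting changes leave each component isotopic to $\torus{a',b'}$.
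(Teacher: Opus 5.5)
Your proof is correct, but it is not the paper's route: the paper gives no argument and simply quotes \cite[Example~4.1]{kawamura2002unknotting}. You derive the formula from results in Appendix~\ref{sec:bounds}. The lower bound comes from Theorem~\ref{thm:positive-links-genus} and Corollary~\ref{cor:unlinking-signed-slice-torus}. The exact value comes from Theorem~\ref{thm:unknotting-simply-linked-links} together with Theorem~\ref{thm:milnor} for the components $T_{a',b'}$.

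The second route proves the theorem on its own, and it uses very little, because Theorem~\ref{thm:unknotting-simply-linked-links} is elementary:
\begin{itemize}
\item The inequality $u(L)\ge\sum_{i<j}\lvert\mathrm{lk}(K_i,K_j)\rvert+\sum_i u(K_i)$ holds for every link (Proposition~\ref{prop:lower-bound-unknotting-link}). A mixed crossing change restricts to an isotopy of each component and changes a single linking number by $\pm1$.
\item Conversely, suppose all crossings between $K_i$ and $K_j$ have the same sign. Then exactly $\lvert\mathrm{lk}(K_i,K_j)\rvert$ of them have $K_j$ on top. Changing these for all $i<j$ gives a layered, hence split, diagram whose components are still $K_1,\dots,K_\ell$.
\end{itemize}
In particular, the step you call delicate is not: mixed crossing changes never change the knot type of a component.

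So your argument shows the link formula is a formal consequence of the Milnor conjecture plus linking-number bookkeeping. All the four-dimensional input sits in the knot case, which any proof must contain, since it is the case $\ell=1$. The citation buys brevity; your route buys a verification internal to the paper, with the slice-torus bound as an independent check.

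Two corrections.

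First, your remark that Theorem~\ref{thm:slice-torus-bound} combined with Proposition~\ref{prop:g4-lower-bound-unknotting} gives the same value is false for $\ell\ge2$. That route yields only $u(T_{a,b})\ge g_4(T_{a,b})\ge\nu(T_{a,b})-\ell+1=\frac{(a-1)(b-1)-\ell+1}{2}$, which is short by $\ell-1$. It cannot do better, because Theorem~\ref{thm:positive-links-genus} shows that $g_4(T_{a,b})$ equals exactly this value; for the Hopf link, $g_4=0$ while $u=1$. The missing $\ell-1$ comes only from the crossing-change inequality behind Corollary~\ref{cor:unlinking-signed-slice-torus}, which is the bound you actually use. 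The proof is unaffected, but delete that sentence.

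Second, in your identity both sides equal $\frac{\ell^2a'b'-\ell a'-\ell b'+\ell}{2}$, not the expression with $+2\ell$. The term $\ell(a'-1)(b'-1)$ contributes $+\ell$, and $(\ell a'-1)(\ell b'-1)+\ell-1$ also ends in $+\ell$. The identity itself holds.
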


\subsubsection{Alternating links}
\label{subsubs:alternating}

For alternating links, the Rasmussen $s$-invariant agrees with the signature up to an overall sign (depending on the sign convention for the signature), which is much faster to compute. On an alternating diagram, it can be read off from the chequerboard surface; see Ozsv\'ath and Szab\'o~\cite[Theorem~3.1]{ozsvath2003alternating}.

\begin{proposition}[{{\cite[Theorem~3]{rasmussen2010slice}} for knots, \cite[Corollary~3.2]{cavallo2015links}} for links]
\label{prop:alternating_link_s_sign}
If $L$ is a non-split alternating link, then $\abs{s(L)} = \abs{\sigma(L)}$.
\end{proposition}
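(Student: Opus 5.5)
The plan is to deduce the statement from two earlier facts. The first is the Ozsv\'ath--Szab\'o identification of the signature of an alternating link. The second is the slice-torus framework of Section~\ref{subsubs:slice-torus-invariants}, in which the normalised Rasmussen invariant $\nu_s(L) = \tfrac{s(L)+\ell-1}{2}$ is a slice-torus link invariant. The strategy is to show that both $s(L)$ and $\sigma(L)$ are determined by the same combinatorial data of a reduced alternating diagram $D$. Concretely, I will use the formula
\[
s(L) = -\sigma(L)
\]
(in the convention where positive links have positive $s$ and negative signature). Taking absolute values then gives the claim, and the sign convention becomes irrelevant.

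\textbf{Step 1: thinness of Khovanov homology.} Fix a connected, hence non-split, reduced alternating diagram $D$ of $L$. Lee's theorem and its extension to links say that Khovanov homology of a non-split alternating link is supported on two adjacent $\delta$-diagonals, $\delta = q - 2h$, and that these diagonals are determined by the signature: they sit at $\delta = -\sigma(L) \pm 1$. The Lee spectral sequence converges to $2^{\ell}$ generators, placed in homological degrees given by linking numbers of orientation choices. The generators coming from the given orientation sit in homological degree $0$, in quantum gradings $s(L)\pm 1$ (after the Beliakova--Wehrli normalisation, which is what produces the shift by $\ell-1$). Since the spectral sequence respects $\delta$-gradings up to its differential of bidegree $(1,4k)$, thinness forces these surviving generators in degree $0$ onto the two diagonals. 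This reads off $s(L) = -\sigma(L)$.

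\textbf{Step 2: the signature side.} Step~1 needs the signature to match the diagonal. This is exactly \cite[Theorem~3.1]{ozsvath2003alternating} together with the Traczyk/Gordon--Litherland formula, which expresses $\sigma(L)$ for an alternating diagram as $O_A(D) - n_+(D) - 1$ (or equivalently in terms of the chequerboard surface). The same diagrammatic quantity determines the Khovanov diagonal, so the two agree.

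\textbf{Main obstacle: the multi-component normalisation.} The hardest step is checking that, for links, the degree-$0$ Lee generators sit on the thin diagonal at the correct offset. With several components, the Lee generators are spread over homological degrees depending on linking numbers, and the normalisation $\nu_s = (s+\ell-1)/2$ must be tracked carefully. My first attempt would be to avoid this bookkeeping altogether. I would instead compare with the positive case: Theorem~\ref{thm:positive-links-genus} gives $s$ for positive alternating links, and the crossing-change inequality of Proposition~\ref{prop:crossing-change-slice-torus}, applied to $\nu_s$ and to the signature, changes both by matching amounts within the alternating class. If that fails, I would follow \cite[Corollary~3.2]{cavallo2015links} directly, using thinness of $\widehat{HFK}$ and $\tau$ for links together with the coincidence of all slice-torus invariants on quasi-alternating links.
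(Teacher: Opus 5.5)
Your Step~1 is correct and is the argument behind the citation. It is Rasmussen's proof of \cite[Theorem~3]{rasmussen2010slice}, which Cavallo carries over to links in \cite[Corollary~3.2]{cavallo2015links}. Lee's theorem is already stated for non-split alternating links and places $Kh^{0,*}(L)$ in quantum degrees $-\sigma(L)\pm1$. Traczyk's signature formula is used inside its proof, so your Step~2 is not a separate step. The classes $[\mathfrak{s}_o\pm\mathfrak{s}_{\bar o}]$ have filtered degrees $s(L)\pm1$, and $E_\infty^{0,q}$ of the Lee spectral sequence is a subquotient of $Kh^{0,q}(L)$. Together these force $s(L)=-\sigma(L)$. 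That subquotient fact is the precise form of your remark about the $(1,4k)$ differentials.

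What you call the main obstacle is not one, so drop the detours; they would not work anyway.

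\textbf{Why the link case needs nothing extra.} Reversing every component changes no linking number, so $\mathfrak{s}_o$ and $\mathfrak{s}_{\bar o}$ lie in homological degree $0$ for every link. The linking numbers only move the generators of the other $2^\ell-2$ orientations, which never enter the argument. That $[\mathfrak{s}_o\pm\mathfrak{s}_{\bar o}]$ have filtered degrees $s(L)\pm1$ is part of the Beliakova--Wehrli set-up of $s$ for links \cite{beliakova2008categorification}, normalised so that $s(U_\ell)=1-\ell$. The shift by $\ell-1$ lives only in $\nu_s$, which the statement does not involve. So Step~1 is already complete for links.

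\textbf{Why the crossing-change detour fails.} Proposition~\ref{prop:crossing-change-slice-torus} only gives $0\le\nu(L_+)-\nu(L_-)\le1$, and the signature is likewise only constrained under a crossing change. Two inequalities cannot be upgraded to the equality you want without already knowing the answer. Moreover, a crossing change on an alternating diagram generally destroys alternation, so there is no induction from positive alternating links to run.

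\textbf{Why the $\widehat{HFK}$ detour fails.} \cite[Corollary~3.2]{cavallo2015links} is a Khovanov-homology argument, namely your Step~1, not an $\widehat{HFK}$ one. The agreement of $\nu_s$ and $\tau$ with $(\ell-1-\sigma)/2$ on quasi-alternating links is itself proved by running the thinness argument separately in each theory. Appealing to it in place of Step~1 would therefore be circular.
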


Since $\tau(K) = -\sigma(K)/2$ for alternating knots \cite[Theorem~1.4]{ozsvath2003fourball}, the signature, $s$, and $\tau$ all give the same lower bound. Combining Proposition~\ref{prop:alternating_link_s_sign} with Theorem~\ref{thm:rasmussen-lower-bound}, we recover the classical lower bound
\begin{equation}
\label{eq:genus-alternating}
\gen{L} \geq \frac{\abs{\sigma(L)} + 1 - \ell}{2}
\end{equation}
for an alternating link $L$.
This also holds in the broader class of \emph{quasi-alternating} links \cite[Definition~3.1]{ozsvath2005branched}, by using
\[
\tau(L) = \frac{\ell - 1 - \sigma(L)}{2}, \quad \tau(\mir{L}) = \ell - 1 - \tau(L)
\]
due to Cavallo~\cite[Corollary~3.7]{cavallo2018concordancetau}.

On the unknotting side, McCoy~\cite[Theorem~1]{mccoy2017alternatingunknottingnumber} proved Kohn's conjecture, which states that an alternating knot satisfies $\unk{K} = 1$ if and only if it has an unknotting crossing in every alternating diagram.
More recently, the signature was shown to determine $\unk{L}$ for \emph{special} alternating links, which admit an alternating diagram where one of the planar spanning surfaces is a Seifert surface for the link.

\begin{theorem}[{\cite[Theorem~1 and Corollary~2]{mccoy2026specialalternatinglinks}}]
\label{thm:special-alternating}
Let $L$ be an oriented non-split special alternating link with $\ell$ components.
Then $\unk{L} = \frac{\abs{\sigma(L)}+\ell-1}{2}$ if and only if $L$ can be unlinked by that many crossing changes in any alternating or minimal diagram. In particular, if $K$ is a special alternating knot, then $\unk{K} = \abs{\sigma(K)}/2$ if and only if $K$ can be unknotted by $\abs{\sigma(K)}/2$ crossing changes in any alternating or minimal diagram.
\end{theorem}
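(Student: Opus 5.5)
The equivalence has an easy direction (the diagrammatic condition implies the formula) and a hard one (the formula implies the diagrammatic condition). Write $n = \frac{\abs{\sigma(L)}+\ell-1}{2}$. I would prove the easy direction from the signature bound of Theorem~\ref{thm:levine-tristram-unknotting-links}. For the hard direction I would follow the strategy of Ozsv\'ath--Szab\'o and McCoy: compare definite four-manifolds bounded by the branched double cover $\Sigma(L)$ and use Donaldson's diagonalisation theorem.

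\emph{Lower bound and the ``if'' direction.} First I show $u(L) \geq n$ for every oriented non-split special alternating (indeed every non-split alternating) link. Apply Theorem~\ref{thm:levine-tristram-unknotting-links} at $\omega = -1$. A non-split alternating link has non-zero determinant, so the nullity satisfies $\eta_L(-1) = 0$. Hence $\abs{\eta_L(-1) - \ell + 1} = \ell - 1$, and the theorem gives $u(L) \geq n$. If $L$ can be unlinked by $n$ crossing changes in some alternating or minimal diagram, then $u(L) \leq n$, so equality holds. This settles one direction of both the link and the knot statements; the knot statement is the case $\ell = 1$.

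\emph{The ``only if'' direction.} Suppose $u(L) = n$, realised by some sequence of crossing changes in some diagram, not necessarily alternating. The plan has four steps.
\begin{enumerate}
\item Lift to $\Sigma(L)$. By the Montesinos trick, each crossing change becomes a half-integral surgery, so $\Sigma(L)$ is obtained by $n$ surgeries on a link in $\#^{\ell-1} S^1\times S^2$.
\item Use the sign condition to control definiteness. The hypothesis $u(L) = n$ forces the sign counts of Corollary~\ref{cor:unlinking-signed-slice-torus} to be extremal: all changes have the sign dictated by the signature, matching the Cochran--Lickorish/Nagel--Owens signature jumps. Consequently the trace $W$ of these surgeries, after filling $\#^{\ell-1} S^1\times S^2$, is a negative-definite four-manifold of rank determined by $n$, with $\partial W = \Sigma(L)$.
\item Glue and diagonalise. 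On the other side, $\Sigma(L)$ bounds the double branched cover $X$ of $B^4$ over the pushed-in chequerboard surface of the alternating diagram. Its intersection form is the Goeritz lattice $\Lambda$, which is positive definite. Gluing $W \cup_{\Sigma(L)} -X$ gives a closed negative-definite manifold, so by Donaldson its form is diagonal. This yields a lattice embedding $\Lambda \oplus Q_W \hookrightarrow \mathbb{Z}^N$.
\item Use the special hypothesis. For a special alternating diagram one chequerboard surface is a Seifert surface, so every crossing has the same type with respect to it. I expect this to force the image of $Q_W$ to be spanned by vectors of the form $e_i \pm e_j$ and $2e_i$, with the $d$-invariant/correction-term constraints pinning down which.
\end{enumerate}

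\emph{Reading off the crossings, and the main obstacle.} Next, I would show combinatorially that such an embedding identifies $n$ crossings of the given alternating diagram: each vector $e_i \pm e_j$ is matched to an edge of the Tait graph, that is, to a crossing. Changing exactly those crossings then produces an unlink. Since $\Lambda$ depends only on the alternating diagram, this works for every alternating diagram. For minimal diagrams I would use that minimal diagrams of a non-split alternating link are alternating (Kauffman--Murasugi--Thistlethwaite), so the two cases coincide. The main obstacle is this lattice-theoretic step: classifying embeddings of the Goeritz lattice together with the surgery lattice into $\mathbb{Z}^N$, and showing the classification really forces a set of unknotting crossings in the diagram itself. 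I would first attack it by induction on the number of crossings, using flypes to normalise the diagram and the irreducibility of the Goeritz lattice to rule out exotic embeddings.
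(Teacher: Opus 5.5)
The paper gives no proof of this statement; it quotes it from McCoy, so your proposal has to stand on its own. Your ``if'' direction does. A non-split alternating link has $\det L\neq 0$, so $\eta_L(-1)=0$, and Theorem~\ref{thm:levine-tristram-unknotting-links} at $\omega=-1$ gives $\unk{L}\ge n$. An explicit set of $n$ unlinking crossings then gives equality.

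\textbf{The ``only if'' direction is not proved.} This direction is the actual content of the theorem. Step~4 is stated only as an expectation (``I expect this to force\dots''). The passage from a lattice embedding to ``changing exactly those crossings then produces an unlink'' has no argument at all; you yourself call it the main obstacle and only propose an induction. This last step is not a formality. The Donaldson embedding only records information about $\Sigma(L)$ and its two fillings, and a link is not in general determined by its branched double cover. So even a perfect dictionary between vectors of $Q_W$ and crossings of $D$ would leave you to prove that the modified diagram really is an unlink. As written, only $\unk{L}\ge n$ is established.

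\textbf{The special hypothesis does not enter where it must.} Two of the ingredients you do state also need correcting, and this is the first. ``Every crossing has the same type'' holds for both chequerboard colourings of \emph{every} alternating diagram, so it encodes nothing about speciality. What speciality actually tells you concerns the Seifert chequerboard surface. The branched double cover of $B^4$ over it has form $\pm(V+V^T)$, which is definite of rank $\abs{\sigma(L)}$ with signature $\sigma(L)$. That is the same rank and the same sign as your sign-sharp $W$; for the trefoil both are $A_2$ with the same sign. Gluing $W$ to that cover therefore gives a manifold of signature zero, and Donaldson says nothing. You must glue instead to the cover over the other, non-orientable chequerboard surface, whose Tait graph is bipartite. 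The ranks then add up to exactly $c(D)$, and any lattice analysis has to happen in $\mathbb{Z}^{c(D)}$ against that bipartite graph lattice.

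\textbf{Minimal diagrams need not be alternating.} It is false that every minimal diagram of a non-split alternating link is alternating. This holds for prime alternating links. A composite special alternating link such as $\mathtt{3_1}\sharp\mathtt{3_1}$, however, has non-alternating minimal diagrams: flip one summand over before forming the connected sum. This is precisely why the statement says ``alternating or minimal''. The minimal case needs its own argument, for example a reduction to alternating diagrams of the summands, rather than being absorbed into the alternating case.
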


\subsubsection{Pseudo-thin links}
\label{subsubs:pseudo-thin-links}

Pseudo-thin links form a class for which the Rasmussen $s$-invariant and related bounds can be computed more explicitly.
This class includes all alternating links (see Ozsv\'ath and Szab\'o~\cite[Lemma~3.2]{ozsvath2005branched}), quasi-alternating links (see Cavallo~\cite[Proposition~3.7]{cavallo2015links}), and positive and negative links (see Cavallo~\cite[after Proposition~3.7]{cavallo2015links}), allowing us to apply the following result to all these classes of links.

\begin{theorem}[Strong slice genus of pseudo-thin links {\cite[Theorem~5.4]{cavallo2015links}}]
\label{thm:strong-genus-pseudo-thin}
Let $L = K_1 \cup \cdots \cup K_{\ell}$ be an $\ell$-component non-split pseudo-thin link which is also algebraically split (i.e., $\lk{K_i, K_j} = 0$ for all $i \neq j$). Then
\[
\frac{|s(L)| + \ell - 1}{2} \leq \sgen{L}.
\]
\end{theorem}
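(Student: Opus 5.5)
The plan is to derive the inequality from Corollary~\ref{cor:strong-slice-rasmussen} applied twice, once to $L$ and once to its mirror $\mir{L}$, and to combine the two bounds using a mirror formula for $s$ that holds for non-split pseudo-thin links. The strong slice genus is invariant under mirroring, and $\mir{L}$ is again an $\ell$-component, algebraically split link, since linking numbers only change sign. So Corollary~\ref{cor:strong-slice-rasmussen} gives
\[
\frac{\abs{s(L)+\ell-1}}{2} \leq \sgen{L} \qquad\text{and}\qquad \frac{\abs{s(\mir{L})+\ell-1}}{2} \leq \sgen{\mir{L}} = \sgen{L}.
\]

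The key intermediate claim is that, for a non-split pseudo-thin link, $s(\mir{L}) = -s(L)$. I would prove this from the structure of Lee homology and the Khovanov homology of a pseudo-thin link. The Lee generators sit in the homological degree of the chosen orientation. Pseudo-thinness forces the relevant part of the homology to lie on a single diagonal, which pins down the $q$-grading of the generator defining $s$ through $\sigma(L)$. This should give a signature-type formula $s(L) = -\sigma(L)$, or $s(L) = -\sigma(L) + c$ with a correction $c$ depending only on $\ell$ that vanishes in the non-split case. This is exactly how Proposition~\ref{prop:alternating_link_s_sign} arises for alternating links. Since $\sigma(\mir{L}) = -\sigma(L)$, the identity $s(\mir{L}) = -s(L)$ would follow. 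As a sanity check, the constant $c$ must be consistent with $\abs{s(L)} = \abs{\sigma(L)}$ from Proposition~\ref{prop:alternating_link_s_sign} and with the quasi-alternating formulas of Section~\ref{subsubs:alternating}.

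Granting the mirror formula, the second inequality reads $\abs{-s(L)+\ell-1} \leq 2\sgen{L}$. Because $\ell - 1 \geq 0$, the maximum of $\abs{s(L)+\ell-1}$ and $\abs{-s(L)+\ell-1}$ is $\abs{s(L)}+\ell-1$: take the first if $s(L)\geq 0$ and the second otherwise. Hence $\abs{s(L)}+\ell-1 \leq 2\sgen{L}$, which is the statement.

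The main obstacle is the mirror identity, and in particular the precise normalisation of $s$ for links, in the Beliakova--Wehrli convention $s(U_\ell) = 1-\ell$, on non-split pseudo-thin links. The identity cannot hold for split links: for the unlink, $s(\mir{U_\ell}) = s(U_\ell) = 1-\ell \neq -(1-\ell)$. So the non-split hypothesis must be used essentially, through the diagonal concentration of Khovanov homology. I would first try to extract the identity directly from Cavallo's analysis of pseudo-thin links. Failing that, I would compute $s$ from the Lee spectral sequence, using that it collapses at the $E_2$ page for thin homology.
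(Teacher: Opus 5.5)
The reduction itself is correct. Once $s(\mir{L})=-s(L)$ is known, apply Corollary~\ref{cor:strong-slice-rasmussen} to $L$ and to $\mir{L}$ and use $\sgen{\mir{L}}=\sgen{L}$. Then $\max\{\abs{s(L)+\ell-1},\abs{-s(L)+\ell-1}\}=\abs{s(L)}+\ell-1$ gives the statement. (The paper gives no proof of its own; it quotes \cite[Theorem~5.4]{cavallo2015links}.)

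The gap is that the mirror identity is the only place where pseudo-thinness enters, and you never use what pseudo-thinness says. The mechanism you propose cannot work on this class. The paper recalls, just before the statement, that positive and negative links are pseudo-thin, and for these $s$ is not determined by $\sigma$. For example, $\abs{s}-\abs{\sigma}$ is $0$ for $T_{2,3}$ but $4$ for the positive torus knot $T_{4,5}$ ($s=12$, $\sigma=-8$). So no formula of the form $s(L)=-\sigma(L)+c(\ell)$ holds, even for non-split knots. Your fallback fails for the same reason. Pseudo-thin links need not be Khovanov-thin: the positive knot $T_{3,4}=\mathtt{8_{19}}$ is thick. So you cannot appeal to the Lee spectral sequence collapsing for thin homology.

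The missing idea is that pseudo-thinness constrains only homological degree zero, and that this constraint is self-dual under mirroring. The condition is that the degree-zero Khovanov homology, and hence the associated graded of the degree-zero Lee homology, is concentrated in two adjacent quantum gradings. These must be $s(L)\pm1$: $s_{\min}(L)$ and $s_{\max}(L)=s_{\min}(L)+2$ are filtration gradings of the nonzero Lee classes $[\mathfrak{s}_o\pm\mathfrak{s}_{\bar o}]$, which lie in homological degree zero.

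The filtered Lee complex of $\mir{L}$ is the dual of that of $L$ with both gradings negated; in particular $Kh^{i,j}(\mir{L};\mathbb{Q})\cong Kh^{-i,-j}(L;\mathbb{Q})$. So the degree-zero part for $\mir{L}$ sits in gradings $-s(L)\pm1$, and the same reasoning forces $s(\mir{L})=-s(L)$, with no signature involved.

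Alternatively, you can bypass the mirror:
\begin{itemize}
\item Since $L$ is algebraically split, all $2^\ell$ Lee generators lie in homological degree zero.
\item The reversed strong cobordism $U_\ell\to L$ is compatible with every orientation of $L$.
\item It therefore sends $v_+^{\otimes\ell}$, the top-graded class of the unlink (filtration grading $\ell$, a combination of all $2^\ell$ generators with nonzero coefficients), to a nonzero degree-zero class of grading at least $\ell-2\sgen{L}$.
\item Pseudo-thinness caps that grading at $s(L)+1$, which is exactly the inequality $2\sgen{L}\geq \ell-1-s(L)$ that your second application of the corollary was meant to supply.
\end{itemize}
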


Moreover, $|s(L)| = |\sigma(L)|$ for an alternating link $L$ by Proposition~\ref{prop:alternating_link_s_sign}. Hence, we can compute the signature instead of the Rasmussen $s$-invariant for them, which is computationally cheaper.

\subsection{Computational considerations}
\label{subs:bounds-computational-sublinks}

Some of the results presented above (Proposition~\ref{prop:lower-bound-unknotting-link}, Lemma~\ref{lem:lower-bound-strong-genus-link-components}, and Theorem~\ref{thm:unknotting-simply-linked-links}) require knowing the unknotting numbers or the slice genera of the individual link components.
This makes them somewhat impractical in a general setting.
Our implementation addresses this issue using two distinct strategies: one general and one specific to the dataset we investigated.

For the general case, we estimate the required invariants of the components using the other bounds presented in this section. 
Given a link $L = K_1 \cup \dots \cup K_{\ell}$, we first compute the bounds for each component $K_i$ and then use these estimates in the inequalities for links obtained from the results mentioned above.

For our specific dataset, which considers links with up to 11 crossings, we can often identify the exact link components by querying a comprehensive table of knots with up to ten crossings.
The table collects all prime knots with up to ten crossings, their mirror images, and all the possible connected sums that stay within this crossing limit.
Completeness follows from the classification of knot diagrams through ten crossings by Cantarella, Chapman, and Mastin~\cite{Cantarella}. 
In this range, every composite knot admitting an $n$-crossing diagram has prime summands whose crossing numbers total at most $n$. Hence, the table includes every possible knot type.
For each entry of the table, we store its PD code, its crossing signs, and a vector of pre-computed invariants: Seifert genus, $\tau$, total rank of the hat version of knot Floer homology, determinant, Alexander polynomial, $\sigma$, and Jones polynomial.
Given a diagram for a component $K_i$ of our link $L$, the identification proceeds as follows:
\begin{itemize}
    \item We simplify the diagram of $K_i$. 
    If it simplifies to ten crossings or fewer, we proceed. Otherwise, we attempt identification through visible connected summands, each within the table’s crossing limit.
    \item We compute the invariant vector for $K_i$ and check how many table entries match it.
    \item If the invariant vector matches a single entry, the knot is uniquely identified. If the remaining candidates differ only by mirroring, it is identified up to mirroring. Otherwise, we retain all matching candidates. For each invariant, we use the minimum of their lower bounds and the maximum of their upper bounds, provided the corresponding bounds are available for every candidate.
    \item For prime candidates, we retrieve the available lower and upper bounds on the unknotting number and slice genus from our database. For composite candidates, we obtain upper bounds by summing the corresponding upper bounds of their prime summands, using subadditivity. After combining candidates as above, we combine these bounds with the independently computed bounds for $K_i$ and apply the relevant link inequalities.
\end{itemize}

\section{The unknotting sequences}
\label{app:sequences}

\raggedbottom
\captionsetup[figure]{justification=raggedright, singlelinecheck=false}

In this appendix, we collect the drawings of the unknotting sequences of Table~\ref{tab:symbionts-unknotting-values} that differ from the sequence~\eqref{eq:unknotting-sequence-briher} of Brittenham and Hermiller.
In each subsection, we present one sequence $D_0 \to D_1 \to D_2 \to D_3 \to D_4$, where $D_4$ is a diagram of the unknot.
We draw one figure per crossing change: the diagram $D_k$ with the band of the crossing change in dashed red, before the simplification that gives $D_{k+1}$.
The circled numbers are the crossings of the PD code of $D_k$ and the grey numbers its edge labels; the arrows give the orientation.
For each drawing we report
\begin{itemize}
    \item its PD code before the crossing change;
    \item its crossing signs;
    \item the band path, which is drawn in dashed red, as a list of actions (Section~\ref{subs:env-mdp}), as printed by the replay scripts of the supporting data.
\end{itemize}
An action \texttt{C\{x\}S\{y\}\_\{type\}} refers to the crossing strand $(x,y)$: the edge meeting crossing $x$ at position $y$ of its PD tuple, the positions being numbered counter-clockwise from the incoming under-strand.
A band with \texttt{start} action \texttt{C\{x\}S\{y\}\_start} starts on that edge, in the face whose boundary contains the crossing strands $(x,y)$ and $(x, y-1 \bmod 4)$; that is, in the corner of crossing $x$ between them.
An action \texttt{C\{x\}S\{y\}\_over} (resp.\ \texttt{C\{x\}S\{y\}\_under}) takes the band over (resp.\ under) the edge of $(x,y)$ into the next face.
The last action, \texttt{C\{x\}S\{y\}\_end1} or \texttt{C\{x\}S\{y\}\_end2}, takes the band across the edge of $(x,y)$, which lies on the boundary of the face the band is in, so that the band clasps that edge.
The two endings tell us how the crossing change is performed.
The clasp gives two crossings between the band and the final edge.
Following the knot from crossing $x$ (the $x$ in \texttt{C\{x\}S\{y\}\_start}) along the band, we meet one of them before the tip of the band, the closer crossing, and the other one after it, the farther crossing.
With \texttt{end1}, the band passes over the final edge at the closer crossing and under it at the farther one.
With \texttt{end2}, the opposite.
Diagrams that are the same up to relabelling are drawn in the same way.

\subsection{Unknotting sequence for \texorpdfstring{$\mathtt{9_{8}} \sharp \mathtt{9_{10}}$}{9\_8 \# 9\_10}}
\label{app:seq-rweval_9_8_9_10}

The sequence found by the tuned random walker is
\begin{equation}
\label{eq:seq-rweval_9_8_9_10}
     \mathtt{9_{8}} \sharp \mathtt{9_{10}} \to \mathtt{4_{1}} \sharp \mathtt{9_{10}} \to \mathtt{K15n4005} \to \mathtt{8_{14}} \to \text{unknot}.
\end{equation}

\bigskip
\noindent
\begin{minipage}[c]{0.45\linewidth}
In Figure~\ref{fig:seq-rweval_9_8_9_10-0} we see the diagram $D_0$ of $\mathtt{9_{8}} \sharp \mathtt{9_{10}}$ to which we apply the first band.
Before the unknotting band, we have \\
PD code:
\begin{flushleft}\footnotesize\ttfamily
[(11, 26, 12, 27), (13, 24, 14, 25),\\
\hphantom{[}(15, 21, 16, 20), (17, 29, 18, 28),\\
\hphantom{[}(19, 23, 20, 22), (21, 17, 22, 16),\\
\hphantom{[}(23, 14, 24, 15), (25, 12, 26, 13),\\
\hphantom{[}(27, 19, 28, 18), (29, 5, 30, 4),\\
\hphantom{[}(31, 3, 32, 2), (33, 9, 34, 8),\\
\hphantom{[}(35, 7, 0, 6), (1, 11, 2, 10),\\
\hphantom{[}(3, 31, 4, 30), (5, 33, 6, 32),\\
\hphantom{[}(7, 35, 8, 34), (9, 1, 10, 0)].
\end{flushleft}
Crossing signs:
\begin{flushleft}\footnotesize\ttfamily
[-1, -1, 1, 1, 1, 1, -1, -1, 1,\\
\hphantom{[}1, 1, 1, 1, 1, 1, 1, 1, 1].
\end{flushleft}

The band applied is
\begin{flushleft}\footnotesize\ttfamily
[\textquotesingle{}C4S2\_start\textquotesingle{}, \textquotesingle{}C4S1\_end2\textquotesingle{}].
\end{flushleft}
\end{minipage}\hfill
\begin{minipage}[c]{0.52\linewidth}
    \captionsetup{type=figure}
    \centering
    \includegraphics[max width=\linewidth, max height=6.5cm]{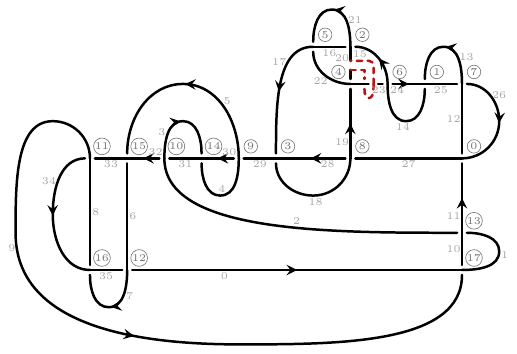}
    \caption{Sequence~\eqref{eq:seq-rweval_9_8_9_10}, band from diagram $D_{0}$ of $\mathtt{9_{8}} \sharp \mathtt{9_{10}}$ to diagram $D_{1}$ of $\mathtt{4_{1}} \sharp \mathtt{9_{10}}$.}
    \label{fig:seq-rweval_9_8_9_10-0}
\end{minipage}

\bigskip
\noindent
\begin{minipage}[c]{0.52\linewidth}
    \captionsetup{type=figure}
    \centering
    \includegraphics[max width=\linewidth, max height=6.5cm]{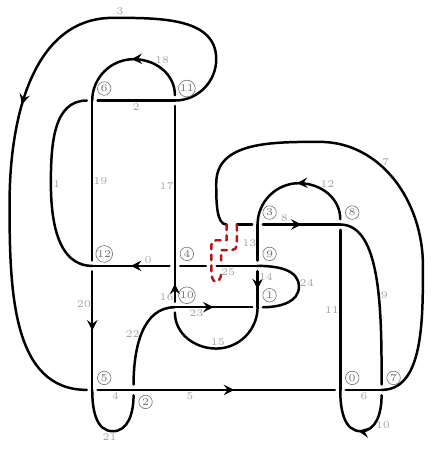}
    \caption{Sequence~\eqref{eq:seq-rweval_9_8_9_10}, band from diagram $D_{1}$ of $\mathtt{4_{1}} \sharp \mathtt{9_{10}}$ to diagram $D_{2}$ of $\mathtt{K15n4005}$.}
    \label{fig:seq-rweval_9_8_9_10-1}
\end{minipage}\hfill
\begin{minipage}[c]{0.45\linewidth}
In Figure~\ref{fig:seq-rweval_9_8_9_10-1} we see the diagram $D_1$ of $\mathtt{4_{1}} \sharp \mathtt{9_{10}}$ to which we apply the second band.
Before the unknotting band, we have \\
PD code:
\begin{flushleft}\footnotesize\ttfamily
[(5, 10, 6, 11), (23, 15, 24, 14),\\
\hphantom{[}(21, 5, 22, 4), (7, 13, 8, 12),\\
\hphantom{[}(25, 17, 0, 16), (3, 21, 4, 20),\\
\hphantom{[}(1, 19, 2, 18), (9, 6, 10, 7),\\
\hphantom{[}(11, 9, 12, 8), (13, 25, 14, 24),\\
\hphantom{[}(15, 23, 16, 22), (17, 3, 18, 2),\\
\hphantom{[}(19, 1, 20, 0)].
\end{flushleft}
Crossing signs:
\begin{flushleft}\footnotesize\ttfamily
[-1, 1, 1, 1, 1, 1, 1, -1, 1,\\
\hphantom{[}1, 1, 1, 1].
\end{flushleft}

The band applied is
\begin{flushleft}\footnotesize\ttfamily
[\textquotesingle{}C7S3\_start\textquotesingle{}, \textquotesingle{}C4S0\_end1\textquotesingle{}].
\end{flushleft}
\end{minipage}

\bigskip
\noindent
\begin{minipage}[c]{0.45\linewidth}
In Figure~\ref{fig:seq-rweval_9_8_9_10-2} we see the diagram $D_2$ of $\mathtt{K15n4005}$ to which we apply the third band.
Before the unknotting band, we have \\
PD code:
\begin{flushleft}\footnotesize\ttfamily
[(10, 17, 11, 18), (1, 21, 2, 20),\\
\hphantom{[}(28, 10, 29, 9), (14, 22, 15, 21),\\
\hphantom{[}(4, 24, 5, 23), (8, 28, 9, 27),\\
\hphantom{[}(6, 26, 7, 25), (16, 11, 17, 12),\\
\hphantom{[}(18, 16, 19, 15), (19, 3, 20, 2),\\
\hphantom{[}(22, 0, 23, 29), (24, 8, 25, 7),\\
\hphantom{[}(26, 6, 27, 5), (3, 12, 4, 13),\\
\hphantom{[}(13, 0, 14, 1)].
\end{flushleft}
Crossing signs:
\begin{flushleft}\footnotesize\ttfamily
[-1, 1, 1, 1, 1, 1, 1, -1, 1,\\
\hphantom{[}1, 1, 1, 1, -1, -1].
\end{flushleft}

The band applied is
\begin{flushleft}\footnotesize\ttfamily
[\textquotesingle{}C8S0\_start\textquotesingle{}, \textquotesingle{}C10S3\_end2\textquotesingle{}].
\end{flushleft}
\end{minipage}\hfill
\begin{minipage}[c]{0.52\linewidth}
    \captionsetup{type=figure}
    \centering
    \includegraphics[max width=\linewidth, max height=6.5cm]{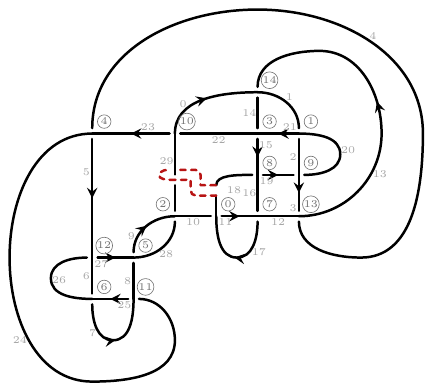}
    \caption{Sequence~\eqref{eq:seq-rweval_9_8_9_10}, band from diagram $D_{2}$ of $\mathtt{K15n4005}$ to diagram $D_{3}$ of $\mathtt{8_{14}}$.}
    \label{fig:seq-rweval_9_8_9_10-2}
\end{minipage}

\bigskip
\noindent
\begin{minipage}[c]{0.52\linewidth}
    \captionsetup{type=figure}
    \centering
    \includegraphics[max width=\linewidth, max height=5cm]{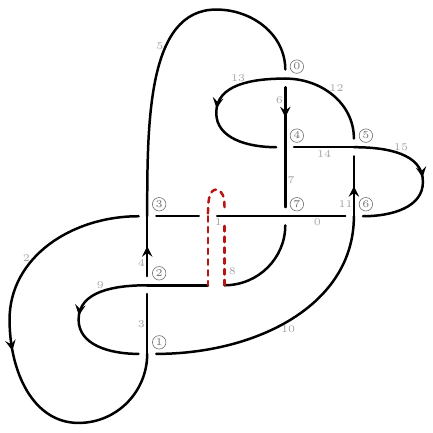}
    \caption{Sequence~\eqref{eq:seq-rweval_9_8_9_10}, band from diagram $D_{3}$ of $\mathtt{8_{14}}$ to diagram $D_{4}$ of the unknot.}
    \label{fig:seq-rweval_9_8_9_10-3}
\end{minipage}\hfill
\begin{minipage}[c]{0.45\linewidth}
In Figure~\ref{fig:seq-rweval_9_8_9_10-3} we see the diagram $D_3$ of $\mathtt{8_{14}}$ to which we apply the fourth band.
Before the unknotting band, we have \\
PD code:
\begin{flushleft}\footnotesize\ttfamily
[(5, 13, 6, 12), (9, 2, 10, 3),\\
\hphantom{[}(3, 8, 4, 9), (1, 5, 2, 4),\\
\hphantom{[}(13, 7, 14, 6), (11, 15, 12, 14),\\
\hphantom{[}(15, 11, 0, 10), (7, 1, 8, 0)].
\end{flushleft}
Crossing signs:
\begin{flushleft}\footnotesize\ttfamily
[1, -1, -1, 1, 1, 1, 1, 1].
\end{flushleft}

The band applied is
\begin{flushleft}\footnotesize\ttfamily
[\textquotesingle{}C7S2\_start\textquotesingle{}, \textquotesingle{}C7S1\_end2\textquotesingle{}].
\end{flushleft}
\end{minipage}

\medskip

After the last crossing change, we obtain the unknot.

\subsection{Unknotting sequence for \texorpdfstring{$\mathtt{9_{40}} \sharp \mathtt{9_{10}}$}{9\_40 \# 9\_10}}
\label{app:seq-rweval_9_40_9_10}

The sequence found by the tuned random walker is
\begin{equation}
\label{eq:seq-rweval_9_40_9_10}
     \mathtt{9_{40}} \sharp \mathtt{9_{10}} \to \mathtt{K20} \to \mathtt{K15n4005} \to \mathtt{8_{14}} \to \text{unknot}.
\end{equation}

\bigskip
\noindent
\begin{minipage}[c]{0.45\linewidth}
In Figure~\ref{fig:seq-rweval_9_40_9_10-0} we see the diagram $D_0$ of $\mathtt{9_{40}} \sharp \mathtt{9_{10}}$ to which we apply the first band.
Before the unknotting band, we have \\
PD code:
\begin{flushleft}\footnotesize\ttfamily
[(11, 25, 12, 24), (13, 22, 14, 23),\\
\hphantom{[}(15, 21, 16, 20), (17, 13, 18, 12),\\
\hphantom{[}(19, 28, 20, 29), (21, 27, 22, 26),\\
\hphantom{[}(23, 19, 24, 18), (25, 16, 26, 17),\\
\hphantom{[}(27, 15, 28, 14), (29, 5, 30, 4),\\
\hphantom{[}(31, 3, 32, 2), (33, 9, 34, 8),\\
\hphantom{[}(35, 7, 0, 6), (1, 11, 2, 10),\\
\hphantom{[}(3, 31, 4, 30), (5, 33, 6, 32),\\
\hphantom{[}(7, 35, 8, 34), (9, 1, 10, 0)].
\end{flushleft}
Crossing signs:
\begin{flushleft}\footnotesize\ttfamily
[1, -1, 1, 1, -1, 1, 1, -1, 1,\\
\hphantom{[}1, 1, 1, 1, 1, 1, 1, 1, 1].
\end{flushleft}

The band applied is
\begin{flushleft}\footnotesize\ttfamily
[\textquotesingle{}C2S3\_start\textquotesingle{}, \textquotesingle{}C15S0\_end2\textquotesingle{}].
\end{flushleft}
\end{minipage}\hfill
\begin{minipage}[c]{0.52\linewidth}
    \captionsetup{type=figure}
    \centering
    \includegraphics[max width=\linewidth, max height=6.5cm]{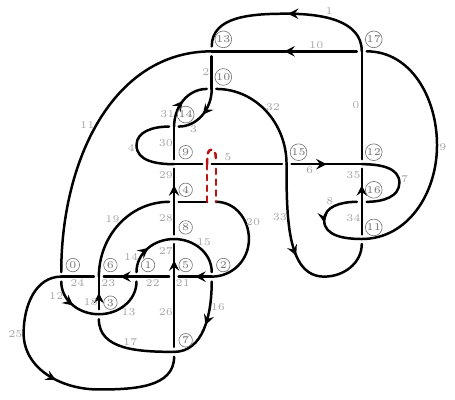}
    \caption{Sequence~\eqref{eq:seq-rweval_9_40_9_10}, band from diagram $D_{0}$ of $\mathtt{9_{40}} \sharp \mathtt{9_{10}}$ to diagram $D_{1}$ of $\mathtt{K20}$.}
    \label{fig:seq-rweval_9_40_9_10-0}
\end{minipage}

\bigskip
\noindent
\begin{minipage}[c]{0.52\linewidth}
    \captionsetup{type=figure}
    \centering
    \includegraphics[max width=\linewidth, max height=6.5cm]{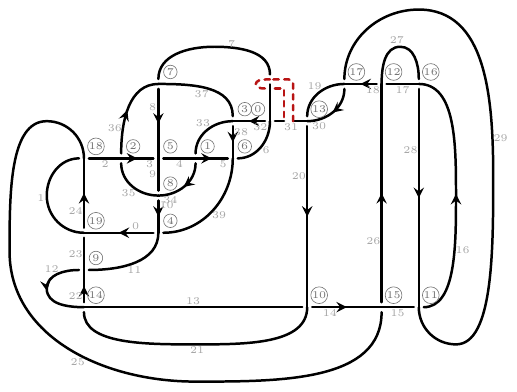}
    \caption{Sequence~\eqref{eq:seq-rweval_9_40_9_10}, band from diagram $D_{1}$ of $\mathtt{K20}$ to diagram $D_{2}$ of $\mathtt{K15n4005}$.}
    \label{fig:seq-rweval_9_40_9_10-1}
\end{minipage}\hfill
\begin{minipage}[c]{0.45\linewidth}
In Figure~\ref{fig:seq-rweval_9_40_9_10-1} we see the diagram $D_1$ of $\mathtt{K20}$ to which we apply the second band.
Before the unknotting band, we have \\
PD code:
\begin{flushleft}\footnotesize\ttfamily
[(31, 7, 32, 6), (33, 4, 34, 5),\\
\hphantom{[}(35, 3, 36, 2), (37, 33, 38, 32),\\
\hphantom{[}(39, 10, 0, 11), (3, 9, 4, 8),\\
\hphantom{[}(5, 39, 6, 38), (7, 36, 8, 37),\\
\hphantom{[}(9, 35, 10, 34), (11, 23, 12, 22),\\
\hphantom{[}(13, 21, 14, 20), (15, 29, 16, 28),\\
\hphantom{[}(17, 27, 18, 26), (19, 31, 20, 30),\\
\hphantom{[}(21, 13, 22, 12), (25, 15, 26, 14),\\
\hphantom{[}(27, 17, 28, 16), (29, 19, 30, 18),\\
\hphantom{[}(1, 24, 2, 25), (23, 0, 24, 1)].
\end{flushleft}
Crossing signs:
\begin{flushleft}\footnotesize\ttfamily
[1, -1, 1, 1, -1, 1, 1, -1, 1,\\
\hphantom{[}1, 1, 1, 1, 1, 1, 1, 1, 1,\\
\hphantom{[}-1, -1].
\end{flushleft}

The band applied is
\begin{flushleft}\footnotesize\ttfamily
[\textquotesingle{}C13S1\_start\textquotesingle{}, \textquotesingle{}C7S0\_end1\textquotesingle{}].
\end{flushleft}
\end{minipage}

\bigskip
\noindent
\begin{minipage}[c]{0.45\linewidth}
In Figure~\ref{fig:seq-rweval_9_40_9_10-2} we see the diagram $D_2$ of $\mathtt{K15n4005}$ to which we apply the third band.
Before the unknotting band, we have \\
PD code:
\begin{flushleft}\footnotesize\ttfamily
[(27, 20, 28, 21), (21, 26, 22, 27),\\
\hphantom{[}(18, 8, 19, 7), (16, 6, 17, 5),\\
\hphantom{[}(14, 4, 15, 3), (24, 0, 25, 29),\\
\hphantom{[}(9, 22, 10, 23), (25, 12, 26, 13),\\
\hphantom{[}(2, 24, 3, 23), (28, 14, 29, 13),\\
\hphantom{[}(1, 11, 2, 10), (4, 18, 5, 17),\\
\hphantom{[}(6, 16, 7, 15), (8, 20, 9, 19),\\
\hphantom{[}(11, 1, 12, 0)].
\end{flushleft}
Crossing signs:
\begin{flushleft}\footnotesize\ttfamily
[-1, -1, 1, 1, 1, 1, -1, -1, 1,\\
\hphantom{[}1, 1, 1, 1, 1, 1].
\end{flushleft}

The band applied is
\begin{flushleft}\footnotesize\ttfamily
[\textquotesingle{}C1S3\_start\textquotesingle{}, \textquotesingle{}C6S0\_end1\textquotesingle{}].
\end{flushleft}
\end{minipage}\hfill
\begin{minipage}[c]{0.52\linewidth}
    \captionsetup{type=figure}
    \centering
    \includegraphics[max width=\linewidth, max height=6.5cm]{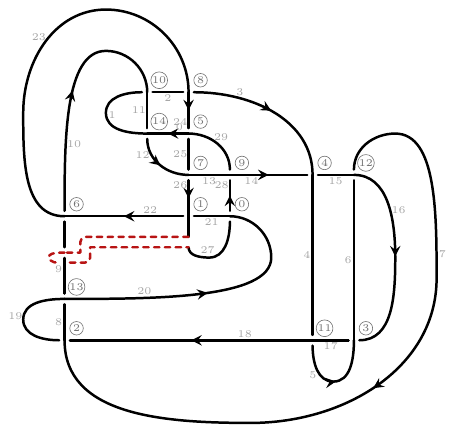}
    \caption{Sequence~\eqref{eq:seq-rweval_9_40_9_10}, band from diagram $D_{2}$ of $\mathtt{K15n4005}$ to diagram $D_{3}$ of $\mathtt{8_{14}}$.}
    \label{fig:seq-rweval_9_40_9_10-2}
\end{minipage}

\bigskip
\noindent
\begin{minipage}[c]{0.52\linewidth}
    \captionsetup{type=figure}
    \centering
    \includegraphics[max width=\linewidth, max height=5cm]{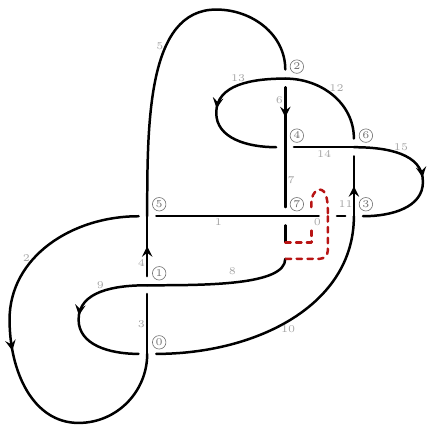}
    \caption{Sequence~\eqref{eq:seq-rweval_9_40_9_10}, band from diagram $D_{3}$ of $\mathtt{8_{14}}$ to diagram $D_{4}$ of the unknot.}
    \label{fig:seq-rweval_9_40_9_10-3}
\end{minipage}\hfill
\begin{minipage}[c]{0.45\linewidth}
In Figure~\ref{fig:seq-rweval_9_40_9_10-3} we see the diagram $D_3$ of $\mathtt{8_{14}}$ to which we apply the fourth band.
Before the unknotting band, we have \\
PD code:
\begin{flushleft}\footnotesize\ttfamily
[(9, 2, 10, 3), (3, 8, 4, 9),\\
\hphantom{[}(5, 13, 6, 12), (15, 11, 0, 10),\\
\hphantom{[}(13, 7, 14, 6), (1, 5, 2, 4),\\
\hphantom{[}(11, 15, 12, 14), (7, 1, 8, 0)].
\end{flushleft}
Crossing signs:
\begin{flushleft}\footnotesize\ttfamily
[-1, -1, 1, 1, 1, 1, 1, 1].
\end{flushleft}

The band applied is
\begin{flushleft}\footnotesize\ttfamily
[\textquotesingle{}C1S1\_start\textquotesingle{}, \textquotesingle{}C3S2\_end1\textquotesingle{}].
\end{flushleft}
\end{minipage}

\medskip

After the last crossing change, we obtain the unknot.

\subsection{Unknotting sequence for \texorpdfstring{$\mathtt{10_{41}} \sharp \mathtt{9_{10}}$}{10\_41 \# 9\_10}}
\label{app:seq-rweval_10_41_9_10}

The sequence found by the tuned random walker is
\begin{equation}
\label{eq:seq-rweval_10_41_9_10}
     \mathtt{10_{41}} \sharp \mathtt{9_{10}} \to \mathtt{K21} \to \mathtt{K15n4005} \to \mathtt{8_{14}} \to \text{unknot}.
\end{equation}

\bigskip
\noindent
\begin{minipage}[c]{0.45\linewidth}
In Figure~\ref{fig:seq-rweval_10_41_9_10-0} we see the diagram $D_0$ of $\mathtt{10_{41}} \sharp \mathtt{9_{10}}$ to which we apply the first band.
Before the unknotting band, we have \\
PD code:
\begin{flushleft}\footnotesize\ttfamily
[(11, 24, 12, 25), (13, 27, 14, 26),\\
\hphantom{[}(15, 20, 16, 21), (17, 29, 18, 28),\\
\hphantom{[}(19, 16, 20, 17), (21, 31, 22, 30),\\
\hphantom{[}(23, 12, 24, 13), (25, 23, 26, 22),\\
\hphantom{[}(27, 19, 28, 18), (29, 15, 30, 14),\\
\hphantom{[}(31, 5, 32, 4), (33, 3, 34, 2),\\
\hphantom{[}(35, 9, 36, 8), (37, 7, 0, 6),\\
\hphantom{[}(1, 11, 2, 10), (3, 33, 4, 32),\\
\hphantom{[}(5, 35, 6, 34), (7, 37, 8, 36),\\
\hphantom{[}(9, 1, 10, 0)].
\end{flushleft}
Crossing signs:
\begin{flushleft}\footnotesize\ttfamily
[-1, 1, -1, 1, -1, 1, -1, 1, 1,\\
\hphantom{[}1, 1, 1, 1, 1, 1, 1, 1, 1,\\
\hphantom{[}1].
\end{flushleft}

The band applied is
\begin{flushleft}\footnotesize\ttfamily
[\textquotesingle{}C10S1\_start\textquotesingle{}, \textquotesingle{}C2S2\_end2\textquotesingle{}].
\end{flushleft}
\end{minipage}\hfill
\begin{minipage}[c]{0.52\linewidth}
    \captionsetup{type=figure}
    \centering
    \includegraphics[max width=\linewidth, max height=6.5cm]{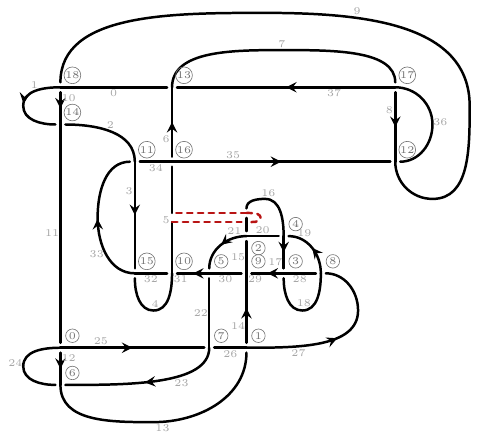}
    \caption{Sequence~\eqref{eq:seq-rweval_10_41_9_10}, band from diagram $D_{0}$ of $\mathtt{10_{41}} \sharp \mathtt{9_{10}}$ to diagram $D_{1}$ of $\mathtt{K21}$.}
    \label{fig:seq-rweval_10_41_9_10-0}
\end{minipage}

\bigskip
\noindent
\begin{minipage}[c]{0.52\linewidth}
    \captionsetup{type=figure}
    \centering
    \includegraphics[max width=\linewidth, max height=6.5cm]{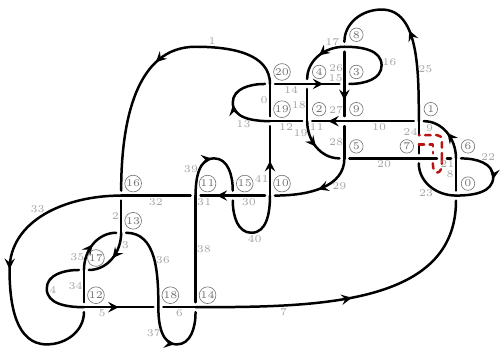}
    \caption{Sequence~\eqref{eq:seq-rweval_10_41_9_10}, band from diagram $D_{1}$ of $\mathtt{K21}$ to diagram $D_{2}$ of $\mathtt{K15n4005}$.}
    \label{fig:seq-rweval_10_41_9_10-1}
\end{minipage}\hfill
\begin{minipage}[c]{0.45\linewidth}
In Figure~\ref{fig:seq-rweval_10_41_9_10-1} we see the diagram $D_1$ of $\mathtt{K21}$ to which we apply the second band.
Before the unknotting band, we have \\
PD code:
\begin{flushleft}\footnotesize\ttfamily
[(7, 22, 8, 23), (9, 25, 10, 24),\\
\hphantom{[}(11, 18, 12, 19), (15, 27, 16, 26),\\
\hphantom{[}(17, 14, 18, 15), (19, 29, 20, 28),\\
\hphantom{[}(21, 8, 22, 9), (23, 21, 24, 20),\\
\hphantom{[}(25, 17, 26, 16), (27, 11, 28, 10),\\
\hphantom{[}(29, 41, 30, 40), (31, 39, 32, 38),\\
\hphantom{[}(33, 5, 34, 4), (35, 3, 36, 2),\\
\hphantom{[}(37, 7, 38, 6), (39, 31, 40, 30),\\
\hphantom{[}(1, 33, 2, 32), (3, 35, 4, 34),\\
\hphantom{[}(5, 37, 6, 36), (41, 12, 0, 13),\\
\hphantom{[}(13, 0, 14, 1)].
\end{flushleft}
Crossing signs:
\begin{flushleft}\footnotesize\ttfamily
[-1, 1, -1, 1, -1, 1, -1, 1, 1,\\
\hphantom{[}1, 1, 1, 1, 1, 1, 1, 1, 1,\\
\hphantom{[}1, -1, -1].
\end{flushleft}

The band applied is
\begin{flushleft}\footnotesize\ttfamily
[\textquotesingle{}C7S2\_start\textquotesingle{}, \textquotesingle{}C7S1\_end2\textquotesingle{}].
\end{flushleft}
\end{minipage}

\bigskip
\noindent
\begin{minipage}[c]{0.45\linewidth}
In Figure~\ref{fig:seq-rweval_10_41_9_10-2} we see the diagram $D_2$ of $\mathtt{K15n4005}$ to which we apply the third band.
Before the unknotting band, we have \\
PD code:
\begin{flushleft}\footnotesize\ttfamily
[(3, 20, 4, 21), (18, 1, 19, 2),\\
\hphantom{[}(29, 4, 0, 5), (2, 8, 3, 7),\\
\hphantom{[}(22, 12, 23, 11), (24, 14, 25, 13),\\
\hphantom{[}(5, 28, 6, 29), (26, 16, 27, 15),\\
\hphantom{[}(19, 9, 20, 8), (16, 28, 17, 27),\\
\hphantom{[}(14, 24, 15, 23), (10, 18, 11, 17),\\
\hphantom{[}(12, 26, 13, 25), (6, 22, 7, 21),\\
\hphantom{[}(9, 1, 10, 0)].
\end{flushleft}
Crossing signs:
\begin{flushleft}\footnotesize\ttfamily
[-1, -1, -1, 1, 1, 1, -1, 1, 1,\\
\hphantom{[}1, 1, 1, 1, 1, 1].
\end{flushleft}

The band applied is
\begin{flushleft}\footnotesize\ttfamily
[\textquotesingle{}C6S2\_start\textquotesingle{}, \textquotesingle{}C9S0\_end1\textquotesingle{}].
\end{flushleft}
\end{minipage}\hfill
\begin{minipage}[c]{0.52\linewidth}
    \captionsetup{type=figure}
    \centering
    \includegraphics[max width=\linewidth, max height=6.5cm]{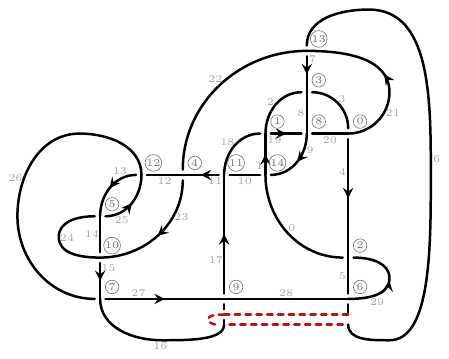}
    \caption{Sequence~\eqref{eq:seq-rweval_10_41_9_10}, band from diagram $D_{2}$ of $\mathtt{K15n4005}$ to diagram $D_{3}$ of $\mathtt{8_{14}}$.}
    \label{fig:seq-rweval_10_41_9_10-2}
\end{minipage}

\bigskip
\noindent
\begin{minipage}[c]{0.52\linewidth}
    \captionsetup{type=figure}
    \centering
    \includegraphics[max width=\linewidth, max height=5cm]{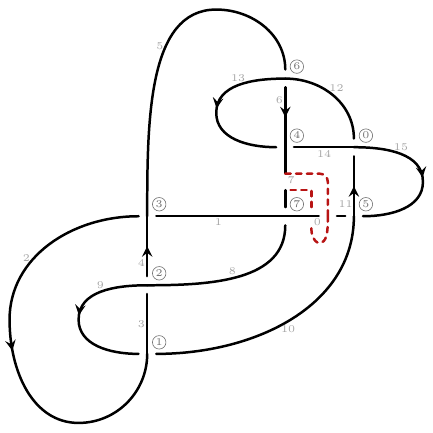}
    \caption{Sequence~\eqref{eq:seq-rweval_10_41_9_10}, band from $D_{3}$ of $\mathtt{8_{14}}$ to $D_{4}$ of the unknot.}
    \label{fig:seq-rweval_10_41_9_10-3}
\end{minipage}\hfill
\begin{minipage}[c]{0.45\linewidth}
In Figure~\ref{fig:seq-rweval_10_41_9_10-3} we see the diagram $D_3$ of $\mathtt{8_{14}}$ to which we apply the fourth band.
Before the unknotting band, we have \\
PD code:
\begin{flushleft}\footnotesize\ttfamily
[(11, 15, 12, 14), (9, 2, 10, 3),\\
\hphantom{[}(3, 8, 4, 9), (1, 5, 2, 4),\\
\hphantom{[}(13, 7, 14, 6), (15, 11, 0, 10),\\
\hphantom{[}(5, 13, 6, 12), (7, 1, 8, 0)].
\end{flushleft}
Crossing signs:
\begin{flushleft}\footnotesize\ttfamily
[1, -1, -1, 1, 1, 1, 1, 1].
\end{flushleft}

The band applied is
\begin{flushleft}\footnotesize\ttfamily
[\textquotesingle{}C7S0\_start\textquotesingle{}, \textquotesingle{}C7S3\_end2\textquotesingle{}].
\end{flushleft}
\end{minipage}

\medskip

After the last crossing change, we obtain the unknot.

\subsection{Unknotting sequence for \texorpdfstring{$\mathtt{10_{115}} \sharp \mathtt{9_{10}}$}{10\_115 \# 9\_10}}
\label{app:seq-rweval_10_115_9_10}

The sequence found by the tuned random walker is
\begin{equation}
\label{eq:seq-rweval_10_115_9_10}
     \mathtt{10_{115}} \sharp \mathtt{9_{10}} \to \mathtt{4_{1}} \sharp \mathtt{9_{10}} \to \mathtt{K15n4005} \to \mathtt{8_{14}} \to \text{unknot}.
\end{equation}
The diagrams $D_2$ and $D_3$ and their bands are those of the sequence of $\mathtt{9_{40}} \sharp \mathtt{9_{10}}$, up to relabelling.

\bigskip
\noindent
\begin{minipage}[c]{0.45\linewidth}
In Figure~\ref{fig:seq-rweval_10_115_9_10-0} we see the diagram $D_0$ of $\mathtt{10_{115}} \sharp \mathtt{9_{10}}$ to which we apply the first band.
Before the unknotting band, we have \\
PD code:
\begin{flushleft}\footnotesize\ttfamily
[(11, 27, 12, 26), (19, 31, 20, 30),\\
\hphantom{[}(25, 20, 26, 21), (21, 12, 22, 13),\\
\hphantom{[}(13, 24, 14, 25), (23, 16, 24, 17),\\
\hphantom{[}(15, 29, 16, 28), (29, 15, 30, 14),\\
\hphantom{[}(17, 22, 18, 23), (27, 19, 28, 18),\\
\hphantom{[}(31, 5, 32, 4), (33, 3, 34, 2),\\
\hphantom{[}(35, 9, 36, 8), (37, 7, 0, 6),\\
\hphantom{[}(1, 11, 2, 10), (3, 33, 4, 32),\\
\hphantom{[}(5, 35, 6, 34), (7, 37, 8, 36),\\
\hphantom{[}(9, 1, 10, 0)].
\end{flushleft}
Crossing signs:
\begin{flushleft}\footnotesize\ttfamily
[1, 1, -1, -1, -1, -1, 1, 1, -1,\\
\hphantom{[}1, 1, 1, 1, 1, 1, 1, 1, 1,\\
\hphantom{[}1].
\end{flushleft}

The band applied is
\begin{flushleft}\footnotesize\ttfamily
[\textquotesingle{}C2S1\_start\textquotesingle{}, \textquotesingle{}C2S0\_end1\textquotesingle{}].
\end{flushleft}
\end{minipage}\hfill
\begin{minipage}[c]{0.52\linewidth}
    \captionsetup{type=figure}
    \centering
    \includegraphics[max width=\linewidth, max height=6.5cm]{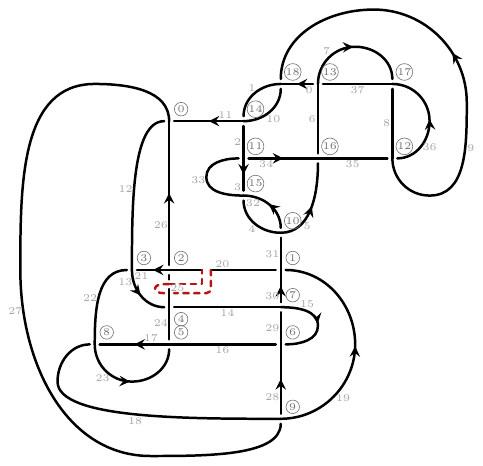}
    \caption{Sequence~\eqref{eq:seq-rweval_10_115_9_10}, band from diagram $D_{0}$ of $\mathtt{10_{115}} \sharp \mathtt{9_{10}}$ to diagram $D_{1}$ of $\mathtt{4_{1}} \sharp \mathtt{9_{10}}$.}
    \label{fig:seq-rweval_10_115_9_10-0}
\end{minipage}

\bigskip
\noindent
\begin{minipage}[c]{0.52\linewidth}
    \captionsetup{type=figure}
    \centering
    \includegraphics[max width=\linewidth, max height=6.5cm]{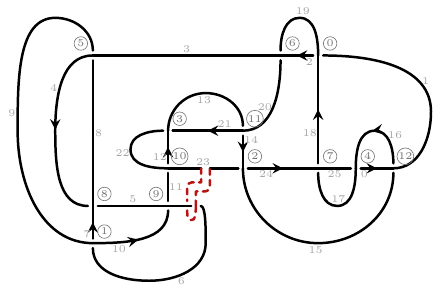}
    \caption{Sequence~\eqref{eq:seq-rweval_10_115_9_10}, band from diagram $D_{1}$ of $\mathtt{4_{1}} \sharp \mathtt{9_{10}}$ to diagram $D_{2}$ of $\mathtt{K15n4005}$.}
    \label{fig:seq-rweval_10_115_9_10-1}
\end{minipage}\hfill
\begin{minipage}[c]{0.45\linewidth}
In Figure~\ref{fig:seq-rweval_10_115_9_10-1} we see the diagram $D_1$ of $\mathtt{4_{1}} \sharp \mathtt{9_{10}}$ to which we apply the second band.
Before the unknotting band, we have \\
PD code:
\begin{flushleft}\footnotesize\ttfamily
[(1, 19, 2, 18), (6, 10, 7, 9),\\
\hphantom{[}(23, 15, 24, 14), (21, 13, 22, 12),\\
\hphantom{[}(25, 17, 0, 16), (8, 3, 9, 4),\\
\hphantom{[}(19, 3, 20, 2), (17, 25, 18, 24),\\
\hphantom{[}(4, 7, 5, 8), (10, 6, 11, 5),\\
\hphantom{[}(11, 23, 12, 22), (13, 21, 14, 20),\\
\hphantom{[}(15, 1, 16, 0)].
\end{flushleft}
Crossing signs:
\begin{flushleft}\footnotesize\ttfamily
[1, 1, 1, 1, 1, -1, 1, 1, -1,\\
\hphantom{[}1, 1, 1, 1].
\end{flushleft}

The band applied is
\begin{flushleft}\footnotesize\ttfamily
[\textquotesingle{}C10S1\_start\textquotesingle{}, \textquotesingle{}C9S1\_end2\textquotesingle{}].
\end{flushleft}
\end{minipage}

\bigskip
\noindent
\begin{minipage}[c]{0.45\linewidth}
In Figure~\ref{fig:seq-rweval_10_115_9_10-2} we see the diagram $D_2$ of $\mathtt{K15n4005}$ to which we apply the third band.
Before the unknotting band, we have \\
PD code:
\begin{flushleft}\footnotesize\ttfamily
[(6, 26, 7, 25), (12, 18, 13, 17),\\
\hphantom{[}(2, 22, 3, 21), (29, 19, 0, 18),\\
\hphantom{[}(4, 24, 5, 23), (15, 8, 16, 9),\\
\hphantom{[}(26, 8, 27, 7), (24, 4, 25, 3),\\
\hphantom{[}(9, 14, 10, 15), (20, 12, 21, 11),\\
\hphantom{[}(16, 2, 17, 1), (19, 29, 20, 28),\\
\hphantom{[}(22, 6, 23, 5), (27, 10, 28, 11),\\
\hphantom{[}(13, 0, 14, 1)].
\end{flushleft}
Crossing signs:
\begin{flushleft}\footnotesize\ttfamily
[1, 1, 1, 1, 1, -1, 1, 1, -1,\\
\hphantom{[}1, 1, 1, 1, -1, -1].
\end{flushleft}

The band applied is
\begin{flushleft}\footnotesize\ttfamily
[\textquotesingle{}C8S3\_start\textquotesingle{}, \textquotesingle{}C13S0\_end1\textquotesingle{}].
\end{flushleft}
\end{minipage}\hfill
\begin{minipage}[c]{0.52\linewidth}
    \captionsetup{type=figure}
    \centering
    \includegraphics[max width=\linewidth, max height=6.5cm]{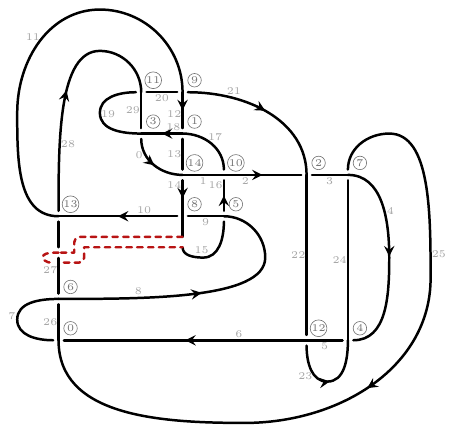}
    \caption{Sequence~\eqref{eq:seq-rweval_10_115_9_10}, band from diagram $D_{2}$ of $\mathtt{K15n4005}$ to diagram $D_{3}$ of $\mathtt{8_{14}}$.}
    \label{fig:seq-rweval_10_115_9_10-2}
\end{minipage}

\bigskip
\noindent
\begin{minipage}[c]{0.52\linewidth}
    \captionsetup{type=figure}
    \centering
    \includegraphics[max width=\linewidth, max height=5cm]{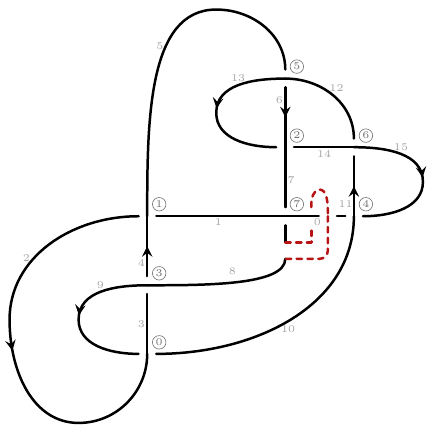}
    \caption{Sequence~\eqref{eq:seq-rweval_10_115_9_10}, band from diagram $D_{3}$ of $\mathtt{8_{14}}$ to diagram $D_{4}$ of the unknot.}
    \label{fig:seq-rweval_10_115_9_10-3}
\end{minipage}\hfill
\begin{minipage}[c]{0.45\linewidth}
In Figure~\ref{fig:seq-rweval_10_115_9_10-3} we see the diagram $D_3$ of $\mathtt{8_{14}}$ to which we apply the fourth band.
Before the unknotting band, we have \\
PD code:
\begin{flushleft}\footnotesize\ttfamily
[(9, 2, 10, 3), (1, 5, 2, 4),\\
\hphantom{[}(13, 7, 14, 6), (3, 8, 4, 9),\\
\hphantom{[}(15, 11, 0, 10), (5, 13, 6, 12),\\
\hphantom{[}(11, 15, 12, 14), (7, 1, 8, 0)].
\end{flushleft}
Crossing signs:
\begin{flushleft}\footnotesize\ttfamily
[-1, 1, 1, -1, 1, 1, 1, 1].
\end{flushleft}

The band applied is
\begin{flushleft}\footnotesize\ttfamily
[\textquotesingle{}C3S1\_start\textquotesingle{}, \textquotesingle{}C4S2\_end1\textquotesingle{}].
\end{flushleft}
\end{minipage}

\medskip

After the last crossing change, we obtain the unknot.

\subsection{Unknotting sequence for \texorpdfstring{$\mathtt{10_{121}} \sharp \mathtt{9_{10}}$}{10\_121 \# 9\_10}}
\label{app:seq-rweval_10_121_9_10}

The sequence found by the tuned random walker is
\begin{equation}
\label{eq:seq-rweval_10_121_9_10}
     \mathtt{10_{121}} \sharp \mathtt{9_{10}} \to \mathtt{K21} \to \mathtt{K23} \to \mathtt{8_{14}} \to \text{unknot}.
\end{equation}

\bigskip
\noindent
\begin{minipage}[c]{0.45\linewidth}
In Figure~\ref{fig:seq-rweval_10_121_9_10-0} we see the diagram $D_0$ of $\mathtt{10_{121}} \sharp \mathtt{9_{10}}$ to which we apply the first band.
Before the unknotting band, we have \\
PD code:
\begin{flushleft}\footnotesize\ttfamily
[(11, 20, 12, 21), (13, 19, 14, 18),\\
\hphantom{[}(15, 31, 16, 30), (17, 25, 18, 24),\\
\hphantom{[}(19, 26, 20, 27), (21, 17, 22, 16),\\
\hphantom{[}(23, 29, 24, 28), (25, 12, 26, 13),\\
\hphantom{[}(27, 15, 28, 14), (29, 23, 30, 22),\\
\hphantom{[}(31, 5, 32, 4), (33, 3, 34, 2),\\
\hphantom{[}(35, 9, 36, 8), (37, 7, 0, 6),\\
\hphantom{[}(1, 11, 2, 10), (3, 33, 4, 32),\\
\hphantom{[}(5, 35, 6, 34), (7, 37, 8, 36),\\
\hphantom{[}(9, 1, 10, 0)].
\end{flushleft}
Crossing signs:
\begin{flushleft}\footnotesize\ttfamily
[-1, 1, 1, 1, -1, 1, 1, -1, 1,\\
\hphantom{[}1, 1, 1, 1, 1, 1, 1, 1, 1,\\
\hphantom{[}1].
\end{flushleft}

The band applied is
\begin{flushleft}\footnotesize\ttfamily
[\textquotesingle{}C8S1\_start\textquotesingle{}, \textquotesingle{}C16S0\_end1\textquotesingle{}].
\end{flushleft}
\end{minipage}\hfill
\begin{minipage}[c]{0.52\linewidth}
    \captionsetup{type=figure}
    \centering
    \includegraphics[max width=\linewidth, max height=6.5cm]{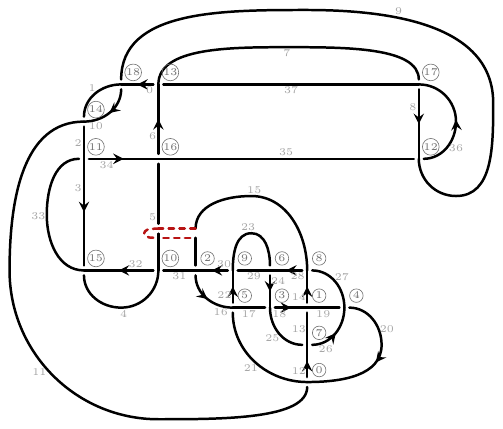}
    \caption{Sequence~\eqref{eq:seq-rweval_10_121_9_10}, band from diagram $D_{0}$ of $\mathtt{10_{121}} \sharp \mathtt{9_{10}}$ to diagram $D_{1}$ of $\mathtt{K21}$.}
    \label{fig:seq-rweval_10_121_9_10-0}
\end{minipage}

\bigskip
\noindent
\begin{minipage}[c]{0.52\linewidth}
    \captionsetup{type=figure}
    \centering
    \includegraphics[max width=\linewidth, max height=6.5cm]{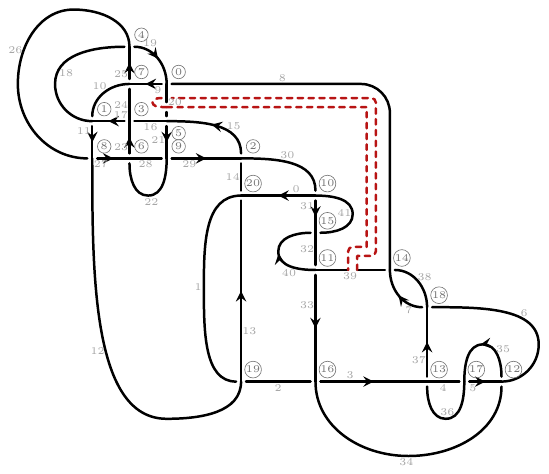}
    \caption{Sequence~\eqref{eq:seq-rweval_10_121_9_10}, band from diagram $D_{1}$ of $\mathtt{K21}$ to diagram $D_{2}$ of $\mathtt{K23}$.}
    \label{fig:seq-rweval_10_121_9_10-1}
\end{minipage}\hfill
\begin{minipage}[c]{0.45\linewidth}
In Figure~\ref{fig:seq-rweval_10_121_9_10-1} we see the diagram $D_1$ of $\mathtt{K21}$ to which we apply the second band.
Before the unknotting band, we have \\
PD code:
\begin{flushleft}\footnotesize\ttfamily
[(8, 19, 9, 20), (10, 18, 11, 17),\\
\hphantom{[}(14, 30, 15, 29), (16, 24, 17, 23),\\
\hphantom{[}(18, 25, 19, 26), (20, 16, 21, 15),\\
\hphantom{[}(22, 28, 23, 27), (24, 9, 25, 10),\\
\hphantom{[}(26, 12, 27, 11), (28, 22, 29, 21),\\
\hphantom{[}(30, 0, 31, 41), (32, 40, 33, 39),\\
\hphantom{[}(34, 6, 35, 5), (36, 4, 37, 3),\\
\hphantom{[}(38, 8, 39, 7), (40, 32, 41, 31),\\
\hphantom{[}(2, 34, 3, 33), (4, 36, 5, 35),\\
\hphantom{[}(6, 38, 7, 37), (1, 12, 2, 13),\\
\hphantom{[}(13, 0, 14, 1)].
\end{flushleft}
Crossing signs:
\begin{flushleft}\footnotesize\ttfamily
[-1, 1, 1, 1, -1, 1, 1, -1, 1,\\
\hphantom{[}1, 1, 1, 1, 1, 1, 1, 1, 1,\\
\hphantom{[}1, -1, -1].
\end{flushleft}

The band applied is
\begin{flushleft}\footnotesize\ttfamily
[\textquotesingle{}C14S2\_start\textquotesingle{}, \textquotesingle{}C0S3\_end2\textquotesingle{}].
\end{flushleft}
\end{minipage}

\bigskip
\noindent
\begin{minipage}[c]{0.45\linewidth}
In Figure~\ref{fig:seq-rweval_10_121_9_10-2} we see the diagram $D_2$ of $\mathtt{K23}$ to which we apply the third band.
Before the unknotting band, we have \\
PD code:
\begin{flushleft}\footnotesize\ttfamily
[(10, 23, 11, 24), (14, 22, 15, 21),\\
\hphantom{[}(18, 38, 19, 37), (20, 30, 21, 29),\\
\hphantom{[}(22, 31, 23, 32), (26, 20, 27, 19),\\
\hphantom{[}(28, 34, 29, 33), (30, 13, 31, 14),\\
\hphantom{[}(32, 16, 33, 15), (34, 28, 35, 27),\\
\hphantom{[}(35, 5, 36, 4), (38, 2, 39, 1),\\
\hphantom{[}(40, 10, 41, 9), (42, 8, 43, 7),\\
\hphantom{[}(45, 13, 0, 12), (3, 37, 4, 36),\\
\hphantom{[}(6, 40, 7, 39), (8, 42, 9, 41),\\
\hphantom{[}(11, 45, 12, 44), (5, 16, 6, 17),\\
\hphantom{[}(17, 2, 18, 3), (43, 24, 44, 25),\\
\hphantom{[}(25, 0, 26, 1)].
\end{flushleft}
Crossing signs:
\begin{flushleft}\footnotesize\ttfamily
[-1, 1, 1, 1, -1, 1, 1, -1, 1,\\
\hphantom{[}1, 1, 1, 1, 1, 1, 1, 1, 1,\\
\hphantom{[}1, -1, -1, -1, -1].
\end{flushleft}

The band applied is
\begin{flushleft}\footnotesize\ttfamily
[\textquotesingle{}C7S3\_start\textquotesingle{}, \textquotesingle{}C4S0\_end1\textquotesingle{}].
\end{flushleft}
\end{minipage}\hfill
\begin{minipage}[c]{0.52\linewidth}
    \captionsetup{type=figure}
    \centering
    \includegraphics[max width=\linewidth, max height=6.5cm]{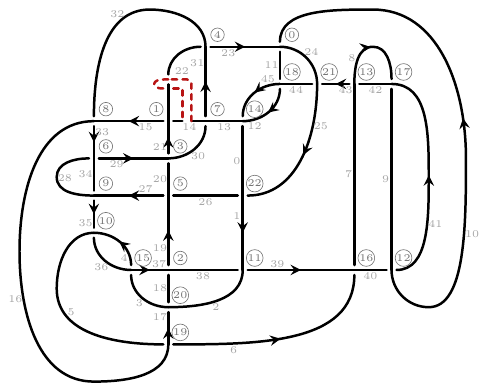}
    \caption{Sequence~\eqref{eq:seq-rweval_10_121_9_10}, band from diagram $D_{2}$ of $\mathtt{K23}$ to diagram $D_{3}$ of $\mathtt{8_{14}}$.}
    \label{fig:seq-rweval_10_121_9_10-2}
\end{minipage}

\bigskip
\noindent
\begin{minipage}[c]{0.52\linewidth}
    \captionsetup{type=figure}
    \centering
    \includegraphics[max width=\linewidth, max height=5cm]{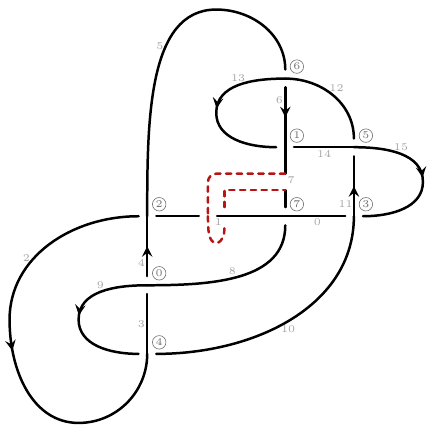}
    \caption{Sequence~\eqref{eq:seq-rweval_10_121_9_10}, band from diagram $D_{3}$ of $\mathtt{8_{14}}$ to diagram $D_{4}$ of the unknot.}
    \label{fig:seq-rweval_10_121_9_10-3}
\end{minipage}\hfill
\begin{minipage}[c]{0.45\linewidth}
In Figure~\ref{fig:seq-rweval_10_121_9_10-3} we see the diagram $D_3$ of $\mathtt{8_{14}}$ to which we apply the fourth band.
Before the unknotting band, we have \\
PD code:
\begin{flushleft}\footnotesize\ttfamily
[(3, 8, 4, 9), (13, 7, 14, 6),\\
\hphantom{[}(1, 5, 2, 4), (15, 11, 0, 10),\\
\hphantom{[}(9, 2, 10, 3), (11, 15, 12, 14),\\
\hphantom{[}(5, 13, 6, 12), (7, 1, 8, 0)].
\end{flushleft}
Crossing signs:
\begin{flushleft}\footnotesize\ttfamily
[-1, 1, 1, 1, -1, 1, 1, 1].
\end{flushleft}

The band applied is
\begin{flushleft}\footnotesize\ttfamily
[\textquotesingle{}C1S1\_start\textquotesingle{}, \textquotesingle{}C2S0\_end1\textquotesingle{}].
\end{flushleft}
\end{minipage}

\medskip

After the last crossing change, we obtain the unknot.

\subsection{Unknotting sequence for \texorpdfstring{$\mathtt{10_{140}} \sharp \mathtt{9_{10}}$}{10\_140 \# 9\_10} (random walker)}
\label{app:seq-rweval_10_140_9_10}

The sequence found by the tuned random walker is
\begin{equation}
\label{eq:seq-rweval_10_140_9_10}
     \mathtt{10_{140}} \sharp \mathtt{9_{10}} \to \mathtt{K21} \to \mathtt{K23} \to \mathtt{8_{14}} \to \text{unknot}.
\end{equation}
The diagram $D_3$ and its band are those of the sequence of $\mathtt{10_{41}} \sharp \mathtt{9_{10}}$, up to relabelling.

\bigskip
\noindent
\begin{minipage}[c]{0.45\linewidth}
In Figure~\ref{fig:seq-rweval_10_140_9_10-0} we see the diagram $D_0$ of $\mathtt{10_{140}} \sharp \mathtt{9_{10}}$ to which we apply the first band.
Before the unknotting band, we have \\
PD code:
\begin{flushleft}\footnotesize\ttfamily
[(11, 25, 12, 24), (13, 20, 14, 21),\\
\hphantom{[}(15, 22, 16, 23), (16, 28, 17, 27),\\
\hphantom{[}(18, 26, 19, 25), (21, 14, 22, 15),\\
\hphantom{[}(23, 31, 24, 30), (26, 18, 27, 17),\\
\hphantom{[}(28, 20, 29, 19), (29, 13, 30, 12),\\
\hphantom{[}(31, 5, 32, 4), (33, 3, 34, 2),\\
\hphantom{[}(35, 9, 36, 8), (37, 7, 0, 6),\\
\hphantom{[}(1, 11, 2, 10), (3, 33, 4, 32),\\
\hphantom{[}(5, 35, 6, 34), (7, 37, 8, 36),\\
\hphantom{[}(9, 1, 10, 0)].
\end{flushleft}
Crossing signs:
\begin{flushleft}\footnotesize\ttfamily
[1, -1, -1, 1, 1, -1, 1, 1, 1,\\
\hphantom{[}1, 1, 1, 1, 1, 1, 1, 1, 1,\\
\hphantom{[}1].
\end{flushleft}

The band applied is
\begin{flushleft}\footnotesize\ttfamily
[\textquotesingle{}C3S0\_start\textquotesingle{}, \textquotesingle{}C16S0\_end2\textquotesingle{}].
\end{flushleft}
\end{minipage}\hfill
\begin{minipage}[c]{0.52\linewidth}
    \captionsetup{type=figure}
    \centering
    \includegraphics[max width=\linewidth, max height=6.5cm]{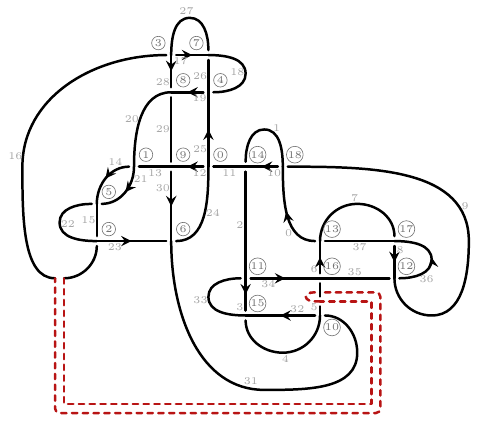}
    \caption{Sequence~\eqref{eq:seq-rweval_10_140_9_10}, band from diagram $D_{0}$ of $\mathtt{10_{140}} \sharp \mathtt{9_{10}}$ to diagram $D_{1}$ of $\mathtt{K21}$.}
    \label{fig:seq-rweval_10_140_9_10-0}
\end{minipage}

\bigskip
\noindent
\begin{minipage}[c]{0.52\linewidth}
    \captionsetup{type=figure}
    \centering
    \includegraphics[max width=\linewidth, max height=6.5cm]{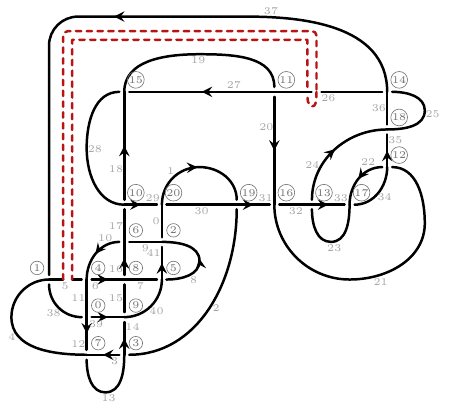}
    \caption{Sequence~\eqref{eq:seq-rweval_10_140_9_10}, band from diagram $D_{1}$ of $\mathtt{K21}$ to diagram $D_{2}$ of $\mathtt{K23}$.}
    \label{fig:seq-rweval_10_140_9_10-1}
\end{minipage}\hfill
\begin{minipage}[c]{0.45\linewidth}
In Figure~\ref{fig:seq-rweval_10_140_9_10-1} we see the diagram $D_1$ of $\mathtt{K21}$ to which we apply the second band.
Before the unknotting band, we have \\
PD code:
\begin{flushleft}\footnotesize\ttfamily
[(38, 12, 39, 11), (37, 4, 38, 5),\\
\hphantom{[}(41, 8, 0, 9), (2, 14, 3, 13),\\
\hphantom{[}(5, 11, 6, 10), (7, 40, 8, 41),\\
\hphantom{[}(9, 17, 10, 16), (12, 4, 13, 3),\\
\hphantom{[}(15, 7, 16, 6), (14, 40, 15, 39),\\
\hphantom{[}(17, 29, 18, 28), (19, 27, 20, 26),\\
\hphantom{[}(21, 35, 22, 34), (23, 33, 24, 32),\\
\hphantom{[}(25, 37, 26, 36), (27, 19, 28, 18),\\
\hphantom{[}(31, 21, 32, 20), (33, 23, 34, 22),\\
\hphantom{[}(35, 25, 36, 24), (1, 30, 2, 31),\\
\hphantom{[}(29, 0, 30, 1)].
\end{flushleft}
Crossing signs:
\begin{flushleft}\footnotesize\ttfamily
[1, -1, -1, 1, 1, -1, 1, 1, 1,\\
\hphantom{[}1, 1, 1, 1, 1, 1, 1, 1, 1,\\
\hphantom{[}1, -1, -1].
\end{flushleft}

The band applied is
\begin{flushleft}\footnotesize\ttfamily
[\textquotesingle{}C4S0\_start\textquotesingle{}, \textquotesingle{}C11S3\_end2\textquotesingle{}].
\end{flushleft}
\end{minipage}

\bigskip
\noindent
\begin{minipage}[c]{0.45\linewidth}
In Figure~\ref{fig:seq-rweval_10_140_9_10-2} we see the diagram $D_2$ of $\mathtt{K23}$ to which we apply the third band.
Before the unknotting band, we have \\
PD code:
\begin{flushleft}\footnotesize\ttfamily
[(39, 9, 40, 8), (36, 1, 37, 2),\\
\hphantom{[}(40, 5, 41, 6), (44, 12, 45, 11),\\
\hphantom{[}(4, 10, 5, 9), (3, 38, 4, 39),\\
\hphantom{[}(7, 15, 8, 14), (10, 0, 11, 45),\\
\hphantom{[}(13, 7, 14, 6), (12, 42, 13, 41),\\
\hphantom{[}(15, 29, 16, 28), (17, 27, 18, 26),\\
\hphantom{[}(19, 35, 20, 34), (21, 33, 22, 32),\\
\hphantom{[}(24, 38, 25, 37), (27, 17, 28, 16),\\
\hphantom{[}(31, 19, 32, 18), (33, 21, 34, 20),\\
\hphantom{[}(35, 23, 36, 22), (43, 30, 44, 31),\\
\hphantom{[}(29, 42, 30, 43), (2, 25, 3, 26),\\
\hphantom{[}(23, 0, 24, 1)].
\end{flushleft}
Crossing signs:
\begin{flushleft}\footnotesize\ttfamily
[1, -1, -1, 1, 1, -1, 1, 1, 1,\\
\hphantom{[}1, 1, 1, 1, 1, 1, 1, 1, 1,\\
\hphantom{[}1, -1, -1, -1, -1].
\end{flushleft}

The band applied is
\begin{flushleft}\footnotesize\ttfamily
[\textquotesingle{}C6S1\_start\textquotesingle{}, \textquotesingle{}C6S0\_end1\textquotesingle{}].
\end{flushleft}
\end{minipage}\hfill
\begin{minipage}[c]{0.52\linewidth}
    \captionsetup{type=figure}
    \centering
    \includegraphics[max width=\linewidth, max height=6.5cm]{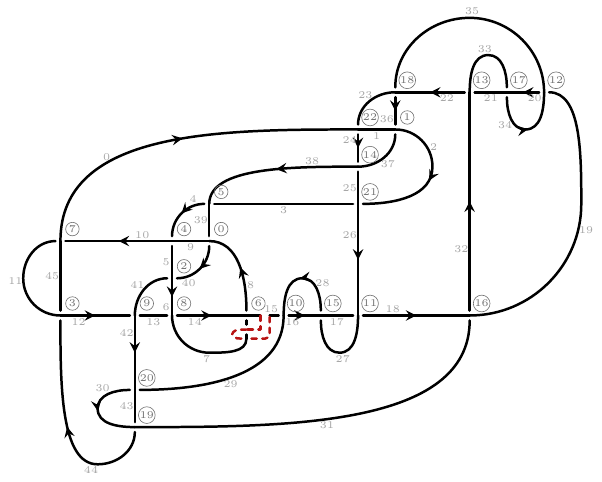}
    \caption{Sequence~\eqref{eq:seq-rweval_10_140_9_10}, band from diagram $D_{2}$ of $\mathtt{K23}$ to diagram $D_{3}$ of $\mathtt{8_{14}}$.}
    \label{fig:seq-rweval_10_140_9_10-2}
\end{minipage}

\bigskip
\noindent
\begin{minipage}[c]{0.52\linewidth}
    \captionsetup{type=figure}
    \centering
    \includegraphics[max width=\linewidth, max height=5cm]{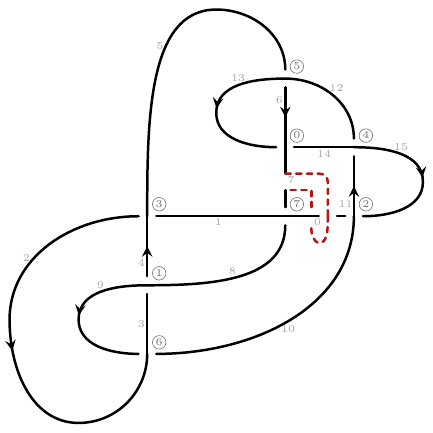}
    \caption{Sequence~\eqref{eq:seq-rweval_10_140_9_10}, band from diagram $D_{3}$ of $\mathtt{8_{14}}$ to diagram $D_{4}$ of the unknot.}
    \label{fig:seq-rweval_10_140_9_10-3}
\end{minipage}\hfill
\begin{minipage}[c]{0.45\linewidth}
In Figure~\ref{fig:seq-rweval_10_140_9_10-3} we see the diagram $D_3$ of $\mathtt{8_{14}}$ to which we apply the fourth band.
Before the unknotting band, we have \\
PD code:
\begin{flushleft}\footnotesize\ttfamily
[(13, 7, 14, 6), (3, 8, 4, 9),\\
\hphantom{[}(15, 11, 0, 10), (1, 5, 2, 4),\\
\hphantom{[}(11, 15, 12, 14), (5, 13, 6, 12),\\
\hphantom{[}(9, 2, 10, 3), (7, 1, 8, 0)].
\end{flushleft}
Crossing signs:
\begin{flushleft}\footnotesize\ttfamily
[1, -1, 1, 1, 1, 1, -1, 1].
\end{flushleft}

The band applied is
\begin{flushleft}\footnotesize\ttfamily
[\textquotesingle{}C7S0\_start\textquotesingle{}, \textquotesingle{}C7S3\_end2\textquotesingle{}].
\end{flushleft}
\end{minipage}

\medskip

After the last crossing change, we obtain the unknot.

\subsection{Unknotting sequence for \texorpdfstring{$\mathtt{10_{140}} \sharp \mathtt{9_{10}}$}{10\_140 \# 9\_10} (mixed agent)}
\label{app:seq-rleval_10_140_9_10}

The sequence found by the mixed agent is
\begin{equation}
\label{eq:seq-rleval_10_140_9_10}
     \mathtt{10_{140}} \sharp \mathtt{9_{10}} \to \mathtt{K21} \to \mir{\mathtt{K15n4866}} \to \mathtt{8_{14}} \to \text{unknot}.
\end{equation}
The diagram $D_0$ is that of the random walker's sequence above (the same PD code); the band differs.

\bigskip
\noindent
\begin{minipage}[c]{0.45\linewidth}
In Figure~\ref{fig:seq-rleval_10_140_9_10-0} we see the diagram $D_0$ of $\mathtt{10_{140}} \sharp \mathtt{9_{10}}$ to which we apply the first band.
Before the unknotting band, we have \\
PD code:
\begin{flushleft}\footnotesize\ttfamily
[(11, 25, 12, 24), (13, 20, 14, 21),\\
\hphantom{[}(15, 22, 16, 23), (16, 28, 17, 27),\\
\hphantom{[}(18, 26, 19, 25), (21, 14, 22, 15),\\
\hphantom{[}(23, 31, 24, 30), (26, 18, 27, 17),\\
\hphantom{[}(28, 20, 29, 19), (29, 13, 30, 12),\\
\hphantom{[}(31, 5, 32, 4), (33, 3, 34, 2),\\
\hphantom{[}(35, 9, 36, 8), (37, 7, 0, 6),\\
\hphantom{[}(1, 11, 2, 10), (3, 33, 4, 32),\\
\hphantom{[}(5, 35, 6, 34), (7, 37, 8, 36),\\
\hphantom{[}(9, 1, 10, 0)].
\end{flushleft}
Crossing signs:
\begin{flushleft}\footnotesize\ttfamily
[1, -1, -1, 1, 1, -1, 1, 1, 1,\\
\hphantom{[}1, 1, 1, 1, 1, 1, 1, 1, 1,\\
\hphantom{[}1].
\end{flushleft}

The band applied is
\begin{flushleft}\footnotesize\ttfamily
[\textquotesingle{}C7S2\_start\textquotesingle{}, \textquotesingle{}C14S0\_end1\textquotesingle{}].
\end{flushleft}
\end{minipage}\hfill
\begin{minipage}[c]{0.52\linewidth}
    \captionsetup{type=figure}
    \centering
    \includegraphics[max width=\linewidth, max height=6.5cm]{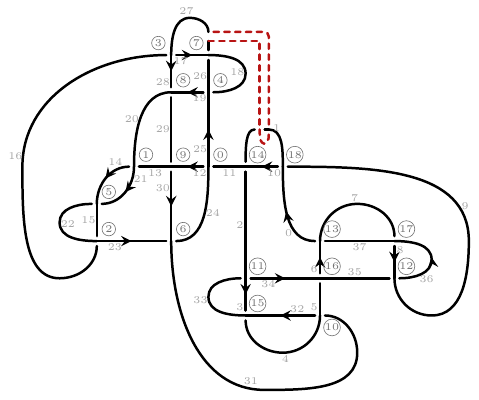}
    \caption{Sequence~\eqref{eq:seq-rleval_10_140_9_10}, band from diagram $D_{0}$ of $\mathtt{10_{140}} \sharp \mathtt{9_{10}}$ to diagram $D_{1}$ of $\mathtt{K21}$.}
    \label{fig:seq-rleval_10_140_9_10-0}
\end{minipage}

\bigskip
\noindent
\begin{minipage}[c]{0.52\linewidth}
    \captionsetup{type=figure}
    \centering
    \includegraphics[max width=\linewidth, max height=6.5cm]{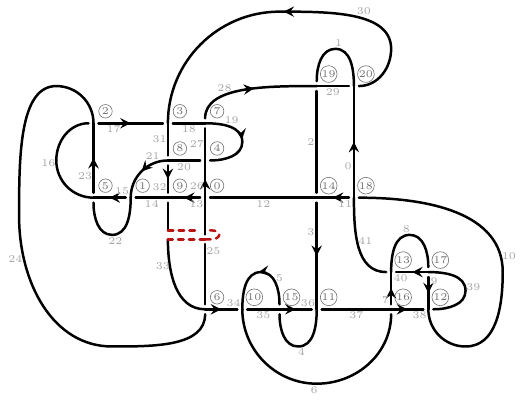}
    \caption{Sequence~\eqref{eq:seq-rleval_10_140_9_10}, band from diagram $D_{1}$ of $\mathtt{K21}$ to diagram $D_{2}$ of $\mir{\mathtt{K15n4866}}$.}
    \label{fig:seq-rleval_10_140_9_10-1}
\end{minipage}\hfill
\begin{minipage}[c]{0.45\linewidth}
In Figure~\ref{fig:seq-rleval_10_140_9_10-1} we see the diagram $D_1$ of $\mathtt{K21}$ to which we apply the second band.
Before the unknotting band, we have \\
PD code:
\begin{flushleft}\footnotesize\ttfamily
[(12, 26, 13, 25), (14, 21, 15, 22),\\
\hphantom{[}(16, 23, 17, 24), (17, 31, 18, 30),\\
\hphantom{[}(19, 27, 20, 26), (22, 15, 23, 16),\\
\hphantom{[}(24, 34, 25, 33), (27, 19, 28, 18),\\
\hphantom{[}(31, 21, 32, 20), (32, 14, 33, 13),\\
\hphantom{[}(34, 6, 35, 5), (36, 4, 37, 3),\\
\hphantom{[}(38, 10, 39, 9), (40, 8, 41, 7),\\
\hphantom{[}(2, 12, 3, 11), (4, 36, 5, 35),\\
\hphantom{[}(6, 38, 7, 37), (8, 40, 9, 39),\\
\hphantom{[}(10, 0, 11, 41), (1, 28, 2, 29),\\
\hphantom{[}(29, 0, 30, 1)].
\end{flushleft}
Crossing signs:
\begin{flushleft}\footnotesize\ttfamily
[1, -1, -1, 1, 1, -1, 1, 1, 1,\\
\hphantom{[}1, 1, 1, 1, 1, 1, 1, 1, 1,\\
\hphantom{[}1, -1, -1].
\end{flushleft}

The band applied is
\begin{flushleft}\footnotesize\ttfamily
[\textquotesingle{}C6S3\_start\textquotesingle{}, \textquotesingle{}C6S2\_end1\textquotesingle{}].
\end{flushleft}
\end{minipage}

\bigskip
\noindent
\begin{minipage}[c]{0.45\linewidth}
In Figure~\ref{fig:seq-rleval_10_140_9_10-2} we see the diagram $D_2$ of $\mir{\mathtt{K15n4866}}$ to which we apply the third band.
Before the unknotting band, we have \\
PD code:
\begin{flushleft}\footnotesize\ttfamily
[(7, 20, 8, 21), (2, 5, 3, 6),\\
\hphantom{[}(21, 6, 22, 7), (4, 10, 5, 9),\\
\hphantom{[}(29, 19, 0, 18), (8, 1, 9, 2),\\
\hphantom{[}(27, 17, 28, 16), (23, 13, 24, 12),\\
\hphantom{[}(25, 15, 26, 14), (10, 4, 11, 3),\\
\hphantom{[}(11, 25, 12, 24), (13, 23, 14, 22),\\
\hphantom{[}(15, 29, 16, 28), (17, 27, 18, 26),\\
\hphantom{[}(19, 1, 20, 0)].
\end{flushleft}
Crossing signs:
\begin{flushleft}\footnotesize\ttfamily
[-1, -1, -1, 1, 1, -1, 1, 1, 1,\\
\hphantom{[}1, 1, 1, 1, 1, 1].
\end{flushleft}

The band applied is
\begin{flushleft}\footnotesize\ttfamily
[\textquotesingle{}C9S3\_start\textquotesingle{}, \textquotesingle{}C10S3\_end2\textquotesingle{}].
\end{flushleft}
\end{minipage}\hfill
\begin{minipage}[c]{0.52\linewidth}
    \captionsetup{type=figure}
    \centering
    \includegraphics[max width=\linewidth, max height=6.5cm]{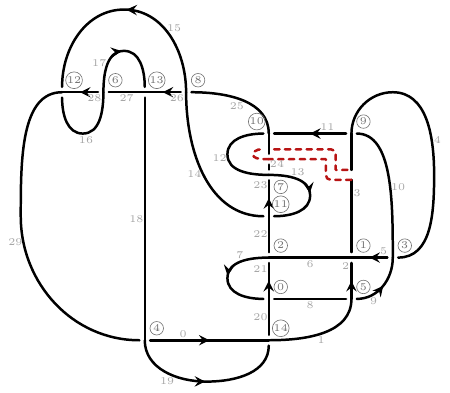}
    \caption{Sequence~\eqref{eq:seq-rleval_10_140_9_10}, band from diagram $D_{2}$ of $\mir{\mathtt{K15n4866}}$ to diagram $D_{3}$ of $\mathtt{8_{14}}$.}
    \label{fig:seq-rleval_10_140_9_10-2}
\end{minipage}

\bigskip
\noindent
\begin{minipage}[c]{0.52\linewidth}
    \captionsetup{type=figure}
    \centering
    \includegraphics[max width=\linewidth, max height=5cm]{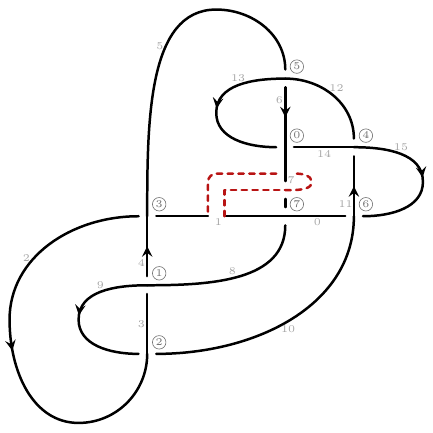}
    \caption{Sequence~\eqref{eq:seq-rleval_10_140_9_10}, band from diagram $D_{3}$ of $\mathtt{8_{14}}$ to diagram $D_{4}$ of the unknot.}
    \label{fig:seq-rleval_10_140_9_10-3}
\end{minipage}\hfill
\begin{minipage}[c]{0.45\linewidth}
In Figure~\ref{fig:seq-rleval_10_140_9_10-3} we see the diagram $D_3$ of $\mathtt{8_{14}}$ to which we apply the fourth band.
Before the unknotting band, we have \\
PD code:
\begin{flushleft}\footnotesize\ttfamily
[(13, 7, 14, 6), (3, 8, 4, 9),\\
\hphantom{[}(9, 2, 10, 3), (1, 5, 2, 4),\\
\hphantom{[}(11, 15, 12, 14), (5, 13, 6, 12),\\
\hphantom{[}(15, 11, 0, 10), (7, 1, 8, 0)].
\end{flushleft}
Crossing signs:
\begin{flushleft}\footnotesize\ttfamily
[1, -1, -1, 1, 1, 1, 1, 1].
\end{flushleft}

The band applied is
\begin{flushleft}\footnotesize\ttfamily
[\textquotesingle{}C7S1\_start\textquotesingle{}, \textquotesingle{}C7S0\_end1\textquotesingle{}].
\end{flushleft}
\end{minipage}

\medskip

After the last crossing change, we obtain the unknot.

\newpage

\printbibliography[
heading=bibliography,
title={Bibliography}
]

\end{document}